\setlist{leftmargin=10mm}
\numberwithin{equation}{section}
\newcommand{\N}{\mathbb{N}}
\newcommand{\R}{\mathbb{R}}
\newcommand{\E}{\mathbb{E}}
\newcommand{\Prob}{\mathbb{P}}
\newcommand{\pnorm}[2]{\lVert#1\rVert_{#2}}
\newcommand{\bigpnorm}[2]{\big\lVert#1\big\rVert_{#2}}
\newcommand{\biggpnorm}[2]{\bigg\lVert#1\bigg\rVert_{#2}}
\newcommand{\abs}[1]{\lvert#1\rvert}
\newcommand{\bigabs}[1]{\big\lvert#1\big\rvert}
\newcommand{\biggabs}[1]{\bigg\lvert#1\bigg\rvert}
\newcommand{\iprod}[2]{\left\langle#1,#2\right\rangle}
\renewcommand{\epsilon}{\varepsilon}
\renewcommand{\d}[1]{\mathrm{d}#1}
\newcommand{\equald}{\stackrel{d}{=}}
\renewcommand{\tilde}{\widetilde}
\DeclareMathOperator{\tr}{tr}
\DeclareMathOperator{\var}{Var}
\DeclareMathOperator{\dcov}{dCov}
\DeclareMathOperator{\dcor}{dCor}
\DeclareMathOperator{\diag}{diag}
\DeclareMathOperator{\op}{op}
\let\limsup\relax
\DeclareMathOperator*\limsup{\overline{lim}}
\newcommand{\beq}{\begin{equation}}
\newcommand{\eeq}{\end{equation}}
\newcommand{\beqa}{\begin{equation} \begin{aligned}}
\newcommand{\eeqa}{\end{aligned} \end{equation}}
\newcommand{\beqas}{\begin{equation*} \begin{aligned}}
\newcommand{\eeqas}{\end{aligned} \end{equation*}}
\newcommand{\bit}{\begin{itemize}}
	\newcommand{\eit}{\end{itemize}}
\newcommand{\bmat}{\begin{bmatrix}}
	\newcommand{\emat}{\end{bmatrix}}
\theoremstyle{definition}\newtheorem{problem}{Problem}[section]
\theoremstyle{definition}
\theoremstyle{remark}\newtheorem{assumption}{Assumption}
\theoremstyle{remark}\newtheorem{remark}[problem]{Remark}
\theoremstyle{definition}
\theoremstyle{plain}\newtheorem{theorem}[problem]{Theorem}
\theoremstyle{plain}
\theoremstyle{plain}\newtheorem{lemma}[problem]{Lemma}
\theoremstyle{plain}\newtheorem{proposition}[problem]{Proposition}
\theoremstyle{plain}\newtheorem{corollary}[problem]{Corollary}
\theoremstyle{plain}
	\def\MR#1{}
\begin{document}

\title[Generalized kernel distance covariance in high dimensions]{Generalized kernel distance covariance in high dimensions: non-null CLTs and power universality}
\thanks{The research of Q. Han is partially supported by NSF grants DMS-1916221 and DMS-2143468.}

\author[Q. Han]{Qiyang Han}

\address[Q. Han]{
Department of Statistics, Rutgers University, Piscataway, NJ 08854, USA.
}
\email{qh85@stat.rutgers.edu}

\author[Y. Shen]{Yandi Shen}

\address[Y. Shen]{
Department of Statistics, University of Chicago, Chicago, IL 60615, USA.
}
\email{ydshen@uchicago.edu}

\date{\today}

\keywords{Central limit theorem, distance covariance, independent test, Poincar\'e inequalities}
\subjclass[2000]{60F17, 62E17}
\begin{abstract}
Distance covariance is a popular dependence measure for two random vectors $X$ and $Y$ of possibly different dimensions and types. Recent years have witnessed concentrated efforts in the literature to understand the distributional properties of the sample distance covariance in a high-dimensional setting, with an exclusive emphasis on the null case that $X$ and $Y$ are independent. This paper derives the first non-null central limit theorem for the sample distance covariance, and the more general sample (Hilbert-Schmidt) kernel distance covariance in high dimensions, in the distributional class of $(X,Y)$ with a separable covariance structure. The new non-null central limit theorem yields an asymptotically exact first-order power formula for the widely used generalized kernel distance correlation test of independence between $X$ and $Y$. The power formula in particular unveils an interesting universality phenomenon: the power of the generalized kernel distance correlation test is completely determined by $n\cdot \dcor^2(X,Y)/\sqrt{2}$ in the high dimensional limit, regardless of a wide range of choices of the kernels and bandwidth parameters. Furthermore, this separation rate is also shown to be optimal in a minimax sense. The key step in the proof of the non-null central limit theorem is a precise expansion of the mean and variance of the sample distance covariance in high dimensions, which shows, among other things, that the non-null Gaussian approximation of the sample distance covariance involves a rather subtle interplay between the dimension-to-sample ratio and the dependence between $X$ and $Y$. 
\end{abstract}

\maketitle

\sloppy
\setcounter{tocdepth}{1}

\vspace{-3em}
\tableofcontents
\vspace{-3em}

\section{Introduction}

\subsection{Overview}
Given samples from a random vector $(X,Y)$ in $\R^{p + q}$, it is of fundamental statistical interest to test whether $X$ and $Y$ are independent. The long history of this problem has given rise to a large number of dependence measures targeting at different types of dependence structure. Notable examples include the classical Pearson correlation coefficient \cite{pearson1895notes}, rank-based correlation coefficients \cite{spearman1904proof, kendall1938new, hoeffding1948nonparametric, blum1961distribution, yanagimoto1970measures, rosenblatt1975quadratic, feuerverger1993consistent}, Cram\'er-von Mises-type measures \cite{de1980cramer}, measure based on characteristic functions \cite{szekely2007measuring, szekely2009brownian}, kernel-based measures \cite{gretton2005kernel, gretton2007kernel}, sign covariance \cite{bergsma2014consistent,weihs2018symmetric}. We refer to the classical textbooks \cite[Chapter 9]{anderson1958introduction} and \cite[Chapter 11]{muirhead1982aspects} for a systematic exposition on this topic.

Among the plentiful dependence measures for such a purpose, the distance covariance metric and its generalizations \cite{szekely2007measuring,szekely2014partial} have attracted much attention in recent years. In one of its many equivalent forms, the distance covariance between $X$ and $Y$ can be defined as (cf. \cite[Theorems 7 and 8]{szekely2009brownian})
\begin{align}\label{intro:dcov}
\dcov^2(X,Y)&\equiv \E\big(\pnorm{X_1-X_2}{}\pnorm{Y_1 - Y_2}{}\big) - 2\E\big(\pnorm{X_1-X_2}{}\pnorm{Y_1 - Y_3}{}\big)\nonumber\\
&\quad\quad\quad + \E\big(\pnorm{X_1-X_2}{})\E\big(\pnorm{Y_1 - Y_2}{}\big).
\end{align}
Here $(X_i,Y_i)$, $i=1,2,3$ are independent copies from the joint distribution of $(X,Y)$, and $\pnorm{\cdot}{}$ is the Euclidean norm. The distance covariance metric $\dcov^2(X,Y)$ is particularly appealing for several nice features. First, $X$ and $Y$ are independent if and only if $\dcov^2(X,Y) = 0$. Second, $\dcov^2(X,Y)$ can be used in cases where $X$ and $Y$ are of different dimensions and data type (discrete, continuous or mixed). Third, several estimators of $\dcov^2(X,Y)$ are known to allow for efficient calculation. Due to these reasons, the distance covariance has been utilized in a wide range of both methodological and applied contexts, see e.g. \cite{kong2012using, li2012feature, shao2014martingale, matteson2017independent, yao2018testing, zhang2018conditional} for an incomplete list of references.

An estimator of $\dcov^2(X,Y)$ based on $n$ i.i.d. samples $(X_1,Y_1),\ldots,(X_n,Y_n)$ from the distribution of $(X,Y)$ is first proposed in \cite{szekely2007measuring}, with its bias corrected version proposed in \cite{szekely2014partial}, which is now known as the sample distance covariance $\dcov_\ast^2(\bm{X},\bm{Y})$. The finite-sample distribution of $\dcov_\ast^2(\bm{X},\bm{Y})$ is generally intractable; so the literature has focused on deriving its asymptotic distribution in different growth regimes of $(n,p,q)$, cf. \cite{szekely2013distance,huo2016fast,zhu2020distance,gao2021asymptotic}. In the fixed dimensional asymptotic regime when $p,q$ are fixed and $n$ diverges to infinity, \cite{huo2016fast} showed that $\dcov_\ast^2(\bm{X},\bm{Y})$ converges in distribution to a mixture of chi-squared distributions. This is complemented by the result of \cite{szekely2013distance},  where a $t$-distribution limit was derived in the so-called `high dimensional low sample size' regime when $n$ remains fixed and both $p,q$ diverge to infinity. The high dimensional regime where both the sample size $n$ and the data dimension $p,q$ diverge was recently studied in \cite{zhu2020distance,gao2021asymptotic}, where $\dcov_\ast^2(\bm{X},\bm{Y})$ was shown to obey a central limit theorem (CLT). We also refer to \cite{gao2021two, yan2023kernel} for some related distributional results in the problem of two-sample distribution testing. Except for some non-null results in \cite[Proposition 2.2.2]{zhu2020distance} under the fixed $n$ regime, all these results are derived under the null scenario where $X$ and $Y$ are independent. This leaves open the more challenging but equally important issue of non-null limiting distributions of $\dcov_\ast^2(\bm{X},\bm{Y})$, which are the key to a complete characterization of the power behavior of distance covariance based tests. Bridging this significant theoretical gap is one of the main motivations of the current paper. 

\subsection{Non-null CLTs}

For the majority of the paper, we work with distributions of $(X,Y)$ with a separable covariance structure (see Section \ref{subsec:nonnull_clt_I} ahead for details), and perform an exact analysis of the distributional properties of the sample distance covariance $\dcov_*^2(\bm{X},\bm{Y})$ in the following high dimensional regime:
\begin{align}\label{def:hd_regime}
\min\{n,p,q\} \rightarrow \infty.
\end{align}
Our first main result is the following non-null CLT (see Theorem \ref{thm:non_null_clt} below for the formal statement): Uniformly over the covariance matrix $\Sigma$ of $(X^\top, Y^\top)^\top$ with a compact spectrum in $(0,\infty)$,
\begin{align}\label{intro:non_null_clt}
\frac{\dcov_*^2(\bm{X},\bm{Y}) - \dcov^2(X,Y)}{\var^{1/2}\big(\dcov_*^2(\bm{X},\bm{Y})\big)} \text{ converges in distribution to $\mathcal{N}(0,1)$}
\end{align} 
in the regime (\ref{def:hd_regime}). Here $\mathcal{N}(0,1)$ denotes the standard normal distribution. For simplicity, we have presented here the asymptotic version of the result; the more complete Theorem \ref{thm:non_null_clt} below is non-asymptotic in nature and gives an error bound with explicit dependence on the problem parameters $(n,p,q)$. Furthermore, we show that an analogue of (\ref{intro:non_null_clt}) also holds for $\dcov_*^2(\bm{X})$, the `marginal' analogous unbiased estimator of $\dcov^2(X)$. To the best of our knowledge, (\ref{intro:non_null_clt}) is the first non-null CLT for $\dcov_\ast^2(\bm{X},\bm{Y})$ in the literature.

Let us now give some intuition why one would expect a  non-null CLT (\ref{intro:non_null_clt}) that holds for a general $\Sigma$ in the regime (\ref{def:hd_regime}). It is well-known that the sample distance covariance $\dcov_\ast^2(\bm{X},\bm{Y})$ admits a $U$-statistics representation (cf. Proposition \ref{prop:hoef_decomp}) with first-order degeneracy under the null. By classical theory in the fixed dimensional asymptotics (i.e., $p,q$ fixed with $n\to \infty$), a CLT holds for $\dcov_\ast^2(\bm{X},\bm{Y})$ under any fixed alternative $\Sigma \neq I_{p+q}$; while a non-Gaussian limit holds under the null $\Sigma = I_{p+q}$. In such fixed dimensional asymptotics, the Gaussian limit is due to the \emph{non-degeneracy} of $\dcov_\ast^2(\bm{X},\bm{Y})$ under the alternative, while the non-Gaussian limit is due to the \emph{degeneracy} of $\dcov_\ast^2(\bm{X},\bm{Y})$ under the null. Now, as high dimensionality also enforces a Gaussian approximation of $\dcov_\ast^2(\bm{X},\bm{Y})$ under the null with degeneracy  (cf. \cite{zhu2020distance,gao2021asymptotic}), one would naturally expect the finite-sample distribution of the centered $\dcov_\ast^2(\bm{X},\bm{Y})$ under a general $\Sigma$,  to be approximately a `mixture' of a centered Gaussian component due to non-degeneracy and another centered Gaussian component due to degeneracy, which is again Gaussian. The non-null CLT (\ref{intro:non_null_clt}) formalizes this intuition in the regime (\ref{def:hd_regime}).

To formally implement the above intuition, an important step in the proof of (\ref{intro:non_null_clt}) is to obtain precise mean and variance expansions for the sample distance covariance $\dcov_\ast^2(\bm{X},\bm{Y})$  in the regime (\ref{def:hd_regime}). In particular, we show in Theorem \ref{thm:dcov_expansion} that the mean can be expanded as
\begin{align}\label{intro:mean_expansion}
\dcov^2(X,Y) = \frac{\pnorm{\Sigma_{XY}}{F}^2}{2\sqrt{\tr(\Sigma_X)\tr(\Sigma_Y)}}\big(1+\mathfrak{o}(1)\big),
\end{align}
and in Theorem \ref{thm:variance_expansion} that the variance \emph{under the null} $\sigma^2_{\textrm{null}}$ can be expanded as
\begin{align}\label{intro:var_expansion}
\sigma^2_{\textrm{null}} = \frac{\pnorm{\Sigma_X}{F}^2\pnorm{\Sigma_Y}{F}^2}{2n(n-1)\tr(\Sigma_X)\tr(\Sigma_Y)}\big(1+\mathfrak{o}(1)\big).
\end{align}
Here $\Sigma_X$, $\Sigma_Y$, and $\Sigma_{XY}$ are sub-blocks of the covariance matrix $\Sigma = [\Sigma_X, \Sigma_{XY};\Sigma_{YX},\Sigma_Y]$, $\tr(\cdot)$ denotes the trace and $\pnorm{\cdot}{F}$ denotes the matrix Frobenious norm, and $\mathfrak{o}(1)$ is the standard small-o notation representing a vanishing term under the asymptotics (\ref{def:hd_regime}).  The variance formula for general $\Sigma$ (explicit form see Theorem \ref{thm:variance_expansion}) is rather complicated so is not presented here, but as explained above, it is expected to contain two parts that are contributed individually by the non-degenerate and the degenerate components of $\dcov_\ast^2(\bm{X},\bm{Y})$. Notably, the contributions of these two components to the Gaussian approximation in (\ref{intro:non_null_clt}) depend on the dimension-to-sample ratio in a fairly subtle way. In `very high dimensions', the non-null CLT is entirely driven by the degeneracy of $\dcov_\ast^2(\bm{X},\bm{Y})$ regardless of the degree of dependence between $X$ and $Y$. On the other hand, in `moderate high dimensions', dependence between $X$ and $Y$ plays a critical role in determining the contributions of the (non-)degeneracy in the non-null CLT. See the discussion after Theorem \ref{thm:non_null_clt} for details.

\subsection{Independent test via distance covariance: power asymptotics}

A major application of the non-null CLT derived in (\ref{intro:non_null_clt}) is a precise power formula for the following popular distance correlation test of independence between $X$ and $Y$, first considered in \cite{szekely2013distance}:
\begin{align}\label{def:test_t}
\Psi(\bm{X},\bm{Y};\alpha) \equiv \bm{1}\bigg( \biggabs{\frac{n\cdot\dcov_\ast^2(\bm{X},\bm{Y})}{\sqrt{2\dcov_\ast^2(\bm{X})\cdot \dcov_\ast^2(\bm{Y})}}}>z_{\alpha/2}\bigg).
\end{align}
The above independence test and the null part of (\ref{intro:non_null_clt}) is connected by the mean and variance expansions in (\ref{intro:mean_expansion}) and (\ref{intro:var_expansion}). In fact, as will be detailed in Section \ref{section:power_formula} below, the above test is asymptotically (in the regime (\ref{def:hd_regime})) equivalent to the (infeasible) $z$-test built from the null part of (\ref{intro:non_null_clt}). As a direct consequence, $\Psi(\bm{X},\bm{Y};\alpha)$ will also have an asymptotic size of $\alpha$. The null behavior of (a variant of) $\Psi(\bm{X},\bm{Y};\alpha)$ was first studied in \cite{szekely2013distance} in the regime of fixed $n$ and $\min\{p,q\}\rightarrow \infty$, and then in \cite{gao2021asymptotic} in a high dimensional regime slightly broader than ours (\ref{def:hd_regime}).

Having understood the behavior of the test (\ref{def:test_t}) under the null, we now turn to the more challenging question of its behavior under a generic alternative covariance $\Sigma$. Using again the non-null CLT in (\ref{intro:non_null_clt}), we show that the test statistic in (\ref{def:test_t}) is asymptotically normal with a mean shift (formal statement see Theorem \ref{thm:dcov_power}):
\begin{align}\label{intro:power}
\E_\Sigma \Psi(\bm{X},\bm{Y};\alpha) = \Prob(|\mathcal{N}(m_n(\Sigma), 1)| > z_{\alpha/2}) + \mathfrak{o}(1).
\end{align}
Here $\E_\Sigma$ denotes expectation under the data distribution with covariance $\Sigma$ so the left side is the power of the test $\Psi(\bm{X},\bm{Y};\alpha)$, and the mean shift parameter $m_n(\Sigma)$ can be either
\begin{align*}
\frac{n\cdot \dcor^2(X,Y)}{\sqrt{2}}\equiv \frac{n\dcov^2(X,Y)}{\sqrt{2\dcov^2(X)\dcov^2(Y)}} \quad \text{or} \quad \frac{n\pnorm{\Sigma_{XY}}{F}^2}{\sqrt{2}\pnorm{\Sigma_X}{F}\pnorm{\Sigma_Y}{F}}.
\end{align*}
Here the (rescaled) left side is known as the \textit{distance correlation} between $X$ and $Y$, and its asymptotic equivalence to the right side follows again from the mean expansion in (\ref{intro:mean_expansion}). It follows directly from (\ref{intro:power}) that if the spectra of $\Sigma_X$ and $\Sigma_Y$ are appropriately bounded, $\Psi(\bm{X},\bm{Y};\alpha)$ has asymptotically full power if and only if $n\cdot \dcor^2(X,Y)\rightarrow \infty$. A complementary minimax lower bound in Theorem \ref{thm:minimax_lower} shows that this separation rate cannot be further improved from an information theoretic point of view.

Power results for tests based on distance covariance (correlation) are scarce, particularly in high dimensions when both $n$ and $p$ and/or $q$ diverge to infinity. \cite{zhu2020distance} gives a relatively complete power characterization for a related studentized test in the regime of fixed $n$ and $p\wedge q\rightarrow \infty$, followed by some partial results in the regime $\min\{p,q\}/n^2\rightarrow \infty$. The same test $\Psi(\bm{X},\bm{Y};\alpha)$ as in (\ref{def:test_t}) is recently analyzed in \cite{gao2021asymptotic} in the slightly broader regime $\min\{n, \max\{p,q\}\}\rightarrow\infty$, but their analysis requires a much stronger condition $\sqrt{n}\cdot \dcor^2(X,Y)\rightarrow \infty$ for power consistency (see their Theorem 5 and subsequent discussion). In contrast, under the distributional Assumption \ref{assump:sepa}, (\ref{intro:power}) gives a much more precise characterization of the power behavior of the distance correlation test (\ref{def:test_t}), even when consistency does not hold.

\subsection{Kernel generalizations and power universality}

Following \cite{gretton2005kernel,gretton2007kernel}, the distance covariance in (\ref{intro:dcov}) can be naturally generalized to the so-called \textit{Hilbert-Schmidt covariance}:
\begin{align}\label{intro:dcov_kernel}
\dcov^2(X,Y;f,\gamma) &\equiv \E \Big[f_X\big(\pnorm{X_1-X_2}{}/\gamma_X\big)f_Y\big(\pnorm{Y_1-Y_2}{}/\gamma_Y\big)\Big]\nonumber\\
&\qquad -2 \E\Big[f_X\big(\pnorm{X_1-X_2}{}/\gamma_X\big)f_Y\big(\pnorm{Y_1-Y_3}{}/\gamma_X\big)\Big]\nonumber\\
&\qquad + \E\Big[f_X\big(\pnorm{X_1-X_2}{}/\gamma_X\big)\Big] \E \Big[f_Y\big(\pnorm{Y_1-Y_2}{}/\gamma_Y\big)\Big].
\end{align}
Here $f = (f_X,f_Y)$ are kernel functions, and $\gamma = (\gamma_X,\gamma_Y)\in \R^2_{\geq 0}$ are the bandwidth parameters for $X$ and $Y$ respectively. $\dcov^2(X;f,\gamma)$ and $\dcov^2(Y;f,\gamma)$ are defined analogously. The above definition reduces to the (rescaled) standard distance covariance in (\ref{intro:dcov}) when the kernel is taken to be the identity function. Let $\dcov_*^2(\bm{X},\bm{Y};f,\gamma)$, $\dcov^2(\bm{Y};f,\gamma)$, $\dcov_*^2(\bm{X};f,\gamma)$ be the sample kernel distance covariance. As a key step of the universality results presented below, we show that these quantities can be related to the standard sample distance covariance: under mild conditions on kernels $f=(f_X,f_Y)$ and bandwidths $\gamma=(\gamma_X,\gamma_Y)$, 
\begin{align}\label{intro:kernel_expansion}
\dcov_*^2(\bm{X},\bm{Y};f,\gamma) =  \varrho(\gamma)\dcov_*^2(\bm{X},\bm{Y})\big(1+\mathfrak{o}_{\mathbf{P}}(1)\big).
\end{align}
Here $\mathfrak{o}_{\mathbf{P}}(1)$ is again under the asymptotics (\ref{def:hd_regime}) and $\varrho(\gamma)$, whose exact definition is given in (\ref{def:varrho}) below, depends on the kernels $f$, bandwidths $\gamma$, and population covariance $\Sigma$. Similar expansions hold for the marginal quantities $\dcov^2(X;f,\gamma)$ and $\dcov^2(Y;f,\gamma)$. 

Relation (\ref{intro:kernel_expansion}) implies that as long as the scaling factor $\varrho(\gamma)$ stabilizes away from zero and infinity, $\dcov_*^2(\bm{X},\bm{Y};f,\gamma)$ (up to a scaling) shares the same limiting distribution as $\dcov_*^2(\bm{X},\bm{Y})$, which has been studied in detail in the previous subsection. In particular, both the non-null CLT in (\ref{intro:non_null_clt}) and the power expansion in (\ref{intro:power}) hold for the kernelized distance covariance as well, upon changing the test (\ref{def:test_t}) to its kernelized version in the latter result; see Theorems \ref{thm:non_null_clt_kernel} and \ref{thm:dcov_power} for formal statements. In other words, the power behavior in (\ref{intro:power}) exhibits \textit{universality} with respect to both the choice of kernels and bandwidth parameters; see Section \ref{sec:simulation} for numerical evidence.

\subsection{Organization}

The rest of the paper is organized as follows. Section \ref{section:nonnull_clt} starts with some background knowledge of the distance covariance metric and then states the main non-null CLTs for both the canonical sample distance covariance and its kernel generalizations. Section \ref{section:power_formula} studies the power behavior of a class of generalized kernel distance correlation tests and discusses their minimax optimality. Some numerical simulations for the main results in the preceding two sections are presented in Section \ref{sec:simulation}, with some concluding remarks in Section \ref{sec:discussion}. Section \ref{section:proof_outline} is devoted to a proof outline for the non-null CLTs. Details of the important steps are then presented in Sections \ref{section:preliminaries} - \ref{section:clt_truncated_dcov}, followed by the main proof in Section \ref{sec:proof_clts}. The rest of the technical proofs are deferred to the supplement.

\subsection{Notation}\label{section:notation}

For any positive integer $n$, let $[n]$ denote the set $\{1,\ldots,n\}$. For $a,b \in \R$, $a\vee b\equiv \max\{a,b\}$ and $a\wedge b\equiv\min\{a,b\}$. For $a \in \R$, let $a_+\equiv a\vee 0$ and $a_- \equiv (-a)\vee 0$. For $x \in \R^n$, let $\pnorm{x}{p}=\pnorm{x}{\ell_p(\R^n)}$ denote its $p$-norm $(0\leq p\leq \infty)$ with $\pnorm{x}{2}$ abbreviated as $\pnorm{x}{}$. Let $B_p(r;x)\equiv \{z \in \R^p: \pnorm{z-x}{}\leq r\}$ be the unit $\ell_2$ ball in $\R^{p}$. By $\bm{1}_n$ we denote the vector of all ones in $\R^n$. For a matrix $M \in \R^{n\times n}$, let $\pnorm{M}{\op}$ and $\pnorm{M}{F}$ denote the spectral and Frobenius norms of $M$ respectively. For two matrices $M,N$ of the same size, let $M\circ N$ denote their Hadamard product. We use $\{e_j\}$ to denote the canonical basis, whose dimension should be self-clear from the context.

We use $C_{x}$ to denote a generic constant that depends only on $x$, whose numeric value may change from line to line unless otherwise specified. Notations $a\lesssim_{x} b$ and $a\gtrsim_x b$ mean $a\leq C_x b$ and $a\geq C_x b$ respectively, and $a\asymp_x b$ means $a\lesssim_{x} b$ and $a\gtrsim_x b$. The symbol $a\lesssim b$ means $a\leq Cb$ for some absolute constant $C$. For two nonnegative sequences $\{a_n\}$ and $\{b_n\}$, we write $a_n\ll b_n$ (respectively~$a_n\gg b_n$) if $\lim_{n\rightarrow\infty} (a_n/b_n) = 0$ (respectively~$\lim_{n\rightarrow\infty} (a_n/b_n) = \infty$). We write $a_n\sim b_n$ if $\lim_{n\rightarrow\infty} (a_n/b_n) = 1$. We follow the convention that $0/0 = 0$. 

Let $\varphi,\Phi$ be the density and the cumulative distribution function of a standard normal random variable. For any $\alpha \in (0,1)$, let $z_\alpha$ be the normal quantile defined by $\Prob(\mathcal{N}(0,1)> z_\alpha) = \alpha$. For two random variables $X,Y$ on $\R$, we use 
$d_{\mathrm{Kol}}(X,Y)$  to denote their Kolmogorov distance defined by
\begin{align}\label{def:intro_distance}
d_{\mathrm{Kol}}(X,Y)&\equiv \sup_{t \in \R}\bigabs{\Prob\big(X \leq t\big)-\Prob\big(Y \leq t\big)}.
\end{align}
Here $\mathcal{B}(\R)$ denotes the Borel $\sigma$-algebra of $\R$.

\section{Non-null central limit theorems}\label{section:nonnull_clt}

\subsection{Distance covariance: a review}

We start with a review for the distance covariance (correlation). For two random vectors $X \in \R^p$ and $Y \in \R^q$, the squared distance covariance \cite{szekely2007measuring} is originally defined by
\begin{align*}
\dcov^2(X,Y)\equiv \int_{\R^{p+q}} \frac{\abs{\varphi_{(X,Y)}(t,s)-\varphi_X(t)\varphi_Y(s)}^2}{c_pc_q \pnorm{t}{}^{p+1}\pnorm{s}{}^{q+1}}\, \d{t}\d{s}.
\end{align*}
Here $c_p = \pi^{(p+1)/2}/\Gamma\big((p+1)/2\big)$ with $\Gamma(\cdot)$ denoting the gamma function, and $\varphi(\cdot)$ is the characteristic function. The marginal quantities $\dcov^2(X,X)$ and $\dcov^2(Y,Y)$ are defined analogously, and we will shorthand them as $\dcov^2(X)$ and $\dcov^2(Y)$ in the sequel. It is well-known that $X$ and $Y$ are independent if and only if $\dcov(X,Y)=0$, hence $\dcov^2(X,Y)$ captures any kind of dependence between $X$ and $Y$ including non-linear and non-monotone ones. Analogous to the standard notion of covariance and correlation, the squared distance correlation is defined by
\begin{align*}
\dcor^2(X,Y) \equiv \frac{\dcov^2(X,Y)}{\sqrt{\dcov^2(X)\dcov^2(Y)}},
\end{align*}
with convention $\dcor^2(X,Y) \equiv 0$ if $\dcov^2(X)\dcov^2(Y) = 0$.

The distance covariance can be equivalently characterized in a number of different ways. 
In addition to (\ref{intro:dcov}), another useful representation that will be particularly relevant for our purpose is through the double centered distances:
\begin{align}\label{def:UV}
U(x_1,x_2)&\equiv\pnorm{x_1-x_2}{}-\E\pnorm{x_1-X}{}-\E\pnorm{X-x_2}{}+ \E \pnorm{X-X'}{},\nonumber\\
V(y_1,y_2)&\equiv\pnorm{y_1-y_2}{}-\E\pnorm{y_1-Y}{}-\E\pnorm{Y-y_2}{}+ \E \pnorm{Y-Y'}{}.
\end{align}
Then by (\ref{intro:dcov}) or \cite[pp. 3287]{lyons2013distance}, we have the identity
\begin{align}\label{def:dcov_population}
\dcov^2(X,Y) = \E U(X_1,X_2) V(Y_1,Y_2).
\end{align}
Now we define the sample distance covariance.
For $n$ copies of i.i.d. observations $(X_1,Y_1),\ldots,(X_n,Y_n)$, define two symmetric matrices $A,B\in\R^{n\times n}$ entrywise by
\begin{align}\label{def:ab}
A_{k\ell}\equiv \pnorm{X_k-X_\ell}{},\quad B_{kl}\equiv \pnorm{Y_k-Y_\ell}{},\quad 1\leq k,\ell \leq n.
\end{align}
Following \cite{szekely2014partial}, the bias-corrected sample distance covariance is defined by
\begin{align}\label{def:dcov_empirical}
\dcov_\ast^2(\bm{X},\bm{Y})& = \frac{1}{n(n-3)}\sum_{k\neq \ell} A_{k\ell}^\ast B_{k\ell}^\ast,
\end{align}
where $A^\ast, B^\ast\in\R^{n\times n}$ are $U$-centered versions of $A,B$ defined by
\begin{align}\label{def:AB}
A^\ast& = A- \frac{\bm{1}\bm{1}^\top A+A\bm{1}\bm{1}^\top}{n-2}+\frac{\bm{1}\bm{1}^\top A \bm{1}\bm{1}^\top}{(n-1)(n-2)},\nonumber\\
B^\ast &= B- \frac{\bm{1}\bm{1}^\top B+B\bm{1}\bm{1}^\top}{n-2}+\frac{\bm{1}\bm{1}^\top B \bm{1}\bm{1}^\top}{(n-1)(n-2)}.
\end{align}
Marginal quantities $\dcov_\ast^2(\bm{X},\bm{X})$ and $\dcov_\ast^2(\bm{Y},\bm{Y})$ are defined analogously, and will be shorthanded as $\dcov_\ast^2(\bm{X})$ and $\dcov_\ast^2(\bm{Y})$ in the sequel. 

The definition of the sample distance covariance $\dcov_\ast^2(\bm{X},\bm{Y})$ in (\ref{def:dcov_empirical}) above looks a bit mysterious at first sight, but the following representation via a 4-th order $U$-statistic makes it clear why the definition is indeed natural. Recall the definitions of $U,V$ in (\ref{def:UV}).

\begin{proposition}[\cite{yao2018testing,gao2021asymptotic}]\label{prop:hoef_decomp}
	The following holds:
	\begin{align*}
	\dcov_\ast^2(\bm{X},\bm{Y}) & = \frac{1}{\binom{n}{4}}\sum_{i_1<\cdots<i_4} k\big(Z_{i_1},Z_{i_2},Z_{i_3},Z_{i_4}\big),
	\end{align*}
	where the symmetric kernel can be either 
	\begin{align}\label{eqn:kernel_rep1}
	&k(Z_1,Z_2,Z_3,Z_4) =\frac{1}{4!}\sum_{(i_1,\ldots,i_4)\in \sigma(1,2,3,4)} \bigg[\pnorm{X_{i_1}-X_{i_2}}{}\pnorm{Y_{i_1}-Y_{i_2}}{}\nonumber\\
	&\qquad\qquad + \pnorm{X_{i_1}-X_{i_2}}{}\pnorm{Y_{i_3}-Y_{i_4}}{}-2\pnorm{X_{i_1}-X_{i_2}}{}\pnorm{Y_{i_1}-Y_{i_3}}{}\bigg],
	\end{align}
	or
	\begin{align}\label{eqn:kernel_rep2}
	&k(Z_1,Z_2,Z_3,Z_4) =\frac{1}{4!}\sum_{(i_1,\ldots,i_4)\in \sigma(1,2,3,4)} \bigg[ U(X_{i_1},X_{i_2})V(Y_{i_1},Y_{i_2})\nonumber\\
	&\qquad\qquad+U(X_{i_1},X_{i_2})V(Y_{i_3},Y_{i_4})- 2 U(X_{i_1},X_{i_2})V(Y_{i_1},Y_{i_3})\bigg].
	\end{align}
	Here $Z_i=(X_i,Y_i)$ for $i\in \N$, and $\sigma(1,2,3,4)$ denotes the set of all ordered permutation of $\{1,2,3,4\}$.
\end{proposition}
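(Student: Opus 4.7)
The plan is to establish both U-statistic representations by direct algebraic expansion, starting from the definition (\ref{def:dcov_empirical}). For representation (\ref{eqn:kernel_rep1}), I would expand each product $A_{k\ell}^\ast B_{k\ell}^\ast$ using the definitions (\ref{def:AB}) of the $U$-centered matrices. This produces sixteen cross-products in which each factor is either an entry of $A$ (or $B$), a row/column sum divided by $n-2$, or the full double sum divided by $(n-1)(n-2)$. Carrying out the outer sum $\sum_{k\neq \ell}$ converts each cross-product into an indexed sum over ordered tuples of distinct indices of size $2$, $3$, or $4$. Regrouping by the equality pattern of the eight ``slot'' indices appearing in the product of distances yields three nontrivial configurations of index pairs -- paired $(\{1,2\},\{1,2\})$, disjoint $(\{1,2\},\{3,4\})$, and single-shared $(\{1,2\},\{1,3\})$ -- whose coefficients in the final combination collapse to $+1$, $+1$, and $-2$ respectively. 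Symmetrizing each surviving indexed sum over $\sigma(1,2,3,4)$ and matching combinatorial factors between $1/(n(n-3))$ and $1/\binom{n}{4}$ delivers kernel (\ref{eqn:kernel_rep1}).

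For representation (\ref{eqn:kernel_rep2}), I would show equivalence of the two U-statistics by substituting the definition (\ref{def:UV}) of $U$ and $V$ into the three $UV$-products forming $k_2$. Each product expands into sixteen monomials; only the leading one, $\pnorm{X_{\cdot}-X_{\cdot}}{}\pnorm{Y_{\cdot}-Y_{\cdot}}{}$, is ``purely empirical'' and matches the corresponding term in (\ref{eqn:kernel_rep1}), while the remaining fifteen monomials each contain at least one population factor -- $g_X(X_{\cdot})\equiv\E\pnorm{X_{\cdot}-X}{}$, $g_Y(Y_{\cdot})$, $\mu_X\equiv\E\pnorm{X-X'}{}$, or $\mu_Y$. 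The crucial observation is that the coefficient pattern $(+1,+1,-2)$ appearing in (\ref{eqn:kernel_rep2}) is precisely the linear combination that annihilates every such extraneous monomial after symmetrization over $\sigma(1,2,3,4)$. Some monomials (such as $\mu_X\mu_Y$ or $\pnorm{X_{i_1}-X_{i_2}}{}\mu_Y$) cancel pointwise by $1+1-2=0$; others (such as $\mu_X g_Y(Y_{i_j})$ or $g_X(X_{i_j})g_Y(Y_{i_k})$) cancel only after summing over $\sigma$, using that each index appears symmetrically across the four positions and that pairwise distance sums over a fixed orbit of $S_4$ collapse identically.

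The main obstacle is purely combinatorial bookkeeping in both steps -- in particular, verifying that the fifteen extraneous monomials in the $k_2$-expansion all vanish requires a careful classification of monomials by the equality pattern of their indices, combined with a count of how many permutations in $\sigma(1,2,3,4)$ produce each target monomial. A more structural alternative, which I would use as a sanity check, is to note that $A^\ast$ is precisely the sample analog of $U$, with the normalizations in (\ref{def:AB}) chosen so that sample averages empirically mimic population expectations at the ``$n-2$'' scale that correctly handles diagonal-excluded sums. Both kernels (\ref{eqn:kernel_rep1}) and (\ref{eqn:kernel_rep2}) are then minimal symmetric kernels yielding the same unbiased U-statistic estimator of $\dcov^2(X,Y)$, and the combinatorial identities invoked above are exactly those that reconcile the sample and population forms of double-centering.
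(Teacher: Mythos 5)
Your proposal is correct, and its first half is essentially the paper's own argument: the paper likewise expands $\sum_{k\neq\ell}A^*_{k\ell}B^*_{k\ell}$ (organizing the sixteen cross-products into the closed form $\tr(AB)+\bm{1}^\top A\bm{1}\,\bm{1}^\top B\bm{1}/((n-1)(n-2))-2\bm{1}^\top AB\bm{1}/(n-2)$ first), then converts the constrained index sums into sums over fully distinct quadruples via exactly the two identities you describe, which is where the $(+1,+1,-2)$ coefficients emerge. For the passage to the $U,V$ kernel your route differs in organization. You substitute the definitions of $U,V$ into (\ref{eqn:kernel_rep2}) and cancel the extraneous monomial classes one by one, some pointwise by $1+1-2=0$ and some only after averaging over $\sigma(1,2,3,4)$; this does go through (for instance the $\pnorm{X_{i_1}-X_{i_2}}{}\,\E\pnorm{Y_{i_j}-Y}{}$ class cancels because symmetrization identifies the $i_1$- and $i_2$-slots and, separately, the $i_3$- and $i_4$-slots), but it is the most bookkeeping-heavy part of your plan and the place where an error would most easily hide. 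The paper avoids it with a structural device: after symmetrizing, $k$ equals $\tfrac14\,\E_{\bm{i}\in\sigma_4}$ of the product of the alternating sum $\pnorm{X_{i_1}-X_{i_2}}{}+\pnorm{X_{i_3}-X_{i_4}}{}-\pnorm{X_{i_1}-X_{i_3}}{}-\pnorm{X_{i_2}-X_{i_4}}{}$ with its $Y$-analogue, and each such alternating sum is unchanged when raw distances are replaced by their double-centered versions $U,V$ (every centering term enters once with each sign). That single observation replaces all of your monomial cancellations at once. Both arguments are valid; yours is more mechanical, the paper's more economical.
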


It is a direct consequence of the above result that $\dcov_\ast^2(\bm{X},\bm{Y})$ is an unbiased estimator for $\dcov^2(X,Y)$. The fact that $\dcov_\ast^2(\bm{X},\bm{Y})$ can be represented as a $U$-statistic is first validated in \cite[Section 3.2]{huo2016fast}. The kernel representation (\ref{eqn:kernel_rep1}) (proved in e.g. \cite[Lemma 2.1]{yao2018testing}) is quite natural in that it gives an unbiased estimate for the population in the form (\ref{intro:dcov}). The double centered version (\ref{eqn:kernel_rep2}), which turns out to be more convenient and useful for the purpose of theoretical developments in this paper, is essentially proved in \cite[Lemma 5]{gao2021asymptotic} in a different form. For the convenience of the reader, we provide a self-contained proof in Appendix \ref{appendix:kernel_rep}.

\subsection{General non-null CLTs I: distance covariance}\label{subsec:nonnull_clt_I}

Throughout the paper, we work with the following distributional family of $(X,Y)$ with a \emph{separable covariance} structure:
\begin{assumption}\label{assump:sepa}
	Suppose 
	\begin{align}\label{def:main_dist}
	(X^\top, Y^\top)^\top \stackrel{d}{=} \Sigma^{1/2} Z.
	\end{align} 
	Here $\Sigma$, in its block form $[\Sigma_X, \Sigma_{XY};\Sigma_{YX},\Sigma_Y]$, is a covariance matrix in $\R^{(p+q)\times (p+q)}$, $Z \in \R^{p+q}$ has i.i.d. components with mean $0$, variance $1$ and satisfies the following:
	\begin{enumerate}
		\item[(A1)] $Z_1$ is symmetric around $0$ with excess kurtosis $\kappa \equiv \E Z_1^4 - 3$.
		\item[(A2)] $Z_1$ satisfies a Poincar\'{e} inequality: for some $c_*>0$, we have $
		\var f(Z_1) \leq c_\ast \E (f'(Z_1))^2$
		for any absolutely continuous $f$ such that $\E (f'(Z_1))^2<\infty$.
		\item[(A3)] $Z_1$ has a Lebesgue density $f_Z(\cdot)$ with $\sup_{x\in(-\epsilon, \epsilon)} f_Z(x) \leq C_\epsilon$ for some small $\epsilon > 0$ and positive $C_\epsilon$. For future purpose, let $\epsilon_0 \equiv \sup\big\{\epsilon>0: 2\pi \epsilon\cdot \sup_{x\in(-\epsilon,\epsilon)} f_Z(x)\leq 1\big\}$.
	\end{enumerate}
\end{assumption}
The distribution class of $(X,Y)$ with separable covariance is quite common in the literature, in particular in the study of non-null behavior of statistics related to large random matrices; the readers are referred to the recent papers \cite{tao2012random,knowles2017anisotropic, ding2018necessary,ding2021spiked} and monographs \cite{bai2010spectral,erdos2017dynamical} for more backgrounds and results under separable covariance in this direction.

The major assumption on the distribution of $Z_1$ is the requirement of a Poincar\'e inequality in condition (A2). It is well-known that the existence of a Poincar\'e inequality as in (A2) is equivalent to exponential mixing of a Markov semigroup with stationary distribution $Z_1$ and Dirichlet form $\mathcal{E}(f,g)=\E f'(Z_1)g'(Z_1)$, cf. \cite{bakry2014analysis}. An important example fulfilling Assumption \ref{assump:sepa} is the family of symmetric, strongly log-concave distributions, which are known to satisfy a Poincar\'e inequality (cf. \cite{bobkov1999isoperimetric,saumard2014logconcavity}) and contain the Gaussian distribution as a special case. It is easy to further weaken condition (A2) to a \emph{weighted Poincar\'e inequality} as in \cite{bobkov2009weighted}; we shall not pursue these formal refinements here. Condition (A3) above is purely technical, and can be further weakened at the cost of a more involved mathematical expression. We choose to work under this condition for clean presentation.

We mention two important implications of Assumption \ref{assump:sepa}: (i) since $Z_1$ has a Lebesgue density, $\kappa \geq c_0 - 2$ for some $c_0 > 0$ depending only on the distribution of $Z_1$; (ii) by \cite[Theorem 4.1]{bobkov2009weighted}, $Z_1$ has moments of any order with $(\E|Z_1|^p)^{1/p}\leq p\cdot\sqrt{c_\ast/2}$ for any $p\geq 1$.

Some notation that will be used throughout the paper:
\begin{align}
\tau_X^2 \equiv \E \pnorm{X-X'}{}^2 = 2\tr(\Sigma_X),\quad \tau_Y^2\equiv \E\pnorm{Y-Y'}{}^2 = 2\tr(\Sigma_Y).
\end{align}
We also reserve $\kappa$ for the excess kurtosis of $Z_1$:
\begin{align}\label{def:kappa}
\kappa \equiv \E Z_1^4 - 3.
\end{align}
Since $Z_1$ has a Lebesgue density, we have $\kappa \geq c_0 - 2$ for some $c_0 > 0$ that only depends on the distribution of $Z_1$.

Let $I_{[ij]} \equiv (\bm{1}_{(i',j')=(i,j)})_{1\leq i',j'\leq 2}$ be the indicator of the block matrix, and $\Sigma_{[ij]}\equiv (\Sigma_{(i'j')}\bm{1}_{(i',j')=(i,j)})_{1\leq i',j'\leq 2}$. Let
\begin{align}\label{def:GH}
G_{[ij]} \equiv \Sigma^{1/2}\Sigma_{[ij]}\Sigma^{1/2},\quad H_{[ij]} \equiv \Sigma^{1/2}I_{[ij]} \Sigma^{1/2}.
\end{align}
Then $G_{[ij]}^\top = G_{[ji]}$ and $H_{[ij]}^\top = H_{[ji]}$. Let $\bar{G}_{[1,2]}\equiv (G_{[12]}+G_{[21]})/2$.

The following non-null central limit theorem is the first main result of this paper; its proof can be found in Section \ref{sec:proof_clts}.
\begin{theorem}\label{thm:non_null_clt}
	Suppose that Assumption \ref{assump:sepa} holds, and that the spectrum of $\Sigma$ is contained in $[1/M,M]$ for some $M>1$. Then there exists some $C=C(M,Z_1)>0$ such that
	\begin{align*}
	d_{\mathrm{Kol}}\bigg(\frac{ \dcov_\ast^2(\bm{X},\bm{Y})-\dcov^2(X,Y)}{\sigma_n(X,Y)},\mathcal{N}(0,1)\bigg)\leq \frac{C}{(n\wedge p\wedge q)^{1/6}}.
	\end{align*}
	Here $\sigma_n(X,Y)$ can be either $\var^{1/2}(\dcov_\ast^2(\bm{X},\bm{Y}))$ or $\bar{\sigma}_n(X,Y)$, where
	\begin{align*}
	\bar{\sigma}_n^2(X,Y) &\equiv 	\bar{\sigma}_{n,1}^2(X,Y) + \bar{\sigma}_{n,2}^2(X,Y),
	\end{align*}
	with 
	\begin{align*}
	\bar{\sigma}_{n,1}^2(X,Y) &\equiv \frac{4}{n\tau_X^2\tau_Y^2}\bigg[ \pnorm{\Sigma_{XY}\Sigma_{YX}}{F}^2 + \tr(\Sigma_{XY}\Sigma_{Y}\Sigma_{YX}\Sigma_X) +\frac{\pnorm{\Sigma_{XY}}{F}^4\pnorm{\Sigma_X}{F}^2}{2\tau_X^4}\\
	&\qquad\qquad + \frac{\pnorm{\Sigma_{XY}}{F}^4\pnorm{\Sigma_Y}{F}^2}{2\tau_Y^4} -\frac{2\pnorm{\Sigma_{XY}}{F}^2}{\tau_X^2}\tr(\Sigma_{XY}\Sigma_{YX}\Sigma_X)\\
	&\qquad\qquad-\frac{2\pnorm{\Sigma_{XY}}{F}^2}{\tau_Y^2}\tr(\Sigma_{YX}\Sigma_{XY}\Sigma_Y) + \frac{\pnorm{\Sigma_{XY}}{F}^6}{\tau_X^2\tau_Y^2} +\kappa\cdot\Big(\tr(G_{[12]} \circ G_{[12]}) \\
	&\qquad  +  \frac{\pnorm{\Sigma_{XY}}{F}^4}{4\tau_X^4}\tr(H_{[11]}\circ H_{[11]}) + \frac{\pnorm{\Sigma_{XY}}{F}^4}{4\tau_Y^4}\tr(H_{[22]}\circ H_{[22]})\\
	&\qquad -\frac{\pnorm{\Sigma_{XY}}{F}^2}{2\tau_X^2}\tr(H_{[11]}\circ G_{[12]})-\frac{\pnorm{\Sigma_{XY}}{F}^2}{2\tau_Y^2}\tr(H_{[22]}\circ G_{[12]})\\
	&\qquad  + \frac{\pnorm{\Sigma_{XY}}{F}^4}{4\tau_X^2\tau_Y^2}\tr(H_{[11]}\circ H_{[22]})\Big)\bigg],\\
	\bar{\sigma}_{n,2}^2(X,Y)&\equiv \frac{2}{n(n-1) \tau_X^2\tau_Y^2}\big(\pnorm{\Sigma_X}{F}^2\pnorm{\Sigma_Y}{F}^2 +  \pnorm{\Sigma_{XY}}{F}^4\big).
	\end{align*}
	Here $\circ$ is the Hadamard product and $H_{[\cdot\cdot]}$ and $G_{[\cdot\cdot]}$ are given in (\ref{def:GH}) above.
\end{theorem}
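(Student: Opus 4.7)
\textbf{Proof plan for Theorem \ref{thm:non_null_clt}.}

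The starting point is the Hoeffding decomposition of the fourth-order $U$-statistic representation of $\dcov_\ast^2(\bm{X},\bm{Y})$ furnished by Proposition \ref{prop:hoef_decomp}, using the double-centered kernel form (\ref{eqn:kernel_rep2}) which exposes the degeneracy structure explicitly. Writing
\[
\dcov_\ast^2(\bm{X},\bm{Y}) - \dcov^2(X,Y) = \sum_{j=1}^{4} \binom{4}{j}\, \mathbb{U}_n^{(j)},
\]
where $\mathbb{U}_n^{(j)}$ is the completely degenerate $U$-statistic of order $j$ generated by $g_j(z_1,\ldots,z_j)=\mathbb{E}[k(z_1,\ldots,z_j,Z_{j+1},\ldots,Z_4)\mid z_1,\ldots,z_j] - (\text{lower order projections})$. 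The intuition from the paper's introduction suggests $\mathbb{U}_n^{(1)}$ (non-degenerate) has variance of order $\bar{\sigma}_{n,1}^2$ and $\mathbb{U}_n^{(2)}$ (the leading degenerate term) has variance of order $\bar{\sigma}_{n,2}^2$, while $\mathbb{U}_n^{(3)}, \mathbb{U}_n^{(4)}$ are negligible under the spectrum condition.

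Next, I would handle the first two components separately. For the first-order Hájek projection $\mathbb{U}_n^{(1)} = n^{-1}\sum_{i=1}^n h_1(Z_i)$ with $h_1(z)= \mathbb{E}[k(z,Z_2,Z_3,Z_4)] - \dcov^2(X,Y)$, a classical Berry--Esseen bound gives Gaussian approximation with variance matching $\bar{\sigma}_{n,1}^2$; the identification of the variance reduces to computing expectations of $U(X_1,X_2)V(Y_1,Y_2)$-type quantities against a fixed $(X_1,Y_1)$, which in the Gaussian setting can be carried out via the linearization $\pnorm{X-X'}{}\approx \tau_X + \tau_X^{-1}(\pnorm{X-X'}{}^2-\tau_X^2)/2$ valid under the spectrum assumption (here the proof would rely on the mean/variance expansions of Theorems \ref{thm:dcov_expansion}, \ref{thm:variance_expansion}, already stated upstream). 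This linearization also makes precise how the various $\Sigma_{XY}, \Sigma_X, \Sigma_Y$ terms enter $\bar{\sigma}_{n,1}^2$.

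The main obstacle is the second component $\mathbb{U}_n^{(2)}$, a completely degenerate $U$-statistic of order $2$ whose kernel is high-dimensional. This requires a Gaussian approximation in a genuinely degenerate regime. My plan is to combine the same high-dimensional linearization of the double-centered distances $U(X_i,X_j), V(Y_i,Y_j)$ (producing approximately bilinear forms $\Sigma_X^{1/2}$-style inner products divided by $\tau_X$) with either a martingale CLT applied to the Hoeffding sum $\sum_{i<j} g_2(Z_i,Z_j)$ ordered lexicographically, or a second-order Poincaré / Stein-type argument (hinted at by the paper's keywords) for smooth functionals of Gaussians; the Gaussian assumption is essential here. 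The variance of $\mathbb{U}_n^{(2)}$ should match the $\pnorm{\Sigma_X}{F}^2\pnorm{\Sigma_Y}{F}^2+\pnorm{\Sigma_{XY}}{F}^4$ combination in $\bar{\sigma}_{n,2}^2$ (the two summands reflecting $\tr(\Sigma_X^2)\tr(\Sigma_Y^2)$-type contributions from the degenerate kernel applied to centered distances).

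Finally, I would bound $\var(\mathbb{U}_n^{(j)})$ for $j=3,4$ by $O_M(n^{-3})$ using the same linearization-plus-Gaussian-moment estimates, establish asymptotic orthogonality/independence of $\mathbb{U}_n^{(1)}$ and $\mathbb{U}_n^{(2)}$ (automatic in $L^2$ by the Hoeffding decomposition), and conclude that the sum is jointly Gaussian with variance $\bar{\sigma}_{n,1}^2+\bar{\sigma}_{n,2}^2$; the interchangeability of $\bar{\sigma}_n$ and $\var^{1/2}(\dcov_\ast^2(\bm{X},\bm{Y}))$ follows from the variance expansion (Theorem \ref{thm:variance_expansion}). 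The Kolmogorov rate $(n\wedge p\wedge q)^{-1/6}$ would be tracked through (i) the Berry--Esseen rate $n^{-1/2}$ for $\mathbb{U}_n^{(1)}$, (ii) the Stein/martingale rate for $\mathbb{U}_n^{(2)}$ which loses a power of $p\wedge q$ through the linearization remainder, and (iii) a smoothing/Slepian-type argument to combine the two Gaussian approximations on possibly different scales $\bar{\sigma}_{n,1},\bar{\sigma}_{n,2}$ into a single Kolmogorov bound.
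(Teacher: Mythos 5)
Your overall architecture (Hoeffding decomposition of the fourth-order $U$-statistic, linearization of the double-centered distances, separate variance orders for the non-degenerate and degenerate parts, negligibility of orders $3,4$, and a Stein/Poincar\'e-type argument for the degenerate piece) matches the paper's strategy, but there are two concrete gaps. First, the naive linearization $U(x_1,x_2)\approx -x_1^\top x_2/\tau_X$, $V(y_1,y_2)\approx -y_1^\top y_2/\tau_Y$ is \emph{not} sufficient for the first-order kernel: the interaction between the linear term and the quadratic remainder of the square-root expansion contributes at the same order as the leading terms, and the paper must replace $g_{1,\ast}(x_1,y_1)=(x_1^\top\Sigma_{XY}y_1-\pnorm{\Sigma_{XY}}{F}^2)/(2\tau_X\tau_Y)$ by a corrected kernel containing the extra terms $-\frac{\pnorm{\Sigma_{XY}}{F}^2}{2\tau_X^2}(\pnorm{x_1}{}^2-\tr(\Sigma_X))$ and $-\frac{\pnorm{\Sigma_{XY}}{F}^2}{2\tau_Y^2}(\pnorm{y_1}{}^2-\tr(\Sigma_Y))$. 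These corrections are precisely what produce the negative-sign terms and the $\pnorm{\Sigma_{XY}}{F}^4,\pnorm{\Sigma_{XY}}{F}^6$ terms in $\bar{\sigma}_{n,1}^2$; your plan as written would only recover $\pnorm{\Sigma_{XY}\Sigma_{YX}}{F}^2+\tr(\Sigma_{XY}\Sigma_Y\Sigma_{YX}\Sigma_X)$. Relatedly, the residual of the linearization must be bounded with a multiplicative factor $\pnorm{\Sigma_{XY}}{F}^2$ (so that it vanishes at and near the null); a remainder that merely ``loses a power of $p\wedge q$'' is not negligible relative to $\bar{\sigma}_n^2$ uniformly over $\Sigma$ in the full regime $n\wedge p\wedge q\to\infty$ (e.g.\ when $p,q$ grow much more slowly than $n$). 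Obtaining these sharp, $\Sigma_{XY}$-scaled residual estimates is the bulk of the paper's technical work.

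Second, your final step --- proving marginal CLTs for $\mathbb{U}_n^{(1)}$ and $\mathbb{U}_n^{(2)}$ separately and then ``combining the two Gaussian approximations'' using $L^2$-orthogonality plus a Slepian/smoothing argument --- does not go through as stated. Orthogonality of Hoeffding projections does not give asymptotic independence, and the sum of two uncorrelated, marginally asymptotically normal but dependent statistics need not be asymptotically normal; this matters precisely in the critical regime $\pnorm{\Sigma_{XY}}{F}^2\asymp pq/n$ where both components contribute non-vanishing fractions of the variance. The paper sidesteps this by forming a single truncated statistic $\bar{T}_n=\sum_{c=0}^2\binom{4}{c}U_n(\bar g_c)$ from the corrected first- and second-order kernels and applying Chatterjee's discrete second-order Poincar\'e inequality to $\bar{T}_n$ as one functional of the independent data, which yields joint normality of the sum directly with an explicit Kolmogorov rate; the passage from $\bar T_n$ to $\dcov_\ast^2(\bm{X},\bm{Y})$ and between the two normalizations is then a variance-ratio argument. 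You would need either to adopt this one-shot approach or to supply a genuine joint CLT for the pair $(\mathbb{U}_n^{(1)},\mathbb{U}_n^{(2)})$.
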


\begin{remark}[Variance formula]
	The variance formula $\var\big(\dcov_\ast^2(\bm{X},\bm{Y})\big) = \big(1+\mathfrak{o}(1)\big)\bar{\sigma}^2_n(X,Y)$ is valid in the high dimensional limit $n\wedge p\wedge q\rightarrow\infty$; see Section \ref{section:hoeffding_decomp} ahead for details. The same is true for the variance formula in Theorem \ref{thm:non_null_clt_X} below. 
\end{remark}

\begin{remark}[Convergence rate]
The convergence rate $(n \wedge p \wedge q)^{-1/6}$ results from the application of Chatterjee's second-order Poincar\'e inequality \cite{chatterjee2008new} (see Section \ref{section:clt_truncated_dcov} for details) and is likely to be sub-optimal. Such rate, however, is common in the literature. For example,  \cite[Proposition 3]{gao2021asymptotic} assumed $(X,Y)$ to be jointly Gaussian and employed a martingale CLT theorem to prove the normal limit under the null, but the convergence rate is also only $(pq)^{-1/5} \vee n^{-1/5}$. It is an interesting but challenging task to pin down the optimal Gaussian approximation rate in terms of $(n,p,q)$.
\end{remark}

As mentioned in the introduction, most of the CLTs in the literature so far are derived under the null case that $X$ and $Y$ are independent, and to the best of our knowledge, Theorem \ref{thm:non_null_clt} is the first non-null CLT that applies to a general class of alternatives. Due to the challenging nature of non-null analysis, the proof of the above theorem requires several technically involved and intertwined steps, so an outline will be provided in Section \ref{section:proof_outline} that discusses the relevance of the groundwork laid in Sections \ref{section:preliminaries}-\ref{section:clt_truncated_dcov}, which culminates in the proof of Theorem \ref{thm:non_null_clt} in Section \ref{sec:proof_clts}.

Let us examine the variance structure in more detail. Roughly speaking, in the high dimensional regime $n\wedge p\wedge q\to \infty$, the variance $\bar{\sigma}_n^2(X,Y)$ of $\dcov_\ast^2(\bm{X},\bm{Y})$ only contains two possibly different sources---the first part $\bar{\sigma}_{n,1}^2(X,Y)$ comes from the contribution of the non-degenerate first-order kernel, while the second part $\bar{\sigma}_{n,2}^2(X,Y)$ comes from the contribution of the degenerate second-order kernel, in the Hoeffding decomposition of $\dcov_\ast^2(\bm{X},\bm{Y})$ to be detailed in Section \ref{section:hoeffding_decomp} ahead.

One notable complication of $\bar{\sigma}_n^2(X,Y)$ is the existence of terms with negative signs in the first-order variance $\bar{\sigma}_{n,1}^2(X,Y)$. These terms may contribute to the same order of the leading quantities $\pnorm{\Sigma_{XY}\Sigma_{YX}}{F}^2$ and $\tr(\Sigma_{XY}\Sigma_{Y}\Sigma_{YX}\Sigma_X)$, but a lower bound in Lemma \ref{lem:first_variance_lower} ahead shows that their contributions do not lead to `cancellations' of the main terms. In fact, under the spectrum condition of Theorem \ref{thm:non_null_clt}, the second claim of Lemma \ref{lem:first_variance_lower} indicates the order of $\bar{\sigma}_n^2(X,Y)$ with terms of positive signs only:
\begin{align*}
\bar{\sigma}_n^2(X,Y)\asymp_M  \max\bigg\{\frac{\pnorm{\Sigma_{XY}}{F}^2}{npq}, \frac{1}{n^2} \bigg\}.
\end{align*}
Here the first term in the above maximum is contributed by $\bar{\sigma}_{n,1}^2(X,Y)$ while the second term is contributed by $\bar{\sigma}_{n,2}^2(X,Y)$. Now we consider two regimes:
\begin{itemize}
	\item (\emph{Ultra high-dimensional regime $\sqrt{pq}\gg n$}). In this regime, as $\pnorm{\Sigma_{XY}}{F}^2\lesssim_M p\wedge q\leq \sqrt{pq}$ via Lemma \ref{lem:SigmaXY_F_bound} in the appendix, the variance $\bar{\sigma}_n^2(X,Y)$ in this ultra high-dimensional regime is completely determined by the contribution from the degenerate second-order kernel $\bar{\sigma}_{n,2}^2(X,Y)$:
	\begin{align*}
	\bar{\sigma}_n^2(X,Y) =\big(1+\mathfrak{o}(1)\big) \bar{\sigma}_{n,2}^2(X,Y).
	\end{align*}
	\item (\emph{Moderate high-dimensional regime $\sqrt{pq}\lesssim n$}). In this regime, there are three possibilities:
	\begin{itemize}
		\item If $\pnorm{\Sigma_{XY}}{F}^2\ll (pq)/n$, which includes the null $\Sigma_{XY} = 0$ as a special case, the variance $\bar{\sigma}_{n}^2(X,Y)$ is again completely determined by the degenerate second-order kernel $\bar{\sigma}_{n,2}^2(X,Y)$.
		\item If $\pnorm{\Sigma_{XY}}{F}^2\gg (pq)/n$,  then the variance $\bar{\sigma}_{n}^2(X,Y)$ is completely determined by the non-degenerate first-order kernel $\bar{\sigma}_{n,1}^2(X,Y)$:
		\begin{align*}
		\bar{\sigma}_n^2(X,Y) &= \big(1+\mathfrak{o}(1)\big) \bar{\sigma}_{n,1}^2(X,Y).
		\end{align*}
		If furthermore $\pnorm{\Sigma_{XY}}{F}^2\ll p\wedge q$ (i.e., excluding the critical regime $\pnorm{\Sigma_{XY}}{F}^2\asymp p\wedge q$), then the first-order variance $\bar{\sigma}_{n,1}^2(X,Y) $ can be simplified to be
		\begin{align*}
		\bar{\sigma}_n^2(X,Y) &= \frac{4\big(1+\mathfrak{o}(1)\big)}{n\tau_X^2\tau_Y^2}\Big[ \pnorm{\Sigma_{XY}\Sigma_{YX}}{F}^2+ \tr(\Sigma_{XY}\Sigma_{Y}\Sigma_{YX}\Sigma_X)\Big].
		\end{align*}
		\item If $\pnorm{\Sigma_{XY}}{F}^2\asymp (pq)/n$, the variance $\bar{\sigma}_n^2(X,Y)$ is contributed by both the non-degenerate first-order kernel $\bar{\sigma}_{n,1}^2(X,Y)$ and the degenerate second-order kernel $\bar{\sigma}_{n,2}^2(X,Y)$ so the general variance expression in Theorem \ref{thm:non_null_clt} cannot be further simplified.
	\end{itemize}
\end{itemize}

The smallest eigenvalue condition in Theorem \ref{thm:non_null_clt} excludes the case $X=Y$, but a slight variation of the proof can cover this case as well. We record formally the result below.

\begin{theorem}\label{thm:non_null_clt_X}
	Suppose that Assumption \ref{assump:sepa} holds, and that the spectrum of $\Sigma_X$ is contained in $[1/M,M]$ for some $M>1$. Then there exists some $C=C(M,Z_1)>0$ such that
	\begin{align*}
	d_{\mathrm{Kol}}\bigg(\frac{ \dcov_\ast^2(\bm{X})-\dcov^2(X)}{\sigma_n(X)},\mathcal{N}(0,1)\bigg)\leq \frac{C}{(n\wedge p)^{1/6}}.
	\end{align*}
	Here $\sigma_n(X)$ can be either $\var^{1/2}\big(\dcov_\ast^2(\bm{X})\big)$ or $\bar{\sigma}_n(X,X)$, where
	\begin{align*}
	\bar{\sigma}_n^2(X) &\equiv 	\bar{\sigma}_{n,1}^2(X) +	\bar{\sigma}_{n,2}^2(X),
	\end{align*}
	with 
	\begin{align*}
	\bar{\sigma}_{n,1}^2(X) &\equiv \frac{1}{n \tr^2(\Sigma_X)}\bigg[2 \pnorm{\Sigma_X^2}{F}^2  +\frac{\pnorm{\Sigma_X}{F}^6}{2\tr^2(\Sigma_X)} -\frac{2\pnorm{\Sigma_X}{F}^2 \tr(\Sigma_X^3)}{\tr(\Sigma_X)}\\
	&\quad +\kappa\cdot\Big(\tr(\Sigma_X^2\circ \Sigma_X^2) + \frac{3\pnorm{\Sigma_X}{F}^4}{4\tau_X^4}\tr(\Sigma_X\circ \Sigma_X) - \frac{\pnorm{\Sigma_X}{F}^2}{\tau_X^2}\tr(\Sigma_X^2\circ \Sigma_X)\Big)\bigg],\\
	\bar{\sigma}_{n,2}^2(X)&\equiv \frac{ \pnorm{\Sigma_X}{F}^4}{n(n-1) \tr^2(\Sigma_X)}.
	\end{align*}
\end{theorem}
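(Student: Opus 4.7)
The plan is to follow the same high-level strategy as for Theorem \ref{thm:non_null_clt}, specializing to the case $Y=X$ in which the joint covariance becomes singular and the global spectral condition must be replaced by the marginal one on $\Sigma_X$. The starting point is the obvious analogue of Proposition \ref{prop:hoef_decomp} for $\dcov_\ast^2(\bm{X})$: it is a 4th order $U$-statistic in the variables $X_1,\ldots,X_n$ whose symmetric kernel is obtained by setting $Y_i = X_i$ (equivalently $V = U$) in either (\ref{eqn:kernel_rep1}) or (\ref{eqn:kernel_rep2}).

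First I would perform the Hoeffding decomposition of this $U$-statistic, paralleling what is done in Section \ref{section:hoeffding_decomp} for $\dcov_\ast^2(\bm{X},\bm{Y})$, to isolate a non-degenerate first-order projection and a degenerate second-order remainder (the third- and fourth-order pieces being of smaller order in the regime $n\wedge p\to\infty$). The claimed variance formulas $\bar{\sigma}_{n,1}^2(X)$ and $\bar{\sigma}_{n,2}^2(X)$ correspond to the variances of these two pieces; substituting $\Sigma_Y = \Sigma_{XY} = \Sigma_X$ and $\tau_Y^2 = \tau_X^2 = 2\tr(\Sigma_X)$ into $\bar{\sigma}_{n,1}^2(X,Y)$ and $\bar{\sigma}_{n,2}^2(X,Y)$ and collecting terms recovers exactly the stated formulas, so that the mean and variance expansions of Theorems \ref{thm:dcov_expansion} and \ref{thm:variance_expansion} can be mimicked with only cosmetic changes.

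Next I would establish Gaussian approximations for the two pieces separately. For the first-order projection, which is essentially a sum of i.i.d.\ random variables, a Berry--Esseen bound of order $n^{-1/2}$ applies once suitable moment estimates on the centered quantities $U(X_1,X_2)$ and their conditional expectations are in place (these depend only on the spectrum of $\Sigma_X$). For the degenerate second-order piece I would invoke the same Poincar\'e/Stein-based machinery used in Sections \ref{section:preliminaries}--\ref{section:clt_truncated_dcov}, which proceeds by truncating $\pnorm{X_i-X_j}{}$ around its mean, replacing the kernel by a smoother surrogate, and then applying a Gaussian approximation to the resulting bilinear object in $X_1,\ldots,X_n$. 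Since the two projections are orthogonal in $L^2$ by construction and their normalized versions are approximately jointly Gaussian, dividing by the combined standardization $\bar{\sigma}_n(X)$ yields the $\mathcal{N}(0,1)$ limit at the asserted rate $(n\wedge p)^{-1/6}$.

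The main obstacle is that the proof of Theorem \ref{thm:non_null_clt} uses the spectral bound on the full $\Sigma$ in several places, particularly to control cross terms like $\tr(\Sigma_{XY}\Sigma_{YX}\Sigma_X)/\tau_X^2$ and to exclude pathological cancellations in $\bar{\sigma}_{n,1}^2(X,Y)$. In the present setting one must reexamine each such step and verify that the bound on the spectrum of $\Sigma_X$ alone suffices; this is plausible because for $Y=X$ all cross-covariance quantities reduce to traces of polynomials in $\Sigma_X$ of bounded spectrum, and in particular $\bar{\sigma}_{n,1}^2(X)\asymp_M 1/n$ and $\bar{\sigma}_{n,2}^2(X)\asymp_M 1/n^2$ follow immediately, ruling out any degenerate behavior. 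All other analytical bounds used in the proof of Theorem \ref{thm:non_null_clt} (concentration of $\pnorm{X_1-X_2}{}$, smoothness of the truncated kernel, Poincar\'e-type deviation estimates) depend only on the law of $X$ and carry over verbatim.
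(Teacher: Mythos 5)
Your overall strategy matches the paper's: Theorem \ref{thm:non_null_clt_X} is proved there precisely as ``a slight variation'' of Theorem \ref{thm:non_null_clt}, with each intermediate result (residual estimates, Hoeffding decomposition, variance expansions, the CLT for the truncated statistic) carrying an explicit $X=Y$ clause, and your formal substitution $\Sigma_{XY}=\Sigma_Y=\Sigma_X$, $\tau_Y=\tau_X$ into $\bar{\sigma}_{n,1}^2(X,Y)$ and $\bar{\sigma}_{n,2}^2(X,Y)$ does reproduce the stated $\bar{\sigma}_{n,1}^2(X)$ and $\bar{\sigma}_{n,2}^2(X)$. There are, however, two genuine gaps.

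First, your claim that $\bar{\sigma}_{n,1}^2(X)\asymp_M 1/n$ ``follows immediately'' is wrong in the exponent and too quick in substance. The correct order is $1/(np)$ (the paper records $\bar{\sigma}_n^2(X)\asymp_M\max\{1/(np),\,1/n^2\}$ right after the theorem): the bracket in $\bar{\sigma}_{n,1}^2(X)$ is of order $p$ while $\tr^2(\Sigma_X)\asymp p^2$. More importantly, that bracket contains the negative term $-2\pnorm{\Sigma_X}{F}^2\tr(\Sigma_X^3)/\tr(\Sigma_X)$, which is of the \emph{same} order $p$ as the positive terms, so a lower bound is not immediate---you must rule out cancellation. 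The paper does this in Lemma \ref{lem:first_variance_lower}-(2) via the sum-of-squares identity $\E\bar{g}_1^2(X,X)\asymp\tau_X^{-4}\pnorm{\Sigma_X^{1/2}(\Sigma_X-\pnorm{\Sigma_X}{F}^2\tau_X^{-2}I)\Sigma_X^{1/2}}{F}^2\gtrsim_M\tau_X^{-8}\,p\,\pnorm{\Sigma_X}{F}^4$. Without this lower bound you cannot show that the residuals $\bar{R}_1,\bar{R}_2$ and the third- and fourth-order Hoeffding projections are negligible relative to $\bar{\sigma}_n^2(X)$, which is the crux of Step 1 of the proof.

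Second, establishing Gaussian approximations ``for the two pieces separately'' and then concluding from $L^2$-orthogonality that the standardized sum is $\mathcal{N}(0,1)$ is not a valid argument: marginal approximate normality of two uncorrelated statistics does not imply approximate normality of their sum, and ``approximately jointly Gaussian'' is exactly what would need to be proved. The paper avoids this entirely by applying Chatterjee's discrete second-order Poincar\'e inequality (Lemma \ref{lem:second_poincare}) to the whole truncated statistic $\bar{T}_n$ in one shot (Theorem \ref{thm:clt_Tn_bar}), so the first- and second-order contributions are never decoupled; you should do the same, and a Berry--Esseen bound for the first-order projection alone buys you nothing. A smaller point: not every lemma ``carries over verbatim''---for instance the interpolation argument behind Lemma \ref{lem:cross_XY_var} degenerates when the joint covariance is singular and is replaced in the $X=Y$ case by a direct Cauchy--Schwarz bound---though you did correctly flag that each use of the full spectral condition must be revisited.
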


The variance $\bar{\sigma}_n^2(X)$ in Theorem \ref{thm:non_null_clt_X} is simpler than that in Theorem \ref{thm:non_null_clt}. In fact, a similar consideration using the variance lower bound in Lemma \ref{lem:first_variance_lower}, we may obtain the order of $\bar{\sigma}_n^2(X)$:
\begin{align*}
\bar{\sigma}_n^2(X)&\asymp_M \max \bigg\{\frac{1}{np},\frac{1}{n^2}\bigg\}. 
\end{align*}
Through the Hoeffding decomposition of $\dcov_\ast^2(\bm{X})$, the first and second terms in the maximum are contributed by the variance of the non-degenerate first-order kernel and the the degenerate second-order kernel respectively. Therefore:
\begin{itemize}
	\item In the ultra high-dimensional regime $p\gg n$, the variance $\bar{\sigma}_n^2(X)$ is completely determined by the degenerate second-order kernel $\bar{\sigma}_{n,2}^2(X)$.
	\item In the strictly moderate high-dimensional regime $p\ll n$, the variance $\bar{\sigma}_{n}^2(X)$ is completely determined by the non-degenerate first-order kernel $\bar{\sigma}_{n,1}^2(X)$.
	\item In the critical regime $p\asymp n$, the variance $\bar{\sigma}_{n}^2(X)$ is determined jointly by the first- and second-order kernels $\bar{\sigma}_{n,1}^2(X)$, $\bar{\sigma}_{n,2}^2(X)$ so cannot be in general simplified.
\end{itemize}

\subsection{General non-null CLTs II: generalized kernel distance covariance}
The sample distance covariance $\dcov_\ast^2(\bm{X},\bm{Y})$ can be generalized using kernel functions as follows. Given functions $f_X,f_Y: \R_{\geq 0}\to \R$, and bandwidth parameters $\gamma_X,\gamma_Y>0$, let for $1\leq k,\ell\leq n$ 
\begin{align*}
A_{k\ell}(f_X,\gamma_X)\equiv f_X\big(\pnorm{X_k-X_\ell}{}/\gamma_X\big)\bm{1}_{k\neq \ell},\quad B_{k\ell}(f_Y,\gamma_Y)\equiv f_Y\big(\pnorm{Y_k-Y_\ell}{}/\gamma_Y\big)\bm{1}_{k\neq \ell}.
\end{align*}
It is essential to set the diagonal terms $\{A_{kk}(f_X,\gamma_X)\}_k, \{B_{kk}(f_Y,\gamma_Y)\}_k$ to be $0$, so that the generalized kernel distance covariance to be introduced below can be analyzed in a unified manner; see Proposition \ref{prop:hoef_decomp_kernel} in the appendix for details. Now with $A_{k\ell}^\ast(f_X,\gamma_X), B_{k\ell}^\ast(f_Y,\gamma_Y)$ defined similarly as in (\ref{def:AB}) by replacing $A_{k\ell},B_{k\ell}$ with $A_{k\ell}(f_X,\gamma_X),B_{k\ell}(f_Y,\gamma_Y)$, we may define the generalized sample distance covariance with kernels $f = (f_X, f_Y)$  and bandwidth parameters $\gamma=(\gamma_X,\gamma_Y) \in \R^2_{>0}$ by
\begin{align}\label{def:dcov_empirical_kernel}
\dcov_\ast^2(\bm{X},\bm{Y};f,\gamma)&\equiv \frac{1}{n(n-3)}\sum_{k\neq \ell} A_{k\ell}^\ast (f_X,\gamma_X) B_{k\ell}^\ast(f_Y,\gamma_Y),
\end{align}
and its population version $\dcov^2(X,Y;f,\gamma)$ as in (\ref{intro:dcov_kernel}). Marginal quantities $\dcov^2(X;f,\gamma)$ and $\dcov^2(Y;f,\gamma)$ are defined analogously, and similar to distance correlation, the kernelized distance correlation is defined as
\begin{align*}
\dcor^2(X,Y;f,\gamma) \equiv \frac{\dcov^2(X,Y;f,\gamma)}{\sqrt{\dcov^2(X;f,\gamma)\dcov^2(Y;f,\gamma)}},
\end{align*} 
with convention $\dcor^2(X,Y;f,\gamma) \equiv 0$ if $\dcov^2(X;f,\gamma) \dcov^2(Y;f,\gamma) = 0$.

A more general formulation, when replacing $f_X(\pnorm{X_\ell-X_k}{}/\gamma_X)$ (resp. $f_Y(\pnorm{Y_\ell-Y_k}{}/\gamma_X)$) by some generic bivariate kernel $k_X(X_\ell,X_k)$ (resp. $k_Y(Y_\ell,Y_k)$), is also known as the \emph{Hilbert-Schmidt independence criteria}, see e.g. \cite{gretton2005kernel,gretton2007kernel}, which can in fact be written as the maximum mean discrepancy between the joint distribution and the marginal distributions of $X$ and $Y$; see e.g. \cite[Section 3.3]{sejdinovic2013equivalence} for an in-depth discussion. Two particular important choices for $f$ are the Laplace and Gaussian kernels:
\begin{itemize}
	\item (Laplace kernel) $f(w)=e^{-w}$;
	\item (Gaussian kernel) $f(w) = e^{-w^2/2}$.
\end{itemize}
These kernels have been considered in, e.g., \cite{gretton2012kernel,zhu2020distance}.

\begin{assumption}[Conditions on the kernel $f$]\label{assump:kernel}
	Suppose that $f \in\{f_X,f_Y\}$ is four times differentiable on $(0,\infty)$ such that: 
	\begin{enumerate}
		\item $f$ is bounded on $[0,M)$ for any $M>0$.
		\item For any $\epsilon>0$, $\max_{1\leq\ell\leq 4}\sup_{x\geq \epsilon}|f^{(\ell)}(x)|\leq C_\epsilon$ for some $C_\epsilon>0$.
		\item For any $\epsilon>0$, there exists some $c_\epsilon>0$ such that $\inf_{x\in(\epsilon, \epsilon^{-1})} |f'(x)| \geq c_\epsilon$.
		\item There exists some $\mathfrak{q}>0$ such that $\limsup_{x \downarrow 0} \max_{1\leq \ell \leq 4} x^{\mathfrak{q}} \abs{f^{(\ell)}(x)}<\infty$.
	\end{enumerate}
\end{assumption}

In words, Assumption \ref{assump:kernel}-(1)(2) require the kernel functions $f_X,f_Y$ and its derivatives to be appropriately bounded, (3) requires the first derivative to be bounded from below on any compacta in $(0,\infty)$, and finally (4)  regulates that the derivatives of $f_X,f_Y$ up to the fourth order can only blow up at $0$ with at most a polynomial rate of divergence.  It is easy to check that both the Laplace/Gaussian kernels, and the canonical choice $f(x)=x$ that recovers the distance covariance (up to a scaling factor) all satisfy Assumption \ref{assump:kernel}.

Let $\rho_X\equiv \tau_X/\gamma_X$, $\rho_Y\equiv \tau_Y/\gamma_Y$ and
\begin{align}\label{def:varrho}
\varrho(\gamma)\equiv \frac{f_X'(\rho_X)f_Y'(\rho_Y)}{\gamma_X\gamma_Y}.
\end{align}
We are now ready to state the following non-null CLT for the generalized kernel distance covariance $\dcov_\ast^2(\bm{X},\bm{Y};f,\gamma)$; its proof can be found in Appendix \ref{sec:proof_kernel}.

\begin{theorem}\label{thm:non_null_clt_kernel}
	Suppose that Assumptions \ref{assump:sepa} and \ref{assump:kernel} hold, and that (i) the spectrum of $\Sigma$ (ii) $\rho_X,\rho_Y$ are contained in $[1/M,M]$ for some $M>1$. Then there exists some $C=C(f,M,Z_1)>0$ such that
	\begin{align*}
	d_{\mathrm{Kol}}\bigg(\frac{ \dcov_\ast^2(\bm{X},\bm{Y};f,\gamma)-\dcov^2(X,Y;f,\gamma)}{\sigma_n(X,Y;f,\gamma)},\mathcal{N}(0,1)\bigg)\leq \frac{C}{(n\wedge p\wedge q)^{1/6}}.
	\end{align*}
	Here $\sigma_n(X,Y;f,\gamma)=\varrho(\gamma) \sigma_n(X,Y)$, where $\sigma_n(X,Y)$ is defined in Theorem \ref{thm:non_null_clt}.  In the case $X=Y$, the conclusion continues to hold if (i) is replaced by: (i') the spectrum of $\Sigma_X$ is contained in $[1/M,M]$ for some $M>1$.
\end{theorem}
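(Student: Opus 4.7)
\textbf{Proof plan for Theorem \ref{thm:non_null_clt_kernel}.} The strategy is to reduce the kernelized statement to the un-kernelized CLT in Theorem \ref{thm:non_null_clt} (respectively Theorem \ref{thm:non_null_clt_X} in the $X=Y$ case) via a Taylor linearization of $f_X$ around $\rho_X = \tau_X/\gamma_X$ and of $f_Y$ around $\rho_Y = \tau_Y/\gamma_Y$. Concretely, for each off-diagonal pair write
\begin{align*}
f_X\big(\pnorm{X_k-X_\ell}{}/\gamma_X\big)
= f_X(\rho_X) + \tfrac{f_X'(\rho_X)}{\gamma_X}\big(\pnorm{X_k-X_\ell}{}-\tau_X\big) + R_{k\ell}^{X},
\end{align*}
and analogously for $f_Y$. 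Because the $U$-centering operation in (\ref{def:AB}) annihilates any term that is constant in $(k,\ell)$, the constant $f_X(\rho_X)$ and the shift $\tau_X$ drop out, giving
\begin{align*}
A^\ast_{k\ell}(f_X,\gamma_X) = \tfrac{f_X'(\rho_X)}{\gamma_X}\, A^\ast_{k\ell} + \tilde R^X_{k\ell},\qquad
B^\ast_{k\ell}(f_Y,\gamma_Y) = \tfrac{f_Y'(\rho_Y)}{\gamma_Y}\, B^\ast_{k\ell} + \tilde R^Y_{k\ell},
\end{align*}
where $\tilde R^{X,Y}$ are the $U$-centered versions of the Taylor remainders. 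Multiplying the two identities and summing yields the key expansion
\begin{align*}
\dcov_\ast^2(\bm X,\bm Y;f,\gamma) = \varrho(\gamma)\,\dcov_\ast^2(\bm X,\bm Y) + \mathrm{Err}_n,
\end{align*}
with $\mathrm{Err}_n$ collecting one quadratic cross term and one ``remainder $\times$ remainder'' term; an identical algebraic identity holds at the population level, giving $\dcov^2(X,Y;f,\gamma) = \varrho(\gamma)\dcov^2(X,Y) + \E\,\mathrm{Err}_n$.

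The analytical core of the proof is then to show that $\mathrm{Err}_n - \E\,\mathrm{Err}_n$ is of smaller order than $\varrho(\gamma)\sigma_n(X,Y)$, so that Slutsky together with Theorem \ref{thm:non_null_clt} yields the claimed CLT at the rate $(n\wedge p\wedge q)^{-1/6}$. To control the remainders, I would invoke Gaussian concentration (e.g.\ Hanson--Wright applied to $\pnorm{X_k-X_\ell}{}^2 - \tau_X^2$) to show that $W_{k\ell}\equiv \pnorm{X_k-X_\ell}{}/\gamma_X - \rho_X = O_{\mathbf P}(p^{-1/2})$ (and similarly $p^{-1/2}$ replaced by $q^{-1/2}$ for $Y$), with uniform sub-Gaussian tails over the $\binom{n}{2}$ pairs, using the spectrum bound on $\Sigma$ together with $\rho_X,\rho_Y\in[1/M,M]$. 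On the ``good event'' $\{|W_{k\ell}|\le \epsilon\}$ for a small fixed $\epsilon$, Assumption \ref{assump:kernel}(1)--(2) gives a uniform bound on $f_X^{(2)},f_X^{(3)}$ in a neighborhood of $\rho_X$, hence $|R^{X}_{k\ell}|\lesssim W_{k\ell}^2$; on the complementary ``bad event'', Assumption \ref{assump:kernel}(4) (polynomial blow-up at $0$) combined with the Gaussian tails on $\pnorm{X_k-X_\ell}{}^{-1}$ keeps the expected contribution negligible. Propagating these bounds through the $U$-statistic representation (Proposition \ref{prop:hoef_decomp} and its kernelized analogue) and using a second-moment estimate on the remainder gives
\begin{align*}
\var(\mathrm{Err}_n)^{1/2} + \bigabs{\E\,\mathrm{Err}_n - \varrho(\gamma)\cdot 0} = \mathfrak{o}\big(\varrho(\gamma)\sigma_n(X,Y)\big).
\end{align*}

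The main obstacle, and the step requiring the most care, is the variance bound on $\mathrm{Err}_n$ in the \emph{critical regime} $\pnorm{\Sigma_{XY}}{F}^2\asymp pq/n$ identified after Theorem \ref{thm:non_null_clt}. There both $\bar\sigma_{n,1}$ and $\bar\sigma_{n,2}$ contribute at the same order, so the tolerance on $\mathrm{Err}_n$ is the \emph{smaller} of the two, and the naive bound ``each entry is $O(1/p)$, so the double-centered $U$-statistic is $O(n^{-2})$'' is not quite sharp enough. To get around this one needs to perform a Hoeffding decomposition of $\mathrm{Err}_n$ itself, using the same machinery developed in Sections \ref{section:preliminaries}--\ref{section:clt_truncated_dcov} to handle $\dcov_\ast^2(\bm X,\bm Y)$: the projection onto first-order chaos is driven by a \emph{second} derivative of the kernel times $W_{k\ell}^2$ whose conditional mean given one coordinate vanishes to leading order, gaining an additional factor of $p^{-1/2}$, while the second-order (degenerate) component is of order $n^{-2}\cdot p^{-1}$ and is hence negligible compared to $\bar\sigma_{n,2}(X,Y)\asymp n^{-1}$. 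Once these bookkeeping estimates are in place, the CLT for the kernelized quantity follows from the unkernelized Theorem \ref{thm:non_null_clt}; the $X=Y$ case is handled identically with Theorem \ref{thm:non_null_clt_X} in place of Theorem \ref{thm:non_null_clt}, and with the analogous one-sample Taylor expansion.
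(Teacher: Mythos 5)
Your high-level strategy --- linearize the kernel around $\rho_X,\rho_Y$, extract the identity $\dcov_\ast^2(\bm X,\bm Y;f,\gamma)=\varrho(\gamma)\dcov_\ast^2(\bm X,\bm Y)+\mathrm{Err}_n$, and kill the error --- is the reduction the paper announces, and your observations that $U$-centering annihilates constants and that the degenerate second-order part of the error is harmless are both correct. But there are two substantive divergences. First, the paper deliberately expands in the \emph{squared} norm, writing $f(\pnorm{x_1-x_2}{}/\gamma_X)=f_{\sqrt{\cdot}}(\rho_X^2(1+L_X(x_1,x_2)))$ and Taylor-expanding $f_{\sqrt{\cdot}}$ in $L_X$; this is what lets it reuse verbatim the Hanson--Wright moment bounds for $L_X$, the negative-moment estimate (\ref{ineq:moment_R_1}) (which, combined with Assumption \ref{assump:kernel}-(4), replaces your good-event/bad-event argument near $\pnorm{X_k-X_\ell}{}=0$), and the entire $\psi_X,\psi_Y,\psi_{X,Y}$ residual machinery of Proposition \ref{prop:res_psi_bound}. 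Second, the paper does not conclude by Slutsky between the two full statistics: it compares $\dcov_\ast^2(\bm X,\bm Y;f,\gamma)$ directly to the rescaled truncated main term $\varrho(\gamma)\bar T_n(\bm X,\bm Y)$ and reruns the argument of Theorem \ref{thm:non_null_clt}, so each residual only ever needs to be compared against the main term, never against the other residual.

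The genuine gap is your treatment of the first-order chaos of $\mathrm{Err}_n$. You claim its first-order projection, driven by $f''$ times $W_{k\ell}^2$, has conditional mean that ``vanishes to leading order, gaining an additional factor of $p^{-1/2}$.'' This is false under strong dependence. The exact computation in Lemma \ref{lem:psi_decomp} shows that $\E_{X_2,Y_2}\big[L_X^2(x_1,X_2)\,Y_2^\top y_1\big]=-4\tau_X^{-4}\big[(\pnorm{x_1}{}^2-\tr(\Sigma_X))x_1^\top\Sigma_{XY}y_1+2x_1^\top\Sigma_X\Sigma_{XY}y_1\big]$, so the first-order projection of the quadratic remainder contains a component proportional to $\pnorm{\Sigma_{XY}}{F}^2\big(\pnorm{x_1}{}^2-\tr(\Sigma_X)\big)/\tau_X^2$ whose contribution to the first-order variance is of order $\pnorm{\Sigma_{XY}}{F}^4\pnorm{\Sigma_X}{F}^2/\tau_X^4$, i.e.\ a $\Theta_M\big(\pnorm{\Sigma_{XY}}{F}^2/(\tau_X^2\wedge\tau_Y^2)\big)$ fraction of the leading first-order variance. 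When $\pnorm{\Sigma_{XY}}{F}^2\asymp p\wedge q$ (permitted under the bounded-spectrum hypothesis, e.g.\ $\Sigma_{XY}=cI$), this ratio is a constant, not $\mathfrak{o}(1)$ --- this is precisely why the paper must promote these conditional means into the main term as the corrections $\mathscr{A}_{1,X},\mathscr{A}_{1,Y}$, which generate the negative-sign terms in $\bar\sigma_{n,1}^2$. Note also that you flag the wrong critical regime: the danger for the first-order remainder is $\pnorm{\Sigma_{XY}}{F}^2\asymp p\wedge q$ (maximal dependence), not $\pnorm{\Sigma_{XY}}{F}^2\asymp pq/n$ (where $\bar\sigma_{n,1}\asymp\bar\sigma_{n,2}$). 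To close the gap you cannot rely on an order-of-magnitude bound; you must compute the first-order conditional mean of the remainder exactly, as in the kernelized decomposition $\psi_{X;f}=A_{1,X;f}+A_{2,X;f}$ of Proposition \ref{prop:res_psi_bound_kernel}, and track these terms through the variance expansion rather than discard them.
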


We note that the conditions posed in the above theorem are not the weakest possible; for instance one may relax all the conditions on the kernels $f=(f_X,f_Y)$ and $(\rho_X,\rho_Y)$ to some growth conditions involving $n,p,q$ at the cost of a more involved error bound, but we have stated the current formulation to avoid unnecessary digressions. 

The key step in the proof of Theorem \ref{thm:non_null_clt_kernel} is to reduce the analysis of $\dcov_\ast^2(\bm{X},\bm{Y};f,\gamma)$ with general kernels $f$ to that of the canonical $\dcov_\ast^2(\bm{X},\bm{Y})$. Analogous to the quantities $U,V$ defined in (\ref{def:UV}) for the canonical distance covariance, let
\begin{align*}
U_{f_X,\gamma_X}(x_1,x_2)&\equiv f_X\big(\pnorm{x_1-x_2}{}/\gamma_X\big)-\E f_X\big(\pnorm{x_1-X}{}/\gamma_X\big)\\
&\qquad\qquad-\E f_X\big(\pnorm{X-x_2}{}/\gamma_X\big)+ \E f_X\big(\pnorm{X-X'}{}/\gamma_X\big),\\
V_{f_Y,\gamma_Y}(y_1,y_2)&\equiv f_Y\big(\pnorm{y_1-y_2}{}/\gamma_Y\big)-\E f_Y\big(\pnorm{y_1-Y}{}/\gamma_Y\big)\\
&\qquad\qquad-\E f_Y\big(\pnorm{Y-y_2}{}/\gamma_Y\big)+ \E f_Y\big(\pnorm{Y-Y'}{}/\gamma_Y\big).
\end{align*}
Then it is shown in Lemma \ref{lem:UV_expansion_kernel} that
\begin{align*}
U_{f_X,\gamma_X} \approx -\frac{f_X'(\rho_X)}{\gamma_X}U,\quad V_{f_Y,\gamma_Y} \approx -\frac{f_Y'(\rho_Y)}{\gamma_Y}V,
\end{align*} 
hence with appropriate control on the remainder terms, it follows from the $U$-statistic representation in (\ref{eqn:kernel_rep2}) that 
\begin{align*}
\dcov_\ast^2(\bm{X},\bm{Y};f,\gamma) \approx \frac{f_X'(\rho_X)}{\gamma_X}\frac{f_Y'(\rho_Y)}{\gamma_Y}\dcov_\ast^2(\bm{X},\bm{Y}) = \varrho(\gamma)\dcov_\ast^2(\bm{X},\bm{Y}).
\end{align*}
Following this line of arguments, we are then able to study the asymptotics of $\dcov_\ast^2(\bm{X},\bm{Y};f,\gamma)$ and $\dcov_\ast^2(\bm{X},\bm{Y})$ in a unified manner; see Appendix \ref{sec:proof_kernel} for detailed arguments.

\subsection{Local CLTs}

As a corollary of the non-null CLT in Theorem \ref{thm:non_null_clt}, we state below a local CLT that will be important to obtain the power formula for the distance correlation test introduced in (\ref{def:test_t}). Its proof is presented in Section \ref{section:proof_local_clt}.

\begin{theorem}\label{thm:local_clt}
	Suppose that Assumption \ref{assump:sepa} holds, and that the spectrum of $\Sigma$ is contained in $[1/M,M]$ for some $M>1$. Let
	\begin{align}\label{def:A}
	A(\Sigma)\equiv \frac{n\pnorm{\Sigma_{XY}}{F}^2}{\pnorm{\Sigma_X}{F}\pnorm{\Sigma_Y}{F}}.
	\end{align}
	Then there exists some constant $C=C(M,Z_1)>0$ such that
	\begin{align*}
	&d_{\mathrm{Kol}}\bigg(\frac{n\big(\tau_X\tau_Y\dcov_\ast^2(\bm{X},\bm{Y})-\pnorm{\Sigma_{XY}}{F}^2\big)}{\sqrt{2} \pnorm{\Sigma_X}{F}\pnorm{\Sigma_Y}{F}}, \mathcal{N}(0,1)\bigg)\leq C\bigg[1\bigwedge \bigg(\frac{1\vee A(\Sigma)^2}{n\wedge p\wedge q}\bigg)^{1/6}\bigg].
	\end{align*}
	If Assumption \ref{assump:kernel} holds and $\rho_X,\rho_Y$ are contained in $[1/M,M]$ for some $M>1$, then the above conclusion holds with $\dcov_\ast^2(\bm{X},\bm{Y})$ replaced by $\dcov_\ast^2(\bm{X},\bm{Y};f,\gamma)/\varrho(\gamma)$ and $C$ replaced by $C' = C'(M,f, Z_1)$.
\end{theorem}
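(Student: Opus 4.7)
The plan is to reduce the target local CLT to the non-null CLT of Theorem \ref{thm:non_null_clt} via a deterministic scale-and-shift perturbation. Writing $\mu_n \equiv \pnorm{\Sigma_{XY}}{F}^2/(\tau_X\tau_Y)$ and $\varsigma_n \equiv \sqrt{2}\pnorm{\Sigma_X}{F}\pnorm{\Sigma_Y}{F}/(n\tau_X\tau_Y)$, the target studentized quantity can be written as $T_n = r_n Z_n + b_n$, where $Z_n \equiv (\dcov_\ast^2(\bm{X},\bm{Y})-\dcov^2(X,Y))/\bar{\sigma}_n(X,Y)$, $r_n \equiv \bar{\sigma}_n(X,Y)/\varsigma_n$, and $b_n \equiv (\dcov^2(X,Y)-\mu_n)/\varsigma_n$. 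Theorem \ref{thm:non_null_clt} already handles $Z_n$, giving $d_{\mathrm{Kol}}(Z_n,\mathcal{N}(0,1))\lesssim_M (n\wedge p\wedge q)^{-1/6}$; what remains is to control the deterministic scale $|r_n - 1|$ and shift $|b_n|$ and then combine via a Gaussian-perturbation estimate.

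The heart of the argument is the variance comparison $|r_n^2 - 1| = |\bar{\sigma}_n^2(X,Y)/\varsigma_n^2 - 1|$. Using the identity $\pnorm{\Sigma_{XY}}{F}^4/(\pnorm{\Sigma_X}{F}^2\pnorm{\Sigma_Y}{F}^2) = A(\Sigma)^2/n^2$, a direct computation gives $\bar{\sigma}_{n,2}^2(X,Y)/\varsigma_n^2 = (n/(n-1))(1 + A(\Sigma)^2/n^2)$, which differs from $1$ by $O(1/n + A(\Sigma)^2/n^2)$. For the first-order piece I would bound each of the seven summands in $\bar{\sigma}_{n,1}^2(X,Y)$ individually under the spectrum condition on $\Sigma$ and the auxiliary bound $\pnorm{\Sigma_{XY}}{F}^2\lesssim_M p\wedge q$ (Lemma \ref{lem:SigmaXY_F_bound}): for instance, the two leading terms $\pnorm{\Sigma_{XY}\Sigma_{YX}}{F}^2$ and $\tr(\Sigma_{XY}\Sigma_Y\Sigma_{YX}\Sigma_X)$ are each $\leq M^2\pnorm{\Sigma_{XY}}{F}^2$, which after normalization contribute a term of order $A(\Sigma)/\sqrt{pq}$; the remaining five terms (including the two negatively signed ones) can be verified to contribute smaller orders, and Lemma \ref{lem:first_variance_lower} is invoked to rule out any mixed-sign cancellations invalidating the upper bound. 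A short case analysis depending on whether $A(\Sigma)\leq 1$ then shows that all seven contributions are dominated by $(1\vee A(\Sigma)^2)/(n\wedge p\wedge q)$, yielding
\begin{align*}
|r_n - 1|\;\lesssim_M\;|r_n^2 - 1| \;\lesssim_M \;\frac{1\vee A(\Sigma)^2}{n\wedge p\wedge q}.
\end{align*}

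For the shift, the quantitative form of the mean expansion (\ref{intro:mean_expansion}) (formally Theorem \ref{thm:dcov_expansion}) gives $|\dcov^2(X,Y) - \mu_n|\lesssim_M \mu_n \cdot (n\wedge p\wedge q)^{-1/2}$; combined with the identity $\mu_n/\varsigma_n = A(\Sigma)/\sqrt{2}$, this yields $|b_n|\lesssim_M A(\Sigma)(n\wedge p\wedge q)^{-1/2}$. Assembling via the standard scale-shift inequality $d_{\mathrm{Kol}}(r_n Z_n + b_n,\mathcal{N}(0,1))\leq d_{\mathrm{Kol}}(Z_n,\mathcal{N}(0,1)) + |r_n - 1| + |b_n|$ and substituting $s \equiv (1\vee A(\Sigma)^2)/(n\wedge p\wedge q)\in[0,1]$ (the bound being trivially $1$ otherwise), each of the three error terms is dominated by $s^{1/6}$, producing the canonical-kernel version. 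For the kernel extension, combining the above with (\ref{intro:kernel_expansion}) and its population-level analogue $\dcov^2(X,Y;f,\gamma)/\varrho(\gamma)\approx \dcov^2(X,Y)$---both with remainders quantitatively controlled under Assumption \ref{assump:kernel} and $\rho_X,\rho_Y\in[1/M,M]$---lets the same argument go through with $\dcov_\ast^2(\bm{X},\bm{Y})$ replaced by $\dcov_\ast^2(\bm{X},\bm{Y};f,\gamma)/\varrho(\gamma)$, yielding the modified constant $C'=C'(M,f)$. The hardest part will be the seven-term bookkeeping in the variance comparison: without Lemma \ref{lem:first_variance_lower} to preclude the harmful sign cancellations in $\bar{\sigma}_{n,1}^2(X,Y)$, one could not extract the clean bound $(1\vee A(\Sigma)^2)/(n\wedge p\wedge q)$ that drives the final $1/6$ rate.
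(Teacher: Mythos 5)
Your proposal is correct and follows essentially the same route as the paper's proof: decompose the studentized statistic into the Theorem \ref{thm:non_null_clt} main term plus a deterministic scale perturbation (controlled by comparing $\bar{\sigma}_n^2(X,Y)$ with the null variance $\varsigma_n^2$, which yields exactly the $A(\Sigma)/\sqrt{pq}+A(\Sigma)^2/(n(n\wedge p\wedge q))$ terms) and a deterministic shift (controlled by Theorem \ref{thm:dcov_expansion}), then assemble via a Kolmogorov perturbation bound and the elementary fact $1\wedge t\lesssim 1\wedge t^{\alpha}$. One small remark: Lemma \ref{lem:first_variance_lower} is not actually needed for the variance \emph{comparison} here --- for an upper bound on $\lvert\bar{\sigma}_n^2/\varsigma_n^2-1\rvert$ one simply bounds each summand of $\bar{\sigma}_{n,1}^2$ in absolute value (sign cancellations could only help), so your appeal to it at that step is superfluous, though harmless.
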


The definition of the local (contiguity) parameter $A(\Sigma)$ is motivated by the critical parameter in the power expansion formula of the (generalized kernel) distance correlation test in Theorem \ref{thm:dcov_power} below. The interesting phenomenon in the local central limit theorem above is that in the local (contiguity) regime $\limsup A(\Sigma)<\infty$, a central limit theorem holds for $\dcov_\ast^2(\bm{X},\bm{Y})$ and $\dcov_\ast^2(\bm{X},\bm{Y};f,\gamma)/\varrho(\gamma)$ with the \emph{null variance} $\sigma^2_{\text{null}}$ in (\ref{intro:var_expansion}), i.e., the variance under $\Sigma_{XY}=0$. Of course, this necessarily implies that (recall $\bar{\sigma}_n^2\equiv \bar{\sigma}_n^2(X,Y)$ defined in Theorem \ref{thm:non_null_clt}) 
\begin{align}\label{ineq:ratio_var_contiguity}
\frac{\bar{\sigma}^2_n}{\sigma^2_{\textrm{null}}}\rightarrow 1,\quad\quad \textrm{if }\quad \limsup A(\Sigma)<\infty,
\end{align}
which can be verified via elementary calculations (see e.g. (\ref{ineq:local_clt_1}) ahead). This fact will be crucial in Theorem \ref{thm:dcov_power} ahead, where we obtain the asymptotic exact power formula for the distance correlation test using the distance correlation itself (or equivalently, $A(\Sigma)$) as the critical parameter.

\section{Generalized kernel distance correlation tests}\label{section:power_formula}

In this section, we study the performance of the distance correlation test $\Psi(\bm{X},\bm{Y};\alpha)$ in (\ref{def:test_t}) and its kernel generalizations for the null hypothesis $H_0: X \textrm{ is independent of } Y$, or equivalently under our Gaussian assumption, $\Sigma_{XY} = 0$. 

Let us start with a motivation for the test $\Psi(\bm{X},\bm{Y};\alpha)$ in (\ref{def:test_t}) by explaining its connection to the non-null CLT derived in Theorem \ref{thm:non_null_clt} for the sample distance covariance $\dcov_*^2(\bm{X},\bm{Y})$. The null part of Theorem \ref{thm:non_null_clt} (i.e., the case of independent $X$ and $Y$) motivates the following `oracle' independence test: for any prescribed size $\alpha\in(0,1)$,
\begin{align}\label{def:test_oracle}
\tilde{\Psi}(\bm{X},\bm{Y};\alpha) \equiv \bm{1}\bigg(\biggabs{\frac{\dcov_\ast^2(\bm{X},\bm{Y})}{\sigma_{\textrm{null}}}}>z_{\alpha/2}\bigg),
\end{align}
where $\sigma^2_{\textrm{null}}$ is the variance of $\dcov_*^2(\bm{X},\bm{Y})$ under the null in (\ref{intro:var_expansion}). Since $\dcov^2(X,Y) = 0$ under the null, Theorem \ref{thm:non_null_clt} implies immediately that the above test has an asymptotic size of $\alpha$. The test $\tilde{\Psi}(\bm{X},\bm{Y};\alpha)$, however, is not practical because even under the null, $\sigma^2_{\textrm{null}}$ might still depend on the unknown marginal distributions of $X$ and $Y$. To see the connection between $\tilde{\Psi}(\bm{X},\bm{Y};\alpha)$ in (\ref{def:test_oracle}) and $\Psi(\bm{X},\bm{Y};\alpha)$ in (\ref{def:test_t}), note that by some preliminary variance bounds, $\dcov_*^2(\bm{X})$ and $\dcov_*^2(\bm{Y})$ appearing in the denominator of $\Psi(\bm{X},\bm{Y};\alpha)$ in (\ref{def:test_t}) will concentrate around their mean values $\dcov^2(X)$ and $\dcov^2(Y)$ respectively (cf.  Lemma \ref{lem:ratio_dcov}). Furthermore, by the mean and variance formula in (\ref{intro:mean_expansion}) and (\ref{intro:var_expansion}) (cf. Theorems \ref{thm:dcov_expansion} and \ref{thm:variance_expansion}), 
\begin{align*}
\sigma_{\textrm{null}}^2 = \frac{2\big(1+\mathfrak{o}(1)\big)}{n^2}\dcov^2(X)\dcov^2(Y). 
\end{align*}
The above identity implies the asymptotic equivalence between $\tilde{\Psi}(\bm{X},\bm{Y};\alpha)$ in (\ref{def:test_oracle}) and $\Psi(\bm{X},\bm{Y};\alpha)$ in (\ref{def:test_t}) under the null, showing in particular that $\Psi(\bm{X},\bm{Y};\alpha)$ will also have an asymptotic size of $\alpha$. The rest of the section is devoted to studying the power asymptotics of $\Psi(\bm{X},\bm{Y};\alpha)$ and its kernel generalizations.

\subsection{Power universality}
Recall the generalized kernel distance covariance in (\ref{def:dcov_empirical_kernel}) with kernel functions $f = (f_X,f_Y)$ and bandwidth parameters $\gamma = (\gamma_X,\gamma_Y)$. Let the generalized kernel distance correlation test, i.e., a kernelized version of $\Psi(\bm{X},\bm{Y};\alpha)$, be defined by
\begin{align}\label{def:dist_cor_kernel}
\Psi_{f,\gamma}(\bm{X},\bm{Y};\alpha)\equiv \bm{1}\bigg(  \biggabs{\frac{n\cdot\dcov_\ast^2(\bm{X},\bm{Y};f,\gamma)}{\sqrt{2\dcov_\ast^2(\bm{X};f,\gamma)\cdot \dcov_\ast^2(\bm{Y};f,\gamma)}}}>z_{\alpha/2}\bigg).
\end{align}
The factor $n$ in the definition above is sometimes replaced by $\sqrt{n(n-1)}$ (e.g. \cite{gao2021asymptotic}), but this will make no difference in the theory below. Using the (local) central limit theorems derived in Theorem \ref{thm:local_clt}, the following result gives a unified power expansion formula for the distance correlation test $\Psi(\bm{X},\bm{Y};\alpha)$ and the generalized kernel distance correlation test $\Psi_{f,\gamma}(\bm{X},\bm{Y};\alpha)$. Its proof can be found in Section \ref{section:proof_power_formula}.
\begin{theorem}\label{thm:dcov_power}
	Suppose that Assumption \ref{assump:sepa} holds, and that the spectrum of $\Sigma$ is contained in $[1/M,M]$ for some $M>1$. Then there exists some constant $C=C(\alpha,M,Z_1)>0$ such that
	\begin{align*}
	&\biggabs{\E_\Sigma \Psi(\bm{X},\bm{Y};\alpha)-\Prob\big(\bigabs{\mathcal{N}\big(m_n(\Sigma),1\big)}> z_{\alpha/2}\big)}\leq \frac{C}{(n\wedge p\wedge q)^{1/7}}.
	\end{align*}
	Here $m_n(\Sigma)$ can be either
	\begin{align*}
	\frac{n\dcov^2(X,Y)}{\sqrt{2 \dcov^2(X,X)\dcov^2(Y,Y)}} = \frac{n\dcor^2(X,Y)}{\sqrt{2}}\textrm{ or }\frac{n\pnorm{\Sigma_{XY}}{F}^2}{\sqrt{2}\pnorm{\Sigma_X}{F}\pnorm{\Sigma_Y}{F}} = \frac{A(\Sigma)}{\sqrt{2}}.
	\end{align*}
	If Assumption \ref{assump:kernel} holds and $\rho_X,\rho_Y$ are contained in $[1/M,M]$ for some $M>1$, then the above conclusion holds with $\Psi(\bm{X},\bm{Y};\alpha)$ replaced by $\Psi_{f,\gamma}(\bm{X},\bm{Y};\alpha)$ and $C$ replaced by $C' = C'(\alpha,M,f,Z_1) > 0$.
\end{theorem}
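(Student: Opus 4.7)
The overall plan is to reduce the studentized test statistic to the pivotal quantity appearing in the local CLT (Theorem \ref{thm:local_clt}) by replacing the random normalizer $\sqrt{2\dcov_\ast^2(\bm{X})\dcov_\ast^2(\bm{Y})}$ with a deterministic one, and then convert the resulting Kolmogorov-distance bound into a power-expansion bound. The key observation is the following chain of asymptotic equivalences: using the mean expansion (\ref{intro:mean_expansion}) applied with $X = Y$, we have $\dcov^2(X)\dcov^2(Y) = (1+\mathfrak{o}(1))\,\pnorm{\Sigma_X}{F}^2\pnorm{\Sigma_Y}{F}^2/(\tau_X^2\tau_Y^2)$, so that if the ratios $\dcov_\ast^2(\bm{X})/\dcov^2(X)$ and $\dcov_\ast^2(\bm{Y})/\dcov^2(Y)$ are both close to $1$ in probability, then the statistic $T_n$ in (\ref{def:test_t}) is close to $\widetilde{T}_n \equiv n\tau_X\tau_Y\dcov_\ast^2(\bm{X},\bm{Y})/(\sqrt{2}\pnorm{\Sigma_X}{F}\pnorm{\Sigma_Y}{F})$, which is precisely the object for which Theorem \ref{thm:local_clt} supplies the Gaussian approximation with mean shift $A(\Sigma)/\sqrt{2}$.

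Concretely, the steps I would execute are: (i) invoke the preliminary variance/ratio bound \textbf{(Lemma \ref{lem:ratio_dcov})} to show that $\dcov_\ast^2(\bm{X})/\dcov^2(X)$ and $\dcov_\ast^2(\bm{Y})/\dcov^2(Y)$ each equal $1 + \mathfrak{o}_{\mathbf{P}}(\eta_n)$ for some explicit $\eta_n\to 0$ controlled by $n\wedge p$ and $n\wedge q$; (ii) combine with the mean expansion to write $\sqrt{2\dcov_\ast^2(\bm{X})\dcov_\ast^2(\bm{Y})} = (1+\mathfrak{o}_{\mathbf{P}}(\eta_n))\sqrt{2}\pnorm{\Sigma_X}{F}\pnorm{\Sigma_Y}{F}/(\tau_X\tau_Y)$; (iii) deduce $T_n = \widetilde{T}_n(1+\mathfrak{o}_{\mathbf{P}}(\eta_n))$ on a high-probability event; (iv) apply Theorem \ref{thm:local_clt} to get $\widetilde{T}_n - A(\Sigma)/\sqrt{2} \approx \mathcal{N}(0,1)$ in Kolmogorov distance; (v) translate an approximation of $T_n$ by $\mathcal{N}(A(\Sigma)/\sqrt{2}, 1)$ into an approximation of $\E_\Sigma \Psi(\bm{X},\bm{Y};\alpha) = \Prob(|T_n|>z_{\alpha/2})$ by $\Prob(|\mathcal{N}(A(\Sigma)/\sqrt{2},1)|>z_{\alpha/2})$, using that the Gaussian density is bounded to absorb multiplicative perturbations of the threshold $z_{\alpha/2}$. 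Finally, (vi) the equivalence $A(\Sigma)/\sqrt{2} \sim n\dcor^2(X,Y)/\sqrt{2}$ follows directly from the mean expansion (\ref{intro:mean_expansion}) applied both jointly and marginally, up to a controllable error in the normal tail probability via the Lipschitz continuity of $t\mapsto \Prob(|\mathcal{N}(t,1)|>z_{\alpha/2})$.

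The main obstacle is squeezing out the claimed rate $(n\wedge p\wedge q)^{-1/7}$, which is weaker than the $1/6$ rate supplied by Theorem \ref{thm:local_clt}. The issue is that the local CLT error in Theorem \ref{thm:local_clt} is $\mathfrak{O}\big((1\vee A(\Sigma)^2)/(n\wedge p\wedge q)\big)^{1/6}$, which degrades when $A(\Sigma)$ is large. A case split is therefore needed. In the \emph{contiguous regime} where $A(\Sigma) \lesssim (n\wedge p\wedge q)^{\kappa}$ for some small $\kappa$, the local CLT supplies a usable bound and the multiplicative perturbation $\mathfrak{o}_{\mathbf{P}}(\eta_n)$ of the normalizer costs at most an $A(\Sigma)\cdot \eta_n$-shift in the mean, which can again be absorbed via the Lipschitz property. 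In the \emph{non-contiguous regime} where $A(\Sigma)$ is very large, both $\E_\Sigma \Psi(\bm{X},\bm{Y};\alpha)$ and the limit $\Prob(|\mathcal{N}(A(\Sigma)/\sqrt{2},1)| > z_{\alpha/2})$ are exponentially close to $1$: for the former, a Chebyshev-type argument using the non-null CLT (Theorem \ref{thm:non_null_clt}) together with the mean/variance expansions shows that rejection happens with probability $1 - \mathfrak{o}(1)$; for the latter, this is immediate from the Gaussian tail. Choosing the threshold $\kappa$ to balance the two regimes yields the rate $(n\wedge p\wedge q)^{-1/7}$ stated in the theorem.

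For the kernel extension, the kernel version of Theorem \ref{thm:local_clt} and the reduction identity $\dcov_\ast^2(\bm{X},\bm{Y};f,\gamma) \approx \varrho(\gamma)\dcov_\ast^2(\bm{X},\bm{Y})$ imply that both the numerator and the two marginal normalizers in the kernelized test $\Psi_{f,\gamma}(\bm{X},\bm{Y};\alpha)$ carry a common factor of $\varrho(\gamma)$ (after noting $\varrho(\gamma_X,\gamma_X)$ for the marginal $X$-version, etc.), and the ratio structure of the test statistic cancels these factors; the same argument as in the canonical case then applies verbatim, with the error constants absorbed into $C'(\alpha,M,f)$.
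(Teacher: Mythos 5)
Your proposal is correct and follows essentially the same route as the paper's proof: reduce the studentized statistic to the pivotal quantity via Lemma \ref{lem:ratio_dcov} and the mean expansion, apply the local CLT (Theorem \ref{thm:local_clt}) in the contiguous regime, use a Chebyshev-type mean/variance argument to show near-full power when $A(\Sigma)$ is large, and balance the two regimes (the paper trades $\rho^{1/3}(A)$ against $A^{-2}$) to obtain the $(n\wedge p\wedge q)^{-1/7}$ rate, with the kernel case handled by the $\varrho(\gamma)$ cancellation.
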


A direct message of the above theorem is that, interestingly, for a large class of kernels $f=(f_X,f_Y)$ and bandwidth parameters $\gamma = (\gamma_X,\gamma_Y)$, the generalized kernel distance correlation test $\Psi_{f,\gamma}(\bm{X},\bm{Y};\alpha)$ in (\ref{def:dist_cor_kernel}) exhibits exactly the same power behavior with the distance correlation test $\Psi(\bm{X},\bm{Y};\alpha)$ in (\ref{def:test_t}) in the high dimensional limit $n\wedge p\wedge q\to \infty$.  Here we have focused on deterministic choices of $\gamma_X,\gamma_Y$ merely for simplicity of exposition, but following \cite{zhu2020distance}, analogous results for data driven choices of $\gamma_X,\gamma_Y$ can also be proved with further concentration arguments, for instance for the popular choice 
\begin{align*}
\gamma_X&\equiv \mathrm{median}\big\{\pnorm{X_s-X_t}{}:s\neq t\big\},\quad \gamma_Y\equiv \mathrm{median}\big\{\pnorm{Y_s-Y_t}{}:s\neq t\big\}.
\end{align*}
We omit here formal developments along these lines.

The proof of Theorem \ref{thm:dcov_power} crucially depends on the local central limit theorem in Theorem \ref{thm:local_clt}. An interesting feature of Theorem \ref{thm:dcov_power} is that although one may expect that the power formula of the distance correlation test $\Psi(\bm{X},\bm{Y};\alpha)$ in (\ref{def:test_t}) and the generalized kernel distance correlation test $\Psi_{f,\gamma}(\bm{X},\bm{Y};\alpha)$ in (\ref{def:dist_cor_kernel}) involves the complicated expression of the variance $\bar{\sigma}_n^2$ in Theorem \ref{thm:non_null_clt}, in fact only the null variance plays a role as in Theorem \ref{thm:local_clt}. The main reason for this phenomenon to occur is due to the fact that \emph{the regime in which the local central theorem in Theorem \ref{thm:local_clt} with the null variance holds covers the entire local contiguity regime $\limsup A(\Sigma)<\infty$}. In other words:
\begin{itemize}
	\item In the contiguity regime $\limsup A(\Sigma)<\infty$, the ratio of the non-null variance and the null variance is asymptotically $1$, cf. (\ref{ineq:ratio_var_contiguity}). 
	\item In the large departure regime $A(\Sigma)\to \infty$, both the distance correlation test $\Psi(\bm{X},\bm{Y};\alpha)$ in (\ref{def:test_t}) and the generalized kernel distance correlation test $\Psi_{f,\gamma}(\bm{X},\bm{Y};\alpha)$ in (\ref{def:dist_cor_kernel}) achieve asymptotically full power. 
\end{itemize}
As a result, the `driving parameter' $n\dcor^2(X,Y)/\sqrt{2}$ (or equivalently $A(\Sigma)/\sqrt{2}$) in the power formula for the distance correlation test $\Psi(\bm{X},\bm{Y};\alpha)$ in (\ref{def:test_t}) and its kernelized version $\Psi_{f,\gamma}(\bm{X},\bm{Y};\alpha)$ in (\ref{def:dist_cor_kernel}), inherited from the local CLT in Theorem \ref{thm:local_clt} is in a similar form of the test itself, although its proof to reach such a conclusion is far from being obvious.

\subsection{Minimax optimality}
Theorems \ref{thm:dcov_power} directly implies a separation rate for the (generalized) distance correlation test in the Frobenius norm $\pnorm{\cdot}{F}$ in a minimax framework. To formulate this, for any $\zeta >0$, $M > 1$, and $\Sigma_0 \equiv \diag(\Sigma_X,\Sigma_Y)$, consider the alternative class
\begin{align*}
\Theta(\zeta,\Sigma_0;M)\equiv& \Big\{\Sigma=
\begin{pmatrix}
\Sigma_X & \Sigma_{XY}\\
\Sigma_{YX} & \Sigma_Y
\end{pmatrix}
\in\R^{(p+q)\times(p+q)}: \pnorm{\Sigma_{XY}}{F}^2 \geq \zeta\sqrt{pq}/n, M^{-1} \leq \lambda_{\min}(\Sigma) \leq \lambda_{\max}(\Sigma) \leq M\Big\}.
\end{align*}
A direct consequence of Theorem \ref{thm:dcov_power} is the following (for simplicity we only state the result for the distance correlation test $\Psi(\bm{X},\bm{Y};\alpha)$ in  (\ref{def:test_t})).
\begin{corollary}
	Fix $\alpha \in (0,1)$. Suppose that Assumption \ref{assump:sepa} holds. 
	Then there exists some constant $C=C(\alpha,M,Z_1)>0$ such that the distance correlation test (\ref{def:test_t}) satisfies
	\begin{align*}
	&\sup_{\Sigma \in \Theta(\zeta,\Sigma_0;M)} \big(\E_{\Sigma_0}\Psi(\bm{X},\bm{Y};\alpha) + \E_{\Sigma}(1-\Psi(\bm{X},\bm{Y};\alpha))\big) \leq \alpha+  C\bigg[e^{-\zeta^2/C}+\frac{1}{(n\wedge p\wedge q)^{1/7}}\bigg].
	\end{align*}
\end{corollary}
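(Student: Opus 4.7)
The plan is to reduce this corollary directly to Theorem \ref{thm:dcov_power} applied under both the null $\Sigma_0$ and a generic alternative $\Sigma \in \Theta(\zeta,\Sigma_0)$, followed by a routine Gaussian tail computation to convert the resulting mean-shift into the stated exponential bound in $\zeta$.

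For the Type I error under $\Sigma_0 = \diag(\Sigma_X,\Sigma_Y)$, we have $\Sigma_{XY}=0$, hence $m_n(\Sigma_0) = 0$ for the second choice of $m_n$ in Theorem \ref{thm:dcov_power}. That theorem therefore yields
\begin{align*}
\E_{\Sigma_0}\Psi(\bm{X},\bm{Y};\alpha) \leq \Prob\big(\abs{\mathcal{N}(0,1)} > z_{\alpha/2}\big) + \frac{C}{(n\wedge p\wedge q)^{1/7}} = \alpha + \frac{C}{(n\wedge p\wedge q)^{1/7}}.
\end{align*}

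For the Type II error under $\Sigma \in \Theta(\zeta,\Sigma_0)$, Theorem \ref{thm:dcov_power} gives
\begin{align*}
\E_{\Sigma}(1 - \Psi(\bm{X},\bm{Y};\alpha)) \leq \Prob\big(\abs{\mathcal{N}(m_n(\Sigma),1)} \leq z_{\alpha/2}\big) + \frac{C}{(n\wedge p\wedge q)^{1/7}},
\end{align*}
with $m_n(\Sigma) = n\pnorm{\Sigma_{XY}}{F}^2/(\sqrt{2}\pnorm{\Sigma_X}{F}\pnorm{\Sigma_Y}{F})$. The spectrum condition on $\Sigma$ forces $\pnorm{\Sigma_X}{F} \leq M\sqrt{p}$ and $\pnorm{\Sigma_Y}{F} \leq M\sqrt{q}$, which combined with the definition of $\Theta(\zeta,\Sigma_0)$ gives a uniform lower bound $m_n(\Sigma) \geq \zeta/(\sqrt{2} M^2)$. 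A standard Gaussian tail estimate then yields, for $m_n(\Sigma) \geq 2 z_{\alpha/2}$,
\begin{align*}
\Prob\big(\abs{\mathcal{N}(m_n(\Sigma),1)} \leq z_{\alpha/2}\big) \leq 2\Phi(z_{\alpha/2} - m_n(\Sigma)) \leq 2\exp\big(-m_n(\Sigma)^2/8\big) \leq 2 e^{-\zeta^2/(16 M^4)}.
\end{align*}
For the complementary range $\zeta \leq 2\sqrt{2}M^2 z_{\alpha/2}$, the probability is at most $1$, which is in turn dominated by $C e^{-\zeta^2/C}$ upon enlarging $C=C(\alpha,M)$ suitably so that $C e^{-\zeta^2/C} \geq 1$ throughout this bounded range.

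Summing the Type I and Type II bounds and taking supremum over $\Sigma \in \Theta(\zeta,\Sigma_0)$ yields the claimed estimate. There is no real obstacle here beyond the tail computation: the substantive content---the non-null CLT and its consequence for the distance correlation test---has already been packaged into Theorem \ref{thm:dcov_power}, and the alternative class is parameterized precisely so that the mean-shift $m_n(\Sigma)$ is bounded below by a fixed multiple of $\zeta$.
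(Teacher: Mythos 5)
Your proof is correct and is exactly the intended argument: the paper states this corollary as a direct consequence of Theorem \ref{thm:dcov_power}, and your reduction (null case giving size $\alpha$ up to the $(n\wedge p\wedge q)^{-1/7}$ error, alternative case giving $m_n(\Sigma)\geq \zeta/(\sqrt{2}M^2)$ via the spectrum bounds on $\pnorm{\Sigma_X}{F},\pnorm{\Sigma_Y}{F}$, followed by the standard Gaussian tail estimate and absorption of the bounded-$\zeta$ range into $Ce^{-\zeta^2/C}$) is precisely that direct consequence.
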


In particular, the above corollary shows that the distance correlation test (\ref{def:test_t}) gives a separation rate in $\pnorm{\cdot}{F}$ of order $(pq)^{1/4}/n^{1/2}$, i.e., the testing error (Type I $+$ Type II error) on the left side is bounded by any prescribed $\alpha$ for $\zeta\rightarrow\infty$ in the regime (\ref{def:hd_regime}). In view of the power universality derived in Theorem \ref{thm:dcov_power}, the above results continues to hold when the distance correlation test $\Psi(\bm{X},\bm{Y};\alpha)$ in (\ref{def:test_t}) is replaced by the generalized kernel distance correlation test $\Psi_{f,\gamma}(\bm{X},\bm{Y};\alpha)$ in (\ref{def:dist_cor_kernel}) under the assumption that Assumption \ref{assump:kernel} holds and $\rho_X,\rho_Y$ are contained in $[1/M,M]$ for some $M>1$.

The separation rate $(pq)^{1/4}/n^{1/2}$ in $\pnorm{\cdot}{F}$, as will be shown in the following theorem, cannot be improved in a minimax sense. While previous covariance testing literature has mostly focused on the likelihood-ratio test \cite{jiang2013central, jiang2015likelihood, qi2019limiting, dette2020likelihood, dornemann2023likelihood}, this implies that the (generalized) distance correlation tests (\ref{def:test_t}) and (\ref{def:dist_cor_kernel}) are rate-optimal in this minimax sense. We prove the lower bound in the special case of Gaussian distribution in (\ref{def:main_dist}).

\begin{theorem}\label{thm:minimax_lower}
	Suppose that $Z_1\stackrel{d}{=}\mathcal{N}(0,1)$, and $\sqrt{pq}/n\leq M$ for some $M>1$. Then for any small $\delta \in (0,1)$, there exists some positive constant $\zeta = \zeta(\delta,M)$ such that 
	\begin{align*}
	\inf_{\psi}\sup_{\Sigma \in \Theta(\zeta,\Sigma_0;M)} \big(\E_{\Sigma_0}\psi(\bm{X},\bm{Y}) + \E_{\Sigma}(1-\psi(\bm{X},\bm{Y}))\big)\geq 1-\delta,
	\end{align*}
	where the infimum is taken over all measurable test functions. 
\end{theorem}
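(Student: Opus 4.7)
The plan is to apply the Ingster--Suslina chi-squared method: construct a mixture prior $\pi$ supported on $\Theta(\zeta,\Sigma_0)$ and invoke
\[
\inf_{\psi}\sup_{\Sigma\in\Theta(\zeta,\Sigma_0)}\bigl(\E_{\Sigma_0}\psi + \E_\Sigma(1-\psi)\bigr) \geq 1 - \sqrt{\chi^2(P^n_\pi, P^n_{\Sigma_0})/2},
\]
reducing the task to exhibiting a prior with $\chi^2 \leq 2\delta^2$ for $\zeta = \zeta(\delta,M)$ small. The invertible congruence $(X_i,Y_i)\mapsto(\Sigma_X^{-1/2}X_i,\Sigma_Y^{-1/2}Y_i)$ reduces further to the isotropic null $\Sigma_0 = I_{p+q}$, converting the cross-covariance to $\tilde C = \Sigma_X^{-1/2}\Sigma_{XY}\Sigma_Y^{-1/2}$ and the Frobenius constraint to $\|\tilde C\|_F^2 \geq \zeta M^{-2}\sqrt{pq}/n$. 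I then take the Rademacher prior $\tilde C = c\,\epsilon$ with $c^2 = \zeta M^{-2}/(n\sqrt{pq})$ and $\epsilon \in \{\pm 1\}^{p\times q}$ i.i.d., conditioned on $E_0 := \{\|\epsilon\|_{\op}\leq 2(\sqrt{p}+\sqrt{q})\}$ (which holds with probability $1 - e^{-c_0(p+q)}$ for some $c_0>0$ by Rademacher matrix concentration). Then $\|\tilde C\|_F^2 = c^2 pq$ saturates the constraint deterministically, and $\|\tilde C\|_{\op}$ is $o(1)$ under $\sqrt{pq}/n\leq M$, so $\tilde \Sigma = I+\Delta$, with $\Delta$ block-antidiagonal carrying $\tilde C$, is positive definite.

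Thanks to the Gaussian structure, the chi-squared has a clean form. For a single observation, $\int p_{\tilde\Sigma}p_{\tilde\Sigma'}/p_I\,\d{z} = \det(I-\Delta\Delta')^{-1/2}$; the block structure $\Delta\Delta' = \diag(\tilde C\tilde C'^{\top},\tilde C^{\top}\tilde C')$ and the identity $\det(I_p - AB) = \det(I_q - BA)$ simplify this to $\det(I_p - \tilde C\tilde C'^{\top})^{-1}$, hence
\[
\chi^2(P^n_\pi, P^n_I) + 1 = \E_{\pi\otimes\pi}\bigl[\det(I_p - c^2\epsilon(\epsilon')^{\top})^{-n}\bigr].
\]
On $E_0\cap E_0'$, the inequality $-\log(1-x)\leq x+2x^2$ for $x\leq 1/2$ applied eigenvalue-wise yields the bound $\exp\bigl(nc^2\langle\epsilon,\epsilon'\rangle_F + 2nc^4\tr((\epsilon(\epsilon')^{\top})^2)\bigr)$. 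The leading term has a Rademacher MGF: conditioning on $\epsilon'$, using $|\epsilon'_{ik}|=1$ and $\cosh(t)\leq e^{t^2/2}$,
\[
\E\exp(nc^2\langle\epsilon,\epsilon'\rangle_F) = \cosh(nc^2)^{pq} \leq \exp\bigl(pq(nc^2)^2/2\bigr) = \exp\bigl(\zeta^2/(2M^4)\bigr),
\]
since $nc^2 = \zeta/(M^2\sqrt{pq})$. The quadratic correction $2nc^4\tr((\epsilon(\epsilon')^{\top})^2)$ has mean $2\zeta^2/(M^4 n) = O(1)$ and sub-exponential tails, contributing a $1+o(1)$ factor; the tail contribution from $(E_0\cap E_0')^c$ is absorbed via Cauchy--Schwarz against $\Prob(E_0^c) \leq e^{-c_0(p+q)}$. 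Consequently, $\chi^2 \leq e^{\zeta^2/(2M^4)}-1 + o(1) \leq 2\delta^2$ for $\zeta$ chosen small enough in $(\delta,M)$.

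The main technical obstacle I expect is the control on the tail event $(E_0\cap E_0')^c$: on this set, $\det(I-c^2\epsilon(\epsilon')^{\top})$ may fail to be positive and the Taylor-based bound collapses catastrophically. Conditioning the prior on $E_0$ resolves this by uniformly bounding $\|c\epsilon\|_{\op}$ while preserving both the Frobenius constraint and positive definiteness of $\tilde\Sigma$, at a cost of only a $(1-e^{-c_0(p+q)})^{-2}$ multiplicative factor in the conditional chi-squared; the remaining tail is then routine. A secondary care point is the concentration of the quadratic correction $\tr((\epsilon(\epsilon')^{\top})^2)$, which is handled via standard Hanson--Wright or direct moment arguments for Rademacher chaos.
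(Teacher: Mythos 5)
Your route is genuinely different from the paper's. The paper places a \emph{rank-one} prior $\Sigma_{XY}=a\tilde u\tilde v^\top$ with Rademacher-type $\tilde u,\tilde v$, computes the eigenvalues of the perturbed covariance and its inverse exactly (Lemma \ref{lem:perturb_eigen}), and arrives at the closed form $\int\phi_\Pi^2/\phi_I=\E\big[(1-a^2\iprod{u_1}{u_2}\iprod{v_1}{v_2})^{-n}\big]$, which is then bounded by Rademacher MGF estimates; no truncation event or log-determinant expansion is needed. You instead use a \emph{dense} i.i.d.\ Rademacher cross-covariance $c\,\epsilon$ conditioned on an operator-norm event, exploit the identity $\int p_{\tilde\Sigma}p_{\tilde\Sigma'}/p_I=\det(I-\Delta\Delta')^{-1/2}$ together with the block structure, and control the resulting $\det(\cdot)^{-n}$ by a Taylor expansion of the log-determinant. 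Both constructions saturate the same $\sqrt{pq}/n$ rate, and your leading-order computation $\cosh(nc^2)^{pq}\le\exp(\zeta^2/(2M^4))$ is correct; the paper's rank-one choice buys an exact affinity formula at the cost of an explicit eigenvalue lemma, while yours buys a simpler prior at the cost of remainder control.

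There is one concrete gap in that remainder control. The matrix $c^2\epsilon(\epsilon')^\top$ is not symmetric, so its eigenvalues are in general complex and the scalar inequality $-\log(1-x)\le x+2x^2$ cannot be ``applied eigenvalue-wise''; in particular the correction is \emph{not} governed by $\tr\big((\epsilon(\epsilon')^\top)^2\big)$, whose mean $pq$ gives your claimed $O(\zeta^2/n)$ contribution. The correct route is the trace power series $-\log\det(I-A)=\sum_{k\ge1}\tr(A^k)/k$ together with Schur's inequality $\sum_i|\lambda_i(A)|^2\le\pnorm{A}{F}^2$ applied to the \emph{smaller} Gram matrix $c^2(\epsilon')^\top\epsilon\in\R^{q\times q}$ (WLOG $q\le p$; Sylvester's identity lets you switch). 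This yields a remainder of order $nc^4\pnorm{(\epsilon')^\top\epsilon}{F}^2$, whose mean is $nc^4pq^2=\zeta^2M^{-4}q/n$ --- of constant order $O_M(\zeta^2)$ rather than $O(\zeta^2/n)$, and bounded only because $\min(p,q)\le\sqrt{pq}\le Mn$; had you kept the $p\times p$ Frobenius norm $\pnorm{\epsilon(\epsilon')^\top}{F}^2$ (mean $p^2q$), the term $\zeta^2 p/n$ would be unbounded in regimes like $q\ll n\ll p$. After this repair the conclusion still holds: the remainder's exponential moment at scale $n$ is controlled by Hanson--Wright conditionally on $E_0'$, and combined with your linear-term bound via Cauchy--Schwarz one gets $\chi^2+1\le e^{C_M\zeta^2}$, so choosing $\zeta=\zeta(\delta,M)$ small finishes the proof. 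So the strategy survives, but the quadratic-term accounting as written is wrong and must be replaced by the trace-series argument on the lower-dimensional Gram matrix.
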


The above theorem improves \cite[Theorem 1]{ramdas2016minimax} by requiring $\sqrt{pq}/n\lesssim 1$ rather than $(p+q)/n\lesssim 1$ therein. Note that this improvement in terms of a single condition on $\sqrt{pq}/n$ is particularly compatible with alternative class $\Theta(\zeta,\Sigma_0,M)$ defined above. 

The proof of Theorem \ref{thm:minimax_lower} follows a standard minimax reduction in that we only need to find a prior $\Pi$ on $\Theta(\zeta,\Sigma_0;M)$ with sufficient separation from $\Sigma_0$, while at the same time the chi-squared divergence between the posterior density corresponding to $\Pi$ and the density corresponding to $\Sigma_0$ is small. For $\Sigma_0=I$, the prior $\Pi$ we construct takes the form 
\begin{align*}
\Sigma_{u,v}(a) = 
\begin{pmatrix}
I_p & a\tilde{u}\tilde{v}^\top\\
a\tilde{v}\tilde{u}^\top & I_q
\end{pmatrix},
\end{align*}
with component-wise independent priors $\tilde{u}_i\sim_{\textrm{i.i.d.}}\sqrt{q}\cdot\textrm{Unif}\{\pm 1\}$ for $\tilde{u}\in \R^p$ and $\tilde{v}_j\sim_{\textrm{i.i.d.}}\sqrt{p}\cdot\textrm{Unif}\{\pm 1\}$ for $\tilde{v}\in \R^q$, and some $a>0$ to be chosen in the end. The calculations of the chi-squared divergence require an exact evaluation of the eigenvalues of certain inverse of $\Sigma_{u,v}(a)$, which eventually leads to a bound of order $a^4 n^2p^3q^3$. So under the constraint that the chi-squared distance is bounded by some sufficiently small constant, the maximal choice $a=a_\ast \asymp n^{-1/2}p^{-3/4}q^{-3/4}$ leads to a minimax separation rate in $\pnorm{\cdot}{F}^2$ of the order $\pnorm{\Sigma_{u,v}(a_\ast)-I}{F}^2\asymp \pnorm{a_\ast \tilde{u}\tilde{v}^\top}{F}^2 =a_\ast^2 \pnorm{\tilde{u}}{}^2\pnorm{\tilde{v}}{}^2=a_\ast^2 p^2q^2\asymp (pq)^{1/2}/n$. Details of the arguments can be found in Section \ref{section:proof_minimax} in the supplement.

\begin{remark}
	In Theorem \ref{thm:minimax_lower} above, the growth condition $\sqrt{pq}/n\leq M$ for some $M>1$ is similar to the condition `$p/n\leq M$' in the covariance testing literature (e.g. \cite{cai2013optimal}), under which the lower bound construction mentioned above is valid. Whether this condition can be removed (and similarly the condition in \cite{cai2013optimal}) remains open.
\end{remark}

\section{Simulation studies}\label{sec:simulation}

In this section, we perform a small scale simulation study to validate the theoretical results established in previous sections. We consider the balanced case $p = q$ under the following data-generating scheme: i.i.d. across $j \in [p]$, 
\begin{align}\label{eq:sim_setup}
(X_j, Y_j) \stackrel{d}{=} (\sqrt{\rho}Z_1 + \sqrt{1-\rho}Z_2, \sqrt{\rho}Z_1 + \sqrt{1-\rho}Z_3),
\end{align}
where $\rho\in(0,1)$ is the dependence parameter, and $Z_1,Z_2,Z_3$ are independent variables with mean zero and variance one. We carry out the simulation primarily in the case where $Z_1$-$Z_3$ are standard normal, which is a special case of (\ref{def:main_dist}) with $\Sigma_X = \Sigma_Y = I_p$ and $\Sigma_{XY} = \rho I_p$. Non-Gaussianity is examined in Figure \ref{fig:unif} below.

\begin{figure}[h]
	\begin{minipage}[b]{0.32\textwidth}
		\includegraphics[width=\textwidth]{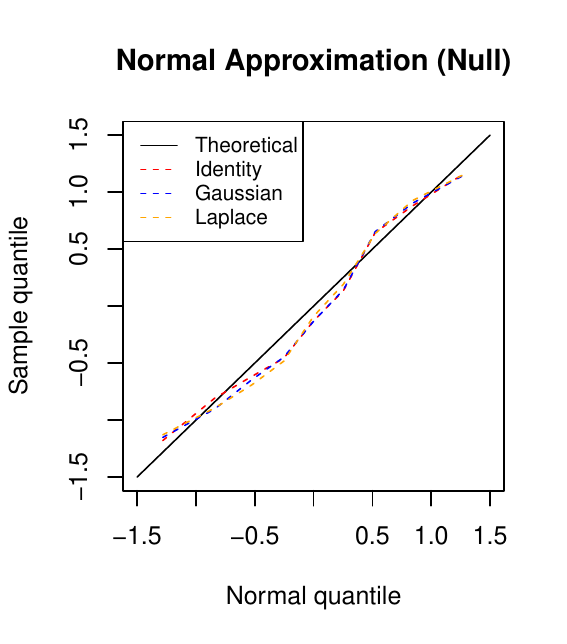}
	\end{minipage}
	\begin{minipage}[b]{0.32\textwidth}
		\includegraphics[width=\textwidth]{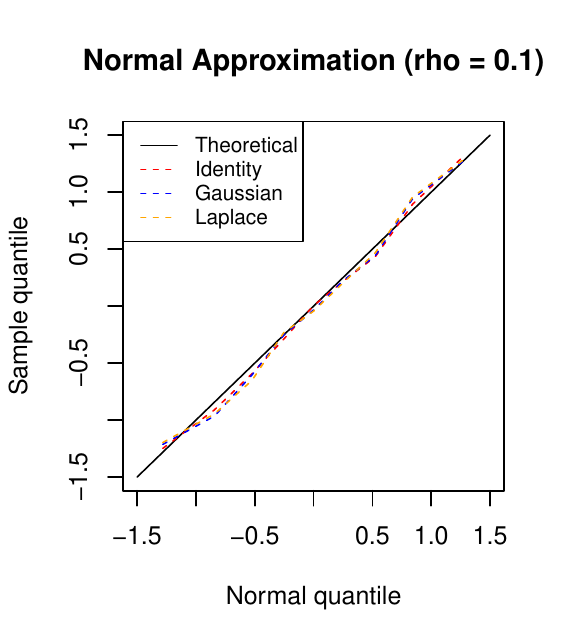}
	\end{minipage}
	\begin{minipage}[b]{0.32\textwidth}
		\includegraphics[width=\textwidth]{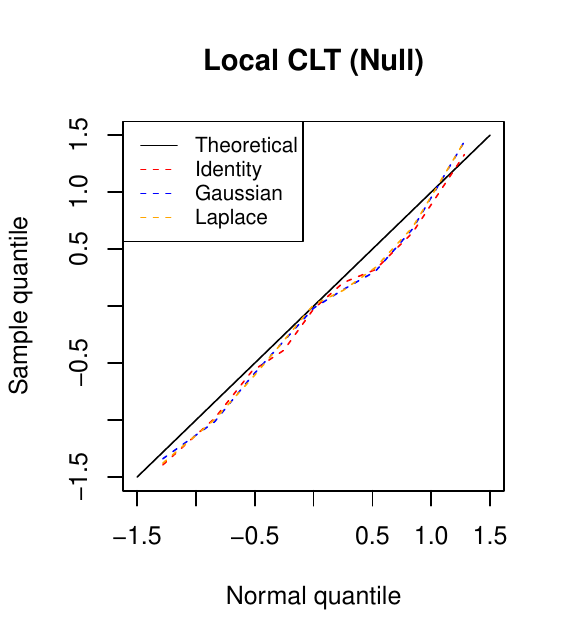}
	\end{minipage}
	\caption{Verification of CLTs. Solid lines correspond to the standard normal quantiles, and dashed lines correspond to sample quantiles with the identity, Gaussian, and Laplace kernels, respectively. Simulation parameters: $(n,p,q) = (1000,100,100)$, $B = 200$ replications, bandwidth choices $\rho_X = \rho_Y = \sqrt{2}$ for both Gaussian and Laplace kernels.}
	\label{fig:gau_clt}
\end{figure}

We start by verifying the CLTs derived in Theorems \ref{thm:non_null_clt} and \ref{thm:local_clt} in Figure \ref{fig:gau_clt}. We take $\rho = 0$ for the null case and $\rho = 0.1$ for the non-null case, and compare the normal quantiles with the corresponding sample quantiles. Normal approximation appears to be accurate in all three cases. 

\begin{figure}[h]
	\begin{minipage}[b]{0.32\textwidth}
		\includegraphics[width=\textwidth]{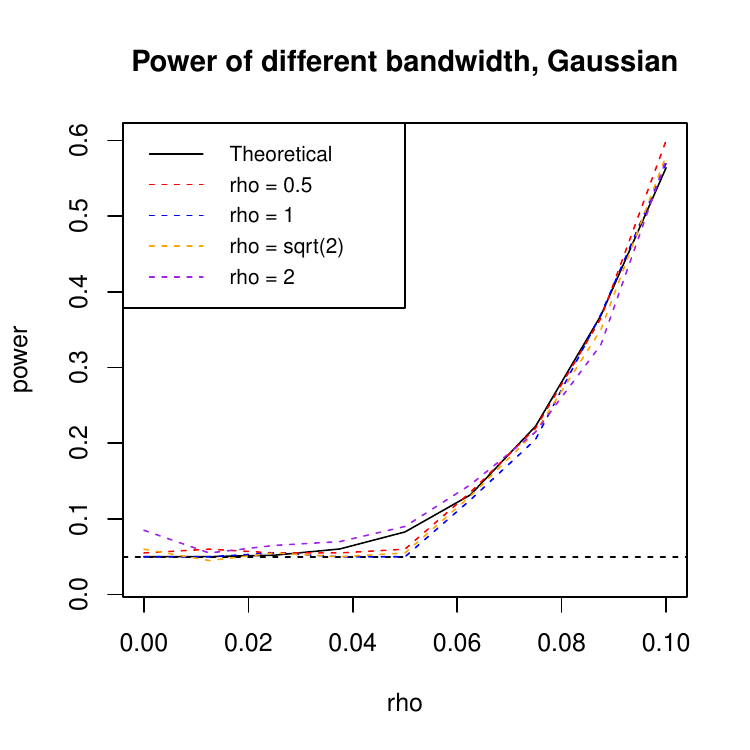}
	\end{minipage}
	\begin{minipage}[b]{0.32\textwidth}
		\includegraphics[width=\textwidth]{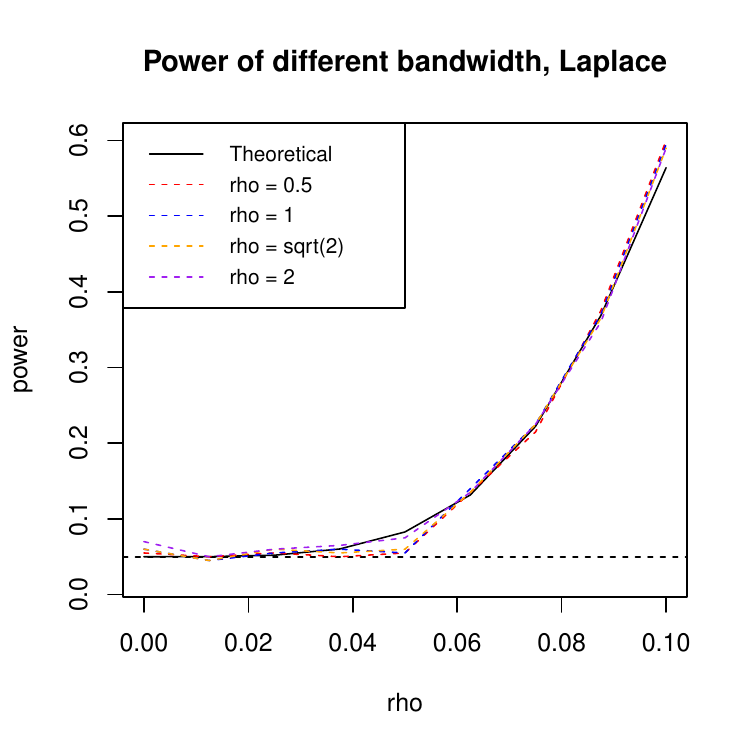}
	\end{minipage}
	\begin{minipage}[b]{0.32\textwidth}
		\includegraphics[width=\textwidth]{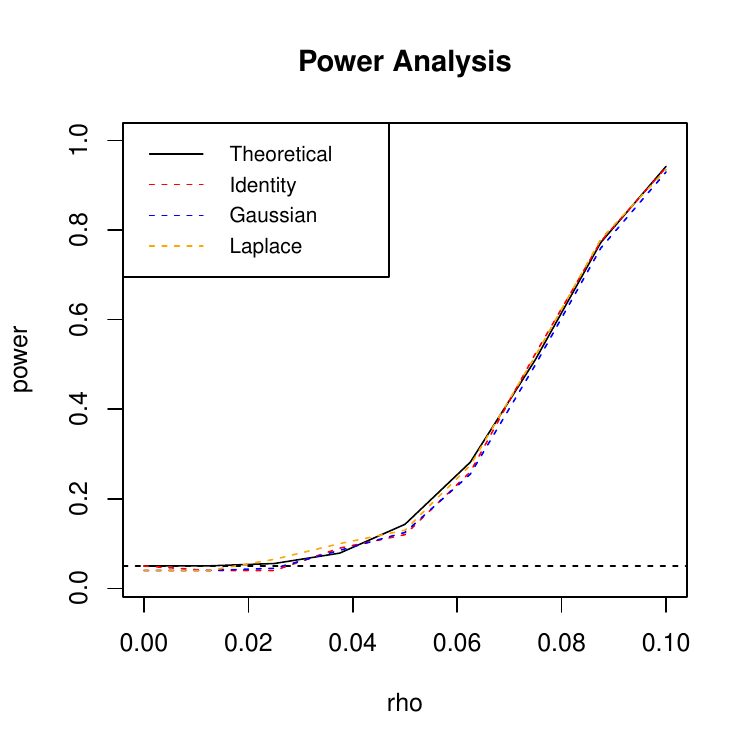}
	\end{minipage}
	\caption{Verification of power universality in choice of bandwidth parameter (left and middle) and choice of kernel (right). Solid lines correspond to the standard normal quantiles, and dashed lines correspond to sample quantiles.} 
	\label{fig:gau_universality}
\end{figure}

Figure \ref{fig:gau_universality} verifies power universality demonstrated via Theorem \ref{thm:dcov_power} in two aspects: (i) the choice of kernel; (ii) the choice of bandwidth parameters $\gamma_X,\gamma_Y$ when using the Gaussian and Laplace kernels. The first two figures illustrates the second point, where the Gaussian and Laplace kernels are used with different bandwidth parameters $\rho_X = \rho_Y \in\{0.5,1,\sqrt{2},5\}$. The third figure uses a fixed bandwidth $\rho_X = \rho_Y = \sqrt{2}$ for both Gaussian and Laplace kernels and compares the performances of different kernels. 

\begin{figure}[h]
	\begin{minipage}[b]{0.32\textwidth}
		\includegraphics[width=\textwidth]{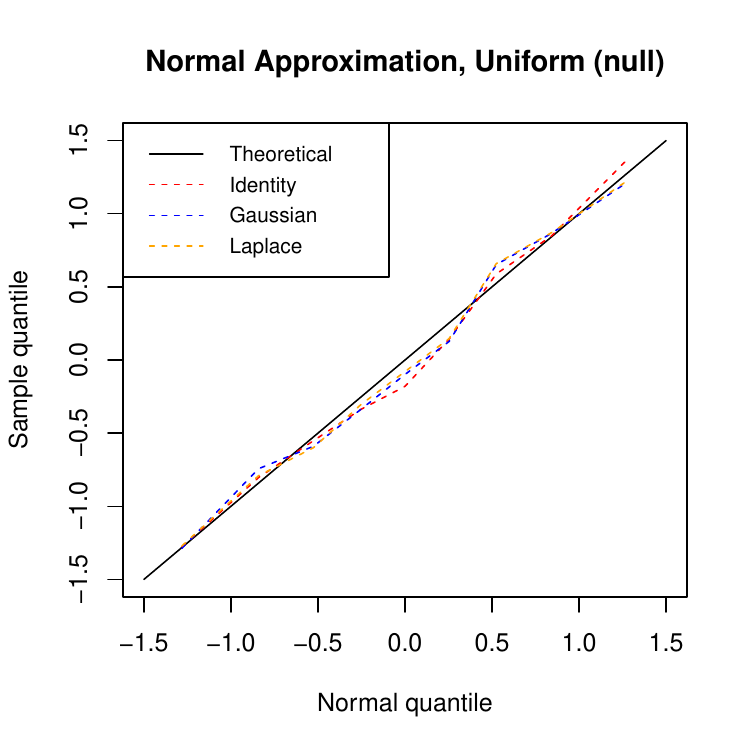}
	\end{minipage}
	\begin{minipage}[b]{0.32\textwidth}
		\includegraphics[width=\textwidth]{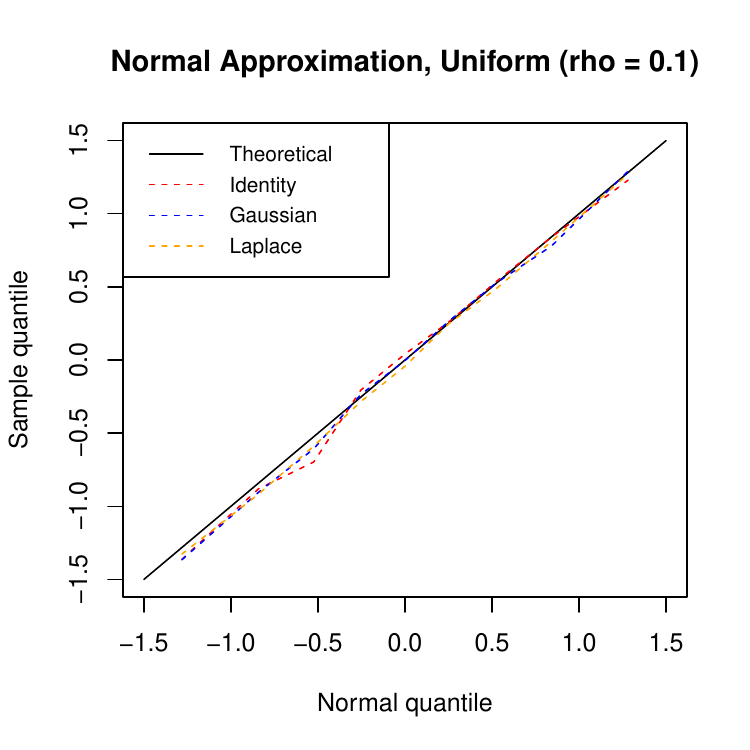}
	\end{minipage}
	\begin{minipage}[b]{0.32\textwidth}
		\includegraphics[width=\textwidth]{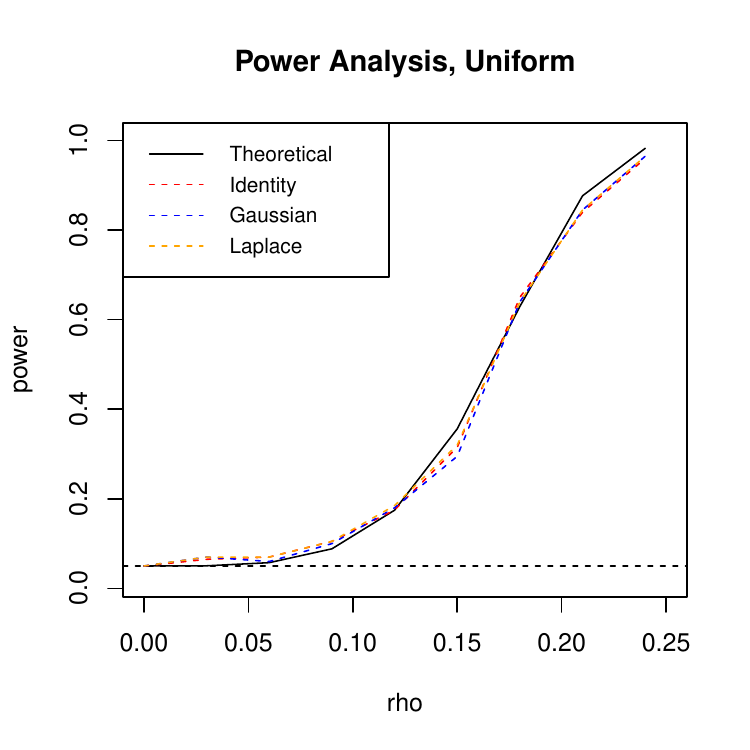}
	\end{minipage}
	
	\begin{minipage}[b]{0.32\textwidth}
		\includegraphics[width=\textwidth]{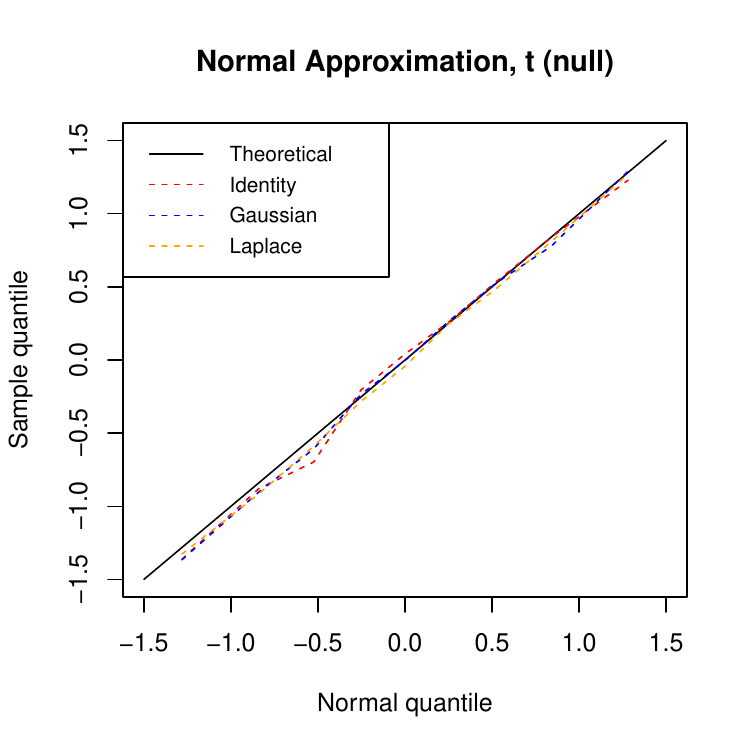}
	\end{minipage}
	\begin{minipage}[b]{0.32\textwidth}
		\includegraphics[width=\textwidth]{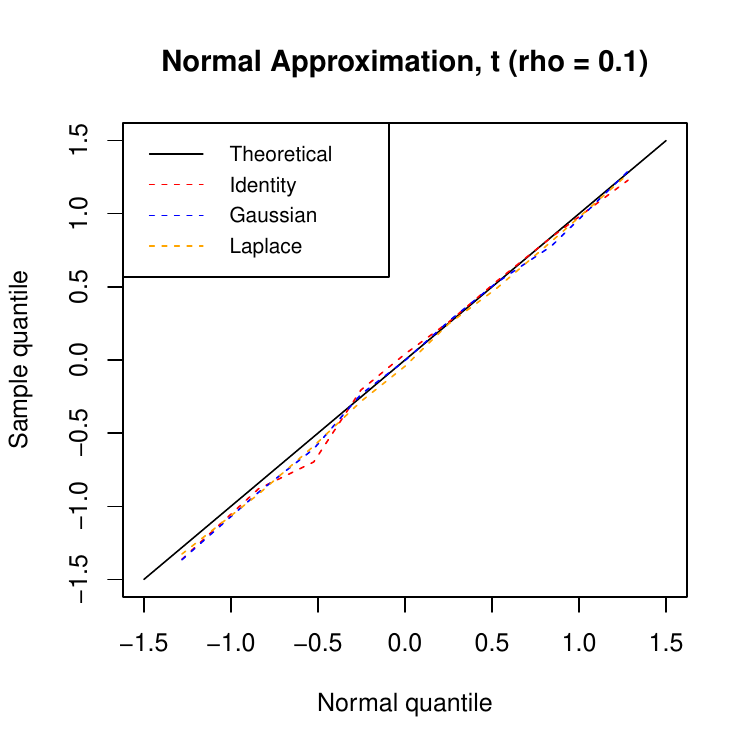}
	\end{minipage}
	\begin{minipage}[b]{0.32\textwidth}
		\includegraphics[width=\textwidth]{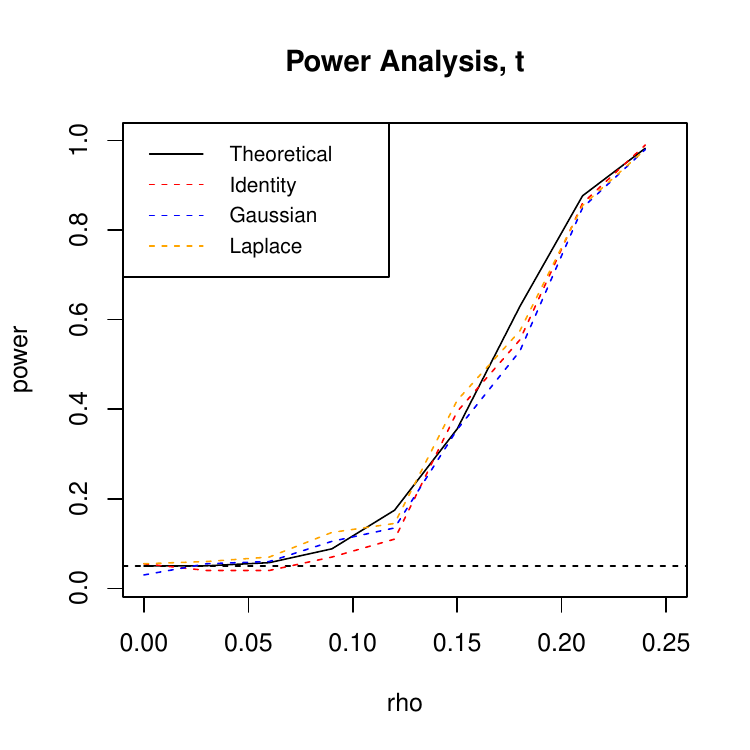}
	\end{minipage}
	\caption{Verification of CLTs and power expansion for uniform (top 3 figures) and $t$- (bottom 3 figures) distributed data. Simulation parameters: $(n,p,q) = (100,100,100)$, $B = 200$ replications, bandwidth choices $\rho_X = \rho_Y = \sqrt{2}$ for both Gaussian and Laplace kernels.} 
	\label{fig:unif}
\end{figure}

Finally we examine the robustness of our theory for non-Gaussian data. We take two choices for $Z_1$-$Z_3$ in the set-up (\ref{eq:sim_setup}): (i) uniform distribution on $[-\sqrt{3},\sqrt{3}]$; (ii) $t$-distribution with $4$ degrees of freedom scaled by $\sqrt{2}$. These parameters are chosen such that $Z_1$-$Z_3$ have mean zero and variance one. Normal approximation and power universality are examined in Figure \ref{fig:unif} for the uniform distribution and the (rescaled) $t$-distribution. These figures suggest that our theory continues to hold for a broader class of data distributions.

\section{Concluding remarks and open questions}\label{sec:discussion}

In this paper, we establish in Theorem \ref{thm:non_null_clt} a general non-null central limit theorem (CLT) for the sample distance covariance $\dcov_\ast^2(\bm{X},\bm{Y})$ in the high dimensional regime $n\wedge p\wedge q \to \infty$ under a separable covariance structure of $(X,Y)$ and a spectral condition on its covariance $\Sigma$. The non-null CLT then applies to obtain a first-order power expansion for the distance correlation test $\Psi(\bm{X},\bm{Y};\alpha)$ in (\ref{def:test_t}):
\begin{align}\label{ineq:power_conc}
\E \Psi(\bm{X},\bm{Y};\alpha) \sim  \Prob\bigg(\biggabs{\mathcal{N}\bigg(\frac{n\dcor^2(X,Y)}{\sqrt{2}},1\bigg) }>z_{\alpha/2}\bigg).
\end{align}
The non-null CLT and the power expansion (\ref{ineq:power_conc}) are also established for a more general class of Hilbert-Schmidt kernel distance covariance, and the associated generalized kernel distance correlation test $\Psi_{f,\gamma}(\bm{X},\bm{Y};\alpha)$ in (\ref{def:dist_cor_kernel}), under mild conditions on the kernels and the bandwidth parameters. This result in particular implies that the generalized kernel distance correlation test admits a universal power behavior with respect to a wide range of choices of kernels and bandwidth parameters.

An important open question is the universality of the power expansion formula (\ref{ineq:power_conc}) and the non-null CLT with respect to more general distributions of $(X,Y)$. The first question of power universality, recorded below, is motivated by the fact that the power expansion formula in the form of (\ref{ineq:power_conc}) does not explicitly involve any specific form of $(X,Y)$ assumed in (\ref{def:main_dist}).

\begin{problem}
	Establish the universality of (\ref{ineq:power_conc}) for general data distributions. 
\end{problem}

Note that in this paper we established (\ref{ineq:power_conc}) by a precise mean and variance expansion for the sample distance covariance $\dcov_\ast^2(X,Y)$. This is the place where the specific form of the data generating distribution in (\ref{def:main_dist}) is crucially used. As a preliminary step towards both power asymptotics and non-null CLT, it is therefore of great interest to investigate:

\begin{problem}
	Obtain (asymptotic) mean and variance formulae for $\dcov_\ast^2(\bm{X},\bm{Y})$ for general data distributions in the \emph{entire} high dimensional regime $n\wedge p\wedge q\to \infty$. 
\end{problem}

In principle, one can obtain `some' mean and variance formulae for general data distributions by expanding the square root $\pnorm{X_\ell-X_k}{},\pnorm{Y_\ell-Y_k}{}$ to further sufficient many `higher-order terms' in (\ref{ineq:sqrt_norm_expansion}) below. However, it seems likely this approach will suffer from significant deficiencies in certain regimes within $n\wedge p\wedge q\to \infty$ as a cost of handling the `residual term' of the highest order. In fact, even with the current distributional form in (\ref{def:main_dist}), it is already a fairly complicated task to handle the residual terms sharply enough to allow a mean and variance expansion in the entire regime $n\wedge p\wedge q \to \infty$; see Section \ref{section:proof_outline} below for an outline of the complications involved.  A new approach may be needed for this problem.

\section{Proof road-map of Theorem \ref{thm:non_null_clt}}\label{section:proof_outline}
We give a road-map for the proof of the main Theorem \ref{thm:non_null_clt}. The basic strategy is to identify the `main terms' in the Hoeffding decomposition of the 4-th order $U$-statistics representation in Proposition \ref{prop:hoef_decomp}. An immediate problem is that the $U,V$ functions in (\ref{def:UV}) involve the square root of the squared $\ell_2$ norm which causes differentiability problems. A simple idea is to use the expansion
\begin{align}\label{ineq:sqrt_norm_expansion}
\frac{\pnorm{X_1-X_2}{}}{\tau_X} &= \bigg(1+\frac{\pnorm{X_1-X_2}{}^2-\tau_X^2}{\tau_X^2}\bigg)^{1/2}\approx \frac{\pnorm{X_1-X_2}{}^2-\tau_X^2}{2\tau_X^2},\nonumber\\
\frac{\pnorm{Y_1-Y_2}{}}{\tau_Y} &= \bigg(1+\frac{\pnorm{Y_1-Y_2}{}^2-\tau_Y^2}{\tau_Y^2}\bigg)^{1/2}\approx \frac{\pnorm{Y_1-Y_2}{}^2-\tau_Y^2}{2\tau_Y^2},
\end{align} 
as the fluctuation of $\pnorm{X_1-X_2}{}^2$ (resp. $\pnorm{Y_1-Y_2}{}^2$) around $\tau_X^2$ (resp. $\tau_Y^2$) is expected to be of smaller order than $\tau_X^2$ (resp. $\tau_Y^2$) in high dimensions. Proceeding with this heuristic, with some book-keeping calculations, we may obtain the following approximation of $U,V$ functions:
\begin{align}\label{ineq:UV_approx}
U(x_1,x_2)\approx-x_1^\top x_2/\tau_X, \quad V(y_1,y_2)\approx -y_1^\top y_2/\tau_Y.
\end{align}
Now if we replace $U,V$ in the $k$ function defined in (\ref{eqn:kernel_rep2}) with the above approximation (\ref{ineq:UV_approx}), the approximate first- and second-order kernels $g_{1,\ast},g_{2,\ast}$ associated with the $k$ function may be computed as follows:
\begin{align*}
g_{1,\ast}(x_1,y_1) &\equiv \frac{1}{2\tau_X\tau_Y}\bigg[\big(x_1^\top\Sigma_{XY}y_1 - \pnorm{\Sigma_{XY}}{F}^2\big)\bigg],\\
g_{2,\ast}\big((x_1,y_1),(x_2,y_2) \big) & \equiv \frac{1}{6\tau_X\tau_Y}\bigg[\Big(x_1^\top x_2 y_1^\top y_2 - x_1^\top \Sigma_{XY} y_1-x_2^\top \Sigma_{XY} y_2 + \pnorm{\Sigma_{XY}}{F}^2\Big)\nonumber\\
&\qquad -\Big(x_1^\top \Sigma_{XY} y_2+x_2^\top \Sigma_{XY} y_1\Big) \bigg].
\end{align*}
Although the heuristic so far seems plausible, it turns out that the above approximation falls short of fully capturing the behavior of the sample distance covariance, even pretending that the effect of higher order kernels can be neglected. In fact, the approximation (\ref{ineq:UV_approx}) is not good enough, in a somewhat subtle way, in the entire high-dimensional regime $p\wedge q \to \infty$: The first-order kernel $g_{1,\ast}$ requires the following correction
\begin{align*}
&\tilde{g}_{1,\ast}(x_1,y_1)\equiv g_{1,\ast}(x_1,y_1)\\
&\qquad\qquad - \frac{1}{2\tau_X\tau_Y}\bigg[\frac{\pnorm{\Sigma_{XY}}{F}^2 }{2\tau_X^2}(\pnorm{x_1}{}^2-\tr(\Sigma_X))+\frac{\pnorm{\Sigma_{XY}}{F}^2 }{2\tau_Y^2}(\pnorm{y_1}{}^2-\tr(\Sigma_Y))\bigg],
\end{align*}
whereas, interestingly, no correction is required for the second-order kernel $g_{2,\ast}$. The underlying reason for the correction terms in the first-order kernel appears to be non-negligible interaction of the approximation of $U,V$ in (\ref{ineq:sqrt_norm_expansion}), while such interaction is of a strict smaller order in the second-order kernel approximation. In fact, the correction terms in $\tilde{g}_{1,\ast}(x_1,y_1)$ above contributes to the difficult terms of negative signs in the first-order variance $\bar{\sigma}_{n,1}^2(X,Y)$ in Theorem \ref{thm:non_null_clt}. As a consequence, the variance of $g_{1,\ast}$ and $\tilde{g}_{1,\ast}$ are of the same order but not asymptotically equivalent. Of course, at this point there is no apriori reason to explain why the correction terms must take this form---they come out of exact calculations. 

From here, a road-map of the proof of Theorem \ref{thm:non_null_clt} can be outlined:
\begin{enumerate}
	\item Derive sharp enough estimates for the approximation errors of $U,V$ in (\ref{ineq:UV_approx}) and their interactions. This will be detailed in Section \ref{section:res_mean}. These sharp enough estimates will immediately give a mean expansion for the sample distance covariance in Theorem \ref{thm:dcov_expansion}. 
	\item Using the estimates in (1), validate that the corrected first-order kernel $\tilde{g}_{1,\ast}$ and the vanilla second-order kernel $g_{2,\ast}$ are indeed `good enough main terms' to approximate the sample distance covariance. This is done via variance considerations detailed in Section \ref{section:hoeffding_decomp}. As a result, a sharp variance expansion of the sample distance covariance is obtained in Theorem \ref{thm:variance_expansion}.
	\item Using the mean and variance expansion established in (1)-(2), we establish a non-null CLT for the `good enough main terms' involving the kernels $\tilde{g}_{1,\ast}$ and $g_{2,\ast}$. The main tool is Chatterjee's discrete second-order Poincar\'e inequality \cite{chatterjee2008new}. This is accomplished in Section \ref{section:clt_truncated_dcov}.
\end{enumerate}
Finally Section \ref{sec:proof_clts} assemblies all these steps to complete the proof for Theorem \ref{thm:non_null_clt}. In the next Section \ref{section:preliminaries} below, we record some further notations and preliminary results that will be used throughout the proofs.

\section{Proof preliminaries}\label{section:preliminaries}

\subsection{$G_{[\cdot\cdot]}$ and $H_{[\cdot\cdot]}$}
Recall the definition of the matrices $G_{[\cdot\cdot]},H_{[\cdot\cdot]}$ in (\ref{def:GH}). We summarize some basic properties of these matrices below.
\begin{lemma}\label{lem:property_GH}
	The following hold.
	\begin{enumerate}
		\item $H_{[11]}^2 = G_{[11]}, H_{[22]}^2=G_{[22]}$, $H_{[11]}H_{[22]}=G_{[12]}$, $H_{[22]}H_{[11]}=G_{[21]}$.
		\item $\pnorm{G_{[11]}}{F}^2= \pnorm{\Sigma_X^2}{F}^2$, $\pnorm{G_{[22]}}{F}^2= \pnorm{\Sigma_Y^2}{F}^2$, $\pnorm{G_{[12]}}{F}^2=\pnorm{G_{[21]}}{F}^2= \pnorm{\Sigma_{XY}\Sigma_{YX}}{F}^2$.
		\item $\tr(G_{[11]}) = \pnorm{\Sigma_X}{F}^2$, $\tr(G_{[22]}) = \pnorm{\Sigma_Y}{F}^2$, $\tr(G_{[12]})=\tr(G_{[21]})=\pnorm{\Sigma_{XY}}{F}^2$.
		\item $\pnorm{H_{[11]}}{F}^2= \pnorm{\Sigma_X}{F}^2$, $\pnorm{H_{[22]}}{F}^2= \pnorm{\Sigma_Y}{F}^2$, $\pnorm{H_{[12]}}{F}^2=\pnorm{H_{[21]}}{F}^2= \pnorm{\Sigma_{XY}}{F}^2$.
		\item $\tr(H_{[11]}) = \tr(\Sigma_X)$, $\tr(H_{[22]}) = \tr(\Sigma_Y)$, $\tr(G_{[12]})=\tr(G_{[21]})=\tr(\Sigma_{XY})$.
		\item $\tr(G_{[11]}G_{[22]})=\tr(G_{[12]}G_{[21]})=\tr(\Sigma_X\Sigma_{XY}\Sigma_Y\Sigma_{YX})$.
	\end{enumerate}
\end{lemma}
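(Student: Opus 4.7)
The plan is to prove all six items via routine block-matrix algebra, organized around a single workhorse identity. First, using the diagonal form of the projections $I_{[11]}=\diag(I_p,0)$, $I_{[22]}=\diag(0,I_q)$ together with the block form of $\Sigma$, I would verify by a direct $2\times 2$ block multiplication that
\begin{align*}
I_{[ii]}\,\Sigma\,I_{[jj]} = \Sigma_{[ij]}, \qquad (i,j)\in\{1,2\}^2.
\end{align*}
Item (1) is then immediate by inserting this into $H_{[ii]}H_{[jj]} = \Sigma^{1/2}(I_{[ii]}\Sigma I_{[jj]})\Sigma^{1/2}$: for example $H_{[11]}^2 = \Sigma^{1/2}\Sigma_{[11]}\Sigma^{1/2} = G_{[11]}$ and $H_{[11]}H_{[22]} = \Sigma^{1/2}\Sigma_{[12]}\Sigma^{1/2} = G_{[12]}$, with the symmetric versions following from $I_{[ij]}^\top=I_{[ji]}$.

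For items (2)--(6), my strategy is to combine item (1) with the cyclic property of the trace and the symmetry of $G_{[ii]}$, $H_{[ii]}$, reducing each norm or trace to a single trace of a product of the four blocks of $\Sigma$. For instance, $\pnorm{G_{[11]}}{F}^2 = \tr(H_{[11]}^4) = \tr((I_{[11]}\Sigma)^4)$, and an easy induction shows that $(I_{[11]}\Sigma)^k$ has $\Sigma_X^k$ as its $(1,1)$ block with vanishing bottom row, so its trace equals $\tr(\Sigma_X^k)$; the cases $k=2$ and $k=4$ recover the diagonal pieces of (3) and (2) respectively. The cross-term identities such as $\pnorm{G_{[12]}}{F}^2=\pnorm{\Sigma_{XY}\Sigma_{YX}}{F}^2$ and $\tr(G_{[12]})=\pnorm{\Sigma_{XY}}{F}^2$ follow after using item (1) to rewrite products like $G_{[12]}G_{[21]} = H_{[11]}G_{[22]}H_{[11]}$ and then invoking cyclic trace. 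Item (6) is obtained in the same spirit: a cyclic manipulation reduces $\tr(G_{[11]}G_{[22]})$ to $\tr(\Sigma_{[11]}\Sigma\Sigma_{[22]}\Sigma)$, which a one-step block expansion identifies as $\tr(\Sigma_X\Sigma_{XY}\Sigma_Y\Sigma_{YX})$; the equality $\tr(G_{[11]}G_{[22]}) = \tr(G_{[12]}G_{[21]})$ is another application of item (1) combined with cyclic trace. Items (4) and (5) for the remaining entries are handled analogously.

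There is essentially no obstacle here: the lemma is a collection of bookkeeping identities. The only care needed is to apply the cyclic trace property in the correct order so that adjacent $\Sigma^{1/2}$ factors recombine into full $\Sigma$'s, after which each identity collapses to a trace of a product of the four blocks $\Sigma_X$, $\Sigma_Y$, $\Sigma_{XY}$, $\Sigma_{YX}$.
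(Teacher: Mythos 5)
Your overall strategy coincides with the paper's: the paper simply asserts that the claims follow from direct calculation and omits the details, and your workhorse identity $I_{[ii]}\,\Sigma\,I_{[jj]}=\Sigma_{[ij]}$ together with the block form of $(I_{[11]}\Sigma)^k$ does dispose of item (1), of all the diagonal-block identities in (2)--(5), and of item (6) exactly as you describe.

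However, there is a concrete problem with the off-diagonal part of item (2). Your own chain of reductions gives
\begin{align*}
\pnorm{G_{[12]}}{F}^2=\tr\big(G_{[12]}^\top G_{[12]}\big)=\tr\big(G_{[21]}G_{[12]}\big)
=\tr\big(H_{[22]}H_{[11]}^2H_{[22]}\big)=\tr\big(G_{[11]}G_{[22]}\big),
\end{align*}
which by your (correct) item (6) equals $\tr(\Sigma_X\Sigma_{XY}\Sigma_Y\Sigma_{YX})$ --- not $\pnorm{\Sigma_{XY}\Sigma_{YX}}{F}^2=\tr\big((\Sigma_{XY}\Sigma_{YX})^2\big)$ as the lemma states. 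These two quantities genuinely differ: already for $p=q=1$ with $\Sigma_X=a$, $\Sigma_Y=b$, $\Sigma_{XY}=c$ one gets $abc^2$ versus $c^4$, which agree only when $\Sigma$ is singular. So the identity you claim ``follows'' is in fact false as an exact statement (the two sides are merely comparable up to constants under the paper's bounded-spectrum hypothesis, which is all that is ever used downstream), and your proposed derivation, carried out honestly, contradicts it rather than proves it; you should record the corrected value $\pnorm{G_{[12]}}{F}^2=\tr(\Sigma_X\Sigma_{XY}\Sigma_Y\Sigma_{YX})$ or explicitly flag the discrepancy. A related bookkeeping slip sits in item (5), whose final equality $\tr(G_{[12]})=\tr(\Sigma_{XY})$ contradicts item (3) and is evidently meant to read $\tr(H_{[12]})=\tr(\Sigma_{XY})$; a proof that ``handles (5) analogously'' without noticing this cannot be literally correct either.
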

\begin{proof}
	These claims follow from direct calculations so we omit the details.
\end{proof}

The following lemma will be useful in the second moment part of Proposition \ref{prop:res_psi_bound} ahead.
\begin{lemma}\label{lem:GH_hadamard}
Suppose that the spectrum of $\Sigma$ is contained in $[M^{-1},M]$ for some $M>1$, then the following hold.
\begin{enumerate}
\item $\tr(G_{[12]}\circ G_{[12]}) \vee \tr(H_{[11]}^2\circ H_{[22]}^2) \vee \pnorm{H_{[11]}\circ H_{[22]}}{F}^2 \lesssim_M \pnorm{\Sigma_X}{F}^2\pnorm{\Sigma_{Y}}{F}^2/(\tau_X\wedge \tau_Y)^2$.
\item $\pnorm{G_{[11]}\circ G_{[22]}}{F} \lesssim_M \pnorm{\Sigma_{XY}}{F}^2$.
\end{enumerate}
\end{lemma}
\begin{proof}[Proof of Lemma \ref{lem:GH_hadamard}]
See Appendix \ref{appendix:proof_prel}.
\end{proof}

\subsection{The function $h$}
Let for $u>-1$
\begin{align}\label{def:h}
h(u) &\equiv \sqrt{1+u}-1-\frac{u}{2} = -\frac{u^2}{4} \int_0^1 \frac{(1-s)}{(1+su)^{3/2}}\,\d{s}.
\end{align}
We summarize below some basic properties of $h$.
\begin{lemma}\label{lem:property_h_fcn}
	We have $\abs{h(u)}\lesssim u^2$ and $\abs{h'(u)}\lesssim {\abs{u}}/{(1+u)^{1/2}}$.
	Furthermore,
	\begin{align*}
	h(u)&=-\frac{u^2}{8}+ u^3 \int_0^1 \frac{3(1-s)^2}{16(1+su)^{5/2}}\,\d{s}\\
	&=-\frac{u^2}{8}+\frac{u^3}{16}-u^4 \int_0^1 \frac{5(1-s)^3}{32(1+su)^{7/2}}\,\d{s}\equiv -\frac{u^2}{8}+h_3(u).
	\end{align*}
\end{lemma}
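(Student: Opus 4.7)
The plan is to prove the three claims separately, all via elementary calculus. For the bound $|h(u)| \lesssim u^2$, I would rationalize:
$$h(u) = \sqrt{1+u} - \bigl(1 + u/2\bigr) = \frac{(1+u) - (1 + u/2)^2}{\sqrt{1+u} + 1 + u/2} = \frac{-u^2/4}{\sqrt{1+u} + 1 + u/2}.$$
For $u > -1$ the denominator is bounded below by $1 + u/2 > 1/2$, yielding $|h(u)| \leq u^2/2$; note that the integral representation in (\ref{def:h}) alone is not sharp enough near $u = -1$, so this algebraic rewrite is what carries the argument uniformly on $(-1,\infty)$. For the derivative bound, I would differentiate directly to get $h'(u) = \frac{1}{2\sqrt{1+u}} - \frac{1}{2} = \frac{1-\sqrt{1+u}}{2\sqrt{1+u}}$, and rationalize once more to obtain
$$h'(u) = \frac{-u}{2\sqrt{1+u}\,(1 + \sqrt{1+u})}.$$
Since $1 + \sqrt{1+u} \geq 1$ for $u > -1$, this gives $|h'(u)| \leq |u|/(2\sqrt{1+u})$, which is the desired estimate.

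For the two higher-order expansions, the approach is Taylor's theorem with integral remainder applied to $f(u) = \sqrt{1+u}$ at orders $3$ and $4$. One computes $f''(0) = -1/4$, $f'''(0) = 3/8$, together with $f'''(u) = \tfrac{3}{8}(1+u)^{-5/2}$ and $f^{(4)}(u) = -\tfrac{15}{16}(1+u)^{-7/2}$, and substitutes these into
$$f(u) = \sum_{k=0}^{n-1} \frac{f^{(k)}(0)}{k!}\,u^k + \frac{u^n}{(n-1)!}\int_0^1 (1-s)^{n-1} f^{(n)}(su)\,\d{s}$$
for $n=3$ and $n=4$; the two displayed identities then drop out immediately after subtracting $1 + u/2$. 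An equivalent and slightly more organic route is to iterate integration by parts directly on the order-$2$ remainder already written in (\ref{def:h}): choosing $dg = (1-s)^k\,ds$ at each step moves a power of $(1-s)$ onto the antiderivative and differentiates the $(1+su)^{-\ell/2}$ factor, so the boundary terms generate the polynomial coefficients $-u^2/8$ and $+u^3/16$, while the surviving integrals are precisely those appearing in the lemma (with the signs tracked by the factor $u$ that each differentiation produces).

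No step presents any genuine obstacle; the lemma is elementary bookkeeping whose sole purpose is to catalog estimates for the square-root expansion (\ref{ineq:sqrt_norm_expansion}) used throughout the subsequent mean and variance calculations. If anything, the only subtle point worth flagging is that the first inequality must hold \emph{uniformly} on $(-1,\infty)$ (unlike the second, which is explicitly allowed to blow up at $u = -1$), which is why the proof proceeds through the algebraic identity above rather than through a crude bound on the integrand $(1+su)^{-3/2}$ that would degenerate as $u \downarrow -1$.
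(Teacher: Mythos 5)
Your proof is correct and follows essentially the same route as the paper's: Taylor's theorem with integral remainder for the two expansions, and rationalization of $1-\sqrt{1+u}$ for the derivative bound. The only (minor) difference is in the bound $|h(u)|\lesssim u^2$, where your algebraic identity $h(u)=\tfrac{-u^2/4}{\sqrt{1+u}+1+u/2}$ gives the estimate uniformly in one stroke, whereas the paper splits into the regimes $u\in[-1,-1/2]$ and $u>-1/2$ to avoid the degeneracy of the integrand near $u=-1$; both arguments are valid.
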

The proof of the above lemma can be found in Appendix \ref{appendix:proof_prel}.

\subsection{$L_X,L_Y$ and $R_X,R_Y$}
Let
\begin{align}\label{def:L_X}
L_X(x_1,x_2)&\equiv \frac{\pnorm{x_1-x_2}{}^2-\tau_X^2}{\tau_X^2}\geq -1,\quad R_X(x_1,x_2)\equiv h\big(L_X(x_1,x_2)\big)\nonumber\\
L_Y(y_1,y_2)&\equiv \frac{\pnorm{y_1-y_2}{}^2-\tau_Y^2}{\tau_Y^2}\geq -1,\quad R_Y(y_1,y_2)\equiv h\big(L_Y(y_1,y_2)\big),
\end{align}
and the double centered quantities
\begin{align*}
\bar{R}_X(x_1,x_2)&\equiv R_X(x_1,x_2)-\E\big[R_X(x_1,X)\big] -\E\big[R_X(X,x_2)\big]+\E\big[R_X(X,X')\big],\\
\bar{R}_Y(y_1,y_2)&\equiv R_Y(y_1,y_2)-\E\big[R_Y(y_1,Y)\big] -\E\big[R_Y(Y,y_2)\big]+\E\big[R_Y(Y,Y')\big].
\end{align*}

Using these quantities, we may represent the square root of the Euclidean distance as follows.
\begin{lemma}\label{lem:sqrt_norm_LX}
	The following hold:
	\begin{align*}
	\frac{\pnorm{x_1-x_2}{}}{\tau_X}& \equiv 1+\frac{L_X(x_1,x_2)}{2}+R_X(x_1,x_2) = 1+\frac{L_X(x_1,x_2)}{2}+ h\big(L_X(x_1,x_2)\big),\\
	\frac{\pnorm{y_1-y_2}{}}{\tau_Y}& \equiv 1+\frac{L_Y(y_1,y_2)}{2}+R_Y(y_1,y_2) = 1+\frac{L_Y(y_1,y_2)}{2}+ h\big(L_Y(y_1,y_2)\big),
	\end{align*}
	and
	\begin{align}\label{eqn:UV}
	U(x_1,x_2)& = - \frac{1}{\tau_X}\Big(x_1^\top x_2-\tau_X^2 \bar{R}_X(x_1,x_2)\Big),\nonumber\\
	V(y_1,y_2) &= - \frac{1}{\tau_Y}\Big(y_1^\top y_2 -\tau_Y^2 \bar{R}_Y(y_1,y_2)\Big).
	\end{align}
\end{lemma}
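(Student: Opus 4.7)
The claim splits naturally into two pieces: a pointwise algebraic rewriting of the Euclidean distance in terms of $L_X, R_X$ (and similarly for $Y$), and an identity for the double-centered $U, V$ in terms of inner products and $\bar R_X, \bar R_Y$. Both are direct calculations; there is no serious obstacle, only some book-keeping. The plan is to verify the first identity by unfolding the definitions of $L_X$ and $h$, and then to substitute it into the definition of $U$ (from \eqref{def:UV}), using the zero-mean Gaussian property of $X$ (Assumption \ref{ass:gaussian}) to evaluate the required expectations.

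For the first identity, by the definition of $L_X$ in \eqref{def:L_X} we have $\pnorm{x_1-x_2}{}^2/\tau_X^2 = 1 + L_X(x_1,x_2)$. Taking square roots and using $\sqrt{1+u} = 1 + u/2 + h(u)$ (the defining relation for $h$ in \eqref{def:h}), together with $R_X = h(L_X)$, yields exactly
\[
\frac{\pnorm{x_1-x_2}{}}{\tau_X} = 1 + \frac{L_X(x_1,x_2)}{2} + h(L_X(x_1,x_2)).
\]
The identity for $Y$ is identical with $X\leftrightarrow Y$.

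For the second identity, substitute the first display into $U(x_1,x_2) = \pnorm{x_1-x_2}{} - \E\pnorm{x_1-X}{} - \E\pnorm{X-x_2}{} + \E\pnorm{X-X'}{}$. The constant $\tau_X$ from each term appears with signs $+1-1-1+1=0$ and cancels, while the $R_X$ pieces assemble into $\tau_X \bar R_X(x_1,x_2)$ by definition. So
\[
U(x_1,x_2) = \frac{\tau_X}{2}\bigl[L_X(x_1,x_2) - \E L_X(x_1,X) - \E L_X(X,x_2) + \E L_X(X,X')\bigr] + \tau_X \bar R_X(x_1,x_2).
\]
It then suffices to check that the bracketed expression equals $-2 x_1^\top x_2/\tau_X^2$. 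Expanding $L_X(x_1,x_2) = (\pnorm{x_1}{}^2 + \pnorm{x_2}{}^2 - 2x_1^\top x_2 - \tau_X^2)/\tau_X^2$ and using $\E X = 0$ and $\E \pnorm{X}{}^2 = \tr(\Sigma_X) = \tau_X^2/2$, one computes $\E L_X(x_1,X) = (\pnorm{x_1}{}^2 - \tau_X^2/2)/\tau_X^2$, symmetrically for $\E L_X(X,x_2)$, and $\E L_X(X,X') = 0$. The four terms combine so the $\pnorm{x_i}{}^2$ pieces cancel pairwise with the $-\tau_X^2/2$ corrections and the $-\tau_X^2$ in $L_X(x_1,x_2)$, leaving exactly $-2x_1^\top x_2/\tau_X^2$. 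Plugging back yields $U(x_1,x_2) = -\tau_X^{-1}\bigl(x_1^\top x_2 - \tau_X^2 \bar R_X(x_1,x_2)\bigr)$, and the argument for $V$ is verbatim.

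The only place where anything nontrivial is used is the vanishing of $\E X$ and the evaluation $\E\pnorm{X}{}^2 = \tau_X^2/2$, both of which follow from Assumption \ref{ass:gaussian}; no analytic estimate on $h$ (the content of Lemma \ref{lem:property_h_fcn}) is invoked here, since the identity is an exact algebraic rewriting rather than an approximation.
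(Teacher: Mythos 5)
Your proof is correct and follows essentially the same route as the paper: the first two identities are definitional consequences of $\pnorm{x_1-x_2}{}^2/\tau_X^2 = 1+L_X$ and $\sqrt{1+u}=1+u/2+h(u)$, and the $U,V$ identities follow by substituting this expansion into \eqref{def:UV}, letting the constants cancel, the $R_X$ terms assemble into $\bar R_X$, and the double-centered squared norms reduce to $-2x_1^\top x_2$. Your explicit evaluation of $\E L_X(x_1,X)$ via $\E X=0$ and $\E\pnorm{X}{}^2=\tau_X^2/2$ is just a slightly more detailed write-up of the same computation the paper performs in one display.
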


The following moment estimate will be used repeatedly. 
\begin{lemma}\label{lem:moment_R}
	Suppose that the spectrum of $\Sigma$ lies in $[M^{-1},M]$ for some $M > 1$. Fix any positive integer $s\in\mathbb{N}$, there exists some $C = C(s,M,Z_1)> 0$ such that the following moment estimates hold.
	\begin{enumerate}
		\item For any positive integer $s \in \N$,
		\begin{align*}
		\E L_X^s (X_1,X_2)\lesssim_s \tau_X^{-2s}\pnorm{\Sigma_X}{F}^{s},\quad	\E L_Y^s (Y_1,Y_2)\lesssim_s \tau_Y^{-2s}\pnorm{\Sigma_Y}{F}^{s}.
		\end{align*}
		\item For any positive integer $s \in \N$,
		\begin{align*}
		\E R_X^s (X_1,X_2)\lesssim_s \tau_X^{-4s}\pnorm{\Sigma_X}{F}^{2s},\quad	\E R_Y^s (Y_1,Y_2)\lesssim_s \tau_Y^{-4s}\pnorm{\Sigma_Y}{F}^{2s}.
		\end{align*}
		Consequently the same estimates hold with $\E R_X^s (X_1,X_2), \E R_Y^s (Y_1,Y_2)$ replaced by their double-centered analogues $\E \bar{R}_X^s (X_1,X_2), \E \bar{R}_Y^s (Y_1,Y_2)$.
		\item Suppose the spectrum of $\Sigma_X,\Sigma_Y$ is contained in $[1/M,M]$ for some $M>1$. Then for any positive integer $s\in \N$, for $p\wedge q\geq 2s+1$, 
		\begin{align*}
		\E h'(L_X(X_1,X_2))^s\lesssim_{M,s} \tau_X^{-s},\quad \E h'(L_Y(Y_1,Y_2))^s\lesssim_{M,s} \tau_Y^{-s}.
		\end{align*}
	\end{enumerate} 
\end{lemma}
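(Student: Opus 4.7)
The plan is to handle the three parts sequentially, each reducing to standard moment bounds for Gaussian quadratic forms.

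For Part (1), write $X_1 - X_2 \stackrel{d}{=} \sqrt{2}\,\Sigma_X^{1/2} Z$ with $Z \sim \mathcal{N}(0, I_p)$, so that
\[
L_X(X_1, X_2) = \frac{2\bigl(Z^\top \Sigma_X Z - \tr(\Sigma_X)\bigr)}{\tau_X^2}.
\]
By Gaussian hypercontractivity (equivalently, the moment form of the Hanson--Wright inequality), the centered quadratic form satisfies $\lVert Z^\top \Sigma_X Z - \tr(\Sigma_X)\rVert_{L^s} \lesssim_s \lVert\Sigma_X\rVert_F$; raising to the $s$-th power yields the claim. The same argument applies to $L_Y$.

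For Part (2), the bound $|h(u)| \lesssim u^2$ from Lemma \ref{lem:property_h_fcn} gives $R_X^s(x_1,x_2) \lesssim_s L_X^{2s}(x_1,x_2)$, and Part (1) with $2s$ in place of $s$ produces the claimed estimate. The bound on the double-centered $\bar{R}_X$ follows from the conditional Jensen inequality applied to each of the three conditional-expectation correction terms, combined with the triangle inequality in $L^s$ (introducing only a multiplicative constant depending on $s$).

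For Part (3), I would start from $|h'(u)| \lesssim |u|/\sqrt{1+u}$ and the identity $\sqrt{1 + L_X(X_1,X_2)} = \lVert X_1 - X_2\rVert/\tau_X$, which together yield
\[
|h'(L_X(X_1,X_2))|^s \lesssim_s \tau_X^s \cdot \frac{|L_X(X_1,X_2)|^s}{\lVert X_1 - X_2\rVert^s}.
\]
By Cauchy--Schwarz, it suffices to control $\E L_X^{2s}$ via Part (1) and $\E \lVert X_1 - X_2\rVert^{-2s}$ separately. For the latter, the spectral condition gives $\lVert X_1 - X_2\rVert^2 \geq 2\lambda_{\min}(\Sigma_X)\lVert Z\rVert^2$ with $\lVert Z\rVert^2 \sim \chi^2_p$, and the exact inverse-chi-square moment formula $\E(\chi^2_p)^{-s} = \Gamma(p/2-s)/(2^s\Gamma(p/2))$ gives $\E \lVert Z\rVert^{-2s} \lesssim_s p^{-s}$ provided $p \geq 2s+1$. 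Combining these ingredients with the asymptotic $\tau_X^2 \asymp_M p$ and $\lVert\Sigma_X\rVert_F^2 \asymp_M p$ (both guaranteed by the spectrum assumption) collapses the estimate to the desired $\tau_X^{-s}$ rate.

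The main obstacle is the careful balance in Part (3): one must verify that the three scalings $\tau_X^{2s}$, $\lVert\Sigma_X\rVert_F^{2s}$, and the inverse chi-square moment $p^{-s}$ cancel precisely, and that the spectrum assumption is essential both to lower-bound $\lVert X_1 - X_2\rVert^2$ and to ensure that $\tau_X^2$ and $\lVert\Sigma_X\rVert_F^2$ scale at the correct rate $p$. The integrability constraint $p \wedge q \geq 2s+1$ arises naturally from this analysis.
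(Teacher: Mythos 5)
Your proof is correct and follows essentially the same route as the paper: Hanson--Wright for the quadratic-form moments in (1), the pointwise bound $|h(u)|\lesssim u^2$ plus centering in (2), and for (3) the Cauchy--Schwarz split into $\E L_X^{2s}$ and the inverse-moment $\E\lVert X_1-X_2\rVert^{-2s}$ controlled via inverse chi-square moments under $p\geq 2s+1$ (your $\tau_X^s/\lVert X_1-X_2\rVert^s$ is exactly the paper's $(1+L_X)^{-s/2}$). The scaling bookkeeping you describe matches the paper's computation precisely.
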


The proofs of the above lemmas can be found in Appendix \ref{appendix:proof_prel}.

\section{Residual estimates and mean expansion}\label{section:res_mean}

\subsection{Residual estimates}
Let 
\begin{align}\label{def:psi}
\psi_X(x_1,y_1)&\equiv \E_{X_2,Y_2}[\bar{R}_X(x_1,X_2)Y_2^\top y_1],\nonumber\\
\psi_Y(x_1,y_1)&\equiv \E_{X_2,Y_2}[\bar{R}_Y(y_1,Y_2)X_2^\top x_1],\nonumber\\
\psi_{X,Y}(x_1,y_1)& \equiv \E_{X_2,Y_2}[\bar{R}_X(x_1,X_2)\bar{R}_Y(y_1,Y_2)].
\end{align}
In view of Lemma \ref{lem:sqrt_norm_LX}, these terms appear naturally as the interaction error terms when $U(x_1,x_2)V(y_1,y_2)$ is approximated using (\ref{ineq:UV_approx}). As mentioned in Section \ref{section:proof_outline}, sharply controlling these `residual terms' constitutes the first crucial step in the proof of Theorem \ref{thm:non_null_clt}.

First, we have the following representation of $\psi_X(x_1,y_1),\psi_Y(x_1,y_1)$.
\begin{lemma}\label{lem:psi_decomp}
	The following decomposition holds:
	\begin{align*}
	\psi_X(x_1,y_1)&=A_{1,X}(x_1,y_1)+A_{2,X}(x_1,y_1),\, 	\psi_Y(x_1,y_1)=A_{1,Y}(x_1,y_1)+A_{2,Y}(x_1,y_1).
	\end{align*}
	Here
	\begin{align*}
	A_{1,X}(x_1,y_1)&=\frac{1}{2\tau_X^4}\Big[\big(\pnorm{x_1}{}^2-\tr(\Sigma_X)\big)x_1^\top \Sigma_{XY} y_1 +2 x_1^\top \Sigma_X \Sigma_{XY} y_1 + \kappa\tr(H_{[11]}\circ Q_X) \Big],\\
	A_{1,Y}(x_1,y_1)&=\frac{1}{2\tau_Y^4}\Big[\big(\pnorm{y_1}{}^2-\tr(\Sigma_Y)\big)x_1^\top \Sigma_{XY} y_1 +2 x_1^\top \Sigma_{XY} \Sigma_{Y} y_1 + \kappa\tr(H_{[22]}\circ Q_Y)\Big],
	\end{align*}
	and
	
	\begin{align*}
	A_{2,X}(x_1,y_1)&\equiv \E\big[h_3\big(L_X(x_1,X)\big)(Y^\top y_1)\big],\,	 A_{2,Y}(x_1,y_1)\equiv \E\big[h_3\big(L_Y(y_1,Y)\big)(X^\top x_1)\big],
	\end{align*}
	with $h_3$ defined in Lemma \ref{lem:property_h_fcn} and
	\begin{align}\label{def:PQ}
	Q_X = H_{[11]}z_1z_1^\top H_{[22]}, \quad Q_Y = H_{[22]}z_1z_1^\top H_{[11]},
	\end{align}
	with $z_1 = (x_1^\top, y_1^\top)^\top$ and $H_{[\cdot\cdot]}$ given in (\ref{def:GH}).
\end{lemma}

\begin{proposition}\label{prop:res_psi_bound}
	Suppose that the spectrum of $\Sigma$ is contained in $[1/M,M]$ for some $M>1$, and that $p, q$ are larger than a big enough absolute constant. 
	\begin{enumerate}
		\item (First moments) The following hold:
		\begin{align*}
		\tau_X^4 \abs{\E \psi_X(X_1,Y_1)}\bigvee  \tau_Y^4 \abs{\E \psi_Y(X_1,Y_1)} \bigvee \tau_X^2\tau_Y^2(\tau_X\wedge \tau_Y)\abs{\E\psi_{X,Y}(X_1,Y_1)}\lesssim \pnorm{\Sigma_{XY}}{F}^2.
		\end{align*}
		\item (Second moments) The following hold:
		\begin{align*}
		&\tau_X^6\E \psi_X^2(X_1,Y_1)\bigvee \tau_Y^6 \E \psi_Y^2(X_1,Y_1)\\
		&\qquad \bigvee \tau_X^4\tau_Y^4 (\tau_X\wedge \tau_Y)^2 \E \psi_{X,Y}^2(X_1,Y_1) \lesssim   \pnorm{\Sigma_{XY}}{F}^2\big(1\vee \pnorm{\Sigma_{XY}}{F}^2\big).
		\end{align*}
	\end{enumerate}
	The constants in $\lesssim$ only depend on $M$ and the distribution of $Z_1$ via its Poincar\'e constant $c_\ast$ and $\epsilon_0$ prescribed in Assumption \ref{assump:sepa}. The claims remain valid with $X=Y$ when the spectrum of $\Sigma_X=\Sigma_Y$ is contained in $[1/M,M]$ for some $M>1$.
\end{proposition}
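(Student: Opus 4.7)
My plan is to leverage the decomposition $\psi_X = A_{1,X} + A_{2,X}$ (and analogously for $\psi_Y$) supplied by Lemma~\ref{lem:psi_decomp}, and for the cross term $\psi_{X,Y}$ to Taylor-expand each of $\bar R_X, \bar R_Y$ via the identity $h(u) = -u^2/8 + h_3(u)$ from Lemma~\ref{lem:property_h_fcn}. This will exhibit each $\psi$-quantity as an explicit polynomial in the jointly Gaussian vectors plus a higher-order residual coming from $h_3$. I will evaluate the polynomial pieces by Isserlis/Wick, producing traces of products in $\Sigma_X, \Sigma_Y, \Sigma_{XY}$, and control the $h_3$-residuals by Cauchy--Schwarz combined with Lemma~\ref{lem:moment_R} and the negative-moment estimate (\ref{ineq:moment_R_1}). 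Throughout, the spectral assumption $\Sigma\in[1/M, M]$ is used both to absorb operator-norm factors into constants and to pass from quantities like $\tr(\Sigma_X\Sigma_{XY}\Sigma_{YX})$ to $\pnorm{\Sigma_{XY}}{F}^2$ up to an $M$-dependent constant.

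For the first moments, $A_{1,X}$ is a polynomial of total degree four in $(X_1, Y_1)$; writing $Y_1 = \Sigma_{YX}\Sigma_X^{-1}X_1 + \xi_1$ with $\xi_1 \perp X_1$ reduces $\E A_{1,X}(X_1, Y_1)$ to pure $X$-moments that Wick evaluates to a constant multiple of $\tr(\Sigma_X\Sigma_{XY}\Sigma_{YX}) \lesssim_M \pnorm{\Sigma_{XY}}{F}^2$. For $A_{2,X}$, I apply the same $Y$-projection inside $\E A_{2,X}(X_1, Y_1) = \E[h_3(L_X(X_1,X_2)) Y_1^\top Y_2]$; the terms involving $\xi_1$ or $\xi_2$ vanish by independence and mean-zero of the $\xi_i$'s, leaving $\E[h_3(L_X(X_1,X_2))\cdot X_1^\top \Sigma_X^{-1}\Sigma_{XY}\Sigma_{YX}\Sigma_X^{-1} X_2]$, which already factors out $\Sigma_{XY}\Sigma_{YX}$. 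Further expanding $h_3(u) = u^3/16 + O(u^4(1+u)^{-7/2})$, evaluating the leading polynomial moment by Wick, and controlling the $u^4$-tail by Cauchy--Schwarz combined with Lemma~\ref{lem:moment_R}(1) and (\ref{ineq:moment_R_1}) completes the first-moment bound.

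For the second moments of $\psi_X, \psi_Y$, I expand $\psi_X^2 = A_{1,X}^2 + 2A_{1,X}A_{2,X} + A_{2,X}^2$. The dominant piece $\E A_{1,X}^2$ is a degree-eight Gaussian polynomial whose Wick expansion decomposes naturally into single-contraction terms of order $\pnorm{\Sigma_{XY}}{F}^2$ and double-contraction terms of order $\pnorm{\Sigma_{XY}}{F}^4$, weighted by traces of $\Sigma_X$-powers that are absorbed by the $\tau_X^6$ normalization; together these give the targeted $\pnorm{\Sigma_{XY}}{F}^2(1\vee \pnorm{\Sigma_{XY}}{F}^2)$ scaling. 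The cross term $\E[A_{1,X}A_{2,X}]$ and the pure residual $\E A_{2,X}^2$ are handled by Cauchy--Schwarz, invoking Lemma~\ref{lem:moment_R}(2,3) for the $R_X$-moments and repeating the $Y$-projection trick to extract the required $\Sigma_{XY}$-factor from each residual.

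The main obstacle will be the second-moment bound for $\psi_{X,Y}$: the prefactor $\tau_X^4\tau_Y^4(\tau_X \wedge \tau_Y)^2$ is sharp, and a naive Cauchy--Schwarz between $\bar R_X$ and $\bar R_Y$ fails to exploit the $\Sigma_{XY}$-structure (producing a nonzero estimate even when $\Sigma_{XY}=0$, where the true value vanishes). To overcome this, I would Taylor-expand both $\bar R_X(x_1, X_2)$ and $\bar R_Y(y_1, Y_2)$ to the quadratic order, write out the resulting products of doubly centered quadratic forms in $(X_2, Y_2)$, and observe that the double centering kills every monomial that does not involve at least one contraction across the $X$- and $Y$-blocks; every surviving monomial therefore carries a factor of $\Sigma_{XY}$ via the joint covariance. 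Squaring and integrating over $(X_1, Y_1)$ by Wick then yields the desired $\pnorm{\Sigma_{XY}}{F}^2(1\vee \pnorm{\Sigma_{XY}}{F}^2)$ dependence, while the $h_3$-tails are cleaned up by Cauchy--Schwarz and Lemma~\ref{lem:moment_R}. The $X=Y$ marginal case is handled by the same expansion with the projection step omitted and Wick applied directly to the $X$-marginal.
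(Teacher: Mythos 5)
Your treatment of the first moments and of the second moments of $\psi_X,\psi_Y$ is essentially the paper's route: the decomposition $\psi_X=A_{1,X}+A_{2,X}$ from Lemma~\ref{lem:psi_decomp}, the projection $Y=\Sigma_{YX}\Sigma_X^{-1}X+\xi$ to pull out the factor $D=\Sigma_X^{-1/2}\Sigma_{XY}\Sigma_{YX}\Sigma_X^{-1/2}$ \emph{before} applying Cauchy--Schwarz (this is exactly Lemma~\ref{lem:cross_moment_LX}), and exact Wick evaluation of the low-degree polynomial pieces. Two remarks. First, for $\E A_{2,X}$ the cubic term $\E[L_X^3\,Y_1^\top Y_2]$ must be computed exactly: after the projection, Cauchy--Schwarz only gives $\tau_X^{-3}\pnorm{\Sigma_{XY}}{F}^2$, and one needs the cancellation of the $\tr^2(\Sigma_X)\tr(D)$-order terms to reach $\tau_X^{-4}\pnorm{\Sigma_{XY}}{F}^2$ (this is the content of Lemma~\ref{lem:cross_moment_LX_improved}-(1)); your plan to evaluate it by Wick would reveal this, but you should be aware that this is where the first-moment bound is won. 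Second, the paper bounds $\var(A_{1,X})$ by the Gaussian--Poincar\'e inequality rather than a full degree-eight Wick expansion; both work.

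The genuine gap is in your treatment of $\E\psi_{X,Y}^2$. You correctly observe that a naive Cauchy--Schwarz between $\bar R_X$ and $\bar R_Y$ cannot see the $\Sigma_{XY}$-structure, and your double-centering/cross-pairing argument does handle the purely polynomial piece $\E_{X_2,Y_2}[\overline{L_X^2}\cdot\overline{L_Y^2}]$. But you then propose to clean up the $h_3$-tails ``by Cauchy--Schwarz and Lemma~\ref{lem:moment_R}'', and this fails for exactly the reason you identified for the full product: the terms $\E_{X_2,Y_2}[\overline{L_X^2}\,\bar h_{3}(L_Y)]$, $\E_{X_2,Y_2}[\bar h_{3}(L_X)\,\overline{L_Y^2}]$ and $\E_{X_2,Y_2}[\bar h_{3}(L_X)\,\bar h_{3}(L_Y)]$ all vanish identically when $\Sigma_{XY}=0$, yet any bound obtained by splitting the $X$- and $Y$-factors via Cauchy--Schwarz and invoking moment estimates gives, after the normalization $\tau_X^4\tau_Y^4(\tau_X\wedge\tau_Y)^2$, an $O_M(1)$ quantity with no $\pnorm{\Sigma_{XY}}{F}^2$ factor. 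Since $h_3$ is not a polynomial, the Wick/cross-pairing argument does not apply to it, and iterating the Taylor expansion does not help: at every order the non-polynomial remainder suffers the same defect. This is precisely why the paper abandons the pointwise expansion for $\psi_{X,Y}$ and instead interpolates in the covariance: writing $\psi_{X,Y}(\Sigma_{XY})-\psi_{X,Y}(0)=\int_0^1\frac{\d{}}{\d{t}}\psi_{X,Y}(t\Sigma_{XY})\,\d{t}$ and differentiating $\Sigma_t^{1/2}$ via the integral representations of Lemma~\ref{lem:mat_sqrt_int_formula}, the derivative carries an explicit factor $\Sigma_{[12]}+\Sigma_{[21]}$, which produces the $\pnorm{\Sigma_{XY}}{F}$ factor uniformly over the whole (non-polynomial) function $h$, while the remaining pieces may then safely be bounded by Cauchy--Schwarz (Lemma~\ref{lem:cross_XY_var}). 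The additional gain of $(\tau_X\wedge\tau_Y)^{-2}$ in the target, which comes from the smallness of $h'\circ L$, is also delivered by that lemma and is not recoverable from a generic covariance inequality. You would need to import this interpolation step (or an equivalent derivative-in-$\Sigma_{XY}$ argument) to close the proof of part (2) for $\psi_{X,Y}$.
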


The role and sharpness of these bounds will be gradually clear in later sections. In particular, these bounds will be essential in the proof of the mean expansion Theorem \ref{thm:dcov_expansion} and the variance expansion Theorem \ref{thm:variance_expansion} ahead.

Note that here the first moment bounds in Proposition \ref{prop:res_psi_bound} do not follow directly by the stated second moment bounds, as the `first moments' here are obtained by first taking expectation followed by the absolute value. In fact, these first moment estimates are stronger by those derived directly from the second moment estimates, indicating the essential role of the order of taking expectation and absolute value in this setting.

An important feature of the bounds in Proposition \ref{prop:res_psi_bound} above is that when $\Sigma_{XY}=0$, all estimates reduce to $0$. Furthermore the exponent in $\pnorm{\Sigma_{XY}}{F},\tau_X,\tau_Y$ also need be correct to allow precise mean and variance expansions in Theorems \ref{thm:dcov_expansion} and \ref{thm:variance_expansion}, and therefore the non-null CLT in Theorem \ref{thm:non_null_clt}, under the entire high dimensional regime $n\wedge p\wedge q\to \infty$. It is for this reason that the proof of Proposition \ref{prop:res_psi_bound} is rather involved, the details of which can be found in Appendix \ref{appendix:proof_res}. The following lemma is representative in terms of an interpolation technique in proving Proposition \ref{prop:res_psi_bound} and may be of broader interest.

\begin{lemma}\label{lem:cross_XY_var}
	Suppose the  spectrum of $\Sigma$ is contained in $[1/M,M]$ for some $M>1$. Let $\mathfrak{h}_X, \mathfrak{h}_Y: \R\to \R$ be smooth functions. For any $k,k',\ell,\ell' \in \{1,2\}$, define
	\begin{align*}
	\psi_{\mathfrak{h}_X,\mathfrak{h}_Y}(\Sigma_{XY})\equiv \E\Big[\mathfrak{h}_X\big(L_X(X_k,X_\ell)\big)\mathfrak{h}_Y\big(L_Y(Y_{k'},Y_{\ell'})\big)\big\lvert X_1,Y_1\Big].
	\end{align*}
	Then
	\begin{align*}
	&\E \big(\psi_{\mathfrak{h}_X,\mathfrak{h}_Y}(\Sigma_{XY})-\psi_{\mathfrak{h}_X,\mathfrak{h}_Y}(0)\big)^2\lesssim  \pnorm{\Sigma_{XY}}{F}^2(1\vee \pnorm{\Sigma_{XY}}{F}^2)\\
	&\quad \times  \bigg( \tau_X^{-4} \cdot \E^{1/4} (\mathfrak{h}_X'\circ L_X)^8\cdot \E^{1/4} (\mathfrak{h}_Y\circ L_Y)^8 + \tau_Y^{-4} \cdot \E^{1/4} (\mathfrak{h}_Y'\circ L_Y)^8\cdot \E^{1/4} (\mathfrak{h}_X\circ L_X)^8\bigg).
	\end{align*}
	The constants in $\lesssim$ only depend on $M$. The claims remain valid with $X=Y$ when the spectrum of $\Sigma_X=\Sigma_Y$ is contained in $[1/M,M]$ for some $M>1$.
\end{lemma}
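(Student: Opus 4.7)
The plan is to identify the difference $\psi_{\mathfrak{h}_X,\mathfrak{h}_Y}(\Sigma_{XY})-\psi_{\mathfrak{h}_X,\mathfrak{h}_Y}(0)$ with a conditional covariance and then bound it by a two-step Gaussian integration by parts performed in two complementary directions that independently produce the two summands appearing on the right-hand side.

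First, since the marginals of $X_2,Y_2$ under the joint Gaussian law do not depend on $\Sigma_{XY}$ and $\Sigma_{XY}=0$ makes $X_2,Y_2$ independent, one has the identity
\[
\psi_{\mathfrak{h}_X,\mathfrak{h}_Y}(\Sigma_{XY})-\psi_{\mathfrak{h}_X,\mathfrak{h}_Y}(0)=\cov\!\Big(\mathfrak{h}_X(L_X(X_k,X_\ell)),\,\mathfrak{h}_Y(L_Y(Y_{k'},Y_{\ell'}))\,\Big|\,X_1,Y_1\Big),
\]
where the conditional covariance is under $(X_2,Y_2)\sim\mathcal N(0,\Sigma)$. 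To produce the first summand (the one with $\tau_X^{-4}$, $\mathfrak{h}_X'$, and $\mathfrak{h}_Y$), decompose $Y_2=AX_2+\widetilde Y_2$ with $A=\Sigma_{YX}\Sigma_X^{-1}$, $\widetilde Y_2\sim\mathcal N(0,\widetilde\Sigma_Y)$, $\widetilde\Sigma_Y=\Sigma_Y-\Sigma_{YX}\Sigma_X^{-1}\Sigma_{XY}$, with $\widetilde Y_2\perp X_2$. Setting $G_X(x_2)=\mathfrak{h}_X(L_X(X_1,x_2))$ and $\widetilde G_Y(x_2)=\E_{\widetilde Y_2}[\mathfrak{h}_Y(L_Y(Y_1,Ax_2+\widetilde Y_2))]$, the conditional covariance reduces to $\cov_{X_2}(G_X,\widetilde G_Y)$ under $X_2\sim\mathcal N(0,\Sigma_X)$, which is a covariance of two functions of a single Gaussian vector.

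Next, I exploit the key algebraic identity $\Sigma_X A^\top=\Sigma_{XY}$. One Gaussian integration by parts on the $G_X$-factor rewrites the covariance as an expectation of $\nabla_{x_2}G_X=-(2/\tau_X^2)(X_1-X_2)\mathfrak{h}_X'(L_X)$ paired with $\Sigma_{XY}\,\nabla_{y_2}\widetilde G_Y$, while a further Stein-type integration on the inner $\widetilde Y_2$-side transfers the remaining $\nabla_{y_2}$ onto the Gaussian density of $\widetilde Y_2$, producing a factor $\widetilde\Sigma_Y^{-1}\widetilde Y_2$ multiplied by $\mathfrak{h}_Y$ rather than $\mathfrak{h}_Y'$. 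Cauchy-Schwarz combined with Gaussian quadratic-form estimates of the type $\E\|AX_2\|^{2s}\lesssim_M\pnorm{\Sigma_{XY}}{F}^{2s}$ and $\E(X_1^\top A^\top A X_2)^{2s}\lesssim_M\pnorm{\Sigma_{XY}}{F}^{4s}$ (as in the proof of Lemma~\ref{lem:cross_moment_LX}), together with the moment bounds of Lemma~\ref{lem:moment_R}, deliver the first summand $\tau_X^{-4}\cdot\E^{1/4}(\mathfrak{h}_X'\circ L_X)^8\cdot\E^{1/4}(\mathfrak{h}_Y\circ L_Y)^8\cdot\pnorm{\Sigma_{XY}}{F}^2(1\vee\pnorm{\Sigma_{XY}}{F}^2)$. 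The second summand comes from the entirely symmetric argument with $X\leftrightarrow Y$, decomposing $X_2=A'Y_2+\widetilde X_2$ with $A'=\Sigma_{XY}\Sigma_Y^{-1}$ and invoking $\Sigma_Y A'^\top=\Sigma_{YX}$; summing the two bounds gives the claim, and the $X=Y$ case follows identically under the single spectral assumption on $\Sigma_X$.

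The main obstacle is to recover the sharp $\tau_X^{-4}$ scaling together with the asymmetric $\mathfrak{h}_X'\cdot\mathfrak{h}_Y$ pairing: a naive Gaussian Poincaré bound on $\cov_{X_2}(G_X,\widetilde G_Y)$ only gives $\tau_X^{-2}$ and forces $\mathfrak{h}_Y'$ on the $Y$-side, losing a full factor of $\tau^{-2}$. Closing this gap depends on the identity $\Sigma_X A^\top=\Sigma_{XY}$, which absorbs one factor of $\tau_X^{-2}$ into the $\Sigma_{XY}$-premultiplier, coupled with the secondary Stein step that moves $\nabla_{y_2}$ onto the Gaussian weight so that only $\mathfrak{h}_Y$ remains. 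The additional $(1\vee\pnorm{\Sigma_{XY}}{F}^2)$ factor arises from the quadratic remainder of an implicit second-order expansion that tracks the $\Sigma_{XY}$-dependence of $\widetilde\Sigma_Y$ and of $\widetilde\Sigma_Y^{-1}$; carefully bookkeeping these higher-order corrections while maintaining the clean $\mathfrak{h}/\mathfrak{h}'$ separation and uniformity in $M$ is the most delicate portion of the argument.
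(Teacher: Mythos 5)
Your route is genuinely different from the paper's and, for the generic case where the full $\Sigma$ has spectrum in $[1/M,M]$, it works. The paper interpolates in the covariance matrix itself, setting $\Sigma_t=\Sigma_0+t(\Sigma_{[12]}+\Sigma_{[21]})$ and computing $\frac{\d{}}{\d t}\psi_{\mathfrak{h}_X,\mathfrak{h}_Y}(t\Sigma_{XY})$; the derivative then lands only on the argument of one kernel at a time, which is how the asymmetric $\mathfrak{h}_X'\cdot\mathfrak{h}_Y$ pairing and the single explicit $\tau_X^{-2}$ (squared to $\tau_X^{-4}$) arise, at the cost of heavy matrix calculus ($\frac{\d}{\d t}\Sigma_t^{1/2}$ via Lemma \ref{lem:mat_sqrt_int_formula}, Fr\'echet derivatives of $e^A$, operator-norm decay of $e^{-u\Sigma_t^{1/2}}$). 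You instead recognize the difference as a conditional covariance, whiten via $Y_2=AX_2+\tilde Y_2$ so that $\Sigma_XA^\top=\Sigma_{XY}$ supplies the $\pnorm{\Sigma_{XY}}{F}$ factor, and use a Stein step on $\tilde Y_2$ to keep $\mathfrak{h}_Y$ underived. This avoids all the matrix square-root machinery and in fact yields the slightly stronger bound without the $(1\vee\pnorm{\Sigma_{XY}}{F}^2)$ factor in the generic case (your pairing $(X_2-X_1)^\top\Sigma_{XY}\tilde\Sigma_Y^{-1}\tilde Y_2$ is centered, so no trace term appears, whereas the paper's $\tr(\bar M(t))\lesssim\pnorm{\Sigma_{XY}}{F}^2$ is what forces that factor). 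Two points of bookkeeping: the exact Gaussian covariance identity requires evaluating $\nabla\tilde G_Y$ at an interpolated copy of $X_2$ (harmless after Cauchy--Schwarz, but it should be stated), and you do not need to ``sum'' the two directions --- each whitening direction alone bounds the whole quantity, so the sum is merely a valid weakening.

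The genuine gap is your last sentence: the $X=Y$ case does \emph{not} follow identically. There $\Sigma_{XY}=\Sigma_X$, the residual covariance $\tilde\Sigma_Y=\Sigma_Y-\Sigma_{YX}\Sigma_X^{-1}\Sigma_{XY}$ is identically zero, and the Stein step against $\tilde Y_2$ (which needs $\tilde\Sigma_Y^{-1}$) has no Gaussian left to integrate by parts against; note also that only $\Sigma_X$, not the joint $\Sigma$, is assumed well-conditioned in that case, precisely because the joint covariance is singular. The covariance is then between two functions of the \emph{same} Gaussian $X_2$, and any naive integration by parts derives both kernels, producing $\mathfrak{h}_Y'$ where the statement requires $\mathfrak{h}_Y$. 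A separate argument is needed; an easy one is a conditional Gaussian--Poincar\'e bound, $\big|\cov(F,H\mid X_1)\big|^2\leq\var(F\mid X_1)\,\E[H^2\mid X_1]\lesssim_M\tau_X^{-2}\,\E^{1/2}[(\mathfrak{h}_X'\circ L_X)^4\mid X_1]\,\E[(\mathfrak{h}_Y\circ L_Y)^2\mid X_1]$, which after the outer expectation is dominated by the stated right-hand side because in this case $\pnorm{\Sigma_{XY}}{F}^2(1\vee\pnorm{\Sigma_{XY}}{F}^2)=\pnorm{\Sigma_X}{F}^4\asymp_M\tau_X^4$ --- i.e., the $(1\vee\pnorm{\Sigma_{XY}}{F}^2)$ factor you tried to explain away is exactly what makes the $X=Y$ statement true. (The paper handles $X=Y$ by the same $t$-interpolation with $\Sigma_t=t\Sigma_X$ and a direct Cauchy--Schwarz.) With that case repaired, your argument is complete.
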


Lemma \ref{lem:cross_XY_var} gives a general recipe of bounding the second moment, in terms of the dependence measure $\pnorm{\Sigma_{XY}}{F}$. Details see Appendix \ref{appendix:proof_res}.

\subsection{Mean expansion}

As a quick application of the residual estimates in Proposition \ref{prop:res_psi_bound}, we may get the following mean expansion.

\begin{theorem}\label{thm:dcov_expansion}
	Suppose that the spectrum of $\Sigma$ is contained in $[1/M,M]$ for some $M>1$. Then the following expansion holds for the distance covariance:
	\begin{align*}
	m_\Sigma =\E_\Sigma \dcov_\ast^2(\bm{X},\bm{Y})=\dcov^2(X,Y)= \frac{\pnorm{\Sigma_{XY}}{F}^2}{\tau_X\tau_Y}\Big[1+\mathcal{O}\big((\tau_X\wedge \tau_Y)^{-1}\big)\Big].
	\end{align*}
	The constants in $\mathcal{O}$ only depend on $M$ and the distribution of $Z_1$ via its Poincar\'e constant $c_\ast$ and $\epsilon_0$ prescribed in Assumption \ref{assump:sepa}. The claims remain valid with $X=Y$ when the spectrum of $\Sigma_X=\Sigma_Y$ is contained in $[1/M,M]$ for some $M>1$.
\end{theorem}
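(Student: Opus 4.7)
My approach is to start from the population identity $\dcov^2(X,Y) = \E U(X_1,X_2) V(Y_1,Y_2)$ in \eqref{def:dcov_population}, substitute the representation in Lemma \ref{lem:sqrt_norm_LX}, multiply out, and compute the resulting four expectations. Concretely, inserting $U = -\tau_X^{-1}(X_1^\top X_2 - \tau_X^2 \bar R_X)$ and $V = -\tau_Y^{-1}(Y_1^\top Y_2 - \tau_Y^2 \bar R_Y)$ yields
\begin{align*}
\dcov^2(X,Y) &= \frac{1}{\tau_X \tau_Y}\Big\{\E[(X_1^\top X_2)(Y_1^\top Y_2)] - \tau_X^2 \E[\bar R_X(X_1,X_2)(Y_1^\top Y_2)] \\
&\qquad - \tau_Y^2 \E[\bar R_Y(Y_1,Y_2)(X_1^\top X_2)] + \tau_X^2 \tau_Y^2 \E[\bar R_X(X_1,X_2) \bar R_Y(Y_1,Y_2)]\Big\}.
\end{align*}

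The leading expectation is easily computed: since $(X_2,Y_2)$ is independent of $(X_1,Y_1)$, a direct Wick-type calculation yields $\E[(X_1^\top X_2)(Y_1^\top Y_2)] = \tr(\Sigma_{XY}\Sigma_{YX}) = \pnorm{\Sigma_{XY}}{F}^2$, producing the main term $\pnorm{\Sigma_{XY}}{F}^2/(\tau_X\tau_Y)$. The three remaining expectations are, after conditioning on $(X_1,Y_1)$ and integrating out $(X_2,Y_2)$, precisely $\E \psi_X(X_1,Y_1)$, $\E \psi_Y(X_1,Y_1)$, and $\E \psi_{X,Y}(X_1,Y_1)$ as defined in \eqref{def:psi}. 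Collecting prefactors, these give contributions of size $\tau_X^{-1} \tau_Y \cdot |\E\psi_X|$, $\tau_Y^{-1}\tau_X\cdot |\E\psi_Y|$, and $\tau_X\tau_Y\cdot |\E\psi_{X,Y}|$ respectively to $\tau_X\tau_Y\cdot\dcov^2(X,Y)$.

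I then plug in Proposition \ref{prop:res_psi_bound}. The first-moment bounds $|\E\psi_X|\lesssim_M \pnorm{\Sigma_{XY}}{F}^2/\tau_X^4$ and $|\E\psi_Y|\lesssim_M \pnorm{\Sigma_{XY}}{F}^2/\tau_Y^4$ combined with the above prefactors yield contributions of order $\pnorm{\Sigma_{XY}}{F}^2/(\tau_X\tau_Y\cdot(\tau_X\wedge\tau_Y)^2)$ to $\dcov^2(X,Y)$, which is strictly smaller than the target relative error $(\tau_X\wedge\tau_Y)^{-1}$ times the main term. For the $\psi_{X,Y}$ term, Cauchy--Schwarz on the second-moment bound of Proposition \ref{prop:res_psi_bound}(2) gives $\tau_X\tau_Y |\E\psi_{X,Y}| \lesssim_M \pnorm{\Sigma_{XY}}{F}(1 \vee \pnorm{\Sigma_{XY}}{F})/(\tau_X\wedge\tau_Y)$, which after dividing by $\tau_X\tau_Y$ yields a relative error of order $(\tau_X\wedge\tau_Y)^{-1}$ in the regime $\pnorm{\Sigma_{XY}}{F}\gtrsim 1$. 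Assembling the four pieces then gives the claimed expansion, and the $X=Y$ case is handled identically since Proposition \ref{prop:res_psi_bound} covers it.

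The main subtlety is sharpening the $\psi_{X,Y}$ estimate in the small-signal regime $\pnorm{\Sigma_{XY}}{F}\ll 1$, where the naive Cauchy--Schwarz loses a $\pnorm{\Sigma_{XY}}{F}^{1/2}$ factor. The key observation is that $\bar R_Y$ is invariant under $Y\mapsto -Y$, so along the one-parameter family $t\mapsto t\Sigma_{XY}$ of cross-covariances the map $t\mapsto \E[\bar R_X \bar R_Y]$ is an even function that vanishes at $t=0$ (by independence and double-centering) with zero first derivative. A Gaussian interpolation argument combined with second-order Taylor expansion thus recovers the correct $\pnorm{\Sigma_{XY}}{F}^2$ scaling for $\E\psi_{X,Y}$, completing the desired multiplicative error estimate uniformly in $\pnorm{\Sigma_{XY}}{F}$.
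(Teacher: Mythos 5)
Your decomposition and your treatment of the $\psi_X,\psi_Y$ terms coincide with the paper's proof: the paper likewise writes $\tau_X\tau_Y\,\dcov^2(X,Y)=\pnorm{\Sigma_{XY}}{F}^2-\tau_X^2\E\psi_X-\tau_Y^2\E\psi_Y+\tau_X^2\tau_Y^2\E\psi_{X,Y}$ via Lemma \ref{lem:sqrt_norm_LX} and then invokes Proposition \ref{prop:res_psi_bound}. (Minor slip: the prefactors you quote for the contributions to $\tau_X\tau_Y\,\dcov^2(X,Y)$ should be $\tau_X^2$, $\tau_Y^2$ and $\tau_X^2\tau_Y^2$; your subsequent order-of-magnitude conclusions are nonetheless the right ones, and the $\psi_X,\psi_Y$ contributions are indeed of relative order $(\tau_X\wedge\tau_Y)^{-2}$, better than required.)

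Where you go beyond the paper is the $\psi_{X,Y}$ term, and you have put your finger on a genuine subtlety. Proposition \ref{prop:res_psi_bound}-(1) bounds only $\E\psi_X$ and $\E\psi_Y$; the only stated control on $\psi_{X,Y}$ is the second moment in part (2), and Cauchy--Schwarz then gives $\tau_X^2\tau_Y^2\abs{\E\psi_{X,Y}}\lesssim_M \pnorm{\Sigma_{XY}}{F}\big(1\vee\pnorm{\Sigma_{XY}}{F}\big)/(\tau_X\wedge\tau_Y)$, which exceeds the required $\pnorm{\Sigma_{XY}}{F}^2/(\tau_X\wedge\tau_Y)$ by a factor $\pnorm{\Sigma_{XY}}{F}^{-1}$ when $0<\pnorm{\Sigma_{XY}}{F}\ll 1$ --- precisely the contiguity regime $\pnorm{\Sigma_{XY}}{F}^2\asymp\sqrt{pq}/n$ where the multiplicative form of the expansion is actually used (e.g.\ to bound $\Delta_2$ in the proof of Theorem \ref{thm:local_clt}). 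The paper's one-line justification does not address this. Your repair is conceptually sound: $\bar R_Y$ is invariant under the global sign flip $(Y_1,Y_2)\mapsto(-Y_1,-Y_2)$, which maps the model with cross-block $t\Sigma_{XY}$ to the one with $-t\Sigma_{XY}$, so $g(t)\equiv\E_{t\Sigma_{XY}}[\bar R_X\bar R_Y]$ is even with $g(0)=g'(0)=0$, and a uniform bound on $g''$ of order $\pnorm{\Sigma_{XY}}{F}^2$ (with the right $\tau$ powers) would restore the quadratic scaling. But that second-derivative bound is the entire technical content of the step: one must differentiate the interpolation of Lemma \ref{lem:cross_XY_var} once more, control $\tfrac{\d{}^2}{\d{t}^2}H_{[ij]}(t)$ in trace and Frobenius norm, and track the moments of $h'$ and $h''$ along the path --- none of which is routine, and none of which you carry out. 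As written, the proposal correctly identifies the gap and names the right fix, but leaves its crux asserted rather than proved.
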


\begin{proof}
	Note that
	\begin{align*}
	&\dcov^2(X,Y)= \E\big[U(X_1,X_2)V(Y_1,Y_2)\big]\\
	& = \frac{1}{\tau_X\tau_Y}\Big[\pnorm{\Sigma_{XY}}{F}^2
	-\tau_X^2 \E\big(\bar{R}_X(X_1,X_2)Y_1^\top Y_2\big)\\
	&\qquad-
	\tau_X^2\E\big(\bar{R}_Y(Y_1,Y_2)X_1^\top X_2\big)+\tau_X^2\tau_Y^2 \E\big( \bar{R}_X(X_1,X_2)\bar{R}_Y(Y_1,Y_2)\big)\Big]\\
	& \equiv \frac{1}{\tau_X\tau_Y}\bigg[\pnorm{\Sigma_{XY}}{F}^2-\tau_X^2 \E \psi_X(X_1,X_2)-\tau_Y^2 \E \psi_Y(Y_1,Y_2)+\tau_X^2\tau_Y^2 \E \psi_{X,Y}(X_1,Y_1)\bigg].
	\end{align*}
	The claim now follows by invoking Proposition \ref{prop:res_psi_bound}-(1).
\end{proof}

A stochastic version of the above theorem was previously derived in \cite[Theorem 2.1.1]{zhu2020distance}, where the main term $\pnorm{\Sigma_{XY}}{F}^2/(\tau_X\tau_Y)$ was replaced by an unbiased estimator and the remainder term was controlled at the order $(p\wedge q)^{-1/2}$. In comparison, thanks to the sharp residual estimates in Proposition \ref{prop:res_psi_bound}, our bound for the remainder term is much more refined in that it contains an important multiplicative factor $\pnorm{\Sigma_{XY}}{F}^2$, which makes it asymptotically negligible in the null case as well.

\section{Hoeffding decomposition and variance expansion}\label{section:hoeffding_decomp}

We first review the basics of Hoeffding decomposition that will be relevant to our purpose. Following \cite[Section 5.1.5, pp. 177]{serfling1980approximation}, for a generic 4-th order $U$-statistic with symmetric kernel $k:\mathcal{Z}^4\to \R$, let
\begin{align*}
k_c(z_1,\ldots,z_c)\equiv \E[k(z_1,\ldots,z_c,Z_{c+1},\ldots,Z_4)],\quad z_1,\ldots,z_c \in \mathcal{Z},
\end{align*}
and for any $z_1,z_2,z_3,z_4 \in \mathcal{Z}$,
\begin{align}\label{def:hoeffding_kernel}
\notag g_0&\equiv \E k(\bm{Z}),\quad g_1(z_1)\equiv k_1(z_1)-\E k(\bm{Z}),\\
\notag g_2(z_1,z_2)&\equiv k_2(z_1,z_2)-k_1(z_1)-k_2(z_2)+\E k(\bm{Z}),\\
\notag g_3(z_1,z_2,z_3)&\equiv  k_3(z_1,z_2,z_3)-\E k(\bm{Z}) -\sum_{\ell=1}^3 g_1(z_\ell)-\sum_{1\leq \ell_1<\ell_2\leq 3} g_2\big(z_{\ell_1},z_{\ell_2}\big),\\
\notag g_4(z_1,z_2,z_3,z_4) &\equiv k_4\big(z_1,z_2,z_3,z_4\big)-\E k(\bm{Z}) -\sum_{\ell=1}^3 g_1(z_\ell)\\
&\qquad-\sum_{1\leq \ell_1<\ell_2\leq 3} g_2\big(z_{\ell_1}, z_{\ell_2}\big) - \sum_{1\leq \ell_1<\ell_2<\ell_3\leq 4} g_3\big(z_{\ell_1},z_{\ell_2},z_{\ell_3}\big).
\end{align}
Then the Hoeffding decomposition says that
\begin{align*}
U_n(k) = \sum_{c=0}^4 \binom{4}{c} U_n(g_c). 
\end{align*}
Here for a generic symmetric kernel $g: \mathcal{Z}^c\to \R$, 
\begin{align*}
U_n(g)\equiv 
\begin{cases}
\binom{n}{c}^{-1}\sum_{i_1<\ldots<i_c} g\big(z_{i_1},\ldots,z_{i_c}\big), & c\geq 1;\\
g & c=0.
\end{cases}
\end{align*}
For $c=0$, $g$ is understood as a real number. In what follows, we will take $\mathcal{Z}\equiv \mathcal{X}\times \mathcal{Y}$, and $k$ as the kernel defined in Proposition \ref{prop:hoef_decomp}. We will evaluate the variance of $\dcov_\ast^2(\bm{X},\bm{Y})=U_n(k)$ by evaluating the variance of $g_1,g_2,g_3,g_4$ associated with $k$ as defined above. 

\subsection{Hoeffding decomposition: 1st order}

The goal of this subsection is to prove the following variance expansion for the first-order kernel associated with $k$.
\begin{proposition}\label{prop:var_first_order}
	Suppose the spectrum of $\Sigma$ is contained in $[1/M,M]$ for some $M>1$. Then for any $\epsilon > 0$, the first-order variance is given by
	\begin{align*}
	&\binom{4}{1}^2\binom{n}{1}^{-1}\E g_1^2(X_1,Y_1) = (1\pm \epsilon)\cdot\bar{\sigma}_{n,1}^2(X,Y)\cdot\bigg[1+\mathcal{O}\Big(\frac{1}{\epsilon\cdot (\tau_X\wedge \tau_Y)}\Big)\bigg].
	\end{align*}
	Here $\bar{\sigma}_{n,1}^2(X,Y)$ is as defined in Theorem \ref{thm:non_null_clt}, and the constants in $\mathcal{O}$ only depend on $M$ and the distribution of $Z_1$ only  via its Poincar\'e constant $c_\ast$, excess kurtosis $\kappa$, and $\epsilon_0$ prescribed in Assumption \ref{assump:sepa}.
	The claim remains valid with $X=Y$ when the spectrum of $\Sigma_X=\Sigma_Y$ is contained in $[1/M,M]$ for some $M>1$.
\end{proposition}

The proof of the above proposition will be presented towards the end of this subsection. First, we may compute:
\begin{lemma}\label{lem:h1}
	The first order kernel is given by
	\begin{align*}
	k_1(z_1) &= \E k(z_1,Z_2,Z_3,Z_4) =  \frac{1}{2}\bigg[\E U(x_1,X)V(y_1,Y)+\dcov^2(X,Y)\bigg],\\
	g_1(z_1) &= k_1(z_1) - \E k(Z) = \frac{1}{2}\bigg[\E U(x_1,X)V(y_1,Y)-\dcov^2(X,Y)\bigg].
	\end{align*}
\end{lemma}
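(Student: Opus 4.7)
The proof is a direct computation using the $U$-statistic representation in Proposition \ref{prop:hoef_decomp}, specifically the double-centered form (\ref{eqn:kernel_rep2}), combined with the fundamental double-centering property
\begin{align*}
\E U(x_1, X) = 0 = \E V(y_1, Y) \quad \text{for all } x_1, y_1.
\end{align*}
The plan is to fix $z_1 = (x_1, y_1)$ and compute $k_1(z_1) = \E k(z_1, Z_2, Z_3, Z_4)$ by summing over the $4! = 24$ permutations $(i_1, i_2, i_3, i_4)$ of $(1,2,3,4)$, analyzing each of the three summands of the kernel separately.

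For the first summand $U(X_{i_1},X_{i_2})V(Y_{i_1},Y_{i_2})$, only the indices $i_1, i_2$ are relevant; since they are distinct, the label $1$ appears in $\{i_1, i_2\}$ in exactly $12$ out of $24$ permutations and outside in the other $12$. In the former case, after using symmetry of $U$ and $V$ in their arguments, the conditional expectation evaluates to $\E U(x_1, X) V(y_1, Y)$; in the latter case, integrating out both samples yields $\dcov^2(X,Y)$. This gives the contribution $\tfrac{1}{2}\bigl[\E U(x_1,X)V(y_1,Y) + \dcov^2(X,Y)\bigr]$.

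For the second summand $U(X_{i_1},X_{i_2})V(Y_{i_3},Y_{i_4})$, the four indices are distinct so $1$ coincides with exactly one of them; in every sub-case, the factorization of expectations into independent blocks produces at least one factor of $\E U(\cdot, X)$ or $\E V(\cdot, Y)$, which vanishes by double-centering. For the third summand $-2U(X_{i_1},X_{i_2})V(Y_{i_1},Y_{i_3})$, the same mechanism applies: if $1 = i_1$ then $U(x_1, X_{i_2})$ integrates to $0$; if $1 = i_2$ then $V(Y_{i_1}, Y_{i_3})$ integrates over the independent $Y_{i_3}$ to $0$; if $1 = i_3$ then $U(X_{i_1}, X_{i_2})$ integrates over $X_{i_2}$ to $0$; and if $1 = i_4$ then $i_4$ is absent from the expression and again $\E_{X_{i_2}} U = 0$. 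Thus the second and third summands contribute nothing to $k_1(z_1)$.

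Combining the three contributions proves the first identity, and the second identity then follows immediately from the definition $g_1 = k_1 - g_0$ in (\ref{def:hoeffding_kernel}) together with $g_0 = \E k(\bm{Z}) = \dcov^2(X,Y)$, which is the unbiasedness statement recorded right after Proposition \ref{prop:hoef_decomp}. There is no substantive obstacle: the proof is essentially a careful bookkeeping exercise, and the only `trick' is to repeatedly invoke the double-centering identities for $U$ and $V$ to kill all cross-sample terms.
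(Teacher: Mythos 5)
Your proof is correct and follows essentially the same route as the paper: both use the double-centered kernel representation (\ref{eqn:kernel_rep2}), kill the second and third summands via $\E U(\cdot,X)=\E V(\cdot,Y)=0$, and count the $12+12$ split of permutations in the first summand to get the factor $\tfrac12$. The concluding step $g_1=k_1-\dcov^2(X,Y)$ is also handled identically.
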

We will use the above lemma to devise an expansion for $g_1$. From the approximation of $U,V$ in (\ref{ineq:UV_approx}), one may hope that the main term for $g_1$ would be $2^{-1}\E U(x_1,X) V(y_1,Y) \approx (x_1^\top\Sigma_{XY}y_1 - \pnorm{\Sigma_{XY}}{F}^2)/2\tau_X\tau_Y$. As announced in Section \ref{section:proof_outline}, this is however not  the case.  Let the `main term' be defined as 
	\begin{align}\label{def:g1_bar}
	\bar{g}_1(x_1,y_1) \equiv \frac{1}{2\tau_X\tau_Y}\bigg[\big(x_1^\top\Sigma_{XY}y_1 - \pnorm{\Sigma_{XY}}{F}^2\big) + \mathscr{A}_{1,X}(x_1,y_1) +  \mathscr{A}_{1,Y}(x_1,y_1)\bigg],
	\end{align}
where
	\begin{align*}
	\mathscr{A}_{1,X}(x_1,y_1) &\equiv -\frac{\pnorm{\Sigma_{XY}}{F}^2}{2\tau_X^2}\big(\pnorm{x_1}{}^2 - \tr(\Sigma_X)\big),\quad\,\mathscr{A}_{1,Y}(x_1,y_1) \equiv -\frac{\pnorm{\Sigma_{XY}}{F}^2}{2\tau_Y^2}\big(\pnorm{y_1}{}^2 - \tr(\Sigma_Y)\big).
	\end{align*}

The terms $\mathscr{A}_{1,X}(x_1,y_1),\mathscr{A}_{1,Y}(x_1,y_1)$ are essential to correct the naive approximation (\ref{ineq:UV_approx}), in that these terms contribute to the somewhat difficult terms of negative sign in the variance expansion of $\bar{g}_1$ in Lemma \ref{lem:bar_g1_second_moment}, which cannot be neglected as they may have the same order as that of the leading terms.

With the main term defined above, let the `residual term' be defined by
\begin{align}\label{def:R1_bar}
\bar{R}_1(x_1,y_1) \equiv -\tau_X^2 \bar{\psi}_X (x_1,y_1)-\tau_Y^2\bar{\psi}_Y(x_1,y_1)+\tau_X^2\tau_Y^2 \psi_{X,Y}(x_1,y_1),
\end{align}
where $\psi_{X,Y}$ is defined in (\ref{def:psi}), and
\begin{align}\label{def:bar_psi}
\bar{\psi}_X(x_1,y_1) &\equiv \frac{1}{2\tau_X^4}\bigg[\big(\pnorm{x_1}{}^2-\tr(\Sigma_X)\big)\big(x_1^\top \Sigma_{XY} y_1-\pnorm{\Sigma_{XY}}{F}^2\big)+2x_1^\top \Sigma_X\Sigma_{XY} y_1\nonumber\\
&\qquad\qquad + \kappa \tr(H_{[11]}\circ Q_X)\bigg]+ A_{2,X}(x_1,y_1),\nonumber\\
\bar{\psi}_Y(x_1,y_1) &\equiv \frac{1}{2\tau_Y^4}\bigg[\big(\pnorm{y_1}{}^2-\tr(\Sigma_Y)\big)\big(x_1^\top \Sigma_{XY} y_1-\pnorm{\Sigma_{XY}}{F}^2\big)+2x_1^\top \Sigma_{XY}\Sigma_{Y} y_1\nonumber\\
&\qquad\qquad + \kappa\tr(H_{[22]}\circ Q_Y)\bigg]A_{2,Y}(x_1,y_1).
\end{align}
Here $Q_{X}, Q_Y, A_{2,X},A_{2,Y}$ are defined in Lemma \ref{lem:psi_decomp}. Using $\bar{g}_1,\bar{R}_1$ defined above, we may expand $g_1$ into the sum of main and residual terms as follows. 
\begin{lemma}\label{lem:g1}
	The following expansion holds:
	\begin{align}\label{eqn:g1}
	g_1(x_1,y_1) = \bar{g}_1(x_1,y_1)+ \frac{1}{2\tau_X\tau_Y}\big(\bar{R}_1(x_1,y_1)-\E \bar{R}_1(X_1,Y_1)\big).
	\end{align}
\end{lemma}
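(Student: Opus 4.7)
The approach is a direct algebraic identification, stitching together ingredients already prepared in Sections~\ref{section:preliminaries} and~\ref{section:res_mean}. Starting from Lemma~\ref{lem:h1}, namely $g_1(x_1,y_1) = \tfrac{1}{2}\big[\E U(x_1,X)V(y_1,Y) - \dcov^2(X,Y)\big]$, the first step is to substitute the representation of $U,V$ from Lemma~\ref{lem:sqrt_norm_LX} to get
\begin{align*}
U(x_1,X)V(y_1,Y) = \frac{1}{\tau_X\tau_Y}\big(x_1^\top X - \tau_X^2 \bar{R}_X(x_1,X)\big)\big(y_1^\top Y - \tau_Y^2 \bar{R}_Y(y_1,Y)\big),
\end{align*}
expand into four terms, and take expectation under the joint law of $(X,Y)$. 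The three cross moments are exactly $\psi_X(x_1,y_1), \psi_Y(x_1,y_1), \psi_{X,Y}(x_1,y_1)$ from~(\ref{def:psi}), while the bilinear term yields $x_1^\top \Sigma_{XY}y_1$, so
\begin{align*}
\E U(x_1,X)V(y_1,Y) = \frac{1}{\tau_X\tau_Y}\Big[x_1^\top \Sigma_{XY} y_1 - \tau_X^2 \psi_X(x_1,y_1) - \tau_Y^2 \psi_Y(x_1,y_1) + \tau_X^2\tau_Y^2 \psi_{X,Y}(x_1,y_1)\Big].
\end{align*}

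Next, the derivation carried out in the proof of Theorem~\ref{thm:dcov_expansion} shows that $\dcov^2(X,Y)$ is precisely the $(X_1,Y_1)$-expectation of the right-hand side above. Subtracting it and dividing by $2$ therefore produces the centered expression
\begin{align*}
g_1(x_1,y_1) = \frac{1}{2\tau_X\tau_Y}\Big[&\big(x_1^\top \Sigma_{XY} y_1 - \pnorm{\Sigma_{XY}}{F}^2\big) - \tau_X^2\big(\psi_X - \E \psi_X\big)(x_1,y_1)\\
&\quad - \tau_Y^2\big(\psi_Y - \E \psi_Y\big)(x_1,y_1) + \tau_X^2\tau_Y^2\big(\psi_{X,Y} - \E \psi_{X,Y}\big)(x_1,y_1)\Big].
\end{align*}

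The final step matches this with $\bar{g}_1 + (2\tau_X\tau_Y)^{-1}(\bar{R}_1 - \E \bar{R}_1)$. A direct comparison of~(\ref{def:bar_psi}) with the decomposition $\psi_X = A_{1,X} + A_{2,X}$ from Lemma~\ref{lem:psi_decomp} gives the key bookkeeping identity
\begin{align*}
\bar{\psi}_X(x_1,y_1) - \psi_X(x_1,y_1) = -\frac{\pnorm{\Sigma_{XY}}{F}^2}{2\tau_X^4}\big(\pnorm{x_1}{}^2 - \tr(\Sigma_X)\big) = \frac{\mathscr{A}_{1,X}(x_1,y_1)}{\tau_X^2},
\end{align*}
and analogously $\bar{\psi}_Y - \psi_Y = \mathscr{A}_{1,Y}/\tau_Y^2$. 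Since $\E \pnorm{X_1}{}^2 = \tr(\Sigma_X)$ and $\E \pnorm{Y_1}{}^2 = \tr(\Sigma_Y)$, the corrections satisfy $\E \mathscr{A}_{1,X}(X_1,Y_1) = \E \mathscr{A}_{1,Y}(X_1,Y_1) = 0$, so substitution into~(\ref{def:R1_bar}) rearranges to
\begin{align*}
\bar{R}_1 - \E \bar{R}_1 = -\tau_X^2(\psi_X - \E \psi_X) - \mathscr{A}_{1,X} - \tau_Y^2(\psi_Y - \E \psi_Y) - \mathscr{A}_{1,Y} + \tau_X^2\tau_Y^2(\psi_{X,Y} - \E \psi_{X,Y}).
\end{align*}
Adding $\bar{g}_1$ from~(\ref{def:g1_bar}), the $\mathscr{A}_{1,X}$ contributions (and likewise the $\mathscr{A}_{1,Y}$ contributions) cancel, recovering exactly the expression for $g_1$ derived above. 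There is no real analytic obstacle here; the only conceptual point to flag is that the insertion of $-\pnorm{\Sigma_{XY}}{F}^2$ inside $\bar{\psi}_X$ relative to $A_{1,X}$ is precisely engineered to produce the $\mathscr{A}_{1,X}$ correction to the naive bilinear main term, in line with the roadmap of Section~\ref{section:proof_outline}.
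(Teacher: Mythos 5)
Your proof is correct and follows essentially the same route as the paper's: substitute the double-centered representation of $U,V$ from Lemma \ref{lem:sqrt_norm_LX} into Lemma \ref{lem:h1} to identify $\psi_X,\psi_Y,\psi_{X,Y}$, then use the decomposition $\psi_X = \bar{\psi}_X - \mathscr{A}_{1,X}/\tau_X^2$ (and its $Y$-analogue) from Lemma \ref{lem:psi_decomp} to absorb the correction terms into $\bar{g}_1$. The only difference is that you spell out the bookkeeping the paper leaves implicit.
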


Now we will evaluate the variance of $\bar{g}_1$ and $\bar{R}_1$. The variance of $\bar{g}_1$ is given by the following.

\begin{lemma}\label{lem:bar_g1_second_moment}
We have $\E\bar{g}_1^2(X,Y) = 4^{-2} n\cdot \bar{\sigma}_{n,1}^2$, where $\bar{\sigma}_{n,1}^2$ is given in Theorem \ref{thm:non_null_clt}.
\end{lemma}

As mentioned above, the variance of $\bar{g}_1$ as above involves terms with a negative sign that are contributed by the `correction terms' $\mathscr{A}_{1,X}(x_1,y_1),\mathscr{A}_{1,Y}(x_1,y_1)$. These terms can be of the same order as the main terms. It is therefore important to have a lower bound on this quantity.

\begin{lemma}\label{lem:first_variance_lower}
	\begin{enumerate}
		\item Suppose $\pnorm{\Sigma^{-1}}{\op}\leq M$ for some $M>1$. Then $\E \bar{g}_1^2(X,Y) \gtrsim \tau_X^{-2}\tau_Y^{-2} \pnorm{\Sigma_{XY}}{F}^2$. If furthermore $\pnorm{\Sigma}{\op}\leq M$, then $\E \bar{g}_1^2(X,Y) \asymp \tau_X^{-2}\tau_Y^{-2} \pnorm{\Sigma_{XY}}{F}^2$.
		\item Suppose $X=Y$, and $\pnorm{\Sigma_X^{-1}}{\op}\leq M$ for some $M>1$. Then $\E \bar{g}_1^2(X,X) \gtrsim \tau_X^{-8}\cdot p \pnorm{\Sigma_X}{F}^4$. If furthermore $\pnorm{\Sigma_X}{\op}\leq M$, then $\E \bar{g}_1^2(X,X) \asymp \tau_X^{-2}$.
	\end{enumerate}
	The constants in $\gtrsim,\asymp$ only depend on $M$ and the distribution of $Z_1$ via its excess kurtosis $\kappa$ in Assumption \ref{assump:sepa}.
\end{lemma}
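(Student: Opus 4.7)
The plan is to recognize $\bar{g}_1$ as a centered quadratic form in the joint Gaussian vector $Z\equiv (X^\top,Y^\top)^\top\sim \mathcal{N}(0,\Sigma)$ and reduce the lower bound to a transparent Frobenius-norm estimate, bypassing the intricate term-by-term algebra of the second-moment formula in Lemma \ref{lem:bar_g1_second_moment}. Writing out $\mathscr{A}_{1,X},\mathscr{A}_{1,Y}$ in (\ref{def:g1_bar}) and using the identities $\tr(\Sigma_X)/\tau_X^2=\tr(\Sigma_Y)/\tau_Y^2=1/2$ to absorb the additive constants, one obtains
\begin{align*}
\bar{g}_1(X,Y)=\frac{1}{2\tau_X\tau_Y}\big[Z^\top M Z-\E Z^\top M Z\big],\qquad M\equiv\begin{pmatrix}-\frac{\pnorm{\Sigma_{XY}}{F}^2}{2\tau_X^2}I_p & \frac{1}{2}\Sigma_{XY}\\ \frac{1}{2}\Sigma_{YX} & -\frac{\pnorm{\Sigma_{XY}}{F}^2}{2\tau_Y^2}I_q\end{pmatrix}.
\end{align*}
By the standard identity $\var(Z^\top MZ)=2\pnorm{\Sigma^{1/2}M\Sigma^{1/2}}{F}^2$ for symmetric $M$, this gives the key reduction $\E\bar{g}_1^2=\pnorm{\Sigma^{1/2}M\Sigma^{1/2}}{F}^2/(2\tau_X^2\tau_Y^2)$.

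Next I would invoke the two-sided Frobenius bound $\lambda_{\min}(\Sigma)^2\pnorm{M}{F}^2\leq \pnorm{\Sigma^{1/2}M\Sigma^{1/2}}{F}^2\leq \pnorm{\Sigma}{\op}^2\pnorm{M}{F}^2$, which follows from $\tr(A^2BB^\top)\geq \lambda_{\min}(A)^2\tr(BB^\top)$ (and its upper counterpart) for psd $A$. Direct computation then gives
\begin{align*}
\pnorm{M}{F}^2=\frac{\pnorm{\Sigma_{XY}}{F}^4\, p}{4\tau_X^4}+\frac{\pnorm{\Sigma_{XY}}{F}^4\, q}{4\tau_Y^4}+\frac{\pnorm{\Sigma_{XY}}{F}^2}{2}\geq \frac{\pnorm{\Sigma_{XY}}{F}^2}{2},
\end{align*}
where the last inequality uses only the off-diagonal contribution. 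Combined with $\lambda_{\min}(\Sigma)^2\geq 1/M^2$, this establishes the lower bound in claim (1). For the matching upper bound under $\pnorm{\Sigma}{\op}\leq M$, the spectral bounds $\tau_X^2\asymp_M p$, $\tau_Y^2\asymp_M q$ and $\pnorm{\Sigma_{XY}}{F}^2\leq M^2(p\wedge q)$ (via Lemma \ref{lem:SigmaXY_F_bound}) control the two diagonal block contributions by $\lesssim_M \pnorm{\Sigma_{XY}}{F}^2$, yielding $\pnorm{M}{F}^2\lesssim_M \pnorm{\Sigma_{XY}}{F}^2$ and hence the claimed matching order.

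For claim (2), setting $Y=X$ in (\ref{def:g1_bar}) collapses $\bar{g}_1(X,X)$ to $(2\tau_X^2)^{-1}[X^\top M_X X-\E X^\top M_X X]$ with $M_X\equiv \Sigma_X-(\pnorm{\Sigma_X}{F}^2/\tau_X^2)I_p$. A small but crucial cancellation, again using $\tr(\Sigma_X)/\tau_X^2=1/2$, gives $\pnorm{M_X}{F}^2=\pnorm{\Sigma_X}{F}^2-2\pnorm{\Sigma_X}{F}^2/2+p\,\pnorm{\Sigma_X}{F}^4/\tau_X^4=p\,\pnorm{\Sigma_X}{F}^4/\tau_X^4$. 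Applying the same Frobenius estimates with $\Sigma$ replaced by $\Sigma_X$ then gives $\E\bar{g}_1^2(X,X)\gtrsim_M p\pnorm{\Sigma_X}{F}^4/\tau_X^8$, and under the additional upper spectral bound the substitutions $\pnorm{\Sigma_X}{F}^2\asymp_M p$, $\tau_X^2\asymp_M p$ yield $\E\bar{g}_1^2(X,X)\asymp_M \tau_X^{-2}$.

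The main obstacle to watch out for is that a direct term-by-term attack on the expression in Lemma \ref{lem:bar_g1_second_moment} is deceptively hard: the negative terms $T_5,T_6$ can be of the same order as the positive $T_1,T_2$, and naive Cauchy--Schwarz or AM--GM splits against $T_1,T_3,T_4,T_7$ are provably insufficient (indeed, they force contradictory constraints on the splitting parameters). The quadratic-form viewpoint bypasses all of this by exploiting automatic nonnegativity of $\pnorm{\Sigma^{1/2}M\Sigma^{1/2}}{F}^2$ together with a transparent lower bound coming just from the $\Sigma_{XY}$ block of $M$, which is exactly where the required $\pnorm{\Sigma_{XY}}{F}^2$ factor emerges.
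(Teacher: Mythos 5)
Your proposal is correct and is essentially the paper's own argument: the paper likewise rewrites $\bar{g}_1$ as a centered Gaussian quadratic form $Z^\top AZ-\E Z^\top AZ$ with $A=\Sigma^{1/2}M\Sigma^{1/2}$ (in the $G_{[ij]},H_{[ij]}$ notation), applies $\var(Z^\top AZ)=2\pnorm{A}{F}^2$, sandwiches via $\lambda_{\min}(\Sigma)$ and $\pnorm{\Sigma}{\op}$, and extracts the $\pnorm{\Sigma_{XY}}{F}^2$ lower bound from the off-diagonal block, with the same exact cancellation $\pnorm{\Sigma_X-\pnorm{\Sigma_X}{F}^2/\tau_X^2\,I}{F}^2=p\pnorm{\Sigma_X}{F}^4/\tau_X^4$ in part (2). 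The only cosmetic difference is that the paper checks the matching upper bound term by term against Lemma \ref{lem:bar_g1_second_moment} rather than via $\pnorm{M}{F}^2$, which amounts to the same computation.
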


Lemma \ref{lem:first_variance_lower} above is an important result, showing that the negative contributions of the `correction terms' $\mathscr{A}_{1,X}(x_1,y_1),\mathscr{A}_{1,Y}(x_1,y_1)$ will not affect the order the variance $\bar{g}_1$. In other words, these terms will contribute a non-vanishing but small proportion of the main terms.

Next to the variance of the main term $\bar{g}_1$, an important step to obtain variance bound for the residual term $\bar{R}_1$ is to obtain variance bounds for $\bar{\psi}_X,\bar{\psi}_Y$ defined in (\ref{def:bar_psi}). 
\begin{lemma}\label{lem:var_bar_psi}
	Suppose that the spectrum of $\Sigma_X,\Sigma_Y$ is contained in $[1/M,M]$ for some $M>1$. Then
	\begin{align*}
	\tau_X^6 \var\big( \bar{\psi}_X(X,Y)\big)\bigvee \tau_Y^6 \var\big( \bar{\psi}_Y(X,Y)\big)&\lesssim_{M,Z_1}  \pnorm{\Sigma_{XY}}{F}^2.
	\end{align*}
	Here the dependence of $\lesssim$ on $Z_1$ is via its Poincar\'e constant $c_\ast$ and $\epsilon_0$ prescribed by Assumption \ref{assump:sepa}.
\end{lemma}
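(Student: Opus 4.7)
The plan is to split $\bar{\psi}_X$ into the explicit polynomial bracket in (\ref{def:bar_psi}), call it $P_X$, and the $h_3$-residual $A_{2,X}$ from Lemma~\ref{lem:psi_decomp}, so that $\var(\bar{\psi}_X) \le 2\var(P_X) + 2\var(A_{2,X})$. For $P_X$, a direct Isserlis/Wick calculation suffices: every monomial in $P_X$ contains at least one $\Sigma_{XY}$-factor (so the variance vanishes at $\Sigma_{XY}=0$), and under the spectrum hypothesis on $\Sigma_X,\Sigma_Y$ together with the block PSD inequality $\pnorm{\Sigma_{XY}}{\op}\le M$, one computes the leading variance to be of order $p\cdot\pnorm{\Sigma_{XY}}{F}^2$ (e.g. for the term $(\pnorm{X_1}{}^2-\tr\Sigma_X)(X_1^\top\Sigma_{XY}Y_1-\pnorm{\Sigma_{XY}}{F}^2)$), with higher-order $\Sigma_{XY}$-terms subdominant. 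After dividing by the $(2\tau_X^4)^2\asymp p^4$ prefactor this gives $\var(P_X)\lesssim_M \pnorm{\Sigma_{XY}}{F}^2/\tau_X^6$, as required. The bound for $\bar{\psi}_Y$ is symmetric.

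For the more delicate $A_{2,X}$-piece I would exploit Gaussian conditioning $\E[Y\mid X]=\Sigma_{YX}\Sigma_X^{-1}X$ to rewrite
\[
A_{2,X}(x_1,y_1) \;=\; y_1^\top\Sigma_{YX}\Sigma_X^{-1}\,u(x_1),\qquad u(x_1)\equiv \E_X[h_3(L_X(x_1,X))\,X],
\]
and then apply Stein's identity to the inner Gaussian expectation, yielding
\[
u(x_1) \;=\; -\tfrac{2}{\tau_X^2}\,\Sigma_X\,w(x_1),\qquad w(x_1)\equiv \E_X[h_3'(L_X(x_1,X))(x_1-X)],
\]
which produces the extra $\tau_X^{-2}$ factor that is crucial for obtaining the correct $\pnorm{\Sigma_{XY}}{F}^2/\tau_X^6$ scaling. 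Substituting, $A_{2,X}(x_1,y_1)=-\tfrac{2}{\tau_X^2}y_1^\top\Sigma_{YX}w(x_1)$, and I would decompose $\E[A_{2,X}^2]=\E[\var(A_{2,X}\mid X_1)]+\E[(\E[A_{2,X}\mid X_1])^2]$. The conditional-variance piece reduces to
\[
\tfrac{4}{\tau_X^4}\,\tr\!\big(\Sigma_{XY}\Sigma_{Y\mid X}\Sigma_{YX}\,\E[ww^\top]\big) \;\le\; \tfrac{4}{\tau_X^4}\,\pnorm{\E[ww^\top]}{\op}\cdot\tr(\Sigma_{XY}\Sigma_Y\Sigma_{YX}),
\]
and the trace factor is $\lesssim_M \pnorm{\Sigma_{XY}}{F}^2$ directly; so the claim follows once $\pnorm{\E[ww^\top]}{\op}\lesssim_M \tau_X^{-2}$. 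I would establish this via a second Stein application, writing $w=\phi_1(X_1)X_1+\tfrac{2}{\tau_X^2}\Sigma_X w^{(2)}(X_1)$ with $\phi_1(x_1)=\E_X[h_3'(L_X)]$ and $w^{(2)}(x_1)=\E_X[h_3''(L_X)(x_1-X)]$, and controlling $\E[\phi_1^k]$ and $\E\pnorm{w^{(2)}}{}^k$ using the moment bounds in Lemma~\ref{lem:moment_R} together with the decomposition $h_3(u)=u^3/16+h_4(u)$ from Lemma~\ref{lem:property_h_fcn}. The conditional-mean piece $(\E[A_{2,X}\mid X_1])^2=\tfrac{4}{\tau_X^4}(w^\top\Sigma_{XY}\Sigma_{YX}\Sigma_X^{-1}X_1)^2$ is handled in parallel by substituting the same two-step Stein representation.

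The main obstacle is to avoid the weaker bound $\pnorm{\Sigma_{XY}}{F}^2(1\vee\pnorm{\Sigma_{XY}}{F}^2)/\tau_X^6$ that the second-moment estimate on $\psi_X$ in Proposition~\ref{prop:res_psi_bound} would naively give. This sharpening is precisely what the correction term $-\tfrac{\pnorm{\Sigma_{XY}}{F}^2}{2\tau_X^4}(\pnorm{x_1}{}^2-\tr\Sigma_X)$ added to $\psi_X$ is designed to buy, by canceling the dominant covariance of $A_{1,X}$ with $\pnorm{X_1}{}^2-\tr\Sigma_X$. Matching this cancellation analytically requires exploiting that $h_3$ has a triple zero at the origin, so that $\E[h_3'(L_X)]=\mathcal{O}(\tau_X^{-2})$ rather than merely $\mathcal{O}(\tau_X^{-1})$; a naive Cauchy--Schwarz between $\phi_1^2$ and Gaussian-chaos factors such as $X_1^\top\Sigma_{XY}\Sigma_{YX}\Sigma_X^{-1}X_1$ loses a factor of $p$ that must be recovered either by tracking conditional moments of $\phi_1$ directly or by the second Stein step described above.
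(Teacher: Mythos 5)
Your decomposition $\bar{\psi}_X=P_X+A_{2,X}$ and your Wick computation for the polynomial bracket $P_X=\bar{A}_{1,X}$ match the paper, which reaches the same bound $\var(\bar{A}_{1,X})\lesssim_M \tau_X^2\pnorm{\Sigma_{XY}}{F}^2/\tau_X^8$ slightly more economically via the Gaussian--Poincar\'e inequality applied to the quadratic-form representation $Z^\top H_{[11]}Z$, $Z^\top \bar{G}_{[1,2]}Z$. Where you diverge is on $A_{2,X}$: the paper simply reuses the already-proved estimate (\ref{ineq:res_psi_bound_6}), $\E A_{2,X}^2\lesssim_M \tau_X^{-6}\pnorm{\Sigma_{XY}}{F}^2$, obtained by Jensen plus H\"older after writing $h_3(u)=u^3\int_0^1\frac{3(1-s)^2}{16(1+su)^{5/2}}\,\d{s}$ and parametrizing $Y_2=\Sigma_{Y\backslash X}^{1/2}Z_{2;Y}+\Sigma_{YX}\Sigma_X^{-1/2}Z_{2;X}$; the factor $\pnorm{\Sigma_{XY}}{F}^2$ there comes from $\E^{1/2}(Z_{2;X}^\top\Sigma_X^{-1/2}\Sigma_{XY}Y_1)^4\lesssim_M\pnorm{\Sigma_{XY}}{F}^2$ and the factor $\tau_X^{-6}$ from $\E^{1/4}L_X^{24}$, so no factor of $p$ is lost and no Stein manipulation is needed. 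Your diagnosis of the obstacle is therefore slightly off: the $(1\vee\pnorm{\Sigma_{XY}}{F}^2)$ loss in Proposition \ref{prop:res_psi_bound} is caused entirely by the uncorrected $A_{1,X}$ (through $\var(\pnorm{X}{}^2)\cdot\pnorm{\Sigma_{XY}}{F}^4/\tau_X^8$), not by $A_{2,X}$, and the centering $\mathscr{A}_{1,X}$ removes exactly that term. Your alternative treatment of $A_{2,X}$ via conditioning $\E[Y\mid X]=\Sigma_{YX}\Sigma_X^{-1}X$ and Stein's identity, giving $A_{2,X}=-\tfrac{2}{\tau_X^2}y_1^\top\Sigma_{YX}w(x_1)$, is nonetheless correct and would close: since $h_3'(u)\asymp u^2$ near $0$ (with the singularity at $u=-1$ controlled as in (\ref{ineq:moment_R_1})), even the crude bound $\pnorm{\E ww^\top}{\op}\le\E\pnorm{w}{}^2\lesssim_M\tau_X^{-2}$ suffices, so your second Stein step is dispensable. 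In short: correct, same skeleton, with a workable but more elaborate route for the residual than the paper's one-line citation of its earlier estimate.
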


This variance bound plays an important role to keep the residual terms small when $\pnorm{\Sigma_{XY}}{F}$ is large. In particular, if one uses the vanilla versions $\psi_{X},\psi_Y$ defined in (\ref{def:psi}), the right hand side of the above display scales as $\pnorm{\Sigma_{XY}}{F}^4$ that would lead to essential difficulties in controlling the residuals. In other words, the reduction from $\pnorm{\Sigma_{XY}}{F}^4$ to $\pnorm{\Sigma_{XY}}{F}^2$ is made possible by the `correction terms'  $\mathscr{A}_{1,X}(x_1,y_1),\mathscr{A}_{1,Y}(x_1,y_1)$ that, in a certain sense, `center' the vanilla versions $\psi_{X},\psi_Y$ to reduce the variance.

Detailed proofs of Lemmas \ref{lem:h1}-\ref{lem:var_bar_psi} are deferred to Appendix \ref{appendix:proof_hoeff}. Now we are in a good position to prove Proposition \ref{prop:var_first_order}.

\begin{proof}[Proof of Proposition \ref{prop:var_first_order}]
	By (\ref{def:R1_bar}),
	\begin{align*}
	&\var\big(\bar{R}_1(X_1,Y_1)\big)\\
	&\lesssim \tau_X^4 \var \big(\bar{\psi}_X(X_1,Y_1)\big)+\tau_Y^4 \var \big(\bar{\psi}_Y(X_1,Y_1)\big)+\tau_X^4\tau_Y^4 \E\psi_{X,Y}^2(X_1,Y_1).
	\end{align*}
	The first two terms can be handled by Lemma \ref{lem:var_bar_psi}, while the last term can be bounded by
	\begin{align*}
	\E\psi_{X,Y}^2(X_1,Y_1)\lesssim_M \tau_X^{-4}\tau_Y^{-4}\bigg[\frac{\pnorm{\Sigma_{XY}}{F}^2+\pnorm{\Sigma_{XY}}{F}^4}{(\tau_X\wedge \tau_Y)^2}\bigwedge 1\bigg].
	\end{align*}
	This follows by Proposition \ref{prop:res_psi_bound}-(2) and the simple bound
	\begin{align*}
	\E\psi_{X,Y}^2(X_1,Y_1)\leq \E \bar{R}_X^2\cdot \E \bar{R}_Y^2\lesssim_M \tau_X^{-4}\tau_Y^{-4}
	\end{align*}
	by using Lemma \ref{lem:moment_R}. Summarizing the estimates, we have
	\begin{align*}
	\var\big(\bar{R}_1(X_1,Y_1)\big) \lesssim \frac{\pnorm{\Sigma_{XY}}{F}^2}{(\tau_X\wedge \tau_Y)^2}+ \frac{\pnorm{\Sigma_{XY}}{F}^4}{(\tau_X\wedge \tau_Y)^2}\bigwedge 1.
	\end{align*}
	As $\var(g_1) = (1\pm \epsilon)\var(\bar{g}_1) + \mathcal{O}\big(\epsilon^{-1}\cdot\tau_X^{-2}\tau_Y^{-2}\var(\bar{R}_1({X,Y})\big)$ for any $\epsilon> 0$, the proof is now complete by noting that
	\begin{align*}
	\frac{\var(\bar{R}_1({X,Y}))}{\tau_X^2\tau_Y^2\var(\bar{g}_1)} \lesssim_M \frac{\frac{\pnorm{\Sigma_{XY}}{F}^2}{(\tau_X\wedge \tau_Y)^2}+ \frac{\pnorm{\Sigma_{XY}}{F}^4}{(\tau_X\wedge \tau_Y)^2}\bigwedge 1}{\pnorm{\Sigma_{XY}}{F}^2}\lesssim \frac{1}{\tau_X\wedge \tau_Y},
	\end{align*}
	using Lemma \ref{lem:first_variance_lower} in the first inequality.
\end{proof}

\subsection{Hoeffding decomposition: 2nd order} 

The goal of this subsection is to prove the following variance expansion for the second-order kernel associated with $k$.

\begin{proposition}\label{prop:var_second_order}
	Suppose that the spectrum of $\Sigma$ is contained in $[1/M,M]$ for some $M>1$. For any $\epsilon > 0$, the second-order variance is given by
	\begin{align*}
	&\binom{4}{2}^2\binom{n}{2}^{-1} \E g_2^2\big((X_1,Y_1),(X_2,Y_2) \big)= (1\pm \epsilon)\cdot\bar{\sigma}_{n,2}^2(X,Y)\cdot\Big[1+\mathcal{O}\Big(\frac{1}{\epsilon\cdot (\tau_X\wedge \tau_Y)^2}\Big)\Big].
	\end{align*}
	Here $\bar{\sigma}_{n,2}^2(X,Y)$ is as defined in Theorem \ref{thm:non_null_clt}, the constant in $\mathcal{O}$ depends on $M$ and the distribution of $Z_1$ only via its Poincar\'e constant $c_\ast$, excess kurtosis $\kappa$, and $\epsilon_0$ prescribed by Assumption \ref{assump:sepa}. The claim remains valid with $X=Y$ when the spectrum of $\Sigma_X=\Sigma_Y$ is contained in $[1/M,M]$ for some $M>1$.
\end{proposition}
The prove Proposition \ref{prop:var_second_order}, we will first get an expansion for $g_2$, which requires a calculation of $k_2$:

\begin{lemma}\label{lem:h2}
	The second order kernel is given by
	\begin{align*}
	&k_2(z_1,z_2)= \E k(z_1,z_2,Z_3,Z_4)\\
	& = \frac{1}{6}\bigg[U(x_1,x_2) V(y_1,y_2)+2\E U(x_1,X)V(y_1,Y)+2\E U(x_2,X)V(y_2,Y)+ \dcov^2(X,Y)\\
	&\qquad\qquad -\E U(x_1,X) V(y_2,Y)-\E U(x_2,X)V(y_1,Y)\bigg].
	\end{align*}
\end{lemma}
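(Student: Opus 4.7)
}
The plan is a direct combinatorial calculation starting from the double-centered kernel representation (\ref{eqn:kernel_rep2}) and taking expectation over $Z_3, Z_4$ while keeping $z_1 = (x_1,y_1), z_2 = (x_2,y_2)$ fixed. The engine that simplifies the 24 permutations is the \emph{double-centering} property of $U$ and $V$ from (\ref{def:UV}): for any fixed $x$, $\E_X U(x,X) = 0$, and for any fixed $y$, $\E_Y V(y,Y) = 0$; together with the independence $Z_3 \perp Z_4$, this forces a majority of terms to vanish. I would also use the symmetry $U(x,x') = U(x',x)$ and $V(y,y') = V(y',y)$ inherited from the Euclidean norm.

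I would then handle the three term types in (\ref{eqn:kernel_rep2}) separately, in each case classifying permutations $\sigma = (i_1,i_2,i_3,i_4)$ by the intersection pattern of the relevant index set with $\{1,2\}$ (fixed) versus $\{3,4\}$ (random):
\begin{itemize}
\item For the cross-product term $U(X_{i_1},X_{i_2}) V(Y_{i_3},Y_{i_4})$, the sets $\{i_1,i_2\}$ and $\{i_3,i_4\}$ partition $\{1,2,3,4\}$. In each of the three possible partitions I would check that at least one factor reduces to an object of the form $\E_X U(x_a, X)$, $\E_Y V(y_a, Y)$, $\E U(X,X')$, or $\E V(Y,Y')$ (using independence of $Z_3,Z_4$ when the partition is mixed), and each of these equals $0$. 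Hence this entire family of terms contributes nothing.
\item For the matched term $U(X_{i_1},X_{i_2}) V(Y_{i_1},Y_{i_2})$, I would split by $\{i_1,i_2\} \cap \{1,2\}$: the case $\{i_1,i_2\} = \{1,2\}$ gives $4$ permutations each contributing $U(x_1,x_2)V(y_1,y_2)$; the case $\{i_1,i_2\} = \{3,4\}$ gives $4$ permutations each contributing $\E U(X,X')V(Y,Y') = \dcov^2(X,Y)$ via (\ref{def:dcov_population}); and the mixed case ($16$ permutations) contributes $8\,\E U(x_1,X)V(y_1,Y) + 8\,\E U(x_2,X)V(y_2,Y)$, split by which element of $\{1,2\}$ is fixed.
\item For the shared-index term $-2 U(X_{i_1},X_{i_2}) V(Y_{i_1},Y_{i_3})$, the index $i_1$ appears in both $U$ and $V$. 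A case-by-case check using centering shows the only surviving configuration is $i_1 \in \{3,4\}$ with $\{i_2,i_3\} = \{1,2\}$: in every other configuration, some random index occurs in exactly one of $U$ or $V$ without a paired partner, so independence plus centering kills the term. The $4$ surviving permutations yield $-4\,\E U(x_1,X)V(y_2,Y) - 4\,\E U(x_2,X)V(y_1,Y)$.
\end{itemize}
Summing the three contributions and dividing by $4! = 24$ gives the claimed identity. The proof is pure bookkeeping; the only obstacle is the careful enumeration of permutations (a good sanity check is that the total count is $24$ in each term, and that the final coefficients $\frac{1}{6}, \frac{2}{6}, -\frac{1}{6}, 1$ match). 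No analytic estimates are required.
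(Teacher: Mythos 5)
Your proposal is correct and follows essentially the same route as the paper's proof: expand $k_2$ from the double-centered representation (\ref{eqn:kernel_rep2}), kill the $U(X_{i_1},X_{i_2})V(Y_{i_3},Y_{i_4})$ family and most of the shared-index family via the double-centering of $U,V$ together with the independence of $Z_3,Z_4$, and then count the surviving permutations (your counts $4,8,8,4$ for the matched term and $4$ for the shared-index term agree with the paper's). No gaps; the identification of the sole surviving configuration $i_1\in\{3,4\}$, $\{i_2,i_3\}=\{1,2\}$ in the third family is exactly the paper's case split.
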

We will use the above lemma to devise an expansion for $g_2$. In the first-order expansion in the previous subsection, we have seen that the approximation of $U,V$ in (\ref{ineq:UV_approx}) is \emph{not} enough to get a precise variance expansion of $g_1$. Somewhat interestingly, as announced in Section \ref{section:proof_outline} such approximation is good enough in the second-order expansion. Formally, let the `main term' of $g_2$ be defined by
\begin{align}\label{def:g2_bar}
\bar{g}_2\big((x_1,y_1),(x_2,y_2) \big) & \equiv \frac{1}{6\tau_X\tau_Y}\bigg[\Big(x_1^\top x_2 y_1^\top y_2 - x_1^\top \Sigma_{XY} y_1-x_2^\top \Sigma_{XY} y_2 + \pnorm{\Sigma_{XY}}{F}^2\Big)\nonumber\\
&\qquad -\Big(x_1^\top \Sigma_{XY} y_2+x_2^\top \Sigma_{XY} y_1\Big) \bigg],
\end{align}
and the `residual term' be defined by [recall the definitions of $\bar{R}_X,\bar{R}_Y$ after (\ref{def:L_X})]
\begin{align*}
&\bar{R}_2\big((x_1,y_1),(x_2,y_2)\big) \\
& = -\tau_Y^2 x_1^\top x_2 \bar{R}_Y(y_1,y_2)-\tau_X^2 y_1^\top y_2 \bar{R}_X(x_1,x_2) + \tau_X^2\tau_Y^2 \bar{R}_X(x_1,x_2)\bar{R}_Y(y_1,y_2)\\
&\qquad -R_1(x_1,y_1)-R_1(x_2,y_2) - R_1(x_1,y_2)-R_1(x_2,y_1),
\end{align*}
with $R_1$ defined by [recall the definitions of $\psi_X,\psi_Y,\psi_{X,Y}$ in (\ref{def:psi})]
\begin{align}\label{def:R1}
R_1(x_1,y_1) \equiv -\tau_X^2 \psi_X (x_1,y_1)-\tau_Y^2\psi_Y(x_1,y_1)+\tau_X^2\tau_Y^2 \psi_{X,Y}(x_1,y_1).
\end{align}

The following lemma gives an expansion of $g_2$ into the sum of the main term $\bar{g}_2$ and the centered residual term $\bar{R}_2$.

\begin{lemma}\label{lem:g2}
	The following expansion holds:
	\begin{align*}
	&g_2\big((x_1,y_1),(x_2,y_2) \big)\\
	& = \bar{g}_2\big((x_1,y_1),(x_2,y_2) \big)+\frac{1}{6\tau_X\tau_Y}\Big[\bar{R}_2\big((x_1,y_1),(x_2,y_2)\big) -\E \bar{R}_2\big((X_1,Y_1),(X_2,Y_2)\big)\Big].
	\end{align*}
\end{lemma}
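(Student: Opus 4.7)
My plan is to perform a direct algebraic expansion of $g_2$ starting from the Hoeffding identity $g_2(z_1,z_2) = k_2(z_1,z_2) - k_1(z_1) - k_1(z_2) + \E k(Z)$, and then to translate every $U\cdot V$ product into an inner product plus $\bar R_X, \bar R_Y$ residuals via the identity (\ref{eqn:UV}) from Lemma \ref{lem:sqrt_norm_LX}. The first step is to substitute the representations of $k_1, k_2$ from Lemmas \ref{lem:h1} and \ref{lem:h2}. The three coefficients attached to $\dcov^2(X,Y)$ (namely $\tfrac{1}{6}$ from $k_2$, $-2\cdot\tfrac{1}{2}$ from the two $k_1$'s, and $+1$ from $\E k(Z)=\dcov^2(X,Y)$) happen to collapse to $\tfrac{1}{6}$, producing the compact form
\[
g_2\big((x_1,y_1),(x_2,y_2)\big) = \tfrac{1}{6}\Big[U(x_1,x_2)V(y_1,y_2) - \sum_{i,j\in\{1,2\}} \E\, U(x_i,X)V(y_j,Y) + \dcov^2(X,Y)\Big].
\]

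Next I would apply (\ref{eqn:UV}) to every $U\cdot V$ appearing above. A direct inspection against the definitions (\ref{def:psi}) of $\psi_X, \psi_Y, \psi_{X,Y}$ and (\ref{def:R1}) of $R_1$ yields the two key identities $\E\, U(x_i,X)V(y_j,Y) = (\tau_X\tau_Y)^{-1}\big[x_i^\top\Sigma_{XY} y_j + R_1(x_i,y_j)\big]$ for all $i,j\in\{1,2\}$, together with $\tau_X\tau_Y\cdot\dcov^2(X,Y) = \pnorm{\Sigma_{XY}}{F}^2 + \E R_1(X_1,Y_1)$. Collecting the quadratic/bilinear pieces $x_1^\top x_2\, y_1^\top y_2$, $x_i^\top\Sigma_{XY} y_j$ and $\pnorm{\Sigma_{XY}}{F}^2$ in the resulting expansion then reconstructs exactly $6\tau_X\tau_Y\bar g_2$ as defined in (\ref{def:g2_bar}); the remaining pieces match the expression for $\bar R_2$ given in the statement, up to a single stray additive constant $+\E R_1(X_1,Y_1)$. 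Equivalently,
\[
6\tau_X\tau_Y\big(g_2 - \bar g_2\big) = \bar R_2\big((x_1,y_1),(x_2,y_2)\big) + \E R_1(X_1,Y_1).
\]

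To finish I would verify the centering identity $\E\bar R_2 = -\E R_1(X_1,Y_1)$, which is exactly what converts the stray $+\E R_1(X_1,Y_1)$ into the $-\E\bar R_2$ required by the lemma. Taking expectations termwise in the definition of $\bar R_2$, the three bilinear-in-residual terms sum to $-\tau_X^2\E\psi_X - \tau_Y^2\E\psi_Y + \tau_X^2\tau_Y^2\E\psi_{X,Y} = \E R_1(X_1,Y_1)$, while the four $R_1$-type terms contribute $-2\E R_1(X_1,Y_1)$ provided that the \emph{mismatched-index} evaluations $\E R_1(X_1,Y_2)$ and $\E R_1(X_2,Y_1)$ both vanish. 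Establishing this vanishing is the only non-mechanical ingredient and is where the main subtlety lies: since $X_1$ and $Y_2$ belong to different sample pairs they are independent marginals, so the inner products $Y_2^\top(\cdot)$ and $X_1^\top(\cdot)$ appearing inside $\E\psi_X(X_1,Y_2)$ and $\E\psi_Y(X_1,Y_2)$ collapse after a mean-zero factorization, while for $\E\psi_{X,Y}(X_1,Y_2)$ one conditions on the fresh copy and uses the double-centering of $\bar R_X$ with respect to $X_1$ to obtain zero. Modulo this cross-pair check, the lemma is a parallel bookkeeping identity to Lemma \ref{lem:g1} and requires no new input.
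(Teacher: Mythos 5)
Your proposal is correct and follows essentially the same route as the paper: combine Lemmas \ref{lem:h1} and \ref{lem:h2} via the Hoeffding definition to get $g_2=\tfrac{1}{6}\big[U(x_1,x_2)V(y_1,y_2)-\sum_{i,j}\E U(x_i,X)V(y_j,Y)+\dcov^2(X,Y)\big]$, then substitute (\ref{eqn:UV}) and collect terms against the definitions of $\bar g_2$ and $\bar R_2$. Your extra verification of the centering constant — including that $\E R_1(X_1,Y_2)=0$ for mismatched pairs, by mean-zero factorization and the double-centering of $\bar R_X$ — is correct (and could alternatively be bypassed by noting that both $g_2$ and $\bar g_2$ have mean zero, which forces the additive constant to equal $-\E\bar R_2$).
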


Proofs of the proceeding lemmas can be found in Appendix \ref{appendix:proof_hoeff}. Using the above decomposition, we only need to compute the variance for the two terms $\bar{g}_2$, $\bar{R}_2$ on the right hand side of the above display for the proof of Proposition \ref{prop:var_second_order}. Clearly the variance of $\bar{g}_2$ can be evaluated by a book-keeping calculation, and the variance of $\bar{R}_2$ can be handled by the residual estimates in Proposition \ref{prop:res_psi_bound}. The proof below illustrates the strength of the bounds obtained in Proposition \ref{prop:res_psi_bound}.

\begin{proof}[Proof of Proposition \ref{prop:var_second_order}]
	First note that we may expand $36\tau_X^2\tau_Y^2\E \bar{g}_2^2\big((X_1,Y_1),(X_2,Y_2) \big)$ as
	\begin{align*}
	&\E\Big[X_1^\top X_2 Y_1^\top Y_2 - X_1^\top \Sigma_{XY} Y_1-X_2^\top \Sigma_{XY} Y_2 + \pnorm{\Sigma_{XY}}{F}^2\Big]^2 + \E\Big[X_1^\top\Sigma_{XY} Y_2+X_2^\top \Sigma_{XY} Y_1\Big]^2 \equiv E_1+E_2.
	\end{align*}
	This follows as the cross term has expectation $0$ due to the symmetry of the distribution of $(X,Y)$. After expansion, we have.
	\begin{align*}
	E_1 &= \E(X_1^\top X_2 Y_1^\top Y_2)^2+2\E(X_1^\top \Sigma_{XY} Y_1)^2+ \pnorm{\Sigma_{XY}}{F}^4 - 4 \E(X_1^\top X_2 Y_1^\top Y_2X_1^\top \Sigma_{XY} Y_1)\\
	&\qquad+2 \pnorm{\Sigma_{XY}}{F}^4+ 2 \E(X_1^\top \Sigma_{XY} Y_1X_2^\top \Sigma_{XY} Y_2) - 4 \pnorm{\Sigma_{XY}}{F}^4.
	\end{align*}
	The above terms can be calculated using Lemma \ref{lem:gaussian_4moment}:
	\begin{itemize}
	\item The first term is
	\begin{align*}
	\E(X_1^\top X_2 Y_1^\top Y_2)^2 &= 2\Big(\pnorm{\Sigma_{XY}}{F}^4 + \tr(\Sigma_{XY}\Sigma_{YX}\Sigma_{XY}\Sigma_{YX}) + 2\tr(\Sigma_{XY}\Sigma_Y\Sigma_{YX}\Sigma_X)\Big)\\
	&+ \pnorm{\Sigma_X}{F}^2\pnorm{\Sigma_Y}{F}^2 + 2\kappa\cdot\big[2\tr(G_{[12]}\circ G_{[12]}) + \tr(H_{[11]}^2\circ H_{[22]}^2)\big] + \kappa^2\cdot\pnorm{H_{[11]}\circ H_{[22]}}{F}^2.
	\end{align*}
	\item The second term is
	\begin{align*}
	2\E(X_1^\top \Sigma_{XY} Y_1)^2 = 2\Big(\pnorm{\Sigma_{XY}}{F}^4 + \pnorm{\Sigma_{XY}\Sigma_{YX}}{F}^2 + \tr(\Sigma_{XY}\Sigma_Y\Sigma_{YX}\Sigma_X) + \kappa\cdot\tr(G_{[12]}\circ G_{[12]})\Big).
	\end{align*}
	\item The fourth term is
	\begin{align*}
	 &- 4 \E(X_1^\top X_2 Y_1^\top Y_2X_1^\top \Sigma_{XY} Y_1) = -4\E(X_1^\top \Sigma_{XY} Y_1)^2\\
	 &=-4\Big(\pnorm{\Sigma_{XY}}{F}^4 + \pnorm{\Sigma_{XY}\Sigma_{YX}}{F}^2 + \tr(\Sigma_{XY}\Sigma_Y\Sigma_{YX}\Sigma_X) + \kappa\cdot\tr(G_{[12]}\circ G_{[12]})\Big).
	\end{align*}
	\item The sixth term is $ 2 \E(X_1^\top \Sigma_{XY} Y_1X_2^\top \Sigma_{XY} Y_2) = 2\pnorm{\Sigma_{XY}}{F}^4$.
	\end{itemize}
	In summary, we have
	\begin{align*}
	E_1 &= \pnorm{\Sigma_X}{F}^2\pnorm{\Sigma_Y}{F}^2 + \pnorm{\Sigma_{XY}}{F}^4 + 2\tr(\Sigma_{XY}\Sigma_Y\Sigma_{YX}\Sigma_X)\\
	&+2\kappa\cdot\big(\tr(G_{[12]}\circ G_{[12]}) + \tr(H_{[11]}^2\circ H_{[22]}^2)\big) + \kappa^2\cdot\pnorm{H_{[11]}\circ H_{[22]}}{F}^2.
	\end{align*}
	Similarly, we have
	\begin{align*}
	E_2 &=2\E(X_1^\top \Sigma_{XY} Y_2)^2 + 2\E(X_1^\top \Sigma_{XY} Y_2X_2^\top \Sigma_{XY} Y_1) = 2\tr(\Sigma_{XY}\Sigma_Y \Sigma_{YX}\Sigma_{X}) +2\pnorm{\Sigma_{XY}\Sigma_{YX}}{F}^2.
	\end{align*}
	Combining the identities and applying Lemmas \ref{lem:GH_hadamard} and \ref{lem:SigmaXY_F_bound} yields that
	\begin{align*}
	&\E \bar{g}_2^2\big((X_1,Y_1),(X_2,Y_2) \big)\\
	&= \frac{1}{36\tau_X^2\tau_Y^2}\Big[\Big( \pnorm{\Sigma_X}{F}^2\pnorm{\Sigma_Y}{F}^2+ 4\tr(\Sigma_{XY}\Sigma_Y \Sigma_{YX}\Sigma_X)+ \pnorm{\Sigma_{XY}}{F}^4 + 2\pnorm{\Sigma_{XY}\Sigma_{YX}}{F}^2\Big)\\
	&\qquad + 2\kappa\cdot\big(\tr(G_{[12]}\circ G_{[12]}) + \tr(H_{[11]}^2\circ H_{[22]}^2)\big) + \kappa^2\cdot\pnorm{H_{[11]}\circ H_{[22]}}{F}^2\Big]\\
	&= \frac{1}{36\tau_X^2\tau_Y^2}\Big( \pnorm{\Sigma_X}{F}^2\pnorm{\Sigma_Y}{F}^2+ \pnorm{\Sigma_{XY}}{F}^4)\bigg[1+\mathcal{O}_{M,\kappa}\Big(\frac{1}{(\tau_X\wedge \tau_Y)^2}\Big)\bigg].
	\end{align*}
	For the residual term, it can be bounded as follows:
	\begin{align*}
	\E \bar{R}_2^2\big((X_1,Y_1),(X_2,Y_2)\big)&\lesssim \tau_Y^4 \E^{1/2} (X_1^\top X_2)^4 \cdot\E^{1/2} \bar{R}_Y^4+ \tau_X^4 \E^{1/2} (Y_1^\top Y_2)^4 \cdot\E^{1/2}\bar{R}_X^4\\
	&\qquad + \tau_X^4\tau_Y^4 \E^{1/2} \bar{R}_X^4\cdot \E^{1/2}\bar{R}_Y^4 + \E R_1^2.
	\end{align*}
	Using Proposition \ref{prop:res_psi_bound}-(2), it follows that
	\begin{align*}
	\E R_1^2 \lesssim \tau_X^4 \E\psi_X^2+\tau_Y^4 \E \psi_Y^2+\tau_X^4\tau_Y^4 \E\psi_{X,Y}^2 \lesssim_M \frac{\pnorm{\Sigma_{XY}}{F}^2(1\vee \pnorm{\Sigma_{XY}}{F}^2)}{(\tau_X\wedge \tau_Y)^2}. 
	\end{align*}
	This, combined with Lemma \ref{lem:moment_R}-(2) and an easy calculation that $\E (X_1^\top X_2)^4\lesssim \pnorm{\Sigma_X}{F}^4$ and $\E (Y_1^\top Y_2)^4\lesssim \pnorm{\Sigma_Y}{F}^4$, shows that
	\begin{align*}
	&\E \bar{R}_2^2\big((X_1,Y_1),(X_2,Y_2)\big)\lesssim_M \tau_Y^4\cdot \pnorm{\Sigma_X}{F}^2 \cdot \tau_Y^{-8}\pnorm{\Sigma_Y}{F}^4 + \tau_X^4\cdot \pnorm{\Sigma_Y}{F}^2 \cdot \tau_X^{-8}\pnorm{\Sigma_X}{F}^4\\
	&\qquad + \tau_X^4\tau_Y^4\cdot  \tau_Y^{-8}\pnorm{\Sigma_Y}{F}^4\cdot\tau_X^{-8}\pnorm{\Sigma_X}{F}^4 +\frac{\pnorm{\Sigma_{XY}}{F}^2(1\vee \pnorm{\Sigma_{XY}}{F}^2)}{(\tau_X\wedge \tau_Y)^2}\\
	&\lesssim_M \frac{\pnorm{\Sigma_X}{F}^2\pnorm{\Sigma_Y}{F}^2+\pnorm{\Sigma_{XY}}{F}^2(1\vee \pnorm{\Sigma_{XY}}{F}^2)}{(\tau_X\wedge \tau_Y)^2}.
	\end{align*}	
	As $\var(g_2) = (1\pm \epsilon)\var(\bar{g}_2) + \mathcal{O}\big(\epsilon^{-1}\cdot\tau_X^{-2}\tau_Y^{-2}\var(\bar{R}_2({X,Y})\big)$ for any $\epsilon> 0$, the proof is now complete by noting that
	\begin{align*}
	\frac{\var\big(\bar{R}_2({X,Y})\big)}{\tau_X^2\tau_Y^2\var(\bar{g}_2)} \lesssim_M \frac{\frac{\pnorm{\Sigma_X}{F}^2\pnorm{\Sigma_Y}{F}^2+\pnorm{\Sigma_{XY}}{F}^2 + \pnorm{\Sigma_{XY}}{F}^4}{(\tau_X\wedge \tau_Y)^2}}{\pnorm{\Sigma_X}{F}^2\pnorm{\Sigma_Y}{F}^2+ \pnorm{\Sigma_{XY}}{F}^4}\lesssim_M \frac{1}{(\tau_X\wedge \tau_Y)^2},
	\end{align*}
	as desired.
	\end{proof}

\subsection{Hoeffding decomposition: higher orders}

The goal of this section is to prove the following.
\begin{proposition}\label{prop:var_higher_order}
	Suppose that the spectrum of $\Sigma$ is contained in $[1/M,M]$ for some $M>1$. Then the third- and fourth- order variance are bounded by
	\begin{align*}
	\E g_3^2+\E g_4^2\lesssim \tau_X^{-2}\tau_Y^{-2}\Big(\pnorm{\Sigma_X}{F}^2\pnorm{\Sigma_Y}{F}^2  + \pnorm{\Sigma_{XY}}{F}^2+  \pnorm{\Sigma_{XY}}{F}^4\Big)\lesssim \E g_1^2+\E g_2^2.
	\end{align*}
	Here the constants in $\lesssim$ depend on $M$ and the distribution of $Z_1$ only via its Poincar\'e constant $c_\ast$, excess kurtosis $\kappa$, and $\epsilon_0$ prescribed by Assumption \ref{assump:sepa}. The claims remain valid with $X=Y$ when the spectrum of $\Sigma_X=\Sigma_Y$ is contained in $[1/M,M]$ for some $M>1$.
\end{proposition}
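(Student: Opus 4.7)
The strategy is to reduce the claim to a bound on $\E k^2$ via orthogonality of the Hoeffding decomposition, and then to obtain the second inequality from the variance lower bounds for $g_1,g_2$ already established in Lemmas \ref{lem:bar_g1_second_moment}, \ref{lem:first_variance_lower} and the proof of Proposition \ref{prop:var_second_order}. Specifically, orthogonality of the $g_c$'s yields
\[
\var(k) \;=\; \sum_{c=1}^{4} \binom{4}{c}\, \E g_c^2 \;\geq\; \E g_4^2 + 4\,\E g_3^2,
\]
so $\E g_3^2+\E g_4^2 \leq \var(k) \leq \E k^2$, and the problem reduces to bounding $\E k^2$.

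To control $\E k^2$, I will use the $U$-statistic representation \eqref{eqn:kernel_rep2} together with the identity $U(x_1,x_2) = -\tau_X^{-1}(x_1^\top x_2-\tau_X^2 \bar{R}_X(x_1,x_2))$ from Lemma \ref{lem:sqrt_norm_LX} (and its $V$-analogue). This expresses $k$ as a symmetric sum of $UV$-products indexed over $\sigma(1,2,3,4)$, each of which splits into a bilinear leading piece of the form $\tau_X^{-1}\tau_Y^{-1}(X_a^\top X_b)(Y_c^\top Y_d)$ plus cross terms containing the residuals $\bar{R}_X,\bar{R}_Y$. Cauchy–Schwarz reduces $\E k^2$ to a finite number of terms of the types $\E[(X_a^\top X_b)^2 (Y_c^\top Y_d)^2]$, $\tau_X^2\,\E[\bar{R}_X^2 (Y_c^\top Y_d)^2]$, $\tau_Y^2\,\E[(X_a^\top X_b)^2 \bar{R}_Y^2]$, and $\tau_X^2 \tau_Y^2\,\E[\bar{R}_X^2 \bar{R}_Y^2]$.

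The dominant contribution comes from the three bilinear terms
\begin{align*}
\E[(X_1^\top X_2)^2(Y_1^\top Y_2)^2] &= \pnorm{\Sigma_X}{F}^2 \pnorm{\Sigma_Y}{F}^2+ 2\tr(\Sigma_{XY}\Sigma_Y\Sigma_{YX}\Sigma_X) +\pnorm{\Sigma_{XY}}{F}^4,\\
\E[(X_1^\top X_2)^2(Y_1^\top Y_3)^2] &= \E[(X_1^\top X_2)^2 Y_1^\top\Sigma_Y Y_1]\lesssim_M \pnorm{\Sigma_X}{F}^2\pnorm{\Sigma_Y}{F}^2, \\
\E[(X_1^\top X_2)^2(Y_3^\top Y_4)^2] &= \pnorm{\Sigma_X}{F}^2 \pnorm{\Sigma_Y}{F}^2,
\end{align*}
where the first follows as in the proof of Proposition \ref{prop:var_second_order} (via a Gaussian moment computation analogous to Lemma \ref{lem:moment_X_Y} appearing there), and the others use conditioning on $Y_3,Y_4$ together with $\E(X_1^\top X_2)^4 \lesssim_M \pnorm{\Sigma_X}{F}^4$. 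The cross terms involving $\bar{R}_X,\bar{R}_Y$ are subdominant: by the moment bounds $\E \bar{R}_X^s \lesssim_s \tau_X^{-4s}\pnorm{\Sigma_X}{F}^{2s}$ of Lemma \ref{lem:moment_R}, under the spectrum condition one has $\tau_Y^2 \E^{1/2}(X_a^\top X_b)^4 \E^{1/2}\bar{R}_Y^4 \lesssim_M \pnorm{\Sigma_X}{F}^2 \pnorm{\Sigma_Y}{F}^2$, and similarly for the other cross terms. Summing yields $\E k^2 \lesssim_M \tau_X^{-2}\tau_Y^{-2}(\pnorm{\Sigma_X}{F}^2\pnorm{\Sigma_Y}{F}^2+\pnorm{\Sigma_{XY}}{F}^4)$, which is dominated by the claimed upper bound since all three terms in the RHS are nonnegative.

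For the second inequality, I combine two lower bounds: from Lemma \ref{lem:first_variance_lower} and Proposition \ref{prop:var_first_order} (with $\epsilon=1$), $\E g_1^2 \gtrsim_M \tau_X^{-2}\tau_Y^{-2}\pnorm{\Sigma_{XY}}{F}^2$; and from the explicit computation of $\E \bar{g}_2^2$ in the proof of Proposition \ref{prop:var_second_order}, $\E g_2^2 \gtrsim_M \tau_X^{-2}\tau_Y^{-2}(\pnorm{\Sigma_X}{F}^2\pnorm{\Sigma_Y}{F}^2+\pnorm{\Sigma_{XY}}{F}^4)$. Adding these gives a lower bound on $\E g_1^2+\E g_2^2$ that matches the three-term RHS. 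The case $X=Y$ is handled identically using the univariate analogues of the same moment identities. The only real bookkeeping obstacle is the control of the many cross terms in the expansion of $\E k^2$, especially triple and quadruple products mixing $X^\top X'$, $Y^\top Y'$, $\bar{R}_X$, $\bar{R}_Y$ across different index subsets, but each is straightforwardly dominated by the main bilinear contribution once Cauchy–Schwarz and Lemma \ref{lem:moment_R} are applied uniformly.
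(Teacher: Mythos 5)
Your argument is correct, and it reaches the paper's conclusion by a somewhat different and arguably cleaner device. The paper bounds $\E g_3^2$ directly from the recursive definition (\ref{def:hoeffding_kernel}), i.e.\ $\E g_3^2 \lesssim \E k_3^2 + (\dcov^2(X,Y))^2 + \E g_1^2 + \E g_2^2$, computes $k_3$ explicitly (Lemma \ref{lem:h3}), bounds $\E k_3^2 \leq \E^{1/2}U^4\cdot\E^{1/2}V^4 \lesssim \tau_X^{-2}\tau_Y^{-2}\pnorm{\Sigma_X}{F}^2\pnorm{\Sigma_Y}{F}^2$ via Lemma \ref{lem:moment_R}, and then remarks that $g_4$ is ``similar.'' You instead invoke the orthogonality of the Hoeffding components to write $4\E g_3^2 + \E g_4^2 \leq \var(k) \leq \E k^2$, which handles both orders at once and avoids needing the explicit form of $k_3$ altogether; the price is that you must bound $\E k^2$ for the full fourth-order kernel, but a single Cauchy--Schwarz over the $\sigma(1,2,3,4)$ sum reduces this to $\E^{1/2}U^4\cdot\E^{1/2}V^4$ exactly as in the paper, so your separate treatment of bilinear and residual pieces, while correct, is more work than necessary. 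The second inequality is obtained in both proofs from the same ingredients: the lower bound $\E \bar g_1^2 \gtrsim_M \tau_X^{-2}\tau_Y^{-2}\pnorm{\Sigma_{XY}}{F}^2$ of Lemma \ref{lem:first_variance_lower} (transferred to $\E g_1^2$ via Proposition \ref{prop:var_first_order}, which, as in the paper, implicitly requires $\tau_X\wedge\tau_Y$ large enough that the $\mathcal{O}_M((\tau_X\wedge\tau_Y)^{-1})$ correction is small) and the explicit evaluation of $\E\bar g_2^2$ from the proof of Proposition \ref{prop:var_second_order}. The $X=Y$ case goes through identically in both treatments.
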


To prove this proposition, we need to evaluate $k_3$ and $k_4$. $k_4 = k$ is already given by Proposition \ref{prop:hoef_decomp}, so we only need to compute $k_3$ as follows. 
\begin{lemma}\label{lem:h3}
	The third order kernel is given by 
	\begin{align*}
	&k_3(z_1,z_2,z_3) = \E k(z_1,z_2,z_3,Z_4)\\
	& = \frac{1}{12}\bigg[2\sum_{1\leq i_1< i_2\leq 3}U(x_{i_1},x_{i_2})V(y_{i_1},y_{i_2})+2\sum_{1\leq i\leq 3}\E U(X,x_{i}) V(Y,y_i)\\
	&\qquad -\sum_{(i_1,i_2,i_3) \in \sigma (1,2,3)} U(x_{i_1},x_{i_2}) V(y_{i_1},y_{i_3})-\sum_{1\leq i_1\neq i_2\leq 3} \E U(X,x_{i_1})V(Y,y_{i_2})\bigg].
	\end{align*}
\end{lemma}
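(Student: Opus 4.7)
\medskip

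\textbf{Proof plan for Lemma \ref{lem:h3}.} The plan is to start from the double-centered symmetric representation in (\ref{eqn:kernel_rep2}) and average out $Z_4$, exploiting the double-centering identities $\mathbb{E}_X U(x,X)=0$ and $\mathbb{E}_Y V(y,Y)=0$ together with the symmetry $U(x_1,x_2)=U(x_2,x_1)$ and $V(y_1,y_2)=V(y_2,y_1)$. Group the $24$ permutation terms in (\ref{eqn:kernel_rep2}) into three types:
\begin{itemize}
\item Type I, $U(X_{i_1},X_{i_2})V(Y_{i_1},Y_{i_2})$, depends only on the unordered pair $\{i_1,i_2\}$; each 2-subset of $\{1,2,3,4\}$ is realized by exactly $4$ permutations.
\item Type II, $U(X_{i_1},X_{i_2})V(Y_{i_3},Y_{i_4})$, is indexed by an unordered 2-subset $\{i_1,i_2\}$ (the complement being $\{i_3,i_4\}$), each realized by $4$ permutations.
\item Type III, $U(X_{i_1},X_{i_2})V(Y_{i_1},Y_{i_3})$, is indexed by the ordered triple $(i_1,i_2,i_3)$ of distinct indices; each of the $24$ triples is realized by exactly one permutation.
\end{itemize}

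Next, I would compute $\mathbb{E}_{Z_4}$ separately on each type. For Type I, split according to whether $4\in\{i_1,i_2\}$: when both indices are in $\{1,2,3\}$ the term is unchanged, and when $4$ appears we obtain $\mathbb{E}\, U(x_j,X)V(y_j,Y)$ with $j$ the remaining index (this uses only the independence of $Z_4$ from $(Z_1,Z_2,Z_3)$). For Type II, every permutation has the index $4$ sitting either inside the $U$-factor \emph{or} inside the $V$-factor but never both; in either case pulling out the factor not containing $Z_4$ leaves $\mathbb{E}_{X_4}U(x_j,X_4)=0$ or $\mathbb{E}_{Y_4}V(y_j,Y_4)=0$, so all Type II terms vanish after conditioning. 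For Type III, split into the four cases $4\notin\{i_1,i_2,i_3\}$, $i_1=4$, $i_2=4$, $i_3=4$: the first contributes $U(x_{i_1},x_{i_2})V(y_{i_1},y_{i_3})$, the second yields $\mathbb{E}\, U(X,x_{i_2})V(Y,y_{i_3})$, and the last two vanish since the index $4$ appears in only one of $U$ or $V$ and the other factor can be pulled out.

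Finally, collect and repackage the surviving contributions. Type I with the factor $4/24=1/6$ produces the summand $\sum_{1\le i_1<i_2\le 3} U(x_{i_1},x_{i_2})V(y_{i_1},y_{i_2}) + \sum_{i=1}^{3}\mathbb{E}\,U(x_i,X)V(y_i,Y)$; Type III with the factor $-2/24=-1/12$ produces $-\sum_{(i_1,i_2,i_3)\in\sigma(1,2,3)}U(x_{i_1},x_{i_2})V(y_{i_1},y_{i_3}) - \sum_{1\le i_2\ne i_3\le 3}\mathbb{E}\,U(X,x_{i_2})V(Y,y_{i_3})$. Pulling out the common factor $1/12$ and keeping the signs and multiplicities gives exactly the claimed formula.

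The computation is essentially combinatorial bookkeeping; the only conceptual point is the systematic use of double-centering to kill any term in which the integrated variable $Z_4$ appears in only one of the $U$- or $V$-factors. Consequently the main ``obstacle'' is not analytic but organizational: one must enumerate permutations by the location of the index $4$ carefully enough to verify that Type II contributes nothing and that Cases C, D of Type III vanish, so that only the two explicit groups of terms survive. No results beyond Proposition \ref{prop:hoef_decomp} and the defining identities of $U,V$ from (\ref{def:UV}) are required.
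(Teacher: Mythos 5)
Your proposal is correct and follows essentially the same route as the paper: starting from the representation (\ref{eqn:kernel_rep2}), killing the $U(X_{i_1},X_{i_2})V(Y_{i_3},Y_{i_4})$ terms via double-centering since $Z_4$ can appear in only one factor there, and then sorting the remaining two types of terms by the position of the index $4$, with the surviving multiplicities ($4/24$ for the first type, $-2/24$ for the third) matching the paper's bookkeeping exactly. No gaps.
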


The proof of the above lemma can be found in Appendix \ref{appendix:proof_hoeff}.
\begin{proof}[Proof of Proposition \ref{prop:var_higher_order}]
	For the second moment of $g_3$, we each term in its definition (\ref{def:hoeffding_kernel}) can be bounded as follows:
	\begin{itemize}
		\item First we have
		\begin{align*}
		&\E k_3^2\big((X_1,Y_1),(X_2,Y_2),(X_3,Y_3)\big)\nonumber\\
		&\lesssim \E^{1/2} U^4 (X_1,X_2)\cdot \E^{1/2} V^4(Y_1,Y_2)\lesssim  \tau_X^{-2}\tau_Y^{-2} \pnorm{\Sigma_X}{F}^2\pnorm{\Sigma_Y}{F}^2.
		\end{align*}
		The last inequality follows as
		\begin{align*}
		\E U^4 (X_1,X_2)&\lesssim \tau_X^{-4} \Big( \E(X_1^\top X_2)^4+\tau_X^8 \E \bar{R}_X^4(X_1,X_2) \Big)\\
		&\stackrel{(\ast)}{\lesssim} \tau_X^{-4}\big(\pnorm{\Sigma_X}{F}^4+\tau_X^{-8}\pnorm{\Sigma_X}{F}^8\big)\lesssim \tau_X^{-4} \pnorm{\Sigma_X}{F}^4,
		\end{align*}
		and similarly $\E V^4(Y_1,Y_2)\lesssim \tau_Y^{-4}\pnorm{\Sigma_Y}{F}^4$, using Lemma \ref{lem:moment_R} in $(*)$.
		
		\item By Theorem \ref{thm:dcov_expansion},
		\begin{align*}
		\big(\dcov^2(X,Y)\big)^2\lesssim_M \tau_X^{-2}\tau_Y^{-2}\pnorm{\Sigma_{XY}}{F}^4.
		\end{align*}
		
		\item By Proposition \ref{prop:var_first_order} and Lemma \ref{lem:SigmaXY_F_bound},
		\begin{align*}
		\E g_1^2(X_1,Y_1) &\lesssim_M \tau_X^{-2}\tau_Y^{-2}\pnorm{\Sigma_{XY}}{F}^2.
		\end{align*}
		
		\item Byy Proposition \ref{prop:var_second_order},
		\begin{align*}
		&\E g_2^2\big((X_1,Y_1),(X_2,Y_2) \big)\lesssim_M \tau_X^{-2}\tau_Y^{-2}\big(\pnorm{\Sigma_X}{F}^2\pnorm{\Sigma_Y}{F}^2+\pnorm{\Sigma_{XY}}{F}^4\big).
		\end{align*}
	\end{itemize} 
	
	Collecting the above bounds and using the variance lower bound in Lemma \ref{lem:first_variance_lower},
	\begin{align*}
	\E g_3^2\lesssim_M \tau_X^{-2}\tau_Y^{-2}\Big(\pnorm{\Sigma_X}{F}^2\pnorm{\Sigma_Y}{F}^2  + \pnorm{\Sigma_{XY}}{F}^2+  \pnorm{\Sigma_{XY}}{F}^4\Big)\lesssim_M \E g_1^2+\E g_2^2.
	\end{align*}
	The second moment bound for $g_4$ can be obtained in a similar way so we omit the proof.
\end{proof}

\subsection{Variance expansion}

With the groundwork laid above, we are now able to prove the following variance expansion formula.

\begin{theorem}\label{thm:variance_expansion}
	Suppose that the spectrum of $\Sigma$ is contained in $[1/M,M]$ for some $M>1$. Then
	\begin{align*}
	\biggabs{\frac{\var\big(\dcov_\ast^2(\bm{X},\bm{Y})\big)}{\bar{\sigma}_n^2(X,Y)}-1}\lesssim n^{-1/2}+(p\wedge q)^{-1/4}.
	\end{align*}
	Here $\bar{\sigma}_{n}^2(X,Y)$ is defined in Theorem \ref{thm:non_null_clt}, and the constant in $\lesssim$ depends on $M$ and the distribution of $Z_1$ only via its Poincar\'e constant $c_\ast$, excess kurtosis $\kappa$, and $\epsilon_0$ prescribed by Assumption \ref{assump:sepa}. The claims remain valid with $X=Y$ when the spectrum of $\Sigma_X=\Sigma_Y$ is contained in $[1/M,M]$ for some $M>1$.
\end{theorem}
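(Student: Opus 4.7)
\medskip

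\noindent\textbf{Proof plan for Theorem \ref{thm:variance_expansion}.} The plan is to assemble the pieces already established in Propositions \ref{prop:var_first_order}, \ref{prop:var_second_order}, and \ref{prop:var_higher_order}, and to optimize the free parameter $\epsilon$ in the first two propositions. The starting point is the orthogonality of the Hoeffding decomposition, which yields
\begin{align*}
\var\big(\dcov_\ast^2(\bm{X},\bm{Y})\big) = \sum_{c=1}^{4} \binom{4}{c}^2 \binom{n}{c}^{-1}\, \E g_c^2,
\end{align*}
where $g_1,\ldots,g_4$ are the Hoeffding kernels defined in (\ref{def:hoeffding_kernel}) associated with the symmetric kernel $k$ of Proposition \ref{prop:hoef_decomp}.

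Next, I would control the first two terms using the main propositions of Section \ref{section:hoeffding_decomp}. Under the spectral assumption on $\Sigma$, we have $\tau_X\wedge\tau_Y\asymp_M \sqrt{p\wedge q}$, so Proposition \ref{prop:var_first_order} gives
\begin{align*}
\binom{4}{1}^2\binom{n}{1}^{-1}\E g_1^2 = (1\pm\epsilon_1)\,\bar{\sigma}_{n,1}^2(X,Y)\cdot\Big[1+\mathcal{O}_M\big(\epsilon_1^{-1}(p\wedge q)^{-1/2}\big)\Big],
\end{align*}
and Proposition \ref{prop:var_second_order} gives
\begin{align*}
\binom{4}{2}^2\binom{n}{2}^{-1}\E g_2^2 = (1\pm\epsilon_2)\,\bar{\sigma}_{n,2}^2(X,Y)\cdot\Big[1+\mathcal{O}_M\big(\epsilon_2^{-1}(p\wedge q)^{-1}\big)\Big].
\end{align*}
Optimizing over $\epsilon_1,\epsilon_2>0$, the choices $\epsilon_1\asymp (p\wedge q)^{-1/4}$ and $\epsilon_2\asymp (p\wedge q)^{-1/2}$ make both relative errors of order $(p\wedge q)^{-1/4}$.

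For the third- and fourth-order contributions, I would use Proposition \ref{prop:var_higher_order} together with the scaling $\binom{n}{c}^{-1}\asymp n^{-c}$ for $c=3,4$. Since $\E g_3^2 + \E g_4^2 \lesssim_M \E g_1^2 + \E g_2^2$, and since $\E g_1^2 \asymp n\,\bar{\sigma}_{n,1}^2$ and $\E g_2^2 \asymp n^2\,\bar{\sigma}_{n,2}^2$ by the two propositions above, the combined $c=3,4$ contribution is bounded by
\begin{align*}
\binom{4}{3}^2\binom{n}{3}^{-1}\E g_3^2 + \binom{4}{4}^2\binom{n}{4}^{-1}\E g_4^2 \lesssim_M n^{-2}\bar{\sigma}_{n,1}^2 + n^{-1}\bar{\sigma}_{n,2}^2 \lesssim_M n^{-1}\bar{\sigma}_n^2(X,Y),
\end{align*}
which contributes a relative error of at most $n^{-1}\leq n^{-1/2}$. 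Adding these error budgets and using the lower bound $\bar{\sigma}_n^2(X,Y)\gtrsim_M \max\{\pnorm{\Sigma_{XY}}{F}^2/(npq),\,n^{-2}\}$ supplied by Lemma \ref{lem:first_variance_lower} to convert absolute error bounds into relative ones yields the claimed bound $n^{-1/2}+(p\wedge q)^{-1/4}$.

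There is no single hard obstacle in this assembly step itself, as all the heavy lifting, including the crucial sharp residual estimates of Proposition \ref{prop:res_psi_bound} and the ``correction terms'' engineered into $\bar{g}_1$, has already been done. The only subtlety is the careful choice of $\epsilon_1,\epsilon_2$ and verifying that the $c=3,4$ contributions, which are not expanded in closed form but only bounded in order, are truly negligible compared with the variance lower bound from Lemma \ref{lem:first_variance_lower}. For the marginal case $X=Y$, the same argument applies verbatim since each of the ingredient propositions explicitly records the $X=Y$ version under the spectrum condition on $\Sigma_X$.
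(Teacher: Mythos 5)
Your proposal is correct and takes essentially the same route as the paper's own proof: orthogonality of the Hoeffding decomposition, the relative expansions of Propositions \ref{prop:var_first_order} and \ref{prop:var_second_order} with an optimization over the free parameter (the paper uses a single $\epsilon$ and evaluates $\inf_{\epsilon>0}\bigl|(1+\epsilon)\bigl(1+\mathcal{O}_M(n^{-1}+(p\wedge q)^{-1/2})/\epsilon\bigr)-1\bigr|\asymp n^{-1/2}+(p\wedge q)^{-1/4}$), and Proposition \ref{prop:var_higher_order} combined with the $\binom{n}{c}^{-1}$ scaling to absorb the $c=3,4$ contributions into a relative error of order $n^{-1}$. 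The only differences are cosmetic: you split $\epsilon$ into $\epsilon_1,\epsilon_2$ and spell out the higher-order bookkeeping slightly more explicitly.
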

\begin{proof}
	By Hoeffding decomposition $
	\dcov_\ast^2(\bm{X},\bm{Y}) = \sum_{c=0}^4 \binom{4}{c} U_n(g_c)$,  so 
	\begin{align*}
	\sigma_\Sigma^2 \equiv \var_\Sigma\big(\dcov_\ast^2(\bm{X},\bm{Y})\big)= \sum_{c=1}^4 \binom{4}{c}^2 \binom{n}{c}^{-1} \E g_c^2.
	\end{align*}
	Now we may apply Propositions \ref{prop:var_first_order}, \ref{prop:var_second_order}, and \ref{prop:var_higher_order} to conclude that the left hand side of the desired inequality is bounded by
	\begin{align*}
	\inf_{\epsilon>0}\biggabs{(1+\epsilon)\bigg(1+ \frac{\mathcal{O}_M(n^{-1}+(p\wedge q)^{-1/2})}{\epsilon}\bigg)-1}\asymp n^{-1/2}+(p\wedge q)^{-1/4}.
	\end{align*}
	The claim follows.
\end{proof}

\section{Normal approximation of truncated $\dcov_\ast^2$}\label{section:clt_truncated_dcov}

Let $\bar{g}_0\equiv \E\dcov_*^2(\bm{X},\bm{Y})=\dcov^2(X,Y)$. Recall $\bar{g}_1,\bar{g}_2$ defined in (\ref{def:g1_bar}) and (\ref{def:g2_bar}). Define the truncated sample distance covariance:
\begin{align}\label{def:bar_T}
\bar{T}_n\big(\bm{X},\bm{Y}\big)& = \sum_{c=0}^2 \binom{4}{c} U_n(\bar{g}_c).
\end{align}

The goal of this section is to prove the following non-null central limit theorem for $\bar{T}_n(\bm{X},\bm{Y})$.

\begin{theorem}\label{thm:clt_Tn_bar}
	Suppose that the spectrum of $\Sigma$ lies in $[1/M,M]$ for some $M > 1$. Then there exists some $C=C(M,Z_1)>0$ such that
	\begin{align*}
	\mathrm{err}_n\equiv d_{\mathrm{Kol}}\bigg(\frac{\bar{T}_n(\bm{X},\bm{Y})-\E \bar{T}_n(\bm{X},\bm{Y})}{\var^{1/2}(\bar{T}_n(\bm{X},\bm{Y}))},\mathcal{N}(0,1)\bigg)\leq C\bigg(\frac{1}{n}+\frac{1}{pq}\bigg)^{1/4}.
	\end{align*}
	Here $\var(\bar{T}_n(\bm{X},\bm{Y}))=\bar{\sigma}_n^2(X,Y)$ is defined in Theorem \ref{thm:non_null_clt}, and $C$ depends on $Z_1$ only via its Poincar\'e constant $c_\ast$, excess kurtosis $\kappa$, and $\epsilon_0$ prescribed by Assumption \ref{assump:sepa}.
\end{theorem}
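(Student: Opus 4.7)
The plan is to apply Chatterjee's second-order Poincar\'e inequality (cf.\ \cite{chatterjee2008new}) directly to $\bar{T}_n(\bm{X},\bm{Y})$, viewed as a smooth function of underlying standard Gaussians. Write $(X_i,Y_i)=\Sigma^{1/2}Z_i$ with $Z_i\sim\mathcal{N}(0,I_{p+q})$ i.i.d., stack $\mathbf{Z}=(Z_1^\top,\ldots,Z_n^\top)^\top\in\R^{n(p+q)}$, and set $F(\mathbf{Z})\equiv\bar{T}_n(\bm{X},\bm{Y})$. The key structural fact is that $\bar{g}_1$ in (\ref{def:g1_bar}) is a quadratic form in $(x_1,y_1)$, while $\bar{g}_2$ in (\ref{def:g2_bar}) is a symmetric bilinear pairing in $((x_1,y_1),(x_2,y_2))$ with leading monomial $x_1^\top x_2\cdot y_1^\top y_2$, so that $F$ is a polynomial of degree at most $4$ in $\mathbf{Z}$, with $\nabla F$ and $\nabla^2 F$ polynomial of degrees $3$ and $2$ respectively. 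Chatterjee's inequality then yields
\[
d_{\mathrm{Kol}}\bigg(\frac{F(\mathbf{Z})-\E F(\mathbf{Z})}{\var^{1/2}(F)},\mathcal{N}(0,1)\bigg)\lesssim \frac{\big(\E\pnorm{\nabla F(\mathbf{Z})}{}^4\big)^{1/4}\big(\E\pnorm{\nabla^2 F(\mathbf{Z})}{\op}^4\big)^{1/4}}{\var(F)}.
\]

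From here the proof splits into two substeps. First, the denominator: by the variance expansion already assembled in Theorem \ref{thm:variance_expansion} together with Lemma \ref{lem:first_variance_lower}, one has $\var(F)\asymp_M \bar{\sigma}_n^2(X,Y)$ in the explicit two-part form of Theorem \ref{thm:non_null_clt}. Second, the derivative norms: differentiating $F=\bar{g}_0+4U_n(\bar{g}_1)+6U_n(\bar{g}_2)$ term by term produces closed-form expressions that are linear (for $\nabla U_n(\bar{g}_1)$, restricted to each single block $i$) or bilinear in the $Z_k$'s (for $\nabla U_n(\bar{g}_2)$, a sum over $j\neq i$ supported on two blocks). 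Combined with Gaussian/Hanson--Wright moment bounds under the spectral assumption $\pnorm{\Sigma_X}{\op}\vee\pnorm{\Sigma_X^{-1}}{\op}\vee\pnorm{\Sigma_Y}{\op}\vee\pnorm{\Sigma_Y^{-1}}{\op}\leq M$, these closed forms give clean bounds for $\E\pnorm{\nabla F}{}^4$ with the correct scaling in $n,p,q$ and in $\pnorm{\Sigma_{XY}}{F}$, analogous to the first-order variance computations of Lemma \ref{lem:bar_g1_second_moment}.

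The main technical obstacle is sharply bounding $\pnorm{\nabla^2 F}{\op}$. The Hessian of $U_n(\bar{g}_1)$ is block-diagonal with $n$ blocks of size $(p+q)\times(p+q)$, deterministic and scaling like $n^{-1}$, so its operator norm is easily controlled. The Hessian of $U_n(\bar{g}_2)$, however, has the form $\binom{n}{2}^{-1}\sum_{i<j}H_{ij}(Z_i,Z_j)$, where each $H_{ij}$ is a rank-bounded random block supported on the $(i,j)$-coordinates; a naive Frobenius bound loses factors of $\sqrt{n}$ and is insufficient to reach the claimed rate. The right approach is to exploit the block structure, view the diagonal blocks of $\sum_{j\neq i}H_{ij}$ as Gaussian random matrices depending linearly on the remaining $Z_j$'s, and bound their operator norm by a direct concentration/matrix-Bernstein argument. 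Combining these gradient and Hessian estimates with the variance lower bound, and simplifying under the spectral assumption, produces the stated bound $(n^{-1}+(pq)^{-1})^{1/4}$ in which the additive structure mirrors the two-part variance: the $n^{-1/4}$ term governs the non-degenerate regime driven by $\bar{g}_1$, while the $(pq)^{-1/4}$ term governs the degenerate regime driven by $\bar{g}_2$.
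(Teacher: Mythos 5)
Your route is genuinely different from the paper's. The paper invokes the \emph{discrete} second-order Poincar\'e inequality (Lemma \ref{lem:second_poincare}), which requires controlling $\var(\E(T\mid \bm{X},\bm{Y}))$ for the combinatorial sum over subsets $A$ together with third moments of the discrete differences $\Delta_i$; to make that tractable it first rewrites $\bar{T}_n$ via Lemma \ref{lem:rep_Tbar} and then works through Propositions \ref{prop:var_psi_1}--\ref{prop:var_psi_3}. You instead propose the \emph{Gaussian} second-order Poincar\'e inequality applied to $\bar{T}_n$ as a degree-$4$ polynomial of the underlying standard Gaussians (note this version is in Chatterjee's eigenvalue-fluctuations paper, not in \cite{chatterjee2008new} -- fix the citation). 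This is legitimate here precisely because of Assumption \ref{ass:gaussian}, and it trades the subset bookkeeping for two cleaner objects: $\E\|\nabla F\|^4$, which by hypercontractivity and the two-sided Poincar\'e inequality for fixed-degree polynomials is comparable to $\var^2(F)$, and $\E\|\nabla^2 F\|_{\op}^4$; the whole bound then collapses to showing $\E\|\nabla^2 F\|_{\op}^4\lesssim_M \var^2(F)\,(n^{-1}+(pq)^{-1})$. What the paper's route buys is distribution-robustness (the discrete version needs no smoothness and is the natural tool for the non-Gaussian extensions conjectured in Section \ref{sec:discussion}); what yours buys is a shorter, more transparent argument that in the balanced null case even yields a sharper rate of order $(pq)^{-1/2}+n^{-1}$ rather than the fourth root.

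Two points need care in the execution. First, the $(X,X)$ and $(Y,Y)$ blocks of the Hessian of the $U_n(\bar{g}_2)$ part assemble into an off-diagonal Gram matrix tensored with the identity; you must use the \emph{centered} operator-norm bound $\|\mathbf{X}\mathbf{X}^\top-\diag(\|X_i\|^2)\|_{\op}\lesssim n+\sqrt{np}$ (and its fourth moment), since the naive bound $n+p$ produces a term of order $\sqrt{p/q}/n$ in the final ratio, which diverges when $p\gg n^2q$ and would miss the stated rate in unbalanced regimes. Second, in the non-null case both $\E\|\nabla^2F\|_{\op}^4$ and the variance lower bound (\ref{ineq:var_lower_bound}) depend on $\|\Sigma_{XY}\|_F$, and the ratio must be checked across all three regimes discussed after Theorem \ref{thm:non_null_clt}; this is where the real work migrates to, but I see no obstruction.
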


The major tool to prove the CLT in Theorem \ref{thm:clt_Tn_bar} is the following discrete second-order Poincar\'e inequality proved by Chatterjee \cite{chatterjee2008new}.

\begin{lemma}[Discrete second-order Poincar\'e inequality]\label{lem:second_poincare}
	Let $X = (X_1,\ldots,X_n)$ be a vector of independent $\mathcal{X}$-valued random variables, and $X'=(X_1',\ldots,X_n')$ be an independent copy of $X$. For any $A\subset [n]$, define the random variable
	\begin{align*}
	X_i^A\equiv
	\begin{cases}
	X_i',& \text{if } i\in A,\\
	X_i,& \text{if } i\notin A.
	\end{cases}
	\end{align*}
	Define $\Delta_j f \equiv f(X)- f(X^{\{j\}})$, $T_{A}\equiv \sum_{j\notin A} \Delta_jf(X)\Delta_jf(X^A)$, and 
	\begin{align*}
	T\equiv \frac{1}{2}\sum_{A\subsetneq [n]}\frac{T_A}{{n\choose |A|}(n-|A|)}.
	\end{align*}
	Then with $W\equiv f(X)$ admitting finite variance $\sigma^2$, 
	\begin{align*}
	d_{\mathrm{Kol}}\bigg(\frac{W-\E(W)}{\var^{1/2}(W)},\mathcal{N}(0,1)\bigg) \leq 2 \bigg[\frac{\var^{1/2}\big(\E(T|W)\big)}{\sigma^2} + \frac{1}{2\sigma^3}\sum_{j=1}^n\E|\Delta_j f(X)|^3\bigg]^{1/2}.
	\end{align*}
\end{lemma}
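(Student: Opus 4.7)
I would follow the approach of Chatterjee~\cite{chatterjee2008new}, combining Stein's method for normal approximation with a Lindeberg-type coordinate-by-coordinate interpolation and a second-order Taylor expansion. The three-step plan is as follows.

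\textbf{Step 1 (Stein reduction with smoothing).} For each $t\in\R$ and a mollification parameter $\epsilon>0$, let $h_{t,\epsilon}$ be a Lipschitz smoothing of $\mathbf{1}_{(-\infty,t]}$ with $\|h_{t,\epsilon}\|_\infty\leq 1$ and $\|h_{t,\epsilon}'\|_\infty\lesssim\epsilon^{-1}$. Writing $\widetilde W\equiv(W-\E W)/\sigma$ and $Z\sim\mathcal{N}(0,1)$, one has $\sup_t|\Prob(\widetilde W\leq t)-\Phi(t)|\leq\sup_t|\E h_{t,\epsilon}(\widetilde W)-\E h_{t,\epsilon}(Z)|+O(\epsilon)$. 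Solving the Stein equation $\phi'(x)-x\phi(x)=h_{t,\epsilon}(x)-\E h_{t,\epsilon}(Z)$ yields a $C^2$ solution $\phi=\phi_{t,\epsilon}$ with $\|\phi'\|_\infty\lesssim 1$ and $\|\phi''\|_\infty\lesssim\epsilon^{-1}$, reducing the task to bounding $|\E[\phi'(\widetilde W)-\widetilde W\phi(\widetilde W)]|$ uniformly in $t$.

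\textbf{Step 2 (Core second-order identity).} The heart of the proof is the integration-by-parts-type identity
\[
\E[(W-\E W)\psi(W)]\;=\;\E[\psi'(W)\,T]+R,\qquad |R|\leq\tfrac12\|\psi''\|_\infty\sum_{j=1}^n\E|\Delta_j f(X)|^3,
\]
valid for every $C^2$ test function $\psi$. To derive it I would start from $W-\E W=\E_{X'}[f(X)-f(X')]$ and telescope $f(X)-f(X')$ along a uniformly random permutation $\pi$ of $[n]$: setting $A_k^\pi\equiv\{\pi(1),\ldots,\pi(k-1)\}$,
\[
f(X)-f(X')\;=\;\sum_{k=1}^n\bigl[f(X^{A_k^\pi})-f(X^{A_k^\pi\cup\{\pi(k)\}})\bigr].
\]
Averaging over $\pi$, each pair $(j,A)$ with $j\notin A$ receives weight $|A|!(n-|A|-1)!/n!=[\binom{n}{|A|}(n-|A|)]^{-1}$, precisely the normalization in the definition of $T$. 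Multiplying by $\psi(W)$, Taylor-expanding $\psi(W)=\psi(W^A)+\psi'(W^A)(W-W^A)+O(\|\psi''\|_\infty(W-W^A)^2)$, and using exchangeability of the pair $(X,X^A)$ to symmetrize the one-sided factor $\Delta_j f(X)\psi'(W^A)$ into a symmetric product involving $\psi'(W)$ and the second coordinate-difference $\Delta_j f(X^A)$, produces precisely $\psi'(W)T_A$ inside the subset-averaged sum. The cubic $R$ collects both the second-order Taylor error and the cost of replacing $\psi'(W^A)$ by $\psi'(W)$. Specializing to $\psi(x)=x$ (so $R=0$) gives the first-order identity $\E T=\sigma^2$, used below.

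\textbf{Step 3 (Assembly and optimization).} Applying Step 2 with $\psi=\phi_{t,\epsilon}$ and dividing by $\sigma^2$, split $\E[\phi'(\widetilde W)\,T/\sigma^2]=\E\phi'(\widetilde W)+\sigma^{-2}\E[\phi'(\widetilde W)(\E(T\mid W)-\E T)]$ and bound the second piece by Cauchy--Schwarz using $\|\phi'\|_\infty\lesssim 1$. Collecting all estimates yields
\[
\sup_t|\Prob(\widetilde W\leq t)-\Phi(t)|\;\lesssim\;\epsilon+\frac{\var^{1/2}(\E(T\mid W))}{\sigma^2}+\frac{1}{\epsilon\sigma^3}\sum_{j=1}^n\E|\Delta_j f(X)|^3,
\]
and optimizing $\epsilon>0$ by AM--GM gives the advertised square-root bound, the outer factor of $2$ emerging from this balance. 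The main obstacle is Step 2, specifically the exchangeability-based symmetrization that converts the one-sided increment $\Delta_j f(X)$ from the telescoping decomposition into the symmetric product $\Delta_j f(X)\Delta_j f(X^A)$ defining $T_A$. Handling the sign conventions in $\Delta_j f(X^A)=f(X^A)-f(X^{A\cup\{j\}})$ versus $\Delta_j f(X)=f(X)-f(X^{\{j\}})$, and verifying that the Taylor remainder summed over all $(j,A)$ telescopes cleanly to $\sum_j\E|\Delta_j f|^3$, is the delicate bookkeeping that justifies the terminology ``second-order Poincar\'e''; Steps 1 and 3 are standard Stein-method manipulations.
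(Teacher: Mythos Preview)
Your proposal reconstructs Chatterjee's argument from scratch, whereas the paper simply cites it: the paper's entire proof is to invoke \cite[Theorem~2.2]{chatterjee2008new}, which already furnishes the \emph{Wasserstein} bound
\[
d_{\mathrm{W}}\bigl((W-\E W)/\sigma,\mathcal{N}(0,1)\bigr)\;\leq\;\frac{\var^{1/2}(\E(T\mid W))}{\sigma^2}+\frac{1}{2\sigma^3}\sum_{j}\E|\Delta_j f|^3,
\]
and then to convert to Kolmogorov distance via the auxiliary inequality $d_{\mathrm{Kol}}^2\leq 4\|\varphi_{\mathcal{N}}\|_\infty\, d_{\mathrm{W}}$ (Lemma~\ref{lem:dkov_dW}). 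The square root and the outer factor~$2$ in the statement come entirely from this last conversion, not from any $\epsilon$-optimization inside the Stein argument.

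Your direct-Kolmogorov route via smoothed indicators is a legitimate alternative, and your Step~2 sketch of the random-permutation telescoping and exchangeability symmetrization is faithful to Chatterjee's proof. However, Step~3 does not deliver the stated form of the bound. With $\|\phi'\|_\infty\lesssim 1$ and $\|\phi''\|_\infty\lesssim\epsilon^{-1}$ the variance term carries no $\epsilon$, so optimizing balances only $\epsilon$ against $\epsilon^{-1}\sigma^{-3}\sum_j\E|\Delta_j f|^3$ and produces
\[
\frac{\var^{1/2}(\E(T\mid W))}{\sigma^2}+C\sqrt{\frac{1}{\sigma^3}\sum_{j}\E|\Delta_j f|^3},
\]
i.e.\ a bound of the shape $A+C\sqrt{B}$ rather than $2\sqrt{A+B/2}$; these two expressions are not comparable in general. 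To recover exactly the lemma's form you should run Steps~1--2 against $1$-Lipschitz test functions (so that both $\|\phi'\|_\infty$ and $\|\phi''\|_\infty$ are $O(1)$), obtain the linear Wasserstein bound $A+B/2$, and only then pass to Kolmogorov by the square-root conversion --- which is precisely the paper's two-line argument.
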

\begin{proof}
	This follows from \cite[Theorem 2.2]{chatterjee2008new} and Lemma \ref{lem:dkov_dW}.
\end{proof}

We start with the following decomposition of $\bar{T}_n(\bm{X},\bm{Y})$. Its proof will be presented in Appendix \ref{subsec:truncate_decomposition}.

\begin{lemma}\label{lem:rep_Tbar}
	Let
	\begin{align*}
	\psi_1(\bm{X},\bm{Y}) &\equiv \sum_{I_n^2}\big(X_{i_1}^\top X_{i_2} Y_{i_1}^\top Y_{i_2} - \pnorm{\Sigma_{XY}}{F}^2\big),\\
	\psi_2(\bm{X},\bm{Y}) &\equiv \sum_{i\neq j} \big(X_i^\top\Sigma_{XY}Y_j+ X_j^\top\Sigma_{XY}Y_i\big),\\
	\psi_3(\bm{X},\bm{Y}) &\equiv \sum_{i=1}^n\Big[\frac{\pnorm{\Sigma_{XY}}{F}^2}{\tau_X^2}\big(\pnorm{X_i}{}^2 - \tr(\Sigma_X)\big)+ \frac{\pnorm{\Sigma_{XY}}{F}^2}{\tau_Y^2}\big(\pnorm{Y_i}{}^2 - \tr(\Sigma_Y)\big)\Big].
	\end{align*} 
	Then
	\begin{align*}
	&\bar{T}_n\big(\bm{X},\bm{Y}\big) = \dcov^2(X,Y)+ \frac{1}{\tau_X\tau_Y\cdot 2 \binom{n}{2}}\Big(\psi_1\big(\bm{X},\bm{Y}\big)-\psi_2\big(\bm{X},\bm{Y}\big)\Big)-\frac{2}{\tau_X\tau_Yn}\psi_{3}(\bm{X},\bm{Y}).
	\end{align*}
\end{lemma}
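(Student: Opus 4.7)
\smallskip

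\textbf{Plan.} The statement is a deterministic algebraic identity: it rewrites the truncated object $\bar{T}_n$, defined as the sum $\sum_{c=0}^{2}\binom{4}{c}U_n(\bar{g}_c)$ of the three leading pieces in the Hoeffding decomposition, as an explicit low-order polynomial in the data. So the plan is a direct expansion of each of the three $U$-statistic pieces, followed by careful bookkeeping of the cancellations. In particular, all the $\sum_i X_i^\top \Sigma_{XY}Y_i$ and constant $\pnorm{\Sigma_{XY}}{F}^2$ contributions appearing outside $\bar g_0$ should disappear, leaving only quantities naturally grouped as $\psi_1$, $\psi_2$, and $\psi_3$.

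\smallskip

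First, the $c=0$ contribution is immediate from the definition $\bar g_0 = \dcov^2(X,Y)$. For the first-order term, I plug the explicit form of $\bar g_1$ (main piece plus the two correction pieces $\mathscr{A}_{1,X}$ and $\mathscr{A}_{1,Y}$ from \eqref{def:g1_bar}) into $\binom{4}{1}U_n(\bar g_1)=(4/n)\sum_i \bar g_1(X_i,Y_i)$. The main piece yields $\tfrac{2}{n\tau_X\tau_Y}\sum_i(X_i^\top\Sigma_{XY}Y_i-\pnorm{\Sigma_{XY}}{F}^2)$, while $\mathscr{A}_{1,X}$ and $\mathscr{A}_{1,Y}$ summed over $i$ combine, up to a scalar, into $\psi_3(\bm X,\bm Y)$; this produces the $\psi_3$ term of the claim.

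\smallskip

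For the second-order term $\binom{4}{2}U_n(\bar g_2)=6\binom{n}{2}^{-1}\sum_{i<j}\bar g_2((X_i,Y_i),(X_j,Y_j))$, I substitute the definition of $\bar g_2$ from \eqref{def:g2_bar} and expand term-by-term. The three elementary manipulations I will use are:
\begin{enumerate}
\item the summand $X_i^\top X_j Y_i^\top Y_j$ is symmetric in $(i,j)$, so $\sum_{i<j}X_i^\top X_j Y_i^\top Y_j=\tfrac12\sum_{i\neq j}X_i^\top X_j Y_i^\top Y_j$, which combined with the constant $\pnorm{\Sigma_{XY}}{F}^2$ inside $\bar g_2$ reassembles $\psi_1$ up to an explicit constant;
\item the cross piece $X_i^\top\Sigma_{XY}Y_j+X_j^\top\Sigma_{XY}Y_i$ is also symmetric, so $\sum_{i<j}(\cdot)=\tfrac12\psi_2$;
\item the diagonal-in-second-coordinate piece $-X_i^\top\Sigma_{XY}Y_i-X_j^\top\Sigma_{XY}Y_j$ collapses under $\sum_{i<j}$ to $-(n-1)\sum_i X_i^\top\Sigma_{XY}Y_i$, because each index $i$ occurs in exactly $n-1$ unordered pairs.
\end{enumerate}

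\smallskip

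Summing the three contributions, the cancellation structure is what matters. The single-index sum $\sum_i X_i^\top\Sigma_{XY}Y_i$ enters with coefficient $+\tfrac{2}{n\tau_X\tau_Y}$ from the first-order piece and with coefficient $-\tfrac{2}{n\tau_X\tau_Y}$ from the second-order piece (via observation (3) above together with the $6/\binom{n}{2}$ prefactor), so it cancels; an analogous cancellation eliminates the stray $\pnorm{\Sigma_{XY}}{F}^2$ constants. What survives from the first-order contribution is precisely the $\psi_3$ term, and what survives from the second-order contribution is precisely $\tfrac{1}{2\binom{n}{2}\tau_X\tau_Y}(\psi_1-\psi_2)$, matching the claimed identity. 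There is no serious obstacle here; the only hazard is arithmetic with the $1/n$ vs. $1/\binom{n}{2}$ prefactors and with the conversion $\sum_{i<j}\!\leftrightarrow\!\tfrac12\sum_{i\neq j}$, and I would double-check the overall scalar on $\psi_3$ by independently verifying that $\E \bar T_n=\dcov^2(X,Y)$ (consistent with $\E\psi_1=\E\psi_2=\E\psi_3=0$).
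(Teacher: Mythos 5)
Your proposal is correct and follows essentially the same route as the paper: substitute the explicit forms of $\bar g_0,\bar g_1,\bar g_2$ into $\sum_{c=0}^{2}\binom{4}{c}U_n(\bar g_c)$, convert $\sum_{i<j}$ to $\tfrac12\sum_{i\neq j}$, and observe that the diagonal piece $-(n-1)\sum_i X_i^\top\Sigma_{XY}Y_i$ from the second-order kernel cancels the single-index term from the first-order kernel (your coefficient check $+\tfrac{2}{n\tau_X\tau_Y}$ versus $-\tfrac{2}{n\tau_X\tau_Y}$ is exactly the computation the paper performs). The only loose end is that you leave the scalar on $\psi_3$ as "up to a scalar," and your proposed sanity check via $\E\bar T_n=\dcov^2(X,Y)$ cannot resolve it since $\E\psi_3=0$; you should instead just carry out the substitution of $\mathscr{A}_{1,X},\mathscr{A}_{1,Y}$ into $\tfrac{4}{n}\sum_i\bar g_1$ explicitly, noting that $\mathscr{A}_{1,X}+\mathscr{A}_{1,Y}$ equals $-\tfrac12$ times the corresponding summand of $\psi_3$.
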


\begin{proof}[Outline of the proof of Theorem \ref{thm:clt_Tn_bar}]
	Define $T_{\psi_1}(\bm{X},\bm{Y})$-$T_{\psi_3}(\bm{X},\bm{Y})$ and $\Delta_i\psi_1(\bm{X},\bm{Y})$-$\Delta_i\psi_3(\bm{X},\bm{Y})$ as in the discrete second-order Poincar\'e inequality (cf. Lemma \ref{lem:second_poincare}). The following three propositions give variance and third moment bounds for these quantities.
	
	\begin{proposition}[Analysis of $\psi_1$]\label{prop:var_psi_1}
		Assume the conditions in Theorem \ref{thm:clt_Tn_bar}. Then the following hold:
		\begin{enumerate}
			\item (Variance bound)
			\begin{align*}
			\var\big[\E(T_{\psi_1}|\bm{X},\bm{Y})\big] &\lesssim n^3\cdot \pnorm{\Sigma_X}{F}^4\pnorm{\Sigma_Y}{F}^4 + n^4\cdot(1\vee\pnorm{\Sigma_{XY}}{F}^2)\pnorm{\Sigma_X}{F}^2\pnorm{\Sigma_Y}{F}^2 \\
			&\quad\quad\quad+ n^5\cdot(1\vee\pnorm{\Sigma_{XY}}{F}^2) \pnorm{\Sigma_{XY}}{F}^2.
			\end{align*}
			\item (Third moment bound)
			\begin{align*}
			\sum_{i=1}^n \E|\Delta_i \psi_1(\bm{X},\bm{Y})|^3 \lesssim n^{5/2} \tr^{3/2}(\Sigma_X)\tr^{3/2}(\Sigma_Y) + n^4\pnorm{\Sigma_{XY}}{F}^3.
			\end{align*}
		\end{enumerate}
		The constants in $\lesssim$ depend on $M$ and the distribution of $Z_1$ only.
	\end{proposition}
	
	\begin{proposition}[Analysis of $\psi_2$]\label{prop:var_psi_2}
		Assume the conditions in Theorem \ref{thm:clt_Tn_bar}. Then the following hold.
		\begin{enumerate}
			\item $\var\big[\E(T_{\psi_2}|\bm{X},\bm{Y})\big] \lesssim n^4\pnorm{\Sigma_{XY}}{F}^4$.
			\item $\sum_{i=1}^n \E|\Delta_j \psi_2(\bm{X},\bm{Y})|^3 \lesssim n^{5/2}\pnorm{\Sigma_{XY}}{F}^3$.
		\end{enumerate}
		The constants in $\lesssim$ depend on $M$ and the distribution of $Z_1$ only.
	\end{proposition}
	
	\begin{proposition}[Analysis of $\psi_3$]\label{prop:var_psi_3} 
		Assume the conditions in Theorem \ref{thm:clt_Tn_bar}. Then the following hold.
		\begin{enumerate}
			\item $\var\big[\E(T_{\psi_3}|\bm{X},\bm{Y})\big]\lesssim n\cdot \pnorm{\Sigma_{XY}}{F}^8(\tau_X^{-4} + \tau_Y^{-4})$.
			\item $\sum_{i=1}^n \E|\Delta_i \psi_3(\bm{X},\bm{Y})|^3 \lesssim n\cdot \pnorm{\Sigma_{XY}}{F}^6(\tau_X^{-3} + \tau_Y^{-3})$.
		\end{enumerate}
		The constants in $\lesssim$ depend on $M$ and the distribution of $Z_1$ only.
	\end{proposition}
	
	The proofs of these propositions will be detailed in Appendix \ref{appendix:proof_clt_Tn_bar}. By the proceeding propositions and Lemma \ref{lem:SigmaXY_F_bound}, we have 
	\begin{align*}
	D_1&\equiv \frac{\var^{1/2}\big(\E(T_{\psi_1}|\bm{X},\bm{Y})\big) + \var^{1/2}\big(\E(T_{\psi_2}|\bm{X},\bm{Y})\big)}{n^4\tau_X^2\tau_Y^2} + \frac{\var^{1/2}\big(\E(T_{\psi_3}|\bm{X},\bm{Y})\big)}{n^2\tau_X^2\tau_Y^2}\\
	&\lesssim_M \bigg[\frac{1}{n^{5/2}} + \frac{1\vee \pnorm{\Sigma_{XY}}{F}}{n^2\tau_X\tau_Y} + \frac{(1\vee\pnorm{\Sigma_{XY}}{F})\pnorm{\Sigma_{XY}}{F}}{n^{3/2}\tau_X^2\tau_Y^2}\bigg]+\frac{\pnorm{\Sigma_{XY}}{F}^4}{n^{3/2}\tau_X^2\tau_Y^2(\tau_X^2\wedge \tau_Y^2)}\\
	&\asymp_M \frac{1}{n^2\tau_X^2\tau_Y^2} \Big[n^{-1/2}\tau_X^2\tau_Y^2+\tau_X\tau_Y(1\vee \pnorm{\Sigma_{XY}}{F})+n^{1/2} (1\vee \pnorm{\Sigma_{XY}}{F})\pnorm{\Sigma_{XY}}{F}\Big],
	\end{align*}
	and
	\begin{align*}
	D_2&\equiv \frac{\sum_{i=1}^n \E|\Delta_i\psi_1(\bm{X},\bm{Y})|^3 + \E|\Delta_i\psi_2(\bm{X},\bm{Y})|^3}{n^6\tau_X^3\tau_Y^3} + \frac{\sum_{i=1}^n \E|\Delta_i \psi_3(\bm{X},\bm{Y})|^3}{n^3\tau_X^3\tau_Y^3}\\
	&\lesssim_M \bigg[\frac{1}{n^{7/2}} + \frac{\pnorm{\Sigma_{XY}}{F}^3}{n^2\tau_X^3\tau_Y^3}\bigg]+\frac{n\pnorm{\Sigma_{XY}}{F}^6(\tau_X^{-3}+\tau_Y^{-3})}{n^3\tau_X^3\tau_Y^3}\\
	&\asymp_M \frac{1}{n^3\tau_X^3\tau_Y^3}\Big[n^{-1/2}\tau_X^3\tau_Y^3+ n\pnorm{\Sigma_{XY}}{F}^3\Big].
	\end{align*}
	Using Theorem \ref{thm:variance_expansion} with the lower bound Lemma \ref{lem:first_variance_lower}, we have
	\begin{align}\label{ineq:var_lower_bound}
	\bar{\sigma}_n^2= \var\big(\bar{T}_n(\bm{X},\bm{Y})\big)\gtrsim_M \frac{1}{n^2\tau_X^2\tau_Y^2}\Big[n \pnorm{\Sigma_{XY}}{F}^2+ \tau_X^2\tau_Y^2 + \pnorm{\Sigma_{XY}}{F}^4\Big].
	\end{align}
	This entails that
	\begin{align*}
	\frac{D_1}{\bar{\sigma}^2}&\lesssim_M  \frac{n^{-1/2}\tau_X^2\tau_Y^2+\tau_X\tau_Y(1\vee \pnorm{\Sigma_{XY}}{F})+n^{1/2} (1\vee \pnorm{\Sigma_{XY}}{F})\pnorm{\Sigma_{XY}}{F}}{n \pnorm{\Sigma_{XY}}{F}^2+ \tau_X^2\tau_Y^2 + \pnorm{\Sigma_{XY}}{F}^4}\\
	&\lesssim \frac{1}{n^{1/2}}+\frac{1}{\tau_X\tau_Y}+ \frac{\tau_X\tau_Y\pnorm{\Sigma_{XY}}{F}}{n \pnorm{\Sigma_{XY}}{F}^2+ \tau_X^2\tau_Y^2} +\frac{n^{1/2}\pnorm{\Sigma_{XY}}{F}}{n \pnorm{\Sigma_{XY}}{F}^2+ \tau_X^2\tau_Y^2} \asymp \frac{1}{n^{1/2}}+\frac{1}{\tau_X\tau_Y},\\
	\frac{D_2}{\bar{\sigma}^3}&\lesssim_M \frac{n^{-1/2}\tau_X^3\tau_Y^3+ n\pnorm{\Sigma_{XY}}{F}^3}{n^{3/2} \pnorm{\Sigma_{XY}}{F}^3+ \tau_X^3\tau_Y^3 + \pnorm{\Sigma_{XY}}{F}^6}\lesssim \frac{1}{n^{1/2}}.
	\end{align*}
	The claim follows by using Lemma \ref{lem:second_poincare} in the form
	\begin{align*}
	\mathrm{err}_n \leq C\cdot\bigg[\frac{D_1}{\bar{\sigma}^2} + \frac{D_2}{\bar{\sigma}^3}\bigg]^{1/2},
	\end{align*}
	as desired.
\end{proof}

As outlined above, the major step in the proof is to obtain good enough variance and third moment bounds for $\E [T_{\psi_1}|\bm{X},\bm{Y}]$-$\E[T_{\psi_3}|\bm{X},\bm{Y}]$ and $\Delta_i\psi_1(\bm{X},\bm{Y})$-$\Delta_i\psi_3(\bm{X},\bm{Y})$, as claimed in Propositions \ref{prop:var_psi_1}-\ref{prop:var_psi_3}. The proofs to these propositions are fairly delicate and involved. The most complicated case appears to be the control for $\E [T_{\psi_1}|\bm{X},\bm{Y}], \Delta_i\psi_1(\bm{X},\bm{Y})$ associated with the first term $\psi_1(\bm{X},\bm{Y})$ due to its highest polynomial order by definition. The structure of the bounds in Propositions \ref{prop:var_psi_1}-\ref{prop:var_psi_3} also reveals a careful balance among the power in the terms $n,\pnorm{\Sigma_X}{F}\pnorm{\Sigma_Y}{F}, \pnorm{\Sigma_{XY}}{F}^2$. Such a balance turns out to be crucial to reach the announced error bound in Theorem \ref{thm:clt_Tn_bar} that requires no more than a bounded spectrum condition. See Appendix \ref{appendix:proof_clt_Tn_bar} for proof details.

\section{Proof of Theorem \ref{thm:non_null_clt}}\label{sec:proof_clts}

\noindent (\textbf{Step 1}) By definition of $\bar{T}_n(\bm{X},\bm{Y})$ in (\ref{def:bar_T}), we have
\begin{align*}
\Delta_n\equiv \dcov_\ast^2(\bm{X},\bm{Y})-\bar{T}_n(\bm{X},\bm{Y}) = \sum_{c=1}^2 U_n(g_c-\bar{g}_c)+ \sum_{c=3}^4 U_n(g_c). 
\end{align*}
This means
\begin{align*}
\frac{\var(\Delta_n)}{\bar{\sigma}_n^2(X,Y)}&\lesssim \sum_{c=1}^2 \frac{\var \big[U_n\big(g_c-\bar{g}_c\big)\big]}{\bar{\sigma}_n^2(X,Y)}+ \sum_{c=3}^4 \frac{\var\big[U_n(g_c)\big]}{\bar{\sigma}_n^2(X,Y)}.
\end{align*}
For $c=1$, by Lemma \ref{lem:g1} and the proof of Proposition \ref{prop:var_first_order}, we have
\begin{align*}
\frac{\var \big[U_n(g_1-\bar{g}_1)\big]}{\bar{\sigma}_n^2(X,Y)}\asymp \frac{\var(\bar{R}_1(X,Y))}{n\tau_X^2\tau_Y^2 \bar{\sigma}_n^2(X,Y)}&\lesssim \frac{\var(\bar{R}_1)}{\tau_X^2\tau_Y^2 \var(\bar{g}_1)}\lesssim_M \frac{1}{\tau_X\wedge \tau_Y}.
\end{align*}
For $c=2$, by Lemma \ref{lem:g2} and the proof of Proposition \ref{prop:var_second_order}, we have
\begin{align*}
\frac{\var \big[U_n(g_2-\bar{g}_2)\big]}{\bar{\sigma}_n^2(X,Y)}\lesssim_M \frac{\var(\bar{R}_2)}{n^2\tau_X^2\tau_Y^2 \bar{\sigma}_n^2(X,Y)}\lesssim_M \frac{\var(\bar{R}_2)}{\tau_X^2\tau_Y^2\var(\bar{g}_2)}\lesssim_M \frac{1}{(\tau_X\wedge \tau_Y)^2}.
\end{align*}
For $c=3,4$, the proof of Proposition \ref{prop:var_higher_order} yields that
\begin{align*}
&\var\big[U_n(g_3)\big]+\var\big[U_n(g_4)\big]\lesssim_M \frac{1}{n^3 \tau_X^2\tau_Y^2}\big(\pnorm{\Sigma_X}{F}^2\pnorm{\Sigma_Y}{F}^2  + \pnorm{\Sigma_{XY}}{F}^2+  \pnorm{\Sigma_{XY}}{F}^4\big),
\end{align*}
so using the variance lower bound in (\ref{ineq:var_lower_bound}), we have 
\begin{align*}
\frac{\var\big[U_n(g_3)\big]+\var\big[U_n(g_4)\big]}{\bar{\sigma}_n^2(X,Y)} \lesssim_M \frac{1}{n}.
\end{align*}
Collecting the bounds, we have 
\begin{align}\label{ineq:non_null_clt_1}
\frac{\var(\Delta_n)}{\bar{\sigma}_n^2(X,Y)}\lesssim_M \frac{1}{n}+\frac{1}{\tau_X\wedge \tau_Y}.
\end{align}
\noindent (\textbf{Step 2}) First consider normalization by $\bar{\sigma}_n(X,Y)=\var^{1/2}(\bar{T}_n(\bm{X},\bm{Y}))$. Using the decomposition
\begin{align*}
\bar{L}_n\equiv \frac{ \dcov_\ast^2(\bm{X},\bm{Y})-\dcov^2(X,Y)}{\bar{\sigma}_n(X,Y)} &= \frac{\Delta_n}{\bar{\sigma}_n(X,Y)}+ \frac{\bar{T}_n(\bm{X},\bm{Y})-\E \bar{T}_n(\bm{X},\bm{Y})}{\var^{1/2}(\bar{T}_n(\bm{X},\bm{Y}))},
\end{align*}
by Lemma \ref{lem:kolmogorov_res} and Theorem \ref{thm:clt_Tn_bar} (note that $\E\Delta_n = 0$),
\begin{align*}
&d_{\mathrm{Kol}}\bigg(\frac{ \dcov_\ast^2(\bm{X},\bm{Y})-\dcov^2(X,Y)}{\bar{\sigma}_n(X,Y)} ,\mathcal{N}(0,1)\bigg)\\
&\leq d_{\mathrm{Kol}}\bigg(\frac{\bar{T}_n(\bm{X},\bm{Y})-\E \bar{T}_n(\bm{X},\bm{Y})}{\var^{1/2}(\bar{T}_n(\bm{X},\bm{Y}))} ,\mathcal{N}(0,1)\bigg)+2\bigg(\frac{\var(\Delta_n)}{\bar{\sigma}_n^2(X,Y)}\bigg)^{1/3}\lesssim_M \bigg(\frac{1}{n\wedge p\wedge q}\bigg)^{1/6}.
\end{align*}
Next, with the normalization $\var^{1/2}(\dcov_\ast^2(\bm{X},\bm{Y}))$, consider the decomposition
\begin{align*}
\frac{ \dcov_\ast^2(\bm{X},\bm{Y})-\dcov^2(X,Y)}{\var^{1/2}(\dcov_\ast^2(\bm{X},\bm{Y}))} & = \bar{L}_n + \bar{L}_n \bigg(\frac{\bar{\sigma}_n(X,Y)}{\var^{1/2}(\dcov_\ast^2(\bm{X},\bm{Y}))}-1\bigg)\equiv \bar{L}_n+\bar{\Delta}_n.
\end{align*}
By Theorem \ref{thm:variance_expansion}, $\var(\bar{\Delta}_n)\lesssim_M n^{-1}+(p\wedge q)^{-1/2}$. The claim now follows by invoking Lemma \ref{lem:kolmogorov_res}.\qed

\section{Proof of Theorem \ref{thm:local_clt}}\label{section:proof_local_clt}

We write $A(\Sigma)$ as $A$ for simplicity in the proof. Let
\begin{align*}
m_0& \equiv \frac{ \pnorm{\Sigma_{XY}}{F}^2}{\tau_X\tau_Y},\quad \sigma_0^2\equiv \frac{2\pnorm{\Sigma_X}{F}^2\pnorm{\Sigma_Y}{F}^2}{n^2\tau_X^2\tau_Y^2}.
\end{align*}
Then with $\bar{\sigma}_n\equiv \bar{\sigma}_n(X,Y)$ in the proof,
\begin{align*}
\frac{\dcov_\ast^2(\bm{X},\bm{Y})-m_0}{\sigma_0}& \equiv \frac{\dcov_\ast^2(\bm{X},\bm{Y})-\dcov^2(X,Y)}{\bar{\sigma}_n}+\Delta_1+\Delta_2,
\end{align*}
where
\begin{align*}
\Delta_1&\equiv \frac{\dcov_\ast^2(\bm{X},\bm{Y})-\dcov^2(X,Y)}{\bar{\sigma}_n}\bigg(\frac{\bar{\sigma}_n}{\sigma_0}-1\bigg),\quad \Delta_2\equiv \frac{\dcov^2(X,Y)-m_0}{\sigma_0}.
\end{align*}
By Lemma \ref{lem:kolmogorov_res} and Theorem  \ref{thm:non_null_clt},
\begin{align*}
&d_{\mathrm{Kol}}\bigg(\frac{\dcov_\ast^2(\bm{X},\bm{Y})-m_0}{\sigma_0},\mathcal{N}(0,1)\bigg)
\leq C_M \bigg(\frac{1}{n\wedge p\wedge q}\bigg)^{1/6}+ 2 \var^{1/3}(\Delta_1)+\abs{\Delta_2}.
\end{align*}
To give a variance bound for $\Delta_1$, first by using Lemma \ref{lem:SigmaXY_F_bound}, it follows that
\begin{align*}
\bar{\sigma}_n^2&= \mathcal{O}_M\bigg(\frac{\pnorm{\Sigma_{XY}}{F}^2}{n\tau_X^2\tau_Y^2}+\frac{\pnorm{\Sigma_{XY}}{F}^4}{n\tau_X^2\tau_Y^2(n\wedge p\wedge q)}\bigg) +\frac{2\pnorm{\Sigma_X}{F}^2\pnorm{\Sigma_Y}{F}^2}{n^2\tau_X^2\tau_Y^2}.
\end{align*}
So with $\rho(A)\equiv A/(n\wedge p\wedge q)^{1/2}$,
\begin{align}\label{ineq:local_clt_1}
\frac{\bar{\sigma}_n^2}{\sigma_0^2}&=1+\mathcal{O}_M\bigg(\frac{A}{(pq)^{1/2}}+\frac{A^2}{n(n\wedge p\wedge q)}\bigg)=1+\mathcal{O}_M\big(\rho(A)+\rho^2(A)\big).
\end{align}
Therefore combined with Theorem \ref{thm:variance_expansion}, we have
\begin{align*}
\var\big(\Delta_1\big)&=\bigg(\frac{\bar{\sigma}_n}{\sigma_0}-1\bigg)^2\cdot \frac{\var\big(\dcov_\ast^2(\bm{X},\bm{Y})\big)}{\bar{\sigma}_n^2}\lesssim_M \big(\rho(A)+\rho^2(A)\big)^2.
\end{align*}
On the other hand, by Theorem \ref{thm:dcov_expansion} we have
\begin{align*}
\abs{\Delta_2}&\lesssim_M \frac{A}{(p\wedge q)^{1/2}}\lesssim \rho(A).
\end{align*}
The claim follows by collecting the estimates and using $1\wedge t \lesssim 1\wedge t^\alpha$ for $\alpha \in (0,1)$.\qed

\section{Proof of Theorem \ref{thm:dcov_power}}\label{section:proof_power_formula}

\begin{lemma}\label{lem:ratio_dcov}
	Suppose that the spectrum of $\Sigma$ is contained in $[1/M,M]$ for some $M>1$. Then with $\Delta_X$ and $\Delta_Y$ defined by 
	\begin{align*}
	\frac{\dcov_\ast(\bm{X})}{\dcov(X)}\equiv 1+\Delta_X,\quad \frac{\dcov_\ast(\bm{Y})}{\dcov(Y)}\equiv 1+\Delta_Y,
	\end{align*}
	we have $
	\E \Delta_X^2\vee \E \Delta_Y^2\leq C(n\wedge p\wedge q)^{-2}$ for some constant $C=C(M)>0$. 
\end{lemma}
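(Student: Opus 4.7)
The plan is to reduce the ratio $\Delta_X$ to the centered statistic $\dcov_\ast^2(\bm{X})-\dcov^2(X)$ via a Lipschitz estimate for the square root, and then invoke the variance expansion (Theorem \ref{thm:variance_expansion}) together with the mean expansion (Theorem \ref{thm:dcov_expansion}), both in their marginal $X=Y$ forms.

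\textbf{Step 1 (Square-root linearization).} Since $\dcov^2(X)\geq 0$ (and we may assume $\dcov^2(X)>0$ for large enough $p$, to be verified in Step 3), the elementary bound $(\sqrt{a}-\sqrt{b})^2\leq (a-b)^2/b$ for $a\geq 0,\,b>0$ applied with $a=\dcov_\ast^2(\bm{X})_+$ and $b=\dcov^2(X)$ gives
\[
\Delta_X^2 \;=\; \frac{(\dcov_\ast(\bm{X})-\dcov(X))^2}{\dcov^2(X)} \;\leq\; \frac{(\dcov_\ast^2(\bm{X})-\dcov^2(X))^2}{\dcov^4(X)}.
\]
Taking expectations and using unbiasedness $\E \dcov_\ast^2(\bm{X})=\dcov^2(X)$ (Proposition \ref{prop:hoef_decomp} with $X=Y$),
\[
\E\Delta_X^2 \;\leq\; \frac{\var\big(\dcov_\ast^2(\bm{X})\big)}{\dcov^4(X)}.
\]

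\textbf{Step 2 (Lower bound on $\dcov^2(X)$).} Under the bounded spectrum assumption $\mathrm{spec}(\Sigma_X)\subset[1/M,M]$, we have $\tau_X^2=2\tr(\Sigma_X)\asymp_M p$ and $\pnorm{\Sigma_X}{F}^2\asymp_M p$, so the mean expansion in Theorem \ref{thm:dcov_expansion} (in the $X=Y$ form) yields
\[
\dcov^2(X) \;=\; \frac{\pnorm{\Sigma_X}{F}^2}{\tau_X^2}\big(1+\mathcal{O}_M(\tau_X^{-1})\big) \;\asymp_M\; 1
\]
for $p$ larger than some constant depending only on $M$. Consequently $\dcov^4(X)\gtrsim_M 1$.

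\textbf{Step 3 (Variance upper bound).} The variance expansion of Theorem \ref{thm:variance_expansion} (marginal form) gives $\var(\dcov_\ast^2(\bm{X}))\asymp_M \bar{\sigma}_n^2(X)=\bar{\sigma}_{n,1}^2(X)+\bar{\sigma}_{n,2}^2(X)$. Plugging in the bounded-spectrum scalings $\tr(\Sigma_X)\asymp_M p$, $\pnorm{\Sigma_X}{F}^2\asymp_M p$, $\tr(\Sigma_X^3)\asymp_M p$, $\pnorm{\Sigma_X^2}{F}^2\asymp_M p$ into the explicit formulas of Theorem \ref{thm:non_null_clt_X} (and using the lower bound in Lemma \ref{lem:first_variance_lower}(2) to ensure no cancellation in $\bar{\sigma}_{n,1}^2(X)$) yields
\[
\bar{\sigma}_{n,1}^2(X)\asymp_M \frac{1}{np},\qquad \bar{\sigma}_{n,2}^2(X)\asymp_M \frac{1}{n^2},
\]
hence $\var(\dcov_\ast^2(\bm{X}))\lesssim_M \tfrac{1}{np}+\tfrac{1}{n^2}\leq \tfrac{2}{n(n\wedge p)}\leq \tfrac{2}{(n\wedge p)^2}$.

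\textbf{Step 4 (Conclusion).} Combining Steps 1--3,
\[
\E\Delta_X^2 \;\leq\; \frac{\var(\dcov_\ast^2(\bm{X}))}{\dcov^4(X)} \;\lesssim_M\; \frac{1}{(n\wedge p)^2} \;\leq\; \frac{C(M)}{(n\wedge p\wedge q)^2}.
\]
The analogous argument with $X$ replaced by $Y$ (swapping $p\leftrightarrow q$) yields the same bound for $\E\Delta_Y^2$, completing the proof. No step here presents a genuine obstacle: the estimate is essentially an immediate corollary of the already established mean and variance expansions, with the only careful bookkeeping being the verification that the bounded-spectrum scaling gives $\dcov^2(X)\asymp_M 1$ and $\var(\dcov_\ast^2(\bm{X}))\lesssim_M (n\wedge p)^{-2}$.
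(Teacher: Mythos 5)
Your proposal is correct and follows essentially the same route as the paper: bound $\Delta_X^2$ by $(\dcov_\ast^2(\bm{X})-\dcov^2(X))^2/\dcov^4(X)$ (the paper does this via the identity $\dcov_\ast(\bm{X})-\dcov(X)=(\dcov_\ast^2(\bm{X})-\dcov^2(X))/(\dcov_\ast(\bm{X})+\dcov(X))$, which is the same as your square-root inequality), then invoke the marginal mean expansion of Theorem \ref{thm:dcov_expansion} to get $\dcov^2(X)\asymp_M 1$ and the marginal variance bound $\var(\dcov_\ast^2(\bm{X}))\lesssim_M (np)^{-1}+n^{-2}$. No gaps; the argument matches the paper's.
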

\begin{proof}
	By mean and variance expansions in Theorems \ref{thm:dcov_expansion} and \ref{thm:variance_expansion},
	\begin{align*}
	\dcov^2(X)= \frac{\pnorm{\Sigma_X}{F}^2}{\tau_X^2}\Big[1+\mathcal{O}_M(\tau_X^{-1})\Big],\quad\var\big(\dcov_\ast^2(\bm{X})\big)\lesssim_M \frac{\pnorm{\Sigma_X}{F}^2}{n\tau_X^4}+\frac{\pnorm{\Sigma_X}{F}^4}{n^2\tau_X^4}.
	\end{align*}
	Then
	\begin{align*}
	\E\Delta_X^2 &= \E\Big[\frac{\dcov_\ast^2(\bm{X}) - \dcov^2(X)}{\dcov(X)(\dcov_*(\bm{X}) + \dcov(X))}\Big]^2\\
	&\leq \dcov^{-4}(X)\cdot \E\Big(\dcov_\ast^2(\bm{X}) - \dcov^2(X)\Big)^2\\
	&= \dcov^{-4}(X)\cdot\var\big(\dcov_\ast^2(\bm{X})\big) \asymp_M (n\tau_X^2)^{-1} + n^{-2} \lesssim_M (n\wedge p\wedge q)^{-2}.
	\end{align*}
	Similar bounds can be derived for $\Delta_Y$.
\end{proof}

\begin{proof}[Proof of Theorem \ref{thm:dcov_power}]
	Let $d_{X,Y}^2\equiv \dcov^2(X)\cdot \dcov^2(Y)$, and $\Delta_d$ be defined by $d_{X,Y}\big/\big(\frac{\pnorm{\Sigma_X}{F}}{\tau_X}\cdot \frac{\pnorm{\Sigma_Y}{F}}{\tau_Y}\big) = 1+\Delta_d$. Then by Theorem \ref{thm:dcov_expansion}, $\Delta_d = \mathcal{O}_M\big((\tau_X\wedge \tau_Y)^{-1}\big)$. Recall the definitions of (random) $\Delta_X$ and $\Delta_Y$ in Lemma \ref{lem:ratio_dcov}, and note that by definition $\Delta_d,\Delta_X,\Delta_Y\geq -1$. For any $t \geq 0$,
		\begin{align}\label{ineq:dcov_power_1}
		\mathfrak{p}_n(t)&\equiv\Prob\bigg(\frac{n\cdot\dcov_\ast^2(\bm{X},\bm{Y})}{\sqrt{2\dcov_\ast^2(\bm{X})\cdot \dcov_\ast^2(\bm{Y})}}\leq t\bigg)=\Prob\bigg(\frac{n\cdot\dcov_\ast^2(\bm{X},\bm{Y})}{\sqrt{2} d_{X,Y}}\leq t (1+\Delta_X)(1+\Delta_Y)\bigg)\nonumber\\
		&= \Prob\bigg(\frac{n\cdot \tau_X\tau_Y\dcov_\ast^2(\bm{X},\bm{Y})}{\sqrt{2}\pnorm{\Sigma_X}{F}\pnorm{\Sigma_Y}{F}}\leq t (1+\Delta_X)(1+\Delta_Y)(1+\Delta_d)\bigg).
		\end{align}
	Let $\Delta_1$ be defined by $(1+\Delta_X)(1+\Delta_Y)(1+\Delta_d)=1+\Delta_1+\Delta_d$. Then Lemma \ref{lem:ratio_dcov} yields that $\E \Delta_1^2\lesssim_M (n\wedge p\wedge q)^{-2}$. From (\ref{ineq:dcov_power_1}), it holds for any $\delta_1,\delta_d\in(0,1/2]$, 
		\begin{align}\label{ineq:pn_bound_1}
		&\mathfrak{p}_n(t) \leq \Prob\bigg(\frac{n\cdot \tau_X\tau_Y\dcov_\ast^2(\bm{X},\bm{Y})}{\sqrt{2}\pnorm{\Sigma_X}{F}\pnorm{\Sigma_Y}{F}}\leq t (1+\delta_1 + \delta_d)\bigg) +\Prob(|\Delta_1|\geq \delta_1) + \bm{1}_{|\Delta_d|\geq \delta_d}\nonumber\\
		&\leq \Prob\bigg(\frac{n\cdot \tau_X\tau_Y\dcov_\ast^2(\bm{X},\bm{Y})}{\sqrt{2}\pnorm{\Sigma_X}{F}\pnorm{\Sigma_Y}{F}}\leq t (1+\delta_1 + \delta_d)\bigg) +\frac{C_M(n\wedge p\wedge q)^{-2}}{\delta_1^2} + \bm{1}_{|\Delta_d|\geq \delta_d},
		\end{align}
	using the bound on $\E \Delta_1^2$ in the second inequality. Next we give two estimates for the upper bound of $\mathfrak{p}_n(t)$ useful for different magnitudes of $A\equiv A(\Sigma)$. 
	
	\noindent \textbf{(Estimate 1)}.  By the local central limit theorem in Theorem \ref{thm:local_clt}, with $A$ defined therein, $\rho(A) = A/(n\wedge p\wedge q)^{1/2}$, and
	\begin{align*}
	e(A)\equiv 1\bigwedge \bigg(\frac{1\vee A^2}{n\wedge p\wedge q}\bigg)^{1/6} \leq (n\wedge p\wedge q)^{-1/6} + 1\wedge \rho^{1/3}(A),
	\end{align*}
	we have by using $\Delta_d = \mathcal{O}_M\big((\tau_X\wedge \tau_Y)^{-1}\big)$
	\begin{align*}
	&\mathfrak{p}_n(t)\leq \inf_{\delta_1,\delta_d\in(0,1/2)}\bigg\{\Phi\bigg[ t (1+\delta_1+\delta_d)-\frac{A}{\sqrt{2}}\bigg]+C_M\cdot e(A) + \frac{C_M'(n\wedge p\wedge q)^{-2}}{\delta_1^2} + \bm{1}_{|\Delta_d|\geq \delta_d}\bigg\}\\
	&\leq \Phi\bigg(t-\frac{A}{\sqrt{2}}\bigg)+ C_{t,M}\bigg[\inf_{\delta_1\in(0,1/2]}\Big(\delta_1+\frac{(n\wedge p\wedge q)^{-2}}{\delta_1^2}\Big)+\inf_{\delta_d\in(0,1/2]}(\delta_d + \bm{1}_{|\Delta_d|\geq \delta_d})+e(A)\bigg]\\
	&\leq  \Phi\bigg(t-\frac{A}{\sqrt{2}}\bigg)+ C_{t,M}\cdot \big[ (n\wedge p\wedge q)^{-1/6} + 1\wedge \rho^{1/3}(A)\big].
	\end{align*}
	\noindent \textbf{(Estimate 2)}. Note that by Theorem \ref{thm:dcov_expansion} and (\ref{ineq:local_clt_1}), we have
	\begin{align*}
	\E \bigg[\frac{n\cdot \tau_X\tau_Y\dcov_\ast^2(\bm{X},\bm{Y})}{\sqrt{2}\pnorm{\Sigma_X}{F}\pnorm{\Sigma_Y}{F}}\bigg] &= \frac{A}{\sqrt{2}}\big(1+\mathcal{O}_M((p\wedge q)^{-1/2}\big),\\
	\var\bigg[\frac{n\cdot \tau_X\tau_Y\dcov_\ast^2(\bm{X},\bm{Y})}{\sqrt{2}\pnorm{\Sigma_X}{F}\pnorm{\Sigma_Y}{F}}\bigg] &\asymp_M 1\vee \rho^2(A).
	\end{align*}
	Hence by choosing $u = A/(2\sqrt{2}C_M(1+\rho(A)))$, and $\delta_1 = \delta_d = 1/2$ in (\ref{ineq:pn_bound_1}), we have
	\begin{align*}
	\mathfrak{p}_n(t)&\leq \Prob\big(A/\sqrt{2}-C_M\cdot u(1+\rho(A)) \leq 2t\big)+u^{-2}+C_M(n\wedge p\wedge q)^{-2}\\
	&\leq \bm{1}_{A\leq 4\sqrt{2}t}+C_M\cdot\Big(\big[1+\rho(A)^2\big]/A^2 + (n\wedge p\wedge q)^{-2}\Big).
	\end{align*}
	which implies 
	\begin{align*}
	&\mathfrak{p}_n(t)-\Phi\bigg(t-\frac{A}{\sqrt{2}}\bigg)\leq C_{t,M}\bigg(e^{-A^2/C}+\frac{1+\rho^2(A)}{A^2}+\bm{1}_{A\leq 4\sqrt{2}t}+\frac{1}{(n\wedge p\wedge q)^2}\bigg).
	\end{align*}
	Combining the estimates (1) and (2), we have
	\begin{align*}
	\mathfrak{p}_n(t)-\Phi\bigg(t-\frac{A}{\sqrt{2}}\bigg)&\leq C_{t,M} \bigg[\bigg((n\wedge p\wedge q)^{-1/6} + 1\wedge \rho^{1/3}(A)\bigg)\\
	&\quad\quad\quad\bigwedge \bigg(e^{-A^2/C}+\frac{1}{A^2}+\bm{1}_{A\leq 4\sqrt{2}t}+ \frac{1}{n\wedge p\wedge q}\bigg)\bigg].
	\end{align*}
	The term in the bracket can be bounded by, via balancing the leading terms $\rho^{1/3}(A)$ and $A^{-2}$,
	\begin{align}\label{ineq:dcov_power_2}
	&\mathfrak{p}_n(t)-\Phi\bigg(t-\frac{A}{\sqrt{2}}\bigg)\leq \frac{C_{t,M}}{(n\wedge p\wedge q)^{1/7}}.
	\end{align}
	By applying similar arguments to the other direction (lower bound) and the case $t\leq 0$, we arrive at
	\begin{align*}
	\Big|\mathfrak{p}_n(t)-\Phi\bigg(t-\frac{A}{\sqrt{2}}\bigg)\Big|\leq \frac{C_{t,M}}{(n\wedge p\wedge q)^{1/7}},\quad t\in\R.
	\end{align*}
	Finally, using Theorem \ref{thm:dcov_expansion} and its analogous expansion to $\dcov^2(X),\dcov^2(Y)$, we have
	\begin{align*}
	&n\dcor^2(X,Y)=\frac{n\dcov^2(X,Y)}{\sqrt{\dcov^2(X)\dcov^2(Y)}}\\
	&= \frac{n\pnorm{\Sigma_{XY}}{F}^2}{\pnorm{\Sigma_X}{F}\pnorm{\Sigma_Y}{F}}\Big[1+\mathcal{O}_M\big((\tau_X\wedge \tau_Y)^{-1}\big)\Big]=A \Big[1+\mathcal{O}_M\big((\tau_X\wedge \tau_Y)^{-1}\big)\Big],
	\end{align*}
	the $A$ term in (\ref{ineq:dcov_power_2}) can be replaced by $n\dcor^2(X,Y)$ at the lost of a larger constant $C_{t,M}$ by \cite[Lemma 2.4]{han2021general}.
\end{proof}

\section{Proof of Theorem \ref{thm:minimax_lower}}\label{section:proof_minimax}

Let $\phi_\Sigma$ denote the Lebesgue density for $\mathcal{N}(0,\Sigma)$. For any prior $\Pi$ on $\Theta(\zeta,\Sigma_0)$, let $P_\Pi$ the probability measure corresponding to the density $\phi_\Pi(x)\equiv \int p_\Sigma(x)\,\d{\Pi(\Sigma)}$. Then 
\begin{align*}
&\inf_{\psi}\sup_{\Sigma \in \Theta(\zeta,\Sigma_0)} \big(\E_{\Sigma_0}\psi + \E_{\Sigma}(1-\psi)\big)\geq \inf_{\psi}\big(P_{\Sigma_0}(\psi)+P_{\Pi}(1-\psi)\big)\\
& = 1- d_{\textrm{TV}}(P_\Pi,P_0) \geq 1- \bigg(\int\frac{\phi_\Pi^2}{\phi_{\Sigma_0}}-1\bigg)^{1/2}.
\end{align*}
The last inequality follows from, e.g. \cite[Equation (2.27), pp. 90]{tsybakov2008introduction}.
So the goal is to find the smallest possible $\zeta>0$ so that the right hand side of the above display has a non-trivial lower bound under a constructed prior $\Pi$ on $\Theta(\zeta,\Sigma_0)$. 

First consider the case $\Sigma_0=I$. For any $\tilde{u}\in\R^p$ and $\tilde{v}\in\R^q$, let $u\equiv [\tilde{u}, 0_q]\in\R^{p+q}$ and $v\equiv [0_p, \tilde{v}]\in\R^{p+q}$. For any $a > 0$, let
\begin{align*}
\Sigma_{u,v}\equiv\Sigma_{u,v}(a)\equiv I + a\big(uv^\top + vu^\top\big) = 
\begin{pmatrix}
I & a\tilde{u}\tilde{v}^\top\\
a\tilde{v}\tilde{u}^\top & I
\end{pmatrix}.
\end{align*}
We place independent priors $\tilde{u}_i\sim_{\textrm{i.i.d.}}\sqrt{q}\cdot\textrm{Unif}\{\pm 1\}$ on $\tilde{u}$ and $\tilde{v}_j\sim_{\textrm{i.i.d.}}\sqrt{p}\cdot\textrm{Unif}\{\pm 1\}$ on $\tilde{v}$, so that $\pnorm{\tilde{u}}{}^2=\pnorm{u}{}^2=\pnorm{\tilde{v}}{}^2 = \pnorm{v}{}^2 = pq$ and $\iprod{u}{v} = 0$. Then by direct calculation,
\begin{align*}
&(uv^\top + vu^\top)(u+v) = u\cdot\pnorm{v}{}^2 + v\cdot \pnorm{u}{}^2 = pq\cdot (u+v),\\
&(uv^\top + vu^\top)(u-v) = -u\cdot\pnorm{v}{}^2 + v\cdot \pnorm{u}{}^2 = -pq\cdot (u-v).
\end{align*}
This entails that the rank two matrix $(uv^\top + vu^\top)$ has eigenvalue $pq$ with eigenvector $u+v$ and eigenvalue $-pq$ with eigenvector $u-v$. Hence $\Sigma_{u,v}$ has eigenvalues $1+apq$ with eigenvector $u + v$, $1-apq$ with eigenvector $u-v$, and all rest eigenvalues as $1$, hence $\det(\Sigma_{u,v}) = 1-(apq)^2$. So by writing $\{s_i\}_{i=1}^{p+q-2}$ as a basis for the orthogonal complement of $u + v$ and $u - v$ in $\R^{p+q}$, we have the eigenvalue decomposition
\begin{align*}
\Sigma_{u,v} = \sum_{i=1}^{p+q-2}s_is_i^\top + (1+apq)\frac{(u+v)(u+v)^\top}{\pnorm{u+v}{}^2} + (1-apq)\frac{(u-v)(u-v)^\top}{\pnorm{u-v}{}^2}.
\end{align*}
This implies that
\begin{align*}
\Sigma_{u,v}^{-1} &= \sum_{i=1}^{p+q-2}s_is_i^\top + \frac{1}{1+apq}\frac{(u+v)(u+v)^\top}{\pnorm{u+v}{}^2} + \frac{1}{1-apq}\frac{(u-v)(u-v)^\top}{\pnorm{u-v}{}^2}\\
&= \Big[I - \frac{(u+v)(u+v)^\top}{\pnorm{u+v}{}^2} - \frac{(u-v)(u-v)^\top}{\pnorm{u-v}{}^2}\Big]\\
&\quad\quad + \frac{1}{1+apq}\frac{(u+v)(u+v)^\top}{\pnorm{u+v}{}^2} + \frac{1}{1-apq}\frac{(u-v)(u-v)^\top}{\pnorm{u-v}{}^2}\\
&= I - \frac{apq}{1+apq}\frac{(u+v)(u+v)^\top}{\pnorm{u+v}{}^2} +\frac{apq}{1-apq} \frac{(u-v)(u-v)^\top}{\pnorm{u-v}{}^2}\\
&= I - \frac{a}{2(1+apq)}(u+v)(u+v)^\top + \frac{a}{2(1-apq)}(u-v)(u-v)^\top.
\end{align*}
Let $\phi_\Pi$ be the joint density of $\{(X_i,Y_i)\}_{i=1}^n$ under the prescribed prior on $\tilde{u}$ and $\tilde{v}$, and $\phi_I$ be the joint density under the null. With
\begin{align}\label{ineq:minimax_1}
&\tilde{\Sigma}(u_1,u_2,v_1,v_2) \nonumber\\
&\equiv -\frac{a}{2(1+apq)}(u_1+v_1)(u_1+v_1)^\top +  \frac{a}{2(1-apq)}(u_1-v_1)(u_1-v_1)^\top \nonumber\\
&\qquad - \frac{a}{2(1+apq)}(u_2+v_2)(u_2+v_2)^\top + \frac{a}{2(1-apq)}(u_2-v_2)(u_2-v_2)^\top,
\end{align}
we have
\begin{align*}
\int\frac{\phi_\Pi^2}{\phi_I} &= \int\frac{\Big[\sum_{u,v}2^{-(p+q)}(2\pi)^{-n(p+q)/2}\det^{-n/2}(\Sigma_{u,v})e^{-\frac{1}{2}\cdot\sum_{i=1}^n x_i^\top\Sigma_{u,v}^{-1}x_i}\Big]^2}{(2\pi)^{-n(p+q)/2}\det^{-n/2}(I_{p+q})e^{-\frac{1}{2}\cdot\sum_{i=1}^n\pnorm{x_i}{}^2}}\,\d x\\
&=\sum_{(u_1,v_1),(u_2,v_2)}\frac{2^{-2(p+q)}}{\big[(1-(apq)^2\big]^n}\int (2\pi)^{-n(p+q)/2}e^{-\frac{1}{2}\cdot\sum_{i=1}^nx_i^\top [\Sigma_{u_1,v_1}^{-1} + \Sigma_{u_2,v_2}^{-1} - I]x_i}\,\d x\\
&\equiv \sum_{(u_1,v_1),(u_2,v_2)}\frac{2^{-2(p+q)}}{\big[1-(apq)^2\big]^n}\int (2\pi)^{-n(p+q)/2}e^{-\frac{1}{2}\cdot\sum_{i=1}^nx_i^\top(I+\tilde{\Sigma}(u_1,u_2,v_1,v_2))x_i}\,\d x\\
&\stackrel{(*)}{=} \E_{(u_1,v_1),(u_2,v_2)}\bigg[\frac{1}{\big[1-(apq)^2\big]^n}\det\big(I+\tilde{\Sigma}(u_1,u_2,v_1,v_2)\big)^{-n/2}\bigg].
\end{align*}
Here in $(*)$, $(u_1,v_1)$ and $(u_2,v_2)$ denote two independent copies of the prescribed prior on $(u,v)$.

By Lemma \ref{lem:perturb_eigen} and the notation $\{\lambda_i=\lambda_i(u_1,u_2,v_1,v_2)\}_{i=1}^4$ therein, $I + \tilde{\Sigma}$ has $p+q-4$ eigenvalues of $1$, and the other four eigenvalues being $1+\lambda_i$, $1\leq i\leq 4$. Hence by the second claim of Lemma \ref{lem:perturb_eigen}, for any $a$ such that $apq\leq 1/4$, we have
\begin{align*}
\int\frac{\phi_\Pi^2}{\phi_I}&= \E_{(u_1,v_1),(u_2,v_2)}\Big[\frac{1}{\big[1-(apq)^2\big]^n}\cdot\frac{1}{\big[(1+\lambda_1)(1+\lambda_2)(1+\lambda_3)(1+\lambda_4)\big]^{n/2}}\Big]\\
&= \E_{(u_1,v_1),(u_2,v_2)}\Big[\frac{1}{\big[1-(apq)^2\big]^n\cdot\big[1 + \frac{a^2}{1-(apq)^2}(p^2q^2 - \iprod{u_1}{u_2}\iprod{v_1}{v_2})\big]^n}\Big]\\
&= \E_{(u_1,v_1),(u_2,v_2)}\Big[\frac{1}{\big(1-a^2\iprod{u_1}{u_2}\iprod{v_1}{v_2}\big)^n}\Big]\\
&= 1+ n \E\big(a^2\iprod{u_1}{u_2}\iprod{v_1}{v_2}\big) \\
&\qquad +n(n+1) \E \bigg[\big(a^2 \iprod{u_1}{u_2}\iprod{v_1}{v_2}\big)^2 \int_0^1 \frac{1-t}{\big(1-t a^2 \iprod{u_1}{u_2}\iprod{v_1}{v_2}\big)^{n+2}}\,\d{t}\bigg]. 
\end{align*}
To calculate the right hand side of the above display, first note that the first order term vanishes: $\E(a^2\iprod{u_1}{u_2}\iprod{v_1}{v_2})=0$. Next we compute the second order term. With $\epsilon_i$'s denoting i.i.d. Rademacher random variables,
\begin{align*}
\E \big(a^2 \iprod{u_1}{u_2}\iprod{v_1}{v_2}\big)^4 = a^8 p^4 q^4 \cdot \E\bigg(\sum_{i=1}^p \epsilon_i\bigg)^4\cdot \E\bigg(\sum_{i=1}^q \epsilon_i\bigg)^4\lesssim a^8 p^6 q^6.
\end{align*}
On the other hand, under $a^2p^2q^2\leq 1/2$, there exists some absolute constant $c_0>0$ such that for any $t\in[0,1]$,
\begin{align*}
&\E \bigg[\frac{1}{\big(1-t a^2 \iprod{u_1}{u_2}\iprod{v_1}{v_2}\big)^{2(n+2)}}\bigg]\leq \E e^{c_0 a^2 n\iprod{u_1}{u_2}\iprod{v_1}{v_2}} \\
&= \E e^{c_0 a^2 npq \sum_{i=1}^p \epsilon_i \sum_{j=1}^q \epsilon_j'}\leq \E \exp\bigg[(c_0^2/2) a^4 n^2p^3q^3 \bigg(\frac{1}{\sqrt{q}}\sum_{j=1}^q \epsilon_j'\bigg)^2\bigg].
\end{align*}
Under $c_0^2 a^4 n^2p^3q^3\leq 1/2$, the above display is bounded by an absolute constant. Collecting all the estimates, we have established that if
\begin{align}\label{ineq:minimax_2}
\max\big\{a^2 p^2q^2, c_0^2 a^4 n^2p^3q^3\big\}\leq 1/2,
\end{align}
the following bound holds: for some absolute constant $C>0$,
\begin{align*}
\int\frac{\phi_\Pi^2}{\phi_I}-1\leq C\cdot a^4 n^2p^3q^3.
\end{align*}	
Now for any $\delta > 0$, by taking $a_*= \tau n^{-1/2}p^{-3/4}q^{-3/4}$ for some sufficiently small $\tau = \tau(\delta) > 0$, we have  $\int \phi_\Pi^2/\phi_I - 1\leq \delta$ and (\ref{ineq:minimax_2}) is satisfied. As $\Sigma_{u,v}(a^*)\in\Theta(\zeta)$ for such choice of $a_*$ and any realization of $(u,v)$, this completes the proof for the lower bound when $\Sigma_0=I$ of the order $\pnorm{a_* uv^\top}{F}^2 = a_*^2\pnorm{u}{}^2\pnorm{v}{}^2 = a_*^2p^2q^2 \asymp \sqrt{pq}/n$. For the general case $\Sigma_0$, we may proceed with the above calculations using $\Sigma_0^{1/2}\Sigma_{u,v}\Sigma_0^{1/2}$ instead of $\Sigma_{u,v}$, which gives the same separation rate under the assumption on the spectrum of $\Sigma_X,\Sigma_Y$. \qed

\begin{lemma}\label{lem:perturb_eigen}
	The matrix $\tilde{\Sigma}(u_1,u_2,v_1,v_2)$ defined in (\ref{ineq:minimax_1}) has at most 4 nontrivial eigenvalues of the form:
	\begin{align*}
	\lambda_i &= (c_2-c_1)(pq + \iprod{u_1}{u_2}) - \beta_i(c_1+c_2)(\iprod{v_1}{v_2} + pq),\quad i=1,2,\\
	\lambda_i &= (c_2-c_1)(pq - \iprod{u_1}{u_2}) + \beta_i(c_1+c_2)(\iprod{v_1}{v_2}-pq),\quad i=3,4.
	\end{align*}
	Here $(\beta_1,\beta_2)$ and $(\beta_3,\beta_4)$ are solutions to equations (\ref{eq:eigen_12}) and (\ref{eq:eigen_34}) below, respectively. Furthermore, 
	\begin{align*}
	(1+\lambda_1)(1+\lambda_2) = (1+\lambda_3)(1+\lambda_4) = 1 + \frac{a^2}{1-(apq)^2}(p^2q^2 - \iprod{u_1}{u_2}\iprod{v_1}{v_2}).
	\end{align*}
\end{lemma}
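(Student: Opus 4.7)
The plan is to exploit the rank-$4$ structure of $\tilde{\Sigma}$ together with a $\mathbb{Z}_2$ symmetry under the index swap $(u_1,v_1)\leftrightarrow(u_2,v_2)$. Expanding
\begin{align*}
\tilde{\Sigma}=(c_2-c_1)\sum_{i=1}^2\bigl(u_iu_i^\top+v_iv_i^\top\bigr)-(c_1+c_2)\sum_{i=1}^2\bigl(u_iv_i^\top+v_iu_i^\top\bigr)
\end{align*}
with $c_1\equiv a/[2(1+apq)]$ and $c_2\equiv a/[2(1-apq)]$, one sees that $\tilde{\Sigma}$ is supported on the four-dimensional space $W\equiv\mathrm{span}(u_1,u_2,v_1,v_2)$, which already accounts for the assertion that at most four eigenvalues are nontrivial. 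Setting $W_\pm\equiv\mathrm{span}(u_1\pm u_2,\,v_1\pm v_2)$, the identities $\pnorm{u_1}{}=\pnorm{u_2}{}=\pnorm{v_1}{}=\pnorm{v_2}{}=\sqrt{pq}$ together with the crucial orthogonality $\iprod{u_i}{v_j}=0$ (from the disjoint coordinate supports of $u_i$ and $v_j$ in $\R^{p+q}$) force $W_+\perp W_-$.

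Next, a direct computation using only these orthogonality identities shows that $\tilde{\Sigma}$ preserves each of $W_\pm$ and, in the basis $\{u_1+u_2,v_1+v_2\}$ of $W_+$, is represented by the $2\times 2$ matrix
\begin{align*}
\begin{pmatrix}(c_2-c_1)A & -(c_1+c_2)B\\ -(c_1+c_2)A & (c_2-c_1)B\end{pmatrix},\qquad A\equiv pq+\iprod{u_1}{u_2},\ B\equiv pq+\iprod{v_1}{v_2}.
\end{align*}
Parametrizing eigenvectors of this block as $(u_1+u_2)+\beta_i(v_1+v_2)$ and eliminating $\lambda$ between the two component equations yields the quadratic $(c_1+c_2)B\beta^2+(c_2-c_1)(B-A)\beta-(c_1+c_2)A=0$ for $\beta_{1,2}$, together with the formula $\lambda_i=(c_2-c_1)A-\beta_i(c_1+c_2)B$ for the associated eigenvalue, matching the statement for $i=1,2$. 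The identical argument on $W_-$, with $A'\equiv pq-\iprod{u_1}{u_2}$ and $B'\equiv pq-\iprod{v_1}{v_2}$, produces $\lambda_3,\lambda_4$.

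For the product identity, Vieta's formulas on the $2\times 2$ block give $\lambda_1+\lambda_2=(c_2-c_1)(A+B)$ and $\lambda_1\lambda_2=\bigl[(c_2-c_1)^2-(c_1+c_2)^2\bigr]AB=-4c_1c_2AB$, so
\begin{align*}
(1+\lambda_1)(1+\lambda_2)=1+(c_2-c_1)(A+B)-4c_1c_2AB.
\end{align*}
Substituting the explicit formulas $c_2-c_1=a^2pq/[1-(apq)^2]$ and $4c_1c_2=a^2/[1-(apq)^2]$, together with the algebraic identity $pq(A+B)-AB=p^2q^2-\iprod{u_1}{u_2}\iprod{v_1}{v_2}$, delivers the claimed expression. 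The matching identity $pq(A'+B')-A'B'=p^2q^2-\iprod{u_1}{u_2}\iprod{v_1}{v_2}$ (the linear-in-inner-product terms flip sign in both $A'+B'$ and $A'B'$ and cancel in the combination) gives the same right-hand side for $(1+\lambda_3)(1+\lambda_4)$. The entire argument is elementary linear algebra; the only genuine insight is recognizing the $W_+\oplus W_-$ block decomposition induced by the $\mathbb{Z}_2$ symmetry, which turns the four-dimensional eigenvalue problem into two uncoupled $2\times 2$ problems, after which Vieta does all the work.
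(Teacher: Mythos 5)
Your proof is correct and follows essentially the same route as the paper: the symmetric/antisymmetric ansatz $\alpha_1=\alpha_2=1,\ \alpha_3=\alpha_4=\beta$ (resp.\ $\alpha_1=-\alpha_2=1,\ \alpha_3=-\alpha_4=\beta$) used there is exactly your $W_+\oplus W_-$ block decomposition, and your quadratics for $\beta$ coincide (up to an overall sign) with (\ref{eq:eigen_12}) and (\ref{eq:eigen_34}). The only cosmetic difference is that you read off $\lambda_1+\lambda_2$ and $\lambda_1\lambda_2$ directly from the trace and determinant of the $2\times 2$ block, whereas the paper applies Vieta to the $\beta$-quadratic first; both yield $1+(c_2-c_1)(A+B)-4c_1c_2AB$ and hence the same final identity.
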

\begin{proof}[Proof of Lemma \ref{lem:perturb_eigen}]
	We write $\tilde{\Sigma}(u_1,u_2,v_1,v_2)$ as $\tilde{\Sigma}$ in the proof for simplicity. Define 
	\begin{align*}
	b &\equiv \alpha_1u_1 + \alpha_2u_2 + \alpha_3v_1 + \alpha_4v_2,\quad c_1\equiv \frac{a}{2(1+apq)},\quad c_2=\frac{a}{2(1-apq)}.
	\end{align*}
	Then 
	\begin{align*}
	\tilde{\Sigma} 
	&\equiv -c_1\Big[(u_1+v_1)(u_1+v_1)^\top + (u_2+v_2)(u_2+v_2)^\top\Big]\\
	&\qquad\qquad + c_2\Big[(u_1-v_1)(u_1-v_1)^\top +(u_2-v_2)(u_2-v_2)^\top \Big],
	\end{align*}
	and
	\begin{align*}
	\tilde{\Sigma}b \equiv T_1u_1 + T_2u_2 + T_3u_3 + T_4u_4,
	\end{align*}
	where
		\begin{align*}
		T_1  &= \alpha_1\cdot (c_2-c_1)pq + \alpha_2\cdot (c_2-c_1)\iprod{u_1}{u_2} - \alpha_3 \cdot (c_1+c_2)pq - \alpha_4\cdot (c_1+c_2)\iprod{v_1}{v_2},\\
		T_2  &= \alpha_1\cdot (c_2-c_1)\iprod{u_1}{u_2} + \alpha_2\cdot (c_2-c_1)pq - \alpha_3 \cdot (c_1+c_2)\iprod{v_1}{v_2} -\alpha_4\cdot(c_1+c_2)pq,\\
		T_3  &= -\alpha_1\cdot (c_1+c_2)pq  -\alpha_2\cdot (c_1+c_2)\iprod{u_1}{u_2} + \alpha_3 \cdot (c_2-c_1)pq + \alpha_4\cdot (c_2-c_1)\iprod{v_1}{v_2},\\
		T_4  &= -\alpha_1\cdot (c_1+c_2)\iprod{u_1}{u_2}  -\alpha_2\cdot (c_1+c_2)pq + \alpha_3 \cdot(c_2-c_1)\iprod{v_1}{v_2} + \alpha_4\cdot (c_2-c_1)pq.
		\end{align*}
	Hence the identity $T_1/\alpha_1 = T_2/\alpha_2 = T_3/\alpha_3 = T_4/\alpha_4$ is equivalent to (denoting $\rho_{ij}\equiv \alpha_i/\alpha_j$)
		\begin{align}
		\tag{i}&\quad\quad\rho_{21}\cdot(c_2-c_1)\iprod{u_1}{u_2} - \rho_{31}\cdot(c_1+c_2)pq - \rho_{41}\cdot(c_1+c_2)\iprod{v_1}{v_2}\\
		\tag{ii}&=\rho_{12}\cdot (c_2-c_1)\iprod{u_1}{u_2} - \rho_{32}\cdot(c_1+c_2)\iprod{v_1}{v_2} - \rho_{42}\cdot(c_1+c_2)pq\\
		\tag{iii}&= -\rho_{13}\cdot(c_1+c_2)pq - \rho_{23}\cdot(c_1+c_2)\iprod{u_1}{u_2} + \rho_{43}\cdot(c_2-c_1)\iprod{v_1}{v_2}\\
		\tag{iv}&= -\rho_{14}\cdot(c_1+c_2)\iprod{u_1}{u_2} - \rho_{24}\cdot(c_1+c_2)pq + \rho_{34}\cdot(c_2-c_1)\iprod{v_1}{v_2}.
		\end{align}
	For the first set of two solutions, let $\alpha_1 = \alpha_2 = 1$ and $\alpha_3 = \alpha_4 = \beta$. Then $(i) = (ii)$ and $(iii) = (iv)$ automatically holds, and $(ii) = (iii)$ is equivalent to
	\begin{align}\label{eq:eigen_12}
	\notag&(c_1+c_2)(\iprod{v_1}{v_2} +pq)\cdot\beta^2 + (c_2-c_1)(\iprod{v_1}{v_2} - \iprod{u_1}{u_2})\cdot \beta\\
	&\quad\quad - (c_1+c_2)(\iprod{u_1}{u_2} + pq) = 0.
	\end{align}
	Note that as $|\iprod{v_1}{v_2}|\leq \pnorm{v_1}{}\pnorm{v_2}{} = pq$ and similarly for $|\iprod{u_1}{u_2}|$, we have
	\begin{align*}
	(c_2-c_1)^2(\iprod{v_1}{v_2} - \iprod{u_1}{u_2})^2 + 4(c_1+c_2)^2(\iprod{v_1}{v_2} +pq)(\iprod{u_1}{u_2} +pq)\geq 0.
	\end{align*}
	Hence (\ref{eq:eigen_12}) has two (possible identical) solutions, and these two solutions denoted by $\beta_1$ and $\beta_2$ lead to two eigenvectors of $\tilde{\Sigma}$. For the second set of two solutions, let $\alpha_1 = -\alpha_2 = 1$ and $\alpha_3 = -\alpha_4 = \beta$. Then again $(i) = (ii)$ and $(iii) = (iv)$ automatically, and $(ii) = (iii)$ is equivalent to
	\begin{align}\label{eq:eigen_34}
	\notag&(c_1+c_2)(\iprod{v_1}{v_2} -pq)\cdot\beta^2 + (c_2-c_1)(\iprod{v_1}{v_2} - \iprod{u_1}{u_2})\cdot \beta\\
	&\quad\quad - (c_1+c_2)(\iprod{u_1}{u_2} - pq) = 0.
	\end{align}
	The two (possible identical) solutions of the above quadratic equation (existence due to similar reason as (\ref{eq:eigen_12})), denoted by $\beta_3$ and $\beta_4$, lead to the other two eigenvectors. Summarizing the above arguments, $\tilde{\Sigma}$ has four eigenvalue
	\begin{align*}
	\lambda_i &= (c_2-c_1)(pq + \iprod{u_1}{u_2}) - \beta_i(c_1+c_2)(\iprod{v_1}{v_2} + pq),\quad i=1,2,\\
	\lambda_i &= (c_2-c_1)(pq - \iprod{u_1}{u_2}) + \beta_i(c_1+c_2)(\iprod{v_1}{v_2}-pq),\quad i=3,4.
	\end{align*}
	
	For the second claim of the lemma, note that (\ref{eq:eigen_12}) implies that
	\begin{align*}
	\beta_1+\beta_2 &= \frac{(c_2-c_1)(\iprod{u_1}{u_2} - \iprod{v_1}{v_2})}{(c_1+c_2)(\iprod{v_1}{v_2} + pq)},\\
	\beta_1\cdot\beta_2 &= -\frac{(c_1+c_2)(\iprod{u_1}{u_2} + pq)}{(c_1+c_2)(\iprod{v_1}{v_2} + pq)}.
	\end{align*}
	Hence we have
	\begin{align*}
	&(1+\lambda_1)(1+\lambda_2)\\
	& = \Big(\big[1 + (c_2-c_1)(pq + \iprod{u_1}{u_2})\big] - \beta_1(c_1+c_2)(\iprod{v_1}{v_2} + pq)\Big)\\
	&\quad\quad\cdot \Big(\big[1 + (c_2-c_1)(pq + \iprod{u_1}{u_2})\big] - \beta_2(c_1+c_2)(\iprod{v_1}{v_2} + pq)\Big)\\
	\quad\quad &=\big[1 + (c_2-c_1)(pq + \iprod{u_1}{u_2})\big]^2\\
	&\quad\quad\quad-(\beta_1+\beta_2)(c_1+c_2)(\iprod{v_1}{v_2} + pq)\big[1 + (c_2-c_1)(pq + \iprod{u_1}{u_2})\big]\\
	&\quad\quad\quad +\beta_1\beta_2\cdot(c_1+c_2)(\iprod{v_1}{v_2} + pq)^2\\
	&= \big[1 + (c_2-c_1)(pq + \iprod{u_1}{u_2})\big]^2\\
	&\quad\quad\quad - (c_2-c_1)(\iprod{u_1}{u_2} - \iprod{v_1}{v_2})\big[1 + (c_2-c_1)(pq + \iprod{u_1}{u_2})\big]\\
	&\quad\quad\quad - (c_1+c_2)^2(\iprod{u_1}{u_2} + pq)(\iprod{v_1}{v_2} + pq)\\
	&= 1 + (c_2-c_1)(2pq + \iprod{u_1}{u_2} + \iprod{v_1}{v_2})- 4c_1c_2(\iprod{u_1}{u_2}+pq)(\iprod{v_1}{v_2}+pq).
	\end{align*}
	By plugging in the definition of $c_1$ and $c_2$, we have
	\begin{align*}
	(1+\lambda_1)(1+\lambda_2) &= 1 + \frac{a^2pq}{1-(apq)^2}(2pq + \iprod{u_1}{u_2} + \iprod{v_1}{v_2})\\
	&\quad\quad\quad - \frac{a^2}{1-(apq)^2}(\iprod{u_1}{u_2}+pq)(\iprod{v_1}{v_2}+pq)\\
	&= 1 + \frac{a^2}{1-(apq)^2}(p^2q^2 - \iprod{u_1}{u_2}\iprod{v_1}{v_2}).
	\end{align*}
	The argument for the expression $(1+\lambda_3)(1+\lambda_4)$ is similar, so the proof is complete.
\end{proof}

\appendix

\section{Proof of Proposition \ref{prop:hoef_decomp}}\label{appendix:kernel_rep}

\begin{lemma}
	Recall $A= \{A_{ij}=\pnorm{X_i-X_j}{}\}$ and $B= \{B_{ij}= \pnorm{Y_i-Y_j}{}\}$ and $A^\ast, B^\ast$ defined in (\ref{def:AB}). Then
	\begin{align*}
	\dcov_\ast^2(\bm{X},\bm{Y}) =\frac{1}{n(n-3)}\sum_{k\neq \ell} A_{k\ell}^\ast B_{k\ell}^\ast= \frac{1}{n(n-3)}\bigg(\tr(AB)+ \frac{\bm{1}^\top A \bm{1} \bm{1}^\top B \bm{1} }{(n-1)(n-2)}-\frac{2\bm{1}^\top AB \bm{1}}{n-2}\bigg).
	\end{align*}
\end{lemma}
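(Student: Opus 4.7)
The plan is to reduce $\sum_{k\neq \ell}A^\ast_{k\ell}B^\ast_{k\ell}$ to a linear combination of three scalar invariants of $A$ and $B$: the trace $\tr(AB)$, the quadratic form $\bm{1}^\top AB\bm{1}$, and the product of all-ones sums $a\cdot b$, where $a\equiv \bm{1}^\top A\bm{1}$ and $b\equiv \bm{1}^\top B\bm{1}$. Since $A^\ast$ and $B^\ast$ are symmetric, the Frobenius inner product equals the trace of their product, so I would split
\[
\sum_{k\neq \ell}A^\ast_{k\ell}B^\ast_{k\ell}=\tr(A^\ast B^\ast)-\sum_{k}A^\ast_{kk}B^\ast_{kk}
\]
and treat the two pieces separately. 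Writing $J\equiv\bm{1}\bm{1}^\top$ and using the compact form $A^\ast=A-(JA+AJ)/(n-2)+aJ/((n-1)(n-2))$ together with the analogous expression for $B^\ast$, the product $A^\ast B^\ast$ expands into nine summands. Cyclic invariance of trace combined with the elementary identities $JMJ=(\bm{1}^\top M\bm{1})J$, $J^2=nJ$, and $\tr(JM)=\bm{1}^\top M\bm{1}$ then reduces each summand to a rational scalar multiple of $\tr(AB)$, $\bm{1}^\top AB\bm{1}$, or $ab$.

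For the diagonal correction, the hypothesis $A_{kk}=B_{kk}=0$ forces $A^\ast_{kk}=-2r_k/(n-2)+a/((n-1)(n-2))$ with $r_k\equiv(A\bm{1})_k$, and analogously for $B^\ast_{kk}$ with $s_k\equiv(B\bm{1})_k$. Since $\sum_k r_k s_k=(A\bm{1})^\top(B\bm{1})=\bm{1}^\top AB\bm{1}$ (using symmetry of $A$), this second sum also reduces to the same three scalar invariants. Collecting all terms, the coefficient of $\tr(AB)$ is simply $1$, and the coefficients of $\bm{1}^\top AB\bm{1}$ and $ab$ emerge from the subtraction with initially opaque rational numerators over the common denominator $(n-1)^2(n-2)^2$.

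The main obstacle is the bookkeeping across the sixteen trace contributions (reduced to twelve after identifying pairs coinciding under the $A\leftrightarrow B$ symmetry) and the verification of two algebraic simplifications that make the final formula clean: the accumulated coefficient of $\bm{1}^\top AB\bm{1}$ must collapse from $-2(n-4)/(n-2)^2-4/(n-2)^2$ to $-2/(n-2)$, and the accumulated coefficient of $ab$ must collapse, via the factorization $n^2-3n+2=(n-1)(n-2)$, to precisely $1/((n-1)(n-2))$. Dividing the resulting identity through by $n(n-3)$ then yields the claimed formula. No probabilistic input is required; the proof is a purely algebraic manipulation with the centering projector $I-J/n$ replaced by the double-centering operator built into the definition of $A^\ast,B^\ast$.
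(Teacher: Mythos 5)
Your proposal is correct and follows essentially the same route as the paper's proof: split $\sum_{k\neq\ell}A^\ast_{k\ell}B^\ast_{k\ell}=\tr(A^\ast B^\ast)-\sum_k A^\ast_{kk}B^\ast_{kk}$, expand via the $\bm{1}\bm{1}^\top$ algebra using $A_{kk}=B_{kk}=0$, and collect coefficients of $\tr(AB)$, $\bm{1}^\top AB\bm{1}$, and $\bm{1}^\top A\bm{1}\,\bm{1}^\top B\bm{1}$. The two coefficient collapses you flag ($-2(n-2)/(n-2)^2=-2/(n-2)$ and $n^2-3n+2=(n-1)(n-2)$) are exactly the simplifications appearing in the paper's calculation.
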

This result is known in the literature, see e.g. in the proof of \cite[Lemma 2.1]{yao2018testing}. We provide a complete proof for the convenience of the reader.
\begin{proof}
	Recall $A\equiv \{A_{ij}\}\equiv\{\pnorm{X_i-X_j}{}\}$ and $B\equiv \{B_{ij}\|\equiv \pnorm{Y_i-Y_j}{}$. Note that $\sum_{k\neq \ell}A_{k\ell}^*B_{k\ell}^* = \tr(A^*B^*) - \sum_{k=1}^n (A^*)_{kk}(B^*)_{kk}$. The second term can be computed as follows: As
	\begin{align*}
	A_{kk}^* 
	&= -\frac{2}{n-2}e_k^\top A\bm{1} + \frac{1}{(n-1)(n-2)}\bm{1}^\top A\bm{1},
	\end{align*}
	we have
	\begin{align*}
	&\sum_{k=1}^n A_{kk}^*B_{kk}^* = \sum_{k=1}^n\Big(-\frac{2}{n-2}e_k^\top A\bm{1} + \frac{1}{(n-1)(n-2)}\bm{1}^\top A\bm{1}\Big)\\
	&\qquad\qquad \times \Big(-\frac{2}{n-2}e_k^\top B\bm{1} + \frac{1}{(n-1)(n-2)}\bm{1}^\top B\bm{1}\Big)\\
	&= \frac{4}{(n-2)^2}\bm{1}^\top AB\bm{1} + \Big(-\frac{4}{(n-1)(n-2)^2} + \frac{n}{(n-1)^2(n-2)^2}\Big)\cdot \bm{1}^\top A\bm{1}\bm{1}^\top B\bm{1}\\
	&=  \frac{4}{(n-2)^2}\bm{1}^\top AB\bm{1} - \frac{3n-4}{(n-1)^2(n-2)^2}\bm{1}^\top A\bm{1}\bm{1}^\top B\bm{1}.
	\end{align*}
	For the first term, using the definition of $A^\ast,B^\ast$,
	\begin{align*}
	\tr(A^*B^*) &= \tr\Big[\Big(A - \frac{1}{n-2}\bm{1}\bm{1}^\top A - \frac{1}{n-2}A\bm{1}\bm{1}^\top + \frac{\bm{1}^\top A\bm{1}}{(n-1)(n-2)}\bm{1}\bm{1}^\top\Big)\\
	&\quad\quad\times \Big(B - \frac{1}{n-2}\bm{1}\bm{1}^\top B - \frac{1}{n-2}B\bm{1}\bm{1}^\top + \frac{\bm{1}^\top B\bm{1}}{(n-1)(n-2)}\bm{1}\bm{1}^\top\Big)\Big]\\
	&= \tr(AB) + \frac{8-2n}{(n-2)^2}\bm{1}^\top AB\bm{1} + \frac{n^2-6n +6}{(n-1)^2(n-2)^2}\bm{1}^\top A\bm{1}\bm{1}^\top B\bm{1}.
	\end{align*}
	Combining the two identities above to conclude.
\end{proof}
\begin{proof}[Proof of Proposition \ref{prop:hoef_decomp}]
	By the proceeding lemma,
	\begin{align*}
	&\dcov_\ast^2(\bm{X},\bm{Y}) = \frac{1}{n(n-3)}\bigg(\tr(AB)+ \frac{\bm{1}^\top A \bm{1} \bm{1}^\top B \bm{1} }{(n-1)(n-2)}-\frac{2\bm{1}^\top AB \bm{1}}{n-2}\bigg)\\
	&= \frac{1}{n(n-3)}\bigg(\sum_{i\neq j}A_{ij}B_{ij} + \frac{1}{(n-1)(n-2)}\sum_{i\neq j,k\neq \ell}A_{ij}B_{k\ell} - \frac{2}{n-2}\sum_{i\neq j,i\neq k}A_{ij}B_{ik}\bigg)\\
	&\stackrel{(*)}{=} \frac{1}{n(n-3)}\bigg(\frac{n-3}{n-1}\sum_{i\neq j}A_{ij}B_{ij} \\
	&\qquad\qquad + \frac{1}{(n-1)(n-2)}\sum_{i\neq j\neq k\neq \ell}A_{ij}B_{k\ell} - \frac{2(n-3)}{(n-1)(n-2)}\sum_{i\neq j\neq k}A_{ij}B_{ik}\bigg)\\
	& = \frac{1}{4!\binom{n}{4}}\sum_{i\neq j\neq k\neq \ell} \big(A_{ij}B_{ij}+A_{ij}B_{k\ell}-2 A_{ij}B_{ik}\big) = \frac{1}{\binom{n}{4}}\sum_{i_1<\cdots<i_4}k_0\big(Z_{i_1},Z_{i_2},Z_{i_3},Z_{i_4}\big),
	\end{align*}
	Here $(*)$ follows from the identities:
	\begin{align*}
	\sum_{i\neq j,i\neq k}A_{ij}B_{ik} &= \sum_{i\neq j\neq k}A_{ij}B_{ik} + \sum_{i\neq j}A_{ij}B_{ij},\\
	\sum_{i\neq j,k\neq \ell}A_{ij}B_{k\ell} &= \sum_{i\neq j\neq k\neq \ell}A_{ij}B_{k\ell} + 4\sum_{i\neq j\neq k}A_{ij}B_{ik} + 2\sum_{i\neq j}A_{ij}B_{ij},
	\end{align*}
	and $k_0$ is the symmetrized kernel
	\begin{align*}
	&k_0(Z_1,Z_2,Z_3,Z_4)\\
	& = \frac{1}{4!}\sum_{(i_1,\ldots,i_4)\in \sigma(1,2,3,4)} \pnorm{X_{i_1}-X_{i_2}}{}\big(\pnorm{Y_{i_1}-Y_{i_2}}{}+\pnorm{Y_{i_3}-Y_{i_4}}{}-2\pnorm{Y_{i_1}-Y_{i_3}}{}\big)\\
	& \equiv \E_{\bm{i} \in \sigma_4} (i_1,i_2)_X\big[(i_1,i_2)_Y+(i_3,i_4)_Y-2(i_1,i_3)_Y\big]\\
	&=\E_{\bm{i} \in \sigma_4} (i_1,i_2)_X\big[(i_1,i_2)_Y+(i_3,i_4)_Y-2(i_2,i_4)_Y\big] \quad(1\leftrightarrow 2, 3\leftrightarrow 4),
	\end{align*}
	where we abbreviate $(i,j)_X=(X_i,X_j),(i,j)_Y=(Y_i,Y_j)$ and the expectation $\E_{\bm{i} \in \sigma_4}$ is taken over uniform distribution of permutation group over $\{1,2,3,4\}$. So by averaging the above two equations, we have
	\begin{align*}
	k_0(\bm{Z})& \stackrel{(a)}{=} \E_{\bm{i} \in \sigma_4} (i_1,i_2)_X\big[(i_1,i_2)_Y+(i_3,i_4)_Y-(i_1,i_3)_Y-(i_2,i_4)_Y\big]\\
	& \stackrel{(b)}{=} \E_{\bm{i} \in \sigma_4} (i_3,i_4)_X\big[(i_1,i_2)_Y+(i_3,i_4)_Y-(i_1,i_3)_Y-(i_2,i_4)_Y\big] \quad (a: 1\leftrightarrow 3, 2\leftrightarrow 4)\\
	& \stackrel{(c)}{=}  \E_{\bm{i} \in \sigma_4} (i_1,i_3)_X\big[(i_1,i_3)_Y+(i_2,i_4)_Y-(i_1,i_2)_Y-(i_3,i_4)_Y\big] \quad (a: 2 \leftrightarrow 3)\\
	& =  \E_{\bm{i} \in \sigma_4} (i_2,i_4)_X\big[(i_1,i_3)_Y+(i_2,i_4)_Y-(i_1,i_2)_Y-(i_3,i_4)_Y\big] \quad (c: 1\leftrightarrow 3, 2\leftrightarrow 4).
	\end{align*}
	Using that
	\begin{align*}
	&(i_1,i_2)_X+(i_3,i_4)_X-(i_1,i_3)_X-(i_2,i_4)_X\\
	& = U(X_{i_1},X_{i_2})+U(X_{i_3},X_{i_4})-U(X_{i_1},X_{i_3})-U(X_{i_2},X_{i_4}),
	\end{align*}
	and a similar equation for $(\cdot,\cdot)_Y$ replacing $U$ by $V$, we have
		\begin{align*}
		&k_0(\bm{Z}) = \frac{1}{4\cdot 4!}\sum_{(i_1,\ldots,i_4)\in \sigma(1,2,3,4)}\Big(U(X_{i_1},X_{i_2})+U(X_{i_3},X_{i_4})-U(X_{i_1},X_{i_3})-U(X_{i_2},X_{i_4})\Big)\\
		&\qquad\qquad\qquad \times \Big(V(Y_{i_1},Y_{i_2})+V(Y_{i_3},Y_{i_4})-V(Y_{i_1},Y_{i_3})-V(Y_{i_2},Y_{i_4})\Big)\\
		& = \frac{1}{4\cdot 4!}\sum_{(i_1,\ldots,i_4)\in \sigma(1,2,3,4)} \bigg[U(X_{i_1},X_{i_2})\Big(V(Y_{i_1},Y_{i_2})+V(Y_{i_3},Y_{i_4})-V(Y_{i_1},Y_{i_3})-V(Y_{i_2},Y_{i_4})\Big)\\
		&\qquad + U(X_{i_3},X_{i_4})\Big(V(Y_{i_1},Y_{i_2})+V(Y_{i_3},Y_{i_4})-V(Y_{i_1},Y_{i_3})-V(Y_{i_2},Y_{i_4})\Big)\\
		&\qquad - U(X_{i_1},X_{i_3})\Big(V(Y_{i_1},Y_{i_2})+V(Y_{i_3},Y_{i_4})-V(Y_{i_1},Y_{i_3})-V(Y_{i_2},Y_{i_4})\Big)\\
		&\qquad - U(X_{i_2},X_{i_4})\Big(V(Y_{i_1},Y_{i_2})+V(Y_{i_3},Y_{i_4})-V(Y_{i_1},Y_{i_3})-V(Y_{i_2},Y_{i_4})\Big)\bigg]\\
		& = \frac{1}{4!}\sum_{(i_1,\ldots,i_4)\in \sigma(1,2,3,4)} \bigg[ U(X_{i_1},X_{i_2})V(Y_{i_1},Y_{i_2})+U(X_{i_1},X_{i_2})V(Y_{i_3},Y_{i_4})\\
		&\qquad\qquad- 2 U(X_{i_1},X_{i_2})V(Y_{i_1},Y_{i_3})\bigg]=k(\bm{Z}).
		\end{align*}
	The proof is complete.
\end{proof}

\section{Proofs for Section \ref{section:preliminaries}}\label{appendix:proof_prel}

\subsection{Proof of Lemma \ref{lem:GH_hadamard}}
\begin{proof}[Proof of Lemma \ref{lem:GH_hadamard}]
We first prove (1). Note that under the spectral condition $ \pnorm{\Sigma_X}{F}^2\pnorm{\Sigma_{Y}}{F}^2\asymp_M pq$, so it suffices to show that all three terms on the left side is bounded by the order of $p\vee q$. For the first term, we have
\begin{align*}
\tr(G_{[12]}\circ G_{[12]}) = \sum_{i=1}^{p+q} (\Sigma^{1/2}e_1\Sigma_{XY}e_2^\top\Sigma^{1/2})_{ii}^2 \leq (p+q)\pnorm{\Sigma^{1/2}e_1\Sigma_{XY}e_2^\top\Sigma^{1/2}}{\op}^2 \lesssim_M p\vee q.
\end{align*}
The second term can be controlled similarly. For the third term, using $\max_{i,j}|(H_{[11]})_{i,j}|\vee |(H_{[22]})_{i,j}|\lesssim_M 1$,
\begin{align*}
\pnorm{H_{[11]}\circ H_{[22]}}{F}^2 \lesssim_M \pnorm{H_{[11]}}{F}^2 \wedge \pnorm{H_{[22]}}{F}^2 \lesssim_M p\wedge q.
\end{align*}

Next we prove (2). Let $\Delta \equiv \Sigma^{1/2} - \Sigma_0^{1/2}$, where $\Sigma_0 \equiv [\Sigma_X,0;0,\Sigma_Y]$. Then the proof of Lemma \ref{lem:cross_XY_var} shows that $\pnorm{\Delta}{F}\lesssim_M \pnorm{\Sigma_{XY}}{F}$. Expanding the definitions of $G_{[11]}$ and $G_{[22]}$, we have
\begin{align*}
\pnorm{G_{[11]}\circ G_{[22]}}{F} &= \pnorm{(\Sigma_0^{1/2}+\Delta)\Sigma_{[11]}(\Sigma_0^{1/2}+\Delta)\circ (\Sigma_0^{1/2}+\Delta)\Sigma_{[22]}(\Sigma_0^{1/2}+\Delta)}{F}\\
&\leq \sum_{D_1,\ldots,D_4\in\{\Sigma_0^{1/2}, \Delta\}} \pnorm{D_1\Sigma_{[11]}D_2\circ D_3\Sigma_{[22]}D_4}{F}.
\end{align*}
The claim now follows from the inequality $\pnorm{A\circ B}{F} \leq \pnorm{A}{\op}\pnorm{B}{F}$ for (not necessarily symmetric) matrices $A,B$, and by noting that the summand is zero if at most one of $D_1$-$D_4$ is $\Delta$.
\end{proof}

\subsection{Proof of Lemma \ref{lem:property_h_fcn}}

The bound for $h$ follows by considering two regimes $u \in [-1,-1/2]$ and $u>-1/2$ separately. In particular, for $u \in [-1,-1/2]$, $\abs{h(u)}=\abs{\sqrt{1+u}-1-u/2}\lesssim 1$. For $u>-1/2$, $\inf_{s \in [0,1]}(1+su)^{3/2}\gtrsim 1$ so using the second equality of (\ref{def:h}) leads to the bound $\abs{h(u)}\lesssim u^2$. Combining the two cases to conclude $\abs{h(u)}\lesssim u^2$ for $u>-1$. Next, as 
\begin{align*}
h''(u) = -\frac{1}{4(1+u)^{3/2}},\, h^{(3)}(u) = \frac{3}{8(1+u)^{5/2}},\, h^{(4)}(u) = -\frac{15}{16(1+u)^{7/2}},
\end{align*}
by Taylor expansion,
\begin{align*}
h(u)& = -\frac{1}{4}\int_0^u \frac{u-t}{(1+t)^{3/2}}\,\d{t} \\
& = \frac{h''(0)}{2}u^2+u^3 \int_0^1 h^{(3)}(su)\frac{(1-s)^2}{2}\,\d{s} = -\frac{u^2}{8}+ u^3 \int_0^1 \frac{3(1-s)^2}{16(1+su)^{5/2}}\,\d{s}\\
& = \frac{h''(0)}{2}u^2+\frac{h^{(3)}(0)}{6}u^3+ u^4 \int_0^1 h^{(4)}(su)\frac{(1-s)^3}{6}\,\d{s}\\
& = -\frac{u^2}{8}+\frac{u^3}{16}-u^4 \int_0^1 \frac{5(1-s)^3}{32(1+su)^{7/2}}\,\d{s}.
\end{align*}
The bound for $h'$ follows by noting that $h'(u) = 2^{-1}(1-\sqrt{1+u})/\sqrt{1+u}$ and $|1-\sqrt{1+u}| \lesssim |u|$.\qed

\subsection{Proof of Lemma \ref{lem:sqrt_norm_LX}}

The first two equations follow from the definition. For the second two equations, note that by definition of $U(x_1,x_2)$, 
\begin{align*}
U(x_1,x_2)&=\frac{\tau_X}{2}\bigg[\pnorm{x_1-x_2}{}^2/\tau_X^2-\E\big[\pnorm{x_1-X}{}^2/\tau_X^2\big]\\
&\qquad-\E\big[\pnorm{X-x_2}{}^2/\tau_X^2\big]+\E\big[\pnorm{X-X'}{}^2/\tau_X^2\big]+2\bar{R}_X(x_1,x_2)\bigg]\\
& = - \frac{1}{\tau_X}\Big(x_1^\top x_2-\tau_X^2 \bar{R}_X(x_1,x_2)\Big),
\end{align*}
as desired. Similar derivation applies to $V(y_1,y_2)$.\qed

\subsection{Proof of Lemma \ref{lem:moment_R}}

\noindent (1). By Jensen's inequality, it suffices to consider the case where $s = 2^\ell$ for some $\ell\in\mathbb{N}$, which we will prove by induction. For the baseline case $\ell = 1$, it follows by Lemma \ref{lem:gaussian_4moment}-(1) that
\begin{align*}
\E L_X^2 &= \tau_X^{-4} \E \big(\pnorm{X_1-X_2}{}^2 -\tau_X^2\big)^2 = 4\tau_X^{-4}\cdot \Big(\E \big[\frac{1}{2}(Z_{X,1}-Z_{X,2})^\top \Sigma_X (Z_{X,1} - Z_{X,2})\big]^2 - \frac{1}{4}\tau_X^4\Big)\\
&=4\tau_X^{-4}\cdot(2\pnorm{\Sigma_X}{F}^2 + (\tilde{m}_4 - 3)\tr(\Sigma_X\circ \Sigma_X)) \lesssim_{m_4} \tau_X^{-4}\pnorm{\Sigma_X}{F}^2.
\end{align*}
Here $\tilde{m}_4 \equiv \E \big[\big((Z_{1,X})_1 - (Z_{2,X})_1\big)/\sqrt{2}\big]^4 \lesssim m_4$. For general $\ell$, using the Poincar\'{e} inequality of $Z$, we have
\begin{align*}
\tau_X^{2^{\ell+1}} \E L_X^{2^\ell} &=  \E \big(\pnorm{X_1-X_2}{}^2 -\tau_X^2\big)^{2^\ell} \lesssim_\ell \E \big(Z_X^\top\Sigma_XZ_X -\tr(\Sigma_X)\big)^{2^\ell}\\
&= \Big(\E \big(Z_X^\top\Sigma_XZ_X -\tr(\Sigma_X)\big)^{2^{\ell-1}}\Big)^2 + \var\Big(\big(Z_X^\top\Sigma_XZ_X -\tr(\Sigma_X)\big)^{2^{\ell-1}}\Big)\\
&\leq \Big(\E \big(Z_X^\top\Sigma_XZ_X -\tr(\Sigma_X)\big)^{2^{\ell-1}}\Big)^2 + c_* \E \Big[2^{\ell-1}\big(Z_X^\top\Sigma_XZ_X -\tr(\Sigma_X)\big)^{2^{\ell-1}-1}\cdot 2\pnorm{\Sigma_XZ_X}{}\Big]^2\\
&\leq \Big(\E \big(Z_X^\top\Sigma_XZ_X -\tr(\Sigma_X)\big)^{2^{\ell-1}}\Big)^2 + \frac{1}{2}\cdot \E\Big[\big(Z_X^\top\Sigma_XZ_X -\tr(\Sigma_X)\big)^{2^{\ell}}\Big] + K(\ell, c_*)\E \pnorm{\Sigma_XZ_X}{}^{2^\ell}.
\end{align*}
Here $c_*$ is the Poincar\'{e} constant of $Z$. Rearranging the terms yields that
\begin{align*}
&\E\Big[\big(Z_X^\top\Sigma_XZ_X -\tr(\Sigma_X)\big)^{2^{\ell}}\Big]\\
&\lesssim  \Big(\E \big(Z_X^\top\Sigma_XZ_X -\tr(\Sigma_X)\big)^{2^{\ell-1}}\Big)^2 + K(\ell, c_*)\E \pnorm{\Sigma_XZ_X}{}^{2^\ell}\\
&\lesssim \Big(\E \big(Z_X^\top\Sigma_XZ_X -\tr(\Sigma_X)\big)^{2^{\ell-1}}\Big)^2 + K'(\ell,c_*,M)\E \big(Z_X^\top \Sigma_X Z_X\big)^{2^{\ell-1}}\\
&\lesssim \Big(\E \big(Z_X^\top\Sigma_XZ_X -\tr(\Sigma_X)\big)^{2^{\ell-1}}\Big)^2 + K'(\ell,c_*,M)\Big[\E \big(Z_X^\top \Sigma_X Z_X - \tr(\Sigma)\big)^{2^{\ell-1}} + \tr(\Sigma_X)^{2^{\ell-1}}\Big]\\
&\lesssim K'(\ell,c_*,M)\pnorm{\Sigma_X}{F}^{2^\ell},
\end{align*}
where the last inequality follows from the induction hypotheses. This concludes that $\E L_X^{2^\ell} \lesssim_{\ell,M,c_*} \tau_X^{-2^{\ell+1}}\pnorm{\Sigma_X}{F}^{2^\ell}$.

\noindent (2). As $\abs{h(u)}\lesssim u^2$, we have
\begin{align*}
\E R_X^s(X_1,X_2) = \E h^s\big(L(X_1,X_2)\big) \lesssim \E L_X^{2s}(X_1,X_2) \lesssim_s \tau_X^{-4s}\pnorm{\Sigma_X}{F}^{2s}.
\end{align*}
The bound for $R_Y$ is similar.

\noindent (3). As $\abs{h'(u)}\lesssim \abs{u}/(1+u)^{1/2}$, we have
\begin{align}\label{ineq:chi_inv_moment}
\notag&\E h'(L_X(X_1,X_2))^s \lesssim \E \bigg[\frac{ \abs{L_X(X_1,X_2)}^s}{(1+L_X(X_1,X_2))^{s/2}}\bigg]\\
&\leq \E^{1/2} L_X^{2s}(X_1,X_2)\cdot \E^{1/2} \big(1+L_X(X_1,X_2)\big)^{-s}\lesssim_{M,s} \tau_X^{-2s}\pnorm{\Sigma_{X}}{F}^s.
\end{align}
To calculate the last term, note that
\begin{align}\label{ineq:z_inverse}
\notag\E \pnorm{Z_X}{}^{-2s} &= \E \pnorm{Z_X}{}^{-2s}\bm{1}_{|\pnorm{Z_X}{} - \E\pnorm{Z_X}{}| \leq p^{1/4}} +  \E \pnorm{Z_X}{}^{-2s}\bm{1}_{|\pnorm{Z_X}{} - \E\pnorm{Z_X}{}| \geq p^{1/4}}\\
\notag&\lesssim (\sqrt{p} + \mathcal{O}(p^{1/4}))^{-2s} + \E^{1/2}\pnorm{Z_X}{}^{-4s}\cdot \Prob^{1/2}(|\pnorm{Z_X}{} - \E\pnorm{Z_X}{}| \geq p^{1/4})\\
&\stackrel{(*)}{\lesssim_s} p^{-s} + \epsilon^{-4s}C(\alpha)p^{-\alpha/4} \asymp p^{-s},
\end{align}
by choosing $\alpha \geq 4s$. Here $(*)$ follows from the exponential tail for 1-Lipschitz functionals of distributions satisfying a Poincar\'{e} inequality \cite[Corollary 4.2]{bobkov2009weighted}, and the following calculation: for any $\epsilon > 0$,
\begin{align*}
\E\pnorm{Z_X}{}^{-4s} &= \E\pnorm{Z_X}{}^{-4s}\bm{1}_{\pnorm{Z_X}{}\geq \epsilon} + \E\pnorm{Z_X}{}^{-4s}\bm{1}_{\pnorm{Z_X}{}< \epsilon}\\
&\leq \epsilon^{-4s} + \int_{\pnorm{x}{}\leq \epsilon} \pnorm{x}{}^{-4s} \prod_{i=1}^p f(x_i) \prod_{i=1}^p\d x_i\\
&\lesssim  \epsilon^{-4s} + \big(2\pi\cdot\sup_{x\in(-\epsilon,\epsilon)} f(x)\big)^p \int_{0}^\epsilon r^{p-1-4s} \d r\\
&= \epsilon^{-4s} + \big(2\pi\cdot \sup_{x\in(-\epsilon,\epsilon)} f(x)\big)^p\cdot \frac{\epsilon^{p-4s}}{p-4s}.
\end{align*}
Hence by choosing $\epsilon$ to be a small but fixed constant (depending only on the distribution of $Z_1$) such that $2\pi\epsilon\cdot \sup_{x\in(-\epsilon,\epsilon)} f(x) < 1$, we have $\E\pnorm{Z_X}{}^{-4s} \lesssim \epsilon^{-4s}$. Now using the fact that $\tau_X \asymp_M \sqrt{p}$, (\ref{ineq:z_inverse}) leads to $\E h'(L_X(X_1,X_2))^s \lesssim \tau_X^{-s}$.
\qed

\section{Proofs for Section \ref{section:res_mean}}\label{appendix:proof_res}

\subsection{Proof of Lemma \ref{lem:psi_decomp}}

We only prove the formula for $\psi_X$. Note that
\begin{align*}
\psi_X(X_1,Y_1) &= \E_{X_2,Y_2}\Big[\big(R_X(X_1,X_2)-\E_X R_X(X_1,X)-\E_X R_X(X,X_2)+ (\E R_X)\big)Y_2^\top Y_1\Big]\\
& \stackrel{(\ast)}{=} \E_{X_2,Y_2}\Big[R_X(X_1,X_2)Y_2^\top Y_1\Big] = \E_{X_2,Y_2}\Big[h(L_X(X_1,X_2))Y_2^\top Y_1\Big].
\end{align*}
Here in $(*)$ we use the fact that $\E_{X_1,X_2,Y_2}R_X(X_1,X_2)(Y_2^\top Y_1) = 0$  for fixed $Y_1$, because $R_X(X_1,X_2)(Y_2^\top Y_1)$ has equal distribution as $R_X(-X_1,-X_2)(-Y_2^\top Y_1)$ for any fixed $Y_1$. On the other hand, by Lemma \ref{lem:property_h_fcn} we have
\begin{align*}
&\E_{X_2,Y_2}\Big[h(L_X(X_1,X_2))Y_2^\top Y_1\Big] = -\frac{\E_{X_2,Y_2} \big(L_X^2(X_1,X_2)Y_2^\top Y_1\big)}{8} +\E_{X_2,Y_2}\big[h_3(L_X(X_1,X_2))Y_2^\top Y_1\big].
\end{align*}
To compute the first term, use the representation $X_1 = \Sigma_X^{1/2} \tilde{Z}_{X},Y_1 = \Sigma_{Y\setminus X}^{1/2} \tilde{Z}_{Y}+\Sigma_{YX}\Sigma_X^{-1/2} \tilde{Z}_{X}$ in Lemma \ref{lem:rep}, where $\Sigma_{Y\setminus X}\equiv \Sigma_Y-\Sigma_{YX}\Sigma_X^{-1}\Sigma_{XY}$ and $\tilde{Z} = (\tilde{Z}_X^\top, \tilde{Z}_Y^\top)^\top = OZ$ for some orthogonal matrix $O$ described therein. Then we have
\begin{align*}
&\tau_X^4\cdot \E_{X_2,Y_2} \big(L_X^2(X_1,X_2)Y_2^\top Y_1\big)\\
\notag & = \E_{Z_X,Z_Y} \bigg[\Big(\pnorm{X_1}{}^2+\pnorm{\Sigma_X^{1/2} \tilde{Z}_{X}}{}^2-2X_1^\top \Sigma_X^{1/2} \tilde{Z}_{X}-\tau_X^2\Big)^2 \tilde{Z}_{X}^\top \Sigma_X^{-1/2} \Sigma_{XY} Y_1\bigg]\\
&\quad\quad + \E_{Z_X,Z_Y} \bigg[\Big(\pnorm{X_1}{}^2+\pnorm{\Sigma_X^{1/2} \tilde{Z}_{X}}{}^2-2X_1^\top \Sigma_X^{1/2} \tilde{Z}_{X}-\tau_X^2\Big)^2 \tilde{Z}_{Y}^\top \Sigma_{Y\backslash X}^{1/2} Y_1\bigg]\equiv (I) + (II).
\end{align*}
Using symmetry of $Z$ and Lemma \ref{lem:gaussian_4moment}-(1), we have
\begin{align*}
(I)& = \E_{Z_X,Z_Y} \bigg[-4 \pnorm{X_1}{}^2 X_1^\top \Sigma_X^{1/2} \tilde{Z}_{X} \tilde{Z}_{X}^\top \Sigma_X^{-1/2} \Sigma_{XY} Y_1\\
\notag&\qquad -4\pnorm{\Sigma_X^{1/2} \tilde{Z}_{X}}{}^2 X_1^\top \Sigma_X^{1/2} \tilde{Z}_{X} \tilde{Z}_{X}^\top \Sigma_X^{-1/2} \Sigma_{XY} Y_1 + 4 \tau_X^2 X_1^\top \Sigma_X^{1/2} \tilde{Z}_{X} \tilde{Z}_{X}^\top \Sigma_X^{-1/2} \Sigma_{XY} Y_1 \bigg]\\
\notag& = -4 \pnorm{X_1}{}^2X_1^\top \Sigma_{XY} Y_1-4\Big(\tr(\Sigma_X)\tr(X_1Y_1^\top \Sigma_{YX})+2\tr(X_1Y_1^\top \Sigma_{YX}\Sigma_X) + (m_4 - 3)\tr(H_{[11]}\circ Q_{X,1})\Big)\\
&\qquad + 4\tau_X^2 X_1^\top \Sigma_{XY} Y_1\\
& = -4\Big[\big(\pnorm{X_1}{}^2-\tr(\Sigma_X)\big)X_1^\top \Sigma_{XY} Y_1 +2 X_1^\top \Sigma_X \Sigma_{XY} Y_1 + (m_4 - 3)\tr(H_{[11]}\circ Q_{X,1})\Big],
\end{align*}
where 
\begin{align}\label{def:AB1}
Q_{X,1} = \Sigma^{-1/2}
\begin{pmatrix}
\Sigma_X X_1Y_1^\top \Sigma_{YX} & \Sigma_XX_1Y_1^\top\Sigma_{YX}\Sigma_X^{-1}\Sigma_{XY}\\
\Sigma_{YX}X_1Y_1^\top\Sigma_{YX} & \Sigma_{YX}X_1Y_1^\top \Sigma_{YX}\Sigma_X^{-1}\Sigma_{XY}
\end{pmatrix}\Sigma^{-1/2}.
\end{align}
Similarly, we have
\begin{align*}
(II) &= \E_{Z_X,Z_Y} \bigg[-4 \pnorm{X_1}{}^2 X_1^\top \Sigma_X^{1/2} \tilde{Z}_{X} \tilde{Z}_{Y}^\top \Sigma_{Y\backslash X}^{1/2} Y_1\\
\notag&\qquad -4\pnorm{\Sigma_X^{1/2} \tilde{Z}_{X}}{}^2 X_1^\top \Sigma_X^{1/2} \tilde{Z}_{X} \tilde{Z}_{Y}^\top \Sigma_{Y\backslash X}^{1/2} Y_1 + 4 \tau_X^2 X_1^\top \Sigma_X^{1/2} \tilde{Z}_{X}\tilde{Z}_{Y}^\top \Sigma_{Y\backslash X}^{1/2} Y_1 \bigg]\\
&= -4(m_4 - 3)\tr(H_{[11]}\circ Q_{X,2}),
\end{align*}
where
\begin{align}\label{def:AB2}
Q_{X,2} = \Sigma^{-1/2}
\begin{pmatrix}
0 & \Sigma_XX_1Y_1^\top\Sigma_{Y\backslash X}\\
0 & \Sigma_{YX}X_1Y_1^\top \Sigma_{Y\backslash X}
\end{pmatrix}\Sigma^{-1/2}.
\end{align}
The claim follows by noting that $Q_{X,1} + Q_{X,2} = Q_X$. The collection of identities follow from direct calculation.\qed

\begin{lemma}\label{lem:rep}
Suppose $(X^\top,Y^\top)^\top \stackrel{d}{=} \Sigma^{1/2}Z$, where $\Sigma$ is a p.s.d. matrix in $\R^{(p+q)\times (p+q)}$ and $Z = (Z_X^\top, Z_Y^\top)^\top\in \R^{p+q}$ has i.i.d. components with mean $0$ and variance $1$. Then there exists some orthogonal matrix $O\in\R^{(p+q)\times(p+q)}$ such that with $\tilde{Z}\equiv(\tilde{Z}_X^\top, \tilde{Z}_Y^\top)^\top \equiv OZ$, we have $(X^\top,Y^\top)^\top \stackrel{d}{=} ((\Sigma_X^{1/2}\tilde{Z}_X)^\top,(\Sigma_{YX}\Sigma_X^{-1/2}\tilde{Z}_X + \Sigma_{Y\backslash X}^{1/2}\tilde{Z}_Y)^\top)^\top$.
\end{lemma}
\begin{proof}[Proof of Lemma \ref{lem:rep}]
By taking $O = [\Sigma_X^{1/2}, \Sigma_X^{-1/2}\Sigma_{XY}; 0, \Sigma_{Y\backslash X}^{1/2}]\Sigma^{-1/2}$ which satisfies $OO^\top = O^\top O = I_{p+q}$, we have $(X^\top,Y^\top)^\top \stackrel{d}{=} \Sigma^{1/2}O^\top OZ$, where $\Sigma^{1/2}O^\top =  [\Sigma_X^{1/2}, 0; \Sigma_{YX}\Sigma_X^{-1/2}, \Sigma_{Y\backslash X}^{1/2}]$, as desired. 
\end{proof}

%

\subsection{Proof of Lemma \ref{lem:cross_XY_var}}
Let $\mathcal{M}^d$ be the set of $d\times d$ matrices and $\mathcal{M}_+^d$ the set of p.s.d. matrices of size $d\times d$. We need the following matrix derivative formulae. 
\begin{lemma}\label{lem:mat_sqrt_int_formula}
	Let $I\subset \R$. 
	\begin{enumerate}
		\item Suppose that $S: I\to \mathcal{M}_d$ is a smooth map. Then for any integer $m \in \N$, and $t \in \mathrm{int}(I)$,
		\begin{align}\label{ineq:mat_poly_int_formula}
		\frac{\d}{\d t} S(t)^m = \sum_{k=0}^{m-1} S(t)^k \frac{\d}{\d t} S(t) S(t)^{m-1-k}.
		\end{align}
		\item Suppose that $S: I\to \mathcal{M}_d$ is a smooth map. Then for any $t \in \mathrm{int}(I)$,
		\begin{align}\label{ineq:mat_exp_int_formula}
		\frac{\d}{\d t} e^{S(t)} = \int_0^1 e^{u S(t)}\frac{\d}{\d t} S(t) e^{(1-u) S(t)}\,\d{u}.
		\end{align}
		\item Suppose that $S: I \to \mathcal{M}^d_+$ is a smooth map. Then for any $t \in \mathrm{int}(I)$ such that $S(t)$ is p.d.,
		\begin{align}\label{ineq:mat_sqrt_int_formula}
		\frac{\d}{\d{t}} S^{1/2}(t) = \int_0^\infty e^{-u S^{1/2}(t)} \frac{\d}{\d{t}} S(t) e^{-u S^{1/2}(t)}\,\d{u}.
		\end{align}
	\end{enumerate}
\end{lemma}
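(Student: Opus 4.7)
The plan is to handle the three identities in increasing order of delicacy, since each one builds on the previous.

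For part (1), the plan is a direct induction on $m$ using the ordinary product rule for matrix-valued functions (which holds since matrix multiplication is bilinear and continuous). The base case $m=1$ is trivial, and given the identity for $m-1$, writing $S(t)^m = S(t)^{m-1} S(t)$ and differentiating gives
\begin{align*}
\frac{\d}{\d t} S(t)^m &= \bigg(\sum_{k=0}^{m-2} S(t)^k S'(t) S(t)^{m-2-k}\bigg) S(t) + S(t)^{m-1} S'(t),
\end{align*}
which rearranges to the desired formula. No non-commutativity issue arises because every term is written in the correct order.

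For part (2), the plan is to differentiate the power series $e^{S(t)} = \sum_{m\geq 0} S(t)^m/m!$ term by term (justified by local uniform convergence on any compact interval in $\mathrm{int}(I)$, since $S$ is smooth) and apply part (1). This gives
\begin{align*}
\frac{\d}{\d t} e^{S(t)} &= \sum_{m\geq 1} \frac{1}{m!} \sum_{k=0}^{m-1} S(t)^k S'(t) S(t)^{m-1-k}.
\end{align*}
The key step is then to recognize that the right-hand side equals $\int_0^1 e^{u S(t)} S'(t) e^{(1-u) S(t)} \, \d u$, which follows by expanding both exponentials in that integral into power series, interchanging summation and integration, and using $\int_0^1 u^k (1-u)^{m-1-k}\,\d u = k!(m-1-k)!/m!$ to match coefficients. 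This is essentially Duhamel's formula; the only subtlety is justifying the termwise differentiation and the interchange of sum and integral, both of which follow from absolute convergence.

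For part (3), the idea is to differentiate the identity $S^{1/2}(t) \cdot S^{1/2}(t) = S(t)$, which yields the Sylvester equation
\begin{align*}
S^{1/2}(t) \cdot \tfrac{\d}{\d t} S^{1/2}(t) + \tfrac{\d}{\d t} S^{1/2}(t) \cdot S^{1/2}(t) = S'(t).
\end{align*}
Then I will verify directly that $X_t \equiv \int_0^\infty e^{-u S^{1/2}(t)} S'(t) e^{-u S^{1/2}(t)}\,\d u$ solves this equation. The verification uses the identity $\frac{\d}{\d u}\big[e^{-u S^{1/2}(t)} S'(t) e^{-u S^{1/2}(t)}\big] = -\big(S^{1/2}(t) X_{t,u} + X_{t,u} S^{1/2}(t)\big)$ where $X_{t,u}$ is the integrand, so that integrating from $0$ to $\infty$ telescopes to $-S'(t) = -(S^{1/2}(t) X_t + X_t S^{1/2}(t))$. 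The positive definiteness of $S(t)$, equivalent to $S^{1/2}(t)$ having spectrum bounded away from $0$, is what guarantees the convergence at $u = \infty$ and is crucial here. Uniqueness of the solution to the Sylvester equation (again from the positive definiteness of $S^{1/2}(t)$, so that $S^{1/2}(t)$ and $-S^{1/2}(t)$ have disjoint spectra) then forces $X_t = \frac{\d}{\d t} S^{1/2}(t)$.

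The main obstacle I expect is the rigor at part (3): ensuring that $S^{1/2}(t)$ is itself smooth in $t$ near the point of interest (so that its derivative exists in the first place), which follows from the analytic functional calculus applied to $z \mapsto \sqrt{z}$ on a contour enclosing the spectrum of $S(t)$, or alternatively from the implicit function theorem applied to the map $A \mapsto A^2$ at a positive definite point. Once differentiability is known, the Sylvester-equation argument above identifies the derivative.
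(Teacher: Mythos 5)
Your proposal is correct and follows essentially the same route as the paper: induction plus the product rule for (1), the power-series expansion combined with the Beta integral $\int_0^1 u^k(1-u)^\ell\,\d u = k!\ell!/(k+\ell+1)!$ for (2), and verification that the integral solves the Sylvester equation $D S^{1/2}+S^{1/2}D=S'$ via the telescoping identity $\int_0^\infty \frac{\d}{\d u}[e^{-uS^{1/2}}S'e^{-uS^{1/2}}]\,\d u=-S'$ for (3). Your added remarks on uniqueness of the Sylvester solution and on the smoothness of $t\mapsto S^{1/2}(t)$ are correct refinements that the paper leaves implicit.
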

These formulae are well-known. We provide a self-contained proof below for the convenience of the reader.
\begin{proof}
	\noindent (1). We prove the claim by induction on $m$. The base case $m=1$ is trivial, so we suppose the claim holds up to $m-1$. Now differentiating on both sides of the identity $S(t)^m = S(t)^{m-1}S(t)$ and using the induction hypothesis, we have
	\begin{align*}
	\frac{\d}{\d t} S(t)^m & = \frac{\d}{\d t} S(t)^{m-1} S(t)+S(t)^{m-1} \frac{\d}{\d t} S(t)\\
	& = \bigg[\sum_{k=0}^{m-2} S(t)^k \frac{\d}{\d t} S(t) S(t)^{m-2-k}\bigg] S(t)+S(t)^{m-1} \frac{\d}{\d t} S(t) = \hbox{RHS of (\ref{ineq:mat_poly_int_formula})}.
	\end{align*}

	\noindent (2). Using the expansion $e^A = \sum_{k= 0}^\infty A^k/k!$,
	\begin{align*}
	\hbox{RHS of (\ref{ineq:mat_exp_int_formula})}&= \int_0^1 \sum_{k,\ell} \frac{1}{k!\ell!} \big(uS(t)\big)^k \frac{\d}{\d t} S(t) \big((1-u) S(t)\big)^\ell\,\d{u}\\
	& \stackrel{(\ast)}{=} \sum_{k,\ell} \frac{1}{(k+\ell+1)!} S(t)^k \frac{\d}{\d t} S(t) S(t)^{\ell}\\
	& = \sum_{m=0}^\infty \frac{1}{m!} \sum_{k=0}^{m-1} S(t)^k \frac{\d}{\d t} S(t) S(t)^{m-1-k} \quad \hbox{(rearraging the summation)}\\
	& = \sum_{m=0}^\infty \frac{1}{m!}\frac{\d}{\d t} S(t)^m  = \hbox{LHS of (\ref{ineq:mat_exp_int_formula})}.
	\end{align*}
	Here in the last line we used (1), whereas in $(\ast)$ we used the simple fact that $\int_0^1 u^k (1-u)^\ell\,\d{u} = k!\ell!/(k+\ell+1)!$ that can be proved by induction on $\ell$:
	\begin{align*}
	\int_0^1 u^k (1-u)^\ell\,\d{u}& = \int_0^1 u^k (1-u)^{\ell-1} (1-u)\,\d{u}\\
	& = \int_0^1 u^k (1-u)^{\ell-1}\,\d{u}-\int_0^1 u^{k+1}(1-u)^{\ell-1}\,\d{u}\\
	& = \frac{k!(\ell-1)!}{(k+\ell)!}-\frac{(k+1)!(\ell-1)!}{(k+\ell+1)!} =\frac{k!\ell!}{(k+\ell+1)!}.
	\end{align*}

	\noindent (3). The derivative $D(t)\equiv \frac{\d}{\d{t}} S^{1/2}(t)$ solves the Sylvester equation
	\begin{align}\label{ineq:mat_sqrt_int_formula_1}
	D(t) S^{1/2}(t)+S^{1/2}(t) D(t) = \frac{\d}{\d{t}} S(t).
	\end{align}
	We will verify $D(t)$ given by the right hand side of (\ref{ineq:mat_sqrt_int_formula}) satisfies (\ref{ineq:mat_sqrt_int_formula_1}). As $S(t)$ is p.d., the integral in (\ref{ineq:mat_sqrt_int_formula}) converges. Now we compute
	\begin{align*}
	&\bigg(\int_0^\infty e^{-u S^{1/2}(t)} \frac{\d}{\d{t}} S(t) e^{-u S^{1/2}(t)}\,\d{u}\bigg) S^{1/2}(t)\\
	&\qquad\qquad\qquad + S^{1/2}(t)\bigg(\int_0^\infty e^{-u S^{1/2}(t)} \frac{\d}{\d{t}} S(t) e^{-u S^{1/2}(t)}\,\d{u}\bigg)\\
	& = -\int_0^\infty \frac{\d}{\d{u}}\bigg[e^{-u S^{1/2}(t)} \frac{\d}{\d{t}} S(t) e^{-u S^{1/2}(t)}\bigg]\,\d{u} = \frac{\d}{\d{t}} S(t),
	\end{align*}  
	as desired.
\end{proof}

\begin{proof}[Proof of Lemma \ref{lem:cross_XY_var}: compact spectrum case]
	We may represent $(X_\ell^\top,Y_\ell^\top)^\top = \Sigma^{1/2}Z_\ell$ for $\ell \in \N$. Let $\bar{Z}_{k,\ell}\equiv Z_k-Z_\ell$, where $k,\ell \in \N$, and 
	\begin{align}\label{def:Sigma_interpolate}
	\Sigma_0 \equiv 
	\begin{pmatrix}
	\Sigma_X & 0\\
	0 & \Sigma_Y
	\end{pmatrix}
	, \quad \Sigma_t \equiv \Sigma_0+ t
	\begin{pmatrix}
	0 & \Sigma_{XY}\\
	\Sigma_{YX} & 0
	\end{pmatrix}=\Sigma_0+t(\Sigma_{[12]}+\Sigma_{[21]}),
	\end{align}
	and $H_{[ij]}(t) \equiv \Sigma_t^{1/2}I_{[ij]}\Sigma_t^{1/2}$. Then
	\begin{align*}
	\psi_{\mathfrak{h}_X,\mathfrak{h}_Y}(t\Sigma_{XY})\equiv \E \bigg[\mathfrak{h}_X\bigg(\frac{\bar{Z}_{k,\ell}^\top H_{[11]}(t) \bar{Z}_{k,\ell}}{\tau_X^2}-1\bigg) \mathfrak{h}_Y\bigg(\frac{\bar{Z}_{k',\ell'}^\top H_{[22]}(t) \bar{Z}_{k',\ell'}}{\tau_Y^2}-1\bigg)\bigg\lvert Z_1\bigg].
	\end{align*}
	By direct calculation, we have
		\begin{align*}
		&\frac{\d}{\d t} \psi_{\mathfrak{h}_X,\mathfrak{h}_Y}(t\Sigma_{XY})\\
		& = \E \bigg[\mathfrak{h}_X'\bigg(\frac{\bar{Z}_{k,\ell}^\top H_{[11]}(t) \bar{Z}_{k,\ell}}{\tau_X^2}-1\bigg) \mathfrak{h}_Y\bigg(\frac{\bar{Z}_{k',\ell'}^\top H_{[22]}(t) \bar{Z}_{k',\ell'}}{\tau_Y^2}-1\bigg)\tau_X^{-2} \bar{Z}_{k,\ell}  \frac{\d}{\d t}H_{[11]}(t) \bar{Z}_{k,\ell}\bigg\lvert Z_1\bigg]\\
		&\qquad + \E \bigg[\mathfrak{h}_X\bigg(\frac{\bar{Z}_{k,\ell}^\top H_{[11]}(t) \bar{Z}_{k,\ell}}{\tau_X^2}-1\bigg) \mathfrak{h}_Y'\bigg(\frac{\bar{Z}_{k',\ell'}^\top H_{[22]}(t) \bar{Z}_{k',\ell'}}{\tau_Y^2}-1\bigg)\tau_Y^{-2} \bar{Z}_{k',\ell'} \frac{\d}{\d t}H_{[11]}(t) \bar{Z}_{k',\ell'}\bigg\lvert Z_1\bigg]\\
		&\equiv B_1(t)+B_2(t).
		\end{align*}
	Now we compute the term $\d H_{[11]}(t)/\d t$ in $B_1$. Using Lemma \ref{lem:mat_sqrt_int_formula}-(3) and the fact that $(\d \Sigma_t/\d t)_{k,\ell} = \Sigma_{[12]}+\Sigma_{[21]}$, we have
	\begin{align*}
	\frac{\d}{\d t}\Sigma_t^{1/2} 
	&= \int_0^\infty e^{-u\Sigma_t^{1/2}}\cdot (\Sigma_{[12]} + \Sigma_{[21]})\cdot e^{-u\Sigma_t^{1/2}}\d u.
	\end{align*}
	This entails
	\begin{align}\label{ineq:res_psi_bound_0}
	\notag\frac{\d}{\d t} H_{[11]}(t) & = \frac{\d}{\d t}\big(\Sigma_t^{1/2} I_{[11]} \Sigma_t^{1/2}\big) = \frac{\d}{\d t}\Sigma_t^{1/2}\cdot I_{[11]} \Sigma_t^{1/2} + \Sigma_t^{1/2}I_{[11]} \cdot\frac{\d}{\d t}\Sigma_t^{1/2}\\
	&=\int_0^\infty \mathcal{T}\bigg[e^{-u \Sigma_t^{1/2}} (\Sigma_{[12]} + \Sigma_{[21]}) e^{-u \Sigma_t^{1/2}} I_{[11]} \Sigma_t^{1/2}\bigg]\,\d{u}.
	\end{align}
	Here for a square matrix $A$, $\mathcal{T}(A)\equiv A+A^\top$. This implies
	\begin{align*}
	\int_0^1 B_{1}(t)\,\d t&= \tau_X^{-2}\int_0^1   \E\bigg[\mathfrak{h}_X'\bigg(\frac{\bar{Z}_{k,\ell}^\top H_{[11]}(t) \bar{Z}_{k,\ell}}{\tau_X^2}-1\bigg) \mathfrak{h}_Y\bigg(\frac{\bar{Z}_{k',\ell'}^\top H_{[22]}(t) \bar{Z}_{k',\ell'}}{\tau_Y^2}-1\bigg)\\
	&\qquad \times \mathcal{T}\bigg(\int_0^\infty \bar{Z}_{k,\ell}^\top e^{-u \Sigma_t^{1/2}} \big(\Sigma_{[12]}+\Sigma_{[21]}\big) e^{-u \Sigma_t^{1/2}} I_{[11]} \Sigma_t^{1/2} \bar{Z}_{k,\ell}\,\d{u}\bigg)\bigg\lvert Z_1 \bigg]\,\d{t}.
	\end{align*}
	To continue calculations, we may write $e^{-u\Sigma_t^{1/2}}$ as follows: by Lemma \ref{lem:mat_sqrt_int_formula}-(2), $A\mapsto e^A$ as a map from $\R^{(p+q)\times (p+q)}$ to $\R^{(p+q)\times (p+q)}$ has Frech\'et derivative $[\nabla (e^\cdot)](A)\cdot H = \int_0^1 e^{vA}He^{(1-v)A}\,\d v$, so a first-order Taylor expansion yields that 
	\begin{align}\label{ineq:res_psi_bound_3}
	\notag &e^{-u\Sigma_t^{1/2}}- e^{-u\Sigma_0^{1/2}}= \int_0^1 \big[\nabla (e^{\cdot})\big]\big(-u\Sigma_0^{1/2}+ s(-u)(\Sigma_t^{1/2} - \Sigma_0^{1/2})\big)\cdot \big[(-u)(\Sigma_t^{1/2} - \Sigma_0^{1/2})\big]\d s\\
	&= (-u) \int_0^1 \int_0^1 e^{-uv[\Sigma_0^{1/2} + s(\Sigma_t^{1/2} - \Sigma_0^{1/2})]}(\Sigma_t^{1/2}-\Sigma_0^{1/2})e^{-u(1-v)[\Sigma_0^{1/2} + s(\Sigma_t^{1/2} - \Sigma_0^{1/2})]}\d v\d s\equiv \mathscr{R}_1(u,t).
	\end{align}
	On the other hand, by Lemma \ref{lem:mat_sqrt_int_formula}-(3), $A\mapsto A^{1/2}$ as a map from p.s.d. matrices in $\R^{(p+q)\times (p+q)}$ to itself has Frech\'et derivative $[\nabla (\cdot)^{1/2}](A)\cdot H = \int_0^\infty e^{-z A^{1/2}} H e^{-z A^{1/2}}\,\d{z}$, so again a first-order Taylor expansion yields that 
	\begin{align}\label{ineq:res_psi_bound_4}
	\Sigma_t^{1/2} - \Sigma_0^{1/2} &= \int_0^1  \big[\nabla(\cdot)^{1/2}\big]\big(\Sigma_0+w(\Sigma_t-\Sigma_0)\big)\cdot (\Sigma_t-\Sigma_0)\,\d{w}\nonumber\\
	& =  \int_0^1  \int_0^\infty e^{-z (\Sigma_0+w(\Sigma_t-\Sigma_0))}(\Sigma_t-\Sigma_0) e^{-z (\Sigma_0+w(\Sigma_t-\Sigma_0))}\,\d{z}\d{w} \equiv \mathscr{R}_2(t).
	\end{align}
	Now using (\ref{ineq:res_psi_bound_3})-(\ref{ineq:res_psi_bound_4}) and that
	\begin{align*}
	&e^{-u \Sigma_0^{1/2}} \big(\Sigma_{[12]}+\Sigma_{[21]}\big) e^{-u \Sigma_0^{1/2}} I_{[11]} \Sigma_0^{1/2} =
	\begin{pmatrix}
	0 & 0\\
	e^{-u\Sigma_Y^{1/2}}\Sigma_{YX} e^{-u\Sigma_X^{1/2}}\Sigma_X^{1/2} & 0
	\end{pmatrix},
	\end{align*}
	we arrive at
	\begin{align*}
	&M(u,t)\equiv  e^{-u \Sigma_t^{1/2}} \big(\Sigma_{[12]}+\Sigma_{[21]}\big) e^{-u \Sigma_t^{1/2}} I_{[11]} \Sigma_t^{1/2}\\
	&= 	\begin{pmatrix}
	0 & 0\\
	e^{-u\Sigma_Y^{1/2}}\Sigma_{YX} e^{-u\Sigma_X^{1/2}}\Sigma_X^{1/2} & 0
	\end{pmatrix} + e^{-u \Sigma_0^{1/2}} \big(\Sigma_{[12]}+\Sigma_{[21]}\big) e^{-u \Sigma_0^{1/2}} I_{[11]} \mathscr{R}_2(t)\\
	&\qquad + \mathscr{R}_1(u,t)\big(\Sigma_{[12]}+\Sigma_{[21]}\big) e^{-u\Sigma_0^{1/2}} I_{[11]} \Sigma_t^{1/2} +e^{-u \Sigma_t^{1/2}} \big(\Sigma_{[12]}+\Sigma_{[21]}\big) \mathscr{R}_1(u,t) I_{[11]} \Sigma_t^{1/2}.
	\end{align*}
	To control the remainder terms, we have 
	\begin{align}\label{ineq:R1R2_bound}
	\pnorm{\mathscr{R}_1(u,t)}{F}\vee \pnorm{\mathscr{R}_2(t)}{F} \lesssim_M  t(u\vee 1) \pnorm{\Sigma_{XY}}{F},
	\end{align}
	where we used the following: as $\inf_{t \in [0,1]} \lambda_{\min}(\Sigma_t)\geq \lambda_{\min}(\Sigma)\wedge \lambda_{\min}(\Sigma_0)\gtrsim_M 1$,
	\begin{itemize}
		\item $\sup_{w\in[0,1]}\int_0^\infty\pnorm{e^{-z (\Sigma_0+w(\Sigma_t-\Sigma_0))}}{\op}^2\d z \lesssim_M 1$.
		\item $\sup_{s,v\in[0,1]}\sup_{u\geq 0}\pnorm{e^{-uv[\Sigma_0^{1/2} + s(\Sigma_t^{1/2} - \Sigma_0^{1/2})]}}{\op}\vee \pnorm{e^{-u(1-v)[\Sigma_0^{1/2} + s(\Sigma_t^{1/2} - \Sigma_0^{1/2})]}}{\op}\lesssim_M 1$.
	\end{itemize}
	Hence by using again $\pnorm{e^{-u\Sigma_0^{1/2}}}{\op}\vee \pnorm{e^{-u\Sigma_t^{1/2}}}{\op} \leq e^{-c_0u}$ for some $c_0 = c_0(M) > 0$, there exist another positive $c_1$ only depending on $M$ such that
	\begin{align*}
	\abs{\tr M(u,t)}&\lesssim_M e^{-c_1 u}\pnorm{\Sigma_{[12]}}{F} \big(\pnorm{\mathscr{R}_1(u,t)}{F}\vee \pnorm{\mathscr{R}_2(t)}{F}\big) \lesssim_M  t(u\vee 1) e^{-c_1 u} \pnorm{\Sigma_{XY}}{F}^2,\\
	\pnorm{M(u,t)}{F}&\lesssim_M e^{-c_1 u}\pnorm{\Sigma_{[12]}}{F} \big(\pnorm{\mathscr{R}_1(u,t)}{\op}\vee \pnorm{\mathscr{R}_2(t)}{\op}\vee 1\big) \lesssim_M  t(u\vee1) e^{-c_1 u} \pnorm{\Sigma_{XY}}{F}(\pnorm{\Sigma_{XY}}{F}\vee 1),
	\end{align*}
	which further leads to, with $\bar{M}(t)\equiv \int_0^\infty M(u,t)\d u$,  
	\begin{align*}
	\big|\tr\big(\bar{M}(t)\big)\big| &\leq \int_0^\infty |\tr(M(u,t))| \lesssim_M t\pnorm{\Sigma_{XY}}{F}^2,\,	\bigpnorm{\bar{M}(t)}{F} \leq \int_0^\infty  \pnorm{M(u,t)}{F}\d u \lesssim_M t\pnorm{\Sigma_{XY}}{F}.     
	\end{align*}
	Therefore by Cauchy-Schwarz,
	\begin{align*}
	&\E \Big(\int_0^1 B_{1}(t)\,\d{t}\Big)^2\lesssim \tau_X^{-4}  \E^{1/4} \mathfrak{h}_X'(L_X(X_1,X_2))^8\cdot \E^{1/4} \mathfrak{h}_Y^8(L_Y(Y_1,Y_2))\cdot \sup_{t \in [0,1]}\E^{1/2}\big(Z_1^\top \bar{M}(t) Z_1\big)^4\\
	&\lesssim\tau_X^{-4}  \E^{1/4} \mathfrak{h}_X'(L_X(X_1,X_2))^8\cdot \E^{1/4} \mathfrak{h}_Y^8(L_Y(Y_1,Y_2))\cdot \sup_{t \in [0,1]}\big[|\tr(\bar{M}(t))|\vee\pnorm{\bar{M}(t)}{F}\big]^2\\
	&\lesssim_M  \tau_X^{-4}  \E^{1/4} \mathfrak{h}_X'(L_X(X_1,X_2))^8\cdot \E^{1/4} \mathfrak{h}_Y^8(L_Y(Y_1,Y_2))\cdot \pnorm{\Sigma_{XY}}{F}^2\big(1\vee \pnorm{\Sigma_{XY}}{F}^2\big).
	\end{align*}
	Flip the roles of $X$ and $Y$ to conclude that
	\begin{align*}
	&\E \Big(\int_0^1 B_2(t)\,\d{t}\Big)^2\lesssim_M  \tau_Y^{-4}  \E^{1/4} \mathfrak{h}_Y'(L_Y(Y_1,Y_2))^8\cdot \E^{1/4} \mathfrak{h}_X^8(L_X(X_1,X_2))\cdot \pnorm{\Sigma_{XY}}{F}^2\big(1\vee \pnorm{\Sigma_{XY}}{F}^2\big).
	\end{align*}
	The claim follows by noting that
	\begin{align*}
	&\psi_{\mathfrak{h}_X,\mathfrak{h}_Y}(\Sigma_{XY}) - \psi_{\mathfrak{h}_X,\mathfrak{h}_Y}(0)= \int_0^1\frac{\d}{\d t} \psi_{\mathfrak{h}_X,\mathfrak{h}_Y}(t\Sigma_{XY})\,\d t = \int_0^1 \big[B_1(t) +B_2(t)\big]\,\d t,
	\end{align*}
	as desired.\end{proof}

\begin{proof}[Proof of Lemma \ref{lem:cross_XY_var}: $X=Y$ case]
	The proof is largely similar so we only sketch the key steps. Let $X_\ell\equiv \Sigma_X^{1/2} Z_\ell$ for $\ell \in \N$. Then with $\mathfrak{h}_X,\mathfrak{h}_Y$ denoted as $ \mathfrak{h},\mathfrak{g}$ to avoid confusion, we have
	\begin{align*}
	\psi_{\mathfrak{h},\mathfrak{g}}(t \Sigma_X) =  \E \bigg[\mathfrak{h}\bigg(\frac{\bar{Z}_{k,\ell}^\top (t\Sigma_X) \bar{Z}_{k,\ell}}{\tau_X^2}-1\bigg) \mathfrak{g}\bigg(\frac{\bar{Z}_{k',\ell'}^\top (t\Sigma_X) \bar{Z}_{k',\ell'}}{\tau_X^2}-1\bigg)\bigg\lvert Z_1\bigg].
	\end{align*}
	So
	\begin{align*}
	&\frac{\d}{\d t}\psi_{\mathfrak{h},\mathfrak{g}}(t\Sigma_{X})\\
	& = \E \bigg[\mathfrak{h}'\bigg(\frac{\bar{Z}_{k,\ell}^\top (t\Sigma_X) \bar{Z}_{k,\ell}}{\tau_X^2}-1\bigg) \mathfrak{g}\bigg(\frac{\bar{Z}_{k',\ell'}^\top (t\Sigma_X) \bar{Z}_{k',\ell'}}{\tau_X^2}-1\bigg)\tau_X^{-2} \bar{Z}_{k,\ell}^\top \Sigma_X \bar{Z}_{k,\ell}\bigg\lvert Z_1\bigg]\\
	&\qquad + \E \bigg[\mathfrak{h}\bigg(\frac{\bar{Z}_{k,\ell}^\top (t\Sigma_X) \bar{Z}_{k,\ell}}{\tau_X^2}-1\bigg) \mathfrak{g}'\bigg(\frac{\bar{Z}_{k',\ell'}^\top (t\Sigma_X) \bar{Z}_{k',\ell'}}{\tau_X^2}-1\bigg)\tau_X^{-2} \bar{Z}_{k',\ell'}^\top \Sigma_X \bar{Z}_{k',\ell'}\bigg\lvert Z_1\bigg]\\
	&\equiv B_{1}(t)+B_{2}(t).
	\end{align*}
	Using Cauchy-Schwarz directly to conclude that 
	\begin{align*}
	&\E \Big(\int_0^1 B_{1}(t)\,\d{t}\Big)^2\lesssim \tau_X^{-4}\cdot \E^{1/4} (\mathfrak{h}'\circ L_X)^8\cdot  \E^{1/4} (\mathfrak{g}\circ L_X)^8\cdot \E^{1/2}(Z_1^\top \Sigma_X Z_1)^4.
	\end{align*}
	As $\E^{1/2}(Z_1^\top \Sigma_X Z_1)^4\lesssim_M \tau_X^4\asymp_M \pnorm{\Sigma_X}{F}^4$, the claim follows by treating $B_2$ in a similar way.
\end{proof}

\subsection{Proof of Proposition \ref{prop:res_psi_bound}}

We first prove a few supporting lemmas for Proposition \ref{prop:res_psi_bound}.
\begin{lemma}\label{lem:cross_moment_LX_improved}
	The following hold:
	\begin{enumerate}
		\item We have
		\begin{align*}
		\bigabs{ \E\big[L_X^3(X_1,X_2) Y_1^\top Y_2\big]}  \lesssim_{M,\kappa} \tau_X^{-4}\pnorm{\Sigma_{XY}}{F}^2.
		\end{align*}
		\item For any $k \in \N$ and real number $\ell >0$, there exists some constant $C=C(k,\ell,M)>0$ such that for $p\geq 8\ell+1$, uniformly in $s \in [0,1]$ and $r>0$, 
		\begin{align*}
		\biggabs{ \E \bigg[L^k_X(X_1,X_2)(Y_1^\top Y_2)\cdot\int_0^{s} \frac{(s -t)^r }{\big(1+t L_X(X_1,X_2)\big)^{\ell}}\,\d{t}\bigg]}\leq C\cdot \tau_X^{-k} \pnorm{\Sigma_{XY}}{F}^2.
		\end{align*}
	\end{enumerate}
\end{lemma}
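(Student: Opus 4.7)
The plan is to reduce both parts to expectations involving only $L_X$ and a Gaussian quadratic form. First, the conditional expectation identity $\E[Y_1^\top Y_2 \mid X_1, X_2] = X_1^\top \Sigma_X^{-1}\Sigma_{XY}\Sigma_{YX}\Sigma_X^{-1}X_2$, together with the representation $X_i = \Sigma_X^{1/2}Z_i$ where $Z_i \stackrel{d}{=} \mathcal{N}(0, I_p)$ are independent, reduces both claims to bounds on $\E[\varphi(L_X) Z_1^\top D Z_2]$ with $D \equiv \Sigma_X^{-1/2}\Sigma_{XY}\Sigma_{YX}\Sigma_X^{-1/2}$. The key structural observation is that the orthogonal change of variables $U \equiv (Z_1+Z_2)/\sqrt{2}$, $V \equiv (Z_1-Z_2)/\sqrt{2}$ yields two independent standard Gaussians for which $Z_1^\top D Z_2 = \tfrac{1}{2}(U^\top D U - V^\top D V)$, while $L_X = 2V^\top\Sigma_X V/\tau_X^2 - 1$ is a function of $V$ alone. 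Consequently,
\begin{align*}
\E[\varphi(L_X) Z_1^\top D Z_2] = -\tfrac{1}{2}\E\bigl[\varphi(L_X)(V^\top D V - \tr(D))\bigr].
\end{align*}

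For part (1) with $\varphi(u) = u^k$, I would invoke the Gaussian integration by parts identity $\E f(V)(V^\top D V - \tr(D)) = \sum_{i,j}D_{ij}\E\,\partial_i\partial_j f(V)$ on the $V$-expectation. A direct calculation with $f = L_X^k$ (whose Hessian is constant modulo an $L_X^{k-1}$ factor since $L_X$ is quadratic in $V$) gives
\begin{align*}
\E L_X^k(V^\top D V - \tr(D)) = \frac{16k(k-1)}{\tau_X^4}\E\bigl[L_X^{k-2}V^\top\Sigma_X D\Sigma_X V\bigr] + \frac{4k}{\tau_X^2}\tr(\Sigma_X D)\E L_X^{k-1}.
\end{align*}
The trace identity $\tr(\Sigma_X D) = \pnorm{\Sigma_{XY}}{F}^2$, together with the bound $|\E L_X^{k-1}| \lesssim_{k,M} \tau_X^{-(k-1)}$ from Lemma \ref{lem:moment_R}(1) (with the sharper value $\E L_X = 0$ when $k = 2$), handles the second summand cleanly, contributing at most $\tau_X^{-4}\pnorm{\Sigma_{XY}}{F}^2$ for all $k \geq 2$. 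For the first summand, Cauchy-Schwarz combined with Lemma \ref{lem:moment_R}(1) suffices for $k \geq 4$; for $k = 2, 3$ I would iterate the Stein identity once more, reducing matters to the trace bounds $\tr(\Sigma_X^m D) \lesssim_M \pnorm{\Sigma_{XY}}{F}^2$ for $m = 2, 3$.

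For part (2), I would regard the integrand as a single function $g(L_X)$ with $g(u) = u^k\int_0^s(s-t)^r(1+tu)^{-\ell}\,\d t$ and apply Lemma \ref{lem:cross_moment_LX}, reducing the task to bounding $\E g(L_X)^2$. Using $L_X \geq -1$ and the elementary lower bound $1+tL_X \geq \max\{1-t,\; t\pnorm{X_1-X_2}{}^2/\tau_X^2\}$ for $t \in [0,1]$, one can show
\begin{align*}
\biggl(\int_0^s(s-t)^r(1+tL_X)^{-\ell}\,\d t\biggr)^2 \lesssim_{r,\ell} s^{2r+2}\bigl[1 + (1+L_X)^{-2\ell}\bigr].
\end{align*}
Cauchy-Schwarz combined with $\E L_X^{4k} \lesssim_{k,M} \tau_X^{-4k}$ (Lemma \ref{lem:moment_R}(1)) and the negative-moment estimate $\E(1+L_X)^{-4\ell} \lesssim_{M,\ell} 1$ valid for $p \geq 8\ell+1$ (from the computation (\ref{ineq:moment_R_1}) in the proof of Lemma \ref{lem:moment_R}(3)) then yields $\E g(L_X)^2 \lesssim_{k,\ell,M} s^{2r+2}\tau_X^{-2k}$; since $s \in [0,1]$ forces $s^{r+1} \leq 1$, the bound is uniform in $r > 0$ and $s \in [0,1]$.

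The main obstacle lies in part (1) for $k = 2, 3$: a direct Cauchy-Schwarz on $\E[L_X^k Z_1^\top D Z_2]$ via Lemma \ref{lem:cross_moment_LX} together with Lemma \ref{lem:moment_R}(1) only yields $\tau_X^{-k}\pnorm{\Sigma_{XY}}{F}^2$, which is insufficient for the claimed $\tau_X^{-4}$. The additional factors of $\tau_X^{-2}$ must come from exploiting that $L_X$ is centered and is \emph{quadratic} in $V$; this is precisely the gain extracted by the Stein integration by parts above, and is the reason two successive applications suffice to cover $k = 3$. A secondary subtlety in part (2) is the near-singularity of $(1+tL_X)^{-\ell}$ when $X_1, X_2$ are nearly coincident (i.e., $L_X \to -1$), and the dimension condition $p \geq 8\ell + 1$ is precisely what is needed for the relevant negative moments of $\pnorm{X_1-X_2}{}^2$ to be finite with constants independent of $p$.
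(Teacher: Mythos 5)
Your proof is correct. Part (2) and the case $k\geq 4$ of part (1) follow essentially the paper's own route: Cauchy--Schwarz/H\"older through Lemma \ref{lem:cross_moment_LX}, the moment bounds of Lemma \ref{lem:moment_R}, and the negative-moment estimate (\ref{ineq:moment_R_1}) (whose requirement $p\geq 8\ell+1$ you correctly identify as the source of the dimension condition). For the delicate cases $k=2,3$ of part (1) you take a genuinely different path. The paper reduces to $\E[L_X^k\,Z_1^\top DZ_2]$ with $D=\Sigma_X^{-1/2}\Sigma_{XY}\Sigma_{YX}\Sigma_X^{-1/2}$ exactly as you do, but then expands $(\pnorm{X_1-X_2}{}^2-\tau_X^2)^3(X_1^\top DX_2)$ and evaluates each Gaussian moment by hand, checking that the leading $p^2\tr(D)$-order contributions cancel; that computation is written out only for $k=3$ and $\Sigma_X=I_p$, with the general case asserted to be analogous. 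Your rotation $U=(Z_1+Z_2)/\sqrt{2}$, $V=(Z_1-Z_2)/\sqrt{2}$, which puts all of the $L_X$-dependence into $V$ and turns $Z_1^\top DZ_2$ into $\tfrac12(U^\top DU-V^\top DV)$, followed by the second-order Stein identity, makes the cancellation structural rather than computational: the centering $V^\top DV-\tr(D)$ \emph{is} the cancellation, and the integration by parts exposes the two gain mechanisms in closed form --- $\E L_X=0$ kills the $\tr(\Sigma_XD)\,\E L_X^{k-1}$ term at $k=2$ and makes it $O(\tau_X^{-4}\pnorm{\Sigma_{XY}}{F}^2)$ for $k\geq 3$, while the Hessian term carries the explicit $\tau_X^{-4}$ prefactor and reduces to $\tr(\Sigma_X^mD)\lesssim_M\pnorm{\Sigma_{XY}}{F}^2$. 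This handles general $\Sigma_X$ and all $k\geq 2$ uniformly and is cleaner than the paper's argument. Two cosmetic remarks if you write it up: for $k=3$ the term $\E[L_X\,V^\top\Sigma_XD\Sigma_XV]$ can also be dispatched by one more centering plus Cauchy--Schwarz (yielding $\tau_X^{-1}\pnorm{\Sigma_{XY}}{F}^2$, which suffices), so the second Stein iteration is optional; and in part (2) your function $g$ is not smooth at $u=-1$, but the proof of Lemma \ref{lem:cross_moment_LX} only uses $\E(g\circ L_X)^2<\infty$, so the application is harmless.
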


\begin{proof}
	\noindent (1). 
%
	By symmetry, we have
	\begin{align*}
	\tau_X^6\E\big[L_X^3(X_1,X_2) Y_1^\top Y_2\big] &= \E\pnorm{X_1-X_2}{}^6(Y_1^\top Y_2) + (-3\tau_X^2)\cdot \E\pnorm{X_1-X_2}{}^4(Y_1^\top Y_2)\\
	&\quad\quad\quad + (3\tau_X^4)\cdot \E\pnorm{X_1-X_2}{}^2(Y_1^\top Y_2)\\
	&\equiv L_1 - 3\tau_X^2 L_2 + 3\tau_X^4 L_3.
	\end{align*}	
	We first calculate $L_1$. Using again the symmetry of $Z$ and the identity
	\begin{align}\label{eq:X_diff_fourth}
	\pnorm{X_1-X_2}{}^4 &= \pnorm{X_1}{}^4 + \pnorm{X_2}{}^4 + 4(X_1^\top X_2)^2 + 2\pnorm{X_1}{}^2\pnorm{X_2}{}^2- 4\pnorm{X_1}{}^2(X_1^\top X_2) - 4\pnorm{X_2}{}^2(X_1^\top X_2),
	\end{align}
	we have
	\begin{align*}
	L_1 &= \E\Big[(\pnorm{X_1}{}^2 + \pnorm{X_2}{}^2 - 2X_1^\top X_2)\cdot(Y_1^\top Y_2)\\
	&\quad\cdot\Big(\pnorm{X_1}{}^4 + \pnorm{X_2}{}^4 + 4(X_1^\top X_2)^2 + 2\pnorm{X_1}{}^2\pnorm{X_2}{}^2 - 4\pnorm{X_1}{}^2(X_1^\top X_2) - 4\pnorm{X_2}{}^2(X_1^\top X_2)\Big)\Big]\\
	&= -12\E\pnorm{X_1}{}^4(X_1^\top X_2)(Y_1^\top Y_2) - 12\E\pnorm{X_1}{}^2\pnorm{X_2}{}^2(X_1^\top X_2)(Y_1^\top Y_2)- 8\E(X_1^\top X_2)^3(Y_1^\top Y_2).
	\end{align*}
	Now by direct calculation, we have:
	\begin{itemize}
		\item By first taking expectation of $X_2$, we have
		\begin{align*}
		\E\pnorm{X_1}{}^4(X_1^\top X_2)(Y_1^\top Y_2) = \E\pnorm{X_1}{}^4(X_1^\top \Sigma_{XY}Y_1) = \big(\frac{1}{4}\tau_X^4+\mathcal{O}(p)\big)\pnorm{\Sigma_{XY}}{F}^2.
		\end{align*}
		\item By first taking expectation of $X_1$, we have by Lemma \ref{lem:gaussian_4moment}-(1)
		\begin{align*}
		&\E\pnorm{X_1}{}^2\pnorm{X_2}{}^2(X_1^\top X_2)(Y_1^\top Y_2) = \E (X_1^\top X_1)(X_1^\top \pnorm{X_2}{}^2X_2Y_2^\top X_1)\\
		&= \big[\frac{1}{2}\tau_X^2 + \mathcal{O}(1)\big]\cdot \E(X_2^\top Y_2)\pnorm{X_2}{}^2 = \big(\frac{1}{4}\tau_X^4+\mathcal{O}(p)\big)\pnorm{\Sigma_{XY}}{F}^2.
		\end{align*}
		\item By first taking expectation of $X_1$, we have by Lemma \ref{lem:gaussian_4moment}-(1)
		\begin{align*}
		&\E(X_1^\top X_2)^3(Y_1^\top Y_2) = \E(X_1^\top X_2X_2^\top X_2)(X_1^\top X_2 Y_2^\top X_1)\\
		&\lesssim \E(X_2^\top Y_2)(X_2^\top X_2) = \mathcal{O}(p)\cdot\pnorm{\Sigma_{XY}}{F}^2.
		\end{align*}
	\end{itemize}
	Combining the pieces yields that
	\begin{align*}
	L_1 = -6\big(\tau_X^4+\mathcal{O}(p)\big)\pnorm{\Sigma_{XY}}{F}^2.
	\end{align*}
	On the other hand, direct calculation of $L_2$ via (\ref{eq:X_diff_fourth}) and Lemma \ref{lem:hadamard}, and $L_3$ yields that
	\begin{align*}
	L_2 = -4\big(\tau_X^2+\mathcal{O}(1)\big)\pnorm{\Sigma_{XY}}{F}^2, \quad L_3 = -2\pnorm{\Sigma_{XY}}{F}^2.
	\end{align*}
	Putting together the estimates for $L_1$-$L_3$ concludes that $\tau_X^6\E\big[L_X^3(X_1,X_2) Y_1^\top Y_2\big] = \mathcal{O}(p)\pnorm{\Sigma_{XY}}{F}^2 \lesssim \tau_X^2\cdot \pnorm{\Sigma_{XY}}{F}^2$, as desired.

	\noindent (2). We only prove the Gaussian case, where the dependence on $\Sigma_{XY}$ is directly exposed via the representation $(X,Y)\stackrel{d}{=} (\Sigma_X^{1/2}Z_X, \Sigma_{Y\backslash X}^{1/2}Z_Y + \Sigma_{YX}\Sigma_X^{-1}Z_X)$; the general case could be dealt by the interpolation method adopted in Lemmas \ref{lem:cross_XY_var} and \ref{lem:hadamard}.
	
	The left hand side of the desired inequality equals
	\begin{align*}
	&\E\bigg[L_X^k\big(\Sigma_X^{1/2}Z_{1;X}, \Sigma_X^{1/2}Z_{2;X}\big)Z_{1;X}^\top D Z_{2;X}\cdot\int_0^{s} \frac{(s -t)^r }{\big(1+t L_X(\Sigma_X^{1/2}Z_{1;X},\Sigma_X^{1/2}Z_{2;X})\big)^{\ell}}\,\d{t} \bigg],\\
	&\lesssim_M\E^{1/4} L_X^{4k}(X_1,X_2)\cdot \E^{1/2} (Z_{1;X}^\top D Z_{2;X})^2 \cdot \E^{1/4} \bigg[\int_0^{s} \frac{(s -t)^r }{\big(1+t L_X(X_1,X_2)\big)^{\ell}}\,\d{t}\bigg]^4 \\
	&\stackrel{(\ast)}{\lesssim}_{k,\ell,M} \tau_X^{-2k} \pnorm{\Sigma_X}{F}^k \cdot \pnorm{D}{F} \cdot 1 \lesssim_M \tau_X^{-k} \pnorm{\Sigma_{XY}}{F}^2.
	\end{align*}
	Here in $(\ast)$ we used the calculation: as $1+tL_X(X_1,X_2) = (1-t) + t\pnorm{X_1-X_2}{}^2/\tau_X^2$,
	\begin{align*}
	&\E\bigg[\int_0^{s} \frac{(s -t)^r }{\big(1+t L_X(X_1,X_2)\big)^{\ell}}\,\d{t}\bigg]^4 \lesssim \int_0^s\E\big(1+t L_X(X_1,X_2)\big)^{-4\ell}\d t\\
	&\lesssim \int_0^s (1-t)^{-4\ell} \wedge \big[t^{-4\ell}\E(\pnorm{X_1-X_2}{}^2/\tau_X^2)^{-4\ell}\big]\d t \stackrel{(**)}{\lesssim}_\ell\int_0^s \big[(1-t)^{-4\ell} \wedge t^{-4\ell}\big]\d t \lesssim 1,
	\end{align*}
	using in $(**)$ a similar calculation as in (\ref{ineq:chi_inv_moment}) which holds under $p\geq 8\ell+1$ . 
\end{proof}

\begin{lemma}\label{lem:hadamard}
Recall the definitions of $P_X, P_Y, Q_X,Q_Y$ in (\ref{def:PQ}). Suppose the spectrum of $\Sigma$ is contained in $[M^{-1},M]$ for some $M > 1$. Then 
\begin{align*}
\E \tr(P_X \circ  Q_X) \vee \E \tr(P_Y \circ  Q_Y) \lesssim_M \pnorm{\Sigma_{XY}}{F}^2.
\end{align*} 
\end{lemma}
\begin{proof}[Proof of Lemma \ref{lem:hadamard}]
By Lemma \ref{lem:psi_decomp} and symmetry, it suffices to show that $\E \tr(P_X \circ  Q_X) =  \tr(H_{[11]}\circ G_{[12]}) \lesssim_M \pnorm{\Sigma_{XY}}{F}^2$. The high level proof strategy is the same as Lemma \ref{lem:cross_XY_var} and we present the details below. Define the map $F:[0,1]\rightarrow \R$ by
\begin{align*}
F(t): t\mapsto  \tr(H_{[11]}(t)\circ G_{[12]}(t)) \equiv  \tr(\Sigma_t^{1/2} I_{[11]} \Sigma_t^{1/2} \circ \Sigma_t^{1/2} \Sigma_{[12]}(t) \Sigma_t^{1/2}).
\end{align*}
Here $\Sigma_t$ is given in (\ref{def:Sigma_interpolate}), and $\Sigma_{[12]}(t) = [0,t\Sigma_{XY}; 0,0]$. Hence by using $F(0) = 0$, we have
\begin{align*}
 \tr(H_{[11]}\circ G_{[12]}) = F(1) = \int_0^1 F'(t)\d t = \int_0^1 \tr\Big(\frac{\d}{\d t}H_{[11]}(t)\circ G_{[12]}(t) + H_{[11]}(t)\circ \frac{\d }{\d t}G_{[12]}(t)\Big)\d t.
\end{align*}
For the first term, we use (\ref{ineq:res_psi_bound_0}) to deduce that (recall that $\mathcal{T}(A) = A+A^\top$)
\begin{align*}
	\int_0^1 \tr\Big(\frac{\d}{\d t}H_{[11]}(t)\circ G_{[12]}(t)\Big)\d t
	&=\int_0^1 \int_0^\infty \tr\Big[\mathcal{T}\Big(e^{-u \Sigma_t^{1/2}} (\Sigma_{[12]} + \Sigma_{[21]}) e^{-u \Sigma_t^{1/2}} I_{[11]} \Sigma_t^{1/2}\Big)\circ G_{[12]}(t)\Big]\,\d{u}\d t\\
	&=2\int_0^1 \int_0^\infty \tr\Big[\Big(e^{-u \Sigma_t^{1/2}} (\Sigma_{[12]} + \Sigma_{[21]}) e^{-u \Sigma_t^{1/2}} I_{[11]} \Sigma_t^{1/2}\Big)\circ G_{[12]}(t)\Big]\,\d{u}\d t.
\end{align*}
Next, using the definitions in (\ref{ineq:res_psi_bound_3}) and (\ref{ineq:res_psi_bound_4}), we have
\begin{align*}
e^{-u \Sigma_t^{1/2}} = e^{-u \Sigma_0^{1/2}} + \mathscr{R}_1(u,t),\quad \Sigma_t^{1/2} = \Sigma_0^{1/2} + \mathscr{R}_2(t),
\end{align*}
so we may expand the integrand as $\tr\big(M(u,t) + R(u,t)\big)$, where
\begin{align*}
M(u,t) &= \Big(e^{-u \Sigma_0^{1/2}} (\Sigma_{[12]} + \Sigma_{[21]}) e^{-u \Sigma_0^{1/2}} I_{[11]} \Sigma_0^{1/2}\Big)\circ \Sigma_0^{1/2}\Sigma_{[12]}(t)\Sigma_0^{1/2},\\
R(u,t) &= e^{-u\Sigma_t^{1/2}}(\Sigma_{[12]} + \Sigma_{[21]})e^{-u\Sigma_t^{1/2}}I_{[11]}\Sigma_t^{1/2}\circ \Big(\mathscr{R}_2(t)\Sigma_{[12]}(t)\Sigma_t^{1/2} + \Sigma_0^{1/2} \Sigma_{[12]}(t)\mathscr{R}_2(t)\Big)\\
&\quad + \Big(e^{-u\Sigma_0^{1/2}}(\Sigma_{[12]} + \Sigma_{[21]})e^{-u\Sigma_0^{1/2}}I_{[11]}\mathscr{R}_2(t) + e^{-u\Sigma_t^{1/2}}(\Sigma_{[12]} + \Sigma_{[21]})\mathscr{R}_1(u,t)I_{[11]}\Sigma_t^{1/2}\\
&\quad\quad\quad + \mathscr{R}_1(u,t)(\Sigma_{[12]} + \Sigma_{[21]})e^{-u\Sigma_0^{1/2}}I_{[11]}\Sigma_t^{1/2}\Big)\circ \Sigma_0^{1/2}\Sigma_{[12]}(t)\Sigma_0^{1/2}.
\end{align*}
By direct calculation we have $\tr\big(M(u,t)\big) = 0$. Furthermore, using $\tr(A\circ B)\leq \pnorm{A}{F}\pnorm{B}{F}$ and the bound of $\mathscr{R}_1(u,t), \mathscr{R}_2(t)$ in (\ref{ineq:R1R2_bound}), we have $\tr(R(u,t)) \leq t(u\vee 1)e^{-cu}\pnorm{\Sigma_{XY}}{F}^2$ for some $c = c(M) > 0$. Combining the two estimates yields that
\begin{align*}
\int_0^1 \tr\Big(\frac{\d}{\d t}H_{[11]}(t)\circ G_{[12]}(t)\Big)\d t = 2\int_0^1\int_0^\infty \tr(M(u,t) + R(u,t))\d u\d t \lesssim_M \pnorm{\Sigma_{XY}}{F}^2.
\end{align*}

Next, for the second term, we have
\begin{align*}
\frac{\d}{\d t}G_{[12]}(t) &= \int_0^\infty e^{-u\Sigma_t^{1/2}}(\Sigma_{[12]}+\Sigma_{[21]})e^{-u\Sigma_t^{1/2}}\Sigma_{[12]}(t)\Sigma_t^{1/2}\d u\\
&\quad + \int_0^\infty \Sigma_t^{1/2}\Sigma_{[12]}(t)e^{-u\Sigma_t^{1/2}}(\Sigma_{[12]}+\Sigma_{[21]})e^{-u\Sigma_t^{1/2}}\Sigma_{[12]}(t)\Sigma_t^{1/2}\d u\\
&\quad + \Sigma_t^{1/2}\Sigma_{[12]}\Sigma_t^{1/2} \equiv \partial G_{[12],1}(t) + \partial G_{[12],2}(t) + \partial G_{[12],3}(t).
\end{align*}
Using similar calculations as above, we have
\begin{align*}
\Big|\int_0^1 \tr(H_{[11]}(t)\circ \partial G_{[12],1}(t))\d t\Big| \vee \Big|\int_0^1 \tr(H_{[11]}(t)\circ \partial G_{[12],2}(t))\d t\Big|\lesssim_M \pnorm{\Sigma_{XY}}{F}^2.
\end{align*}
For $\partial G_{[12], 3}$, we may use $\Sigma_t^{1/2} = \Sigma_0^{1/2} + \mathscr{R}_2(t)$ to expand the integrand as $\tr(M(t) + R(t))$, where
\begin{align*}
M(t) &= \Sigma_0^{1/2}I_{[11]}\Sigma_0^{1/2}\circ \Sigma_0^{1/2}\Sigma_{[12]}\Sigma_0^{1/2}, \\
R(t) &= \big(\Sigma_t^{1/2}I_{[11]}\mathscr{R}_2(t) + \mathscr{R}_2(t)I_{[11]}\Sigma_0^{1/2}\big)\circ \Sigma_t^{1/2}\Sigma_{[12]}\Sigma_t^{1/2} \\
&\quad\quad+ \Sigma_0^{1/2}I_{[11]}\Sigma_0^{1/2}\circ \big(\Sigma_t^{1/2}\Sigma_{[12]}\mathscr{R}_2(t) + \mathscr{R}_2(t)\Sigma_{[12]}\Sigma_0^{1/2}\big).
\end{align*}
Direct calculation yields that $\tr(M(t)) = 0$ and $\sup_{t\in[0,1]}|\tr(R(t))|\lesssim_M \pnorm{\Sigma_{XY}}{F}^2$. This concludes that
\begin{align*}
\int_0^1 \tr\Big(H_{[11]}(t)\circ \frac{\d}{\d t}G_{[12]}(t)\Big)\d t \lesssim_M \pnorm{\Sigma_{XY}}{F}^2,
\end{align*}
and hence the proof is complete.
\end{proof}

We are now ready for the proof of Proposition \ref{prop:res_psi_bound}. We divide the proof into four parts: first moment bounds for $\psi_X$ and $\psi_Y$, first moment bound for $\psi_{X,Y}$, second moment bounds for $\psi_{X}$ and $\psi_Y$, and second moment bound for $\psi_{X,Y}$.

\begin{proof}[Proof of Proposition \ref{prop:res_psi_bound}-\textbf{Part 1}: first moment bounds for $\psi_X,\psi_Y$]
	By Lemma \ref{lem:psi_decomp}, we have $\E \psi_X(X_1,Y_1) = \E A_{1,X}(X_1,Y_1) + \E A_{2,X}(X_1,Y_1)$ with $A_{1,X},A_{2,X}$ defined therein. For $\E A_{1,X}(X_1,Y_1)$, using
	\begin{align*}
	\E[\pnorm{X_1}{}^2 X_1^\top \Sigma_{XY} Y_1]= \tr(\Sigma_X)\tr(\Sigma_{XY}\Sigma_{YX})+2\tr(\Sigma_{XY}\Sigma_{YX}\Sigma_X) + \kappa \tr(H_{[11]}\circ \E Q_X),
	\end{align*}
	we have
	\begin{align}\label{ineq:res_psi_bound_2}
	\E A_{1,X}(X_1,Y_1) = \tau_X^{-4}\big[2\tr(\Sigma_{XY}\Sigma_{YX}\Sigma_X) + \kappa \tr(H_{[11]}\circ \E Q_X)\big].
	\end{align}
	Here $Q_X$ is given by (\ref{def:PQ}). The claim $|\E A_{1,X}(X_1,Y_1)|\lesssim_{M,\kappa} \tau_X^{-4}\pnorm{\Sigma_{XY}}{F}^2$ now follows from Lemma \ref{lem:hadamard}. Now we handle the `residual' term $\E A_{2,X}(X_1,Y_1)$: By Lemmas \ref{lem:property_h_fcn} and \ref{lem:cross_moment_LX_improved},
	\begin{align*}
	&\abs{\E A_{2,X}(X_1,Y_1)} \lesssim \bigabs{ \E\big[L_X^3(X_1,X_2)Y_1^\top Y_2\big]}\\
	&\qquad +\biggabs{\E \bigg[L_X^{4}(X_1,X_2) Y_1^\top Y_2\cdot \int_0^{1} \frac{(1-s)^3 }{\big(1+sL_X(X_1,X_2)\big)^{7/2}}\,\d{s}\bigg] }\lesssim_M \tau_X^{-4}\pnorm{\Sigma_{XY}}{F}^2.
	\end{align*}
	The claim follows.
\end{proof}

\begin{proof}[Proof of Proposition \ref{prop:res_psi_bound}-\textbf{Part 2}: first moment bounds for $\psi_{X,Y}$]
By direct calculation, we have
\begin{align*}
\E\psi_{X,Y} = \E R_X(X_1,X_2)R_Y(Y_1,Y_2) - 2\E R_X(X_1,X_2)R(Y_1,Y_3) + \E R(X_1,X_2)R(Y_3,Y_4) \equiv \sum_{i=1}^3 S_i.
\end{align*}
Next we use the same interpolation method as in Lemma \ref{lem:cross_XY_var} to extract the dependence on $\Sigma_{XY}$ of $S_1$-$S_3$. For $S_1$, define the map $F_1:[0,1]\rightarrow\R$ by
\begin{align*}
F_1(t): t\mapsto \E_{(X_1,Y_1)\stackrel{d}{=} \Sigma_t^{1/2}Z_1, (X_2,Y_2)\stackrel{d}{=} \Sigma_t^{1/2}Z_2} R_X(X_1,X_2)R_Y(Y_1,Y_2),
\end{align*}
where $\Sigma_t$ is defined in (\ref{def:Sigma_interpolate}). Define $F_2(t)$ and $F_3(t)$ similarly. Then $\E\psi_{X,Y} = \sum_{i=1}^3 F_i(1)$ and by definition $\sum_{i=1}^3 F_i(0) = 0$. Since with $\bar{Z}_{1,2}\equiv Z_1-Z_2$, $H_{[11]}(t)\equiv \Sigma_t^{1/2}I_{[11]} \Sigma_t^{1/2}$, and a similar definition for $H_{[22]}(t)$, 
\begin{align*}
F_1(t) = \E h\Big(\frac{\bar{Z}_{1,2}^\top H_{[11]}(t)\bar{Z}_{1,2}}{\tau_X^2}-1\Big)h\Big(\frac{\bar{Z}_{1,2}^\top H_{[22]}(t)\bar{Z}_{1,2}}{\tau_Y^2}-1\Big),
\end{align*}
we have $\d F_1(t)/\d t = \Delta_{1,1}(t) + \Delta_{1,2}(t)$, where
\begin{align*}
\tau_X^2\Delta_{1,1}(t) &\equiv \E h'\Big(\frac{\bar{Z}_{1,2}^\top H_{[11]}(t)\bar{Z}_{1,2}}{\tau_X^2}-1\Big)\cdot \bar{Z}_{1,2}^\top\frac{\d}{\d t}H_{[11]}(t)\bar{Z}_{1,2}\cdot h\Big(\frac{\bar{Z}_{1,2}^\top H_{[22]}(t)\bar{Z}_{1,2}}{\tau_Y^2}-1\Big),\\
\tau_Y^2\Delta_{1,2}(t) &\equiv \E h\Big(\frac{\bar{Z}_{1,2}^\top H_{[11]}(t)\bar{Z}_{1,2}}{\tau_X^2}-1\Big)\cdot \bar{Z}_{1,2}^\top\frac{\d}{\d t}H_{[22]}(t)\bar{Z}_{1,2}\cdot h'\Big(\frac{\bar{Z}_{1,2}^\top H_{[22]}(t)\bar{Z}_{1,2}}{\tau_Y^2}-1\Big).
\end{align*}
Now using the formula of $\d H_{[11]}(t)/\d t$ in (\ref{ineq:res_psi_bound_0}), we have the decomposition $\tau_X^2\Delta_{1,1}(t) = M_{1,1} + R_{1,1}(t)$, where
\begin{align*}
M_{1,1} &= \E h'\Big(\frac{\bar{Z}_{1,2}^\top H_{[11]}(0)\bar{Z}_{1,2}}{\tau_X^2}-1\Big)\cdot \bar{Z}_{1,2}^\top\int_0^\infty \mathcal{T}\Big(e^{-u\Sigma_0^{1/2}}(\Sigma_{[12]} + \Sigma_{[21]})e^{-u\Sigma_0^{1/2}}I_{[11]}\Sigma_0^{1/2}\Big)\d u\bar{Z}_{1,2}\\
&\qquad\cdot h\Big(\frac{\bar{Z}_{1,2}^\top H_{[22]}(0)\bar{Z}_{1,2}}{\tau_Y^2}-1\Big)
\end{align*} 
and $R_{1,1}(t)$ consists of terms of the forms
\begin{align*}
R_{1,1,1}(t) &\equiv \E \Big[h'\Big(\frac{\bar{Z}_{1,2}^\top H_{[11]}(t)\bar{Z}_{1,2}}{\tau_X^2}-1\Big) - h'\Big(\frac{\bar{Z}_{1,2}^\top H_{[11]}(0)\bar{Z}_{1,2}}{\tau_X^2}-1\Big)\Big]\\
&\cdot \bar{Z}_{1,2}^\top\int_0^\infty \mathcal{T}\Big(e^{-u\Sigma_0^{1/2}}(\Sigma_{[12]} + \Sigma_{[21]})e^{-u\Sigma_0^{1/2}}I_{[11]}\Sigma_0^{1/2}\Big)\d u\bar{Z}_{1,2}\cdot h\Big(\frac{\bar{Z}_{1,2}^\top H_{[22]}(0)\bar{Z}_{1,2}}{\tau_Y^2}-1\Big),\\
R_{1,1,2}(t) &\equiv \E h'\Big(\frac{\bar{Z}_{1,2}^\top H_{[11]}(0)\bar{Z}_{1,2}}{\tau_X^2}-1\Big)\cdot \bar{Z}_{1,2}^\top\int_0^\infty \mathcal{T}\Big(e^{-u\Sigma_0^{1/2}}(\Sigma_{[12]} + \Sigma_{[21]})e^{-u\Sigma_0^{1/2}}I_{[11]}\big(\Sigma_t^{1/2} - \Sigma_0^{1/2}\big)\Big)\d u\bar{Z}_{1,2}\\
&\qquad\cdot h\Big(\frac{\bar{Z}_{1,2}^\top H_{[22]}(0)\bar{Z}_{1,2}}{\tau_Y^2}-1\Big),\\
R_{1,1,3}(t)&\equiv \E h'\Big(\frac{\bar{Z}_{1,2}^\top H_{[11]}(0)\bar{Z}_{1,2}}{\tau_X^2}-1\Big)\cdot \bar{Z}_{1,2}^\top\int_0^\infty \mathcal{T}\Big(e^{-u\Sigma_0^{1/2}}(\Sigma_{[12]} + \Sigma_{[21]})e^{-u\Sigma_0^{1/2}}I_{[11]}\Sigma_0^{1/2}\Big)\d u\bar{Z}_{1,2}\\
&\qquad\cdot \Big[h\Big(\frac{\bar{Z}_{1,2}^\top H_{[22]}(t)\bar{Z}_{1,2}}{\tau_Y^2}-1\Big) - h\Big(\frac{\bar{Z}_{1,2}^\top H_{[22]}(0)\bar{Z}_{1,2}}{\tau_Y^2}-1\Big)\Big].
\end{align*}
Using Cauchy-Schwarz, a similar calculation as in (\ref{ineq:chi_inv_moment}), and the estimates
\begin{align*}
\pnorm{H_{[11]}(t) - H_{[11]}(0)}{F}\vee \pnorm{\Sigma_t^{1/2}-\Sigma_0^{1/2}}{F}\lesssim_M \pnorm{\Sigma_{XY}}{F}^2, \quad \pnorm{e^{-u\Sigma_t^{1/2}} - e^{-u\Sigma_0^{1/2}}}{F} \lesssim_M e^{-c_0u}\pnorm{\Sigma_{XY}}{F},
\end{align*}
we arrive at
\begin{align*}
\sup_{t\in[0,1]} |R_{1,1,1}(t)|\vee |R_{1,1,2}(t)|\vee |R_{1,1,3}(t)| \lesssim_M \frac{\pnorm{\Sigma_{XY}}{F}^2}{\tau_X\tau_Y^2}.
\end{align*}
Define $R_{i,j,k}$, $i\leq 3,j\leq 2,k\leq 3$ and $M_{i,j}$, $i\leq 3, j\leq 2$ similarly. Flipping the roles of $X,Y$ then yields that
\begin{align*}
\Big|\sum_{i=1}^3\frac{\d F_1(t)}{\d t}\Big| \lesssim_M \Big|\sum_{i=1}^3\sum_{j=1}^2 M_{i,j}\Big| + \frac{\pnorm{\Sigma_{XY}}{F}^2}{\tau_X^2\tau_Y^2(\tau_X\wedge \tau_Y)}.
\end{align*}
The claim now follows by noting that $\sum_{i=1}^3\sum_{j=1}^2 M_{i,j} = 0$.
\end{proof}

\begin{proof}[Proof of Proposition \ref{prop:res_psi_bound}-\textbf{Part 3}: second moment bounds for $\psi_{X},\psi_Y$]
	We write $A_{1,X},A_{2,X}$ as $A_1,A_2$ in the proof. We only need to bound
	\begin{align*}
	\E\psi_{X}^2(X_1,Y_1)\lesssim \big[\E A_{1}^2(X_1,Y_1) + \E A_{2}^2(X_1,Y_1)\big].
	\end{align*}
	Next we bound the above two summands separately.
	
	To bound $\var\big(A_1(X_1,Y_1)\big)$, we write
	\begin{align*}
	A_{1}(X_1,Y_1) \stackrel{d}{=} A_1(Z)= (2\tau_X^4)^{-1}\Big[\big(Z^\top H_{[11]} Z-\tr(H_{[11]})\big)Z^\top \bar{G}_{[1,2]} Z +2 Z^\top \bar{G}_X Z + \kappa (P_X\circ Q_X)\Big].
	\end{align*}
	Here $\bar{G}_{[1,2]} = (G_{[12]} + G_{[21]})/2$ and
	\begin{align}\label{def:bar_G_X}
	\bar{G}_X\equiv \frac{1}{2} \Sigma^{1/2}\begin{pmatrix}
	0 & \Sigma_X\Sigma_{XY}\\
	\Sigma_{YX}\Sigma_X & 0
	\end{pmatrix} \Sigma^{1/2}.
	\end{align}
	Let $(2\tau_X^4)A_1(X_1,Y_1)\equiv A_{1,1}(X_1,Y_1) + A_{1,2}(X_1,Y_1)$, with $A_{1,2}(X_1,Y_1) = \kappa (P_X\circ Q_X)$. For $A_{1,1}(X_1,Y_1)$, by Poincar\'e inequality and Lemma \ref{lem:property_GH}, we have
	\begin{align*}
	\var\big(A_{1,1}(X_1,Y_1)\big)&\lesssim \E(Z^\top \bar{G}_{[1,2]} Z)^2 Z^\top H_{[11]}^2 Z+\E\big(Z^\top H_{[11]} Z-\tr(H_{[11]})\big)^2 Z^\top \bar{G}_{[1,2]}^2 Z +\E Z^\top \bar{G}_{X}^2 Z\\
	&\lesssim \big(\tr^2(\bar{G}_{[1,2]})+ \pnorm{\bar{G}_{[1,2]}}{F}^2\big) \pnorm{H_{[11]}}{F}^2+  \pnorm{\bar{G}_X}{F}^2\\
	&\lesssim_M  \pnorm{\Sigma_X}{F}^2\big(\pnorm{\Sigma_{XY}}{F}^4+\pnorm{\Sigma_{XY}\Sigma_{YX}}{F}^2\big)+ \pnorm{\Sigma_{XY}\Sigma_{YX}}{F}^2\lesssim_M \tau_X^2 \pnorm{\Sigma_{XY}}{F}^4.
	\end{align*}
	For $A_{1,2}(X_1,Y_1)$, note that $Q_X(X_1,Y_1) = Q_X(Z) = H_{[11]}ZZ^\top H_{[22]}$. This implies
	\begin{align*}
	\nabla_Z \tr(P_X\circ Q_X(Z)) = \sum_{i=1}^{p+q} (H_{[11]})_{ii} \big[H_{[11]}e_i(e_i^\top H_{[22]}Z) + H_{[22]}e_i(e_i^\top H_{[11]}Z)\big] \equiv D_1(Z) + D_2(Z). 
	\end{align*}
	Then by the Poincar\'e inequality, we have $\var (A_{1,2}(X_1,Y_1)) \leq c_* \big(\E \pnorm{D_1(Z)}{}^2 + \E \pnorm{D_2(Z)}{}^2\big)$. By direct calculation, we have
	\begin{align*}
	\E \pnorm{D_1(Z)}{}^2 &= \E \sum_{i,j=1}^{p+q} (H_{[11]})_{ii}(H_{[11]})_{jj}e_i^\top H_{[11]}^2 e_j \cdot e_i^\top H_{[22]}ZZ^\top H_{[22]} e_j\\
	&= \sum_{i,j=1}^{p+q} (H_{[11]})_{ii}(H_{[11]})_{jj} (H_{[11]}^2\circ H_{[22]}^2)_{ij}\\
	&\leq \Big(\sum_{i,j=1}^{p+q} (H_{[11]})^2_{ii}(H_{[11]})^2_{jj}\Big)^{1/2} \cdot \Big(\sum_{i,j=1}^{p+q} (H_{[11]}^2\circ H_{[22]}^2)_{ij}^2\Big)^{1/2}\\
	&= \pnorm{H_{[11]}}{F}^2\cdot \pnorm{G_{[11]}\circ G_{[22]}}{F}.
	\end{align*}
	By Lemmas \ref{lem:property_GH} and \ref{lem:GH_hadamard}, we have $\E \pnorm{D_1(Z)}{}^2 \lesssim_M \pnorm{\tau_X}{}^2\pnorm{\Sigma_{XY}}{F}^2$. Hence with a similar bound for $\E \pnorm{D_1(Z)}{}^2$, we have $\var (A_{1,2}(X_1,Y_1)) \lesssim_M \tau_X^2\pnorm{\Sigma_{XY}}{F}^2$.
	This together with (\ref{ineq:res_psi_bound_2}) yields that
	\begin{align}\label{ineq:res_psi_bound_5}
	\E A_1^2(X_1,Y_1) &= \var\big(A_1(X_1,Y_1)\big)+\big(\E A_1(X_1,Y_1)\big)^2 \lesssim_M \tau_X^{-6} \pnorm{\Sigma_{XY}}{F}^2(\pnorm{\Sigma_{XY}}{F}^2\vee 1).
	\end{align}
	Next we handle $A_2$. Similar to the proof of Lemma \ref{lem:cross_moment_LX_improved}-(2), we only show below the proof for the Gaussian case. By definition of $h_3$ in Lemma \ref{lem:property_h_fcn} and the parametrization $Y_2 = \Sigma_{Y\backslash X}^{1/2}Z_{2;Y} + \Sigma_{YX}\Sigma_X^{-1/2}Z_{2;X}$,
	\begin{align*}
	&\E A_2^2(X_1,Y_1) \lesssim \E_{X_1,Y_1} \bigg\{\E_{X_2,Y_2}\bigg[L_X^3(X_1,X_2)Y_1^\top Y_2\int_0^1 \frac{(1-s)^2}{(1+sL_X(X_1,X_2))^{5/2}}\,\d{s}\bigg]\bigg\}^2\\
	&\lesssim \E^{1/4} L_X^{24}(X_1,X_2)\cdot \E^{1/2} (Z_{2;X}^\top \Sigma_X^{-1/2}\Sigma_{XY} Y_1)^4\cdot  \E^{1/4}\bigg(\int_0^1 \frac{(1-s)^2}{(1+sL_X(X_1,X_2))^{5/2}}\,\d{s}\bigg)^8.
	\end{align*}
	The three terms above can be bounded as follows:
	\begin{itemize}
		\item Lemma \ref{lem:moment_R} yields that $\E^{1/4} L_X^{24}(X_1,X_2)\lesssim_M \tau_X^{-6}$.
		\item With $L\equiv \Sigma_X^{-1/2}\Sigma_{XY} \Sigma_Y^{1/2}$, we have
		\begin{align*}
		\E^{1/2} \big(Z_{2;X}^\top\Sigma_X^{-1/2}\Sigma_{XY} Y_1\big)^4 = \E^{1/2}\big(Z_X^\top L Z_Y\big)^4\lesssim \pnorm{L}{F}^2 \lesssim_M \pnorm{\Sigma_{XY}}{F}^2.
		\end{align*}
		\item By a similar calculation as in (\ref{ineq:chi_inv_moment}), for $p$ larger than an absolute constant, the third term is at most of constant order.
	\end{itemize}	
	Combining the estimates yields that 
	\begin{align}\label{ineq:res_psi_bound_6}
	\E A_2^2(X_1,Y_1)&\lesssim_M \tau_X^{-6} \pnorm{\Sigma_{XY}}{F}^2.
	\end{align}
	Combine  (\ref{ineq:res_psi_bound_5})-(\ref{ineq:res_psi_bound_6}) to conclude the estimate for $\E \psi_X^2(X_1,Y_1)$. A similar argument applies to $\E \psi_Y^2(X_1,Y_1)$.
\end{proof}

\begin{proof}[Proof of Proposition \ref{prop:res_psi_bound}-\textbf{Part 4}: second moment bound for $\psi_{X,Y}$]
	Using the notation in the proof of Lemma \ref{lem:cross_XY_var}, and viewing $\psi_{X,Y}$ as a function of $\Sigma_{XY}$, for any $t \in [0,1]$,
	\begin{align*}
	&\psi_{X,Y}(t\Sigma_{XY}) 
	= \E_{Z_2}\bigg[\mathcal{D} h\bigg(\frac{\bar{Z}_{1,2}^\top H_{[11]}(t) \bar{Z}_{1,2}}{\tau_X^2}-1\bigg)\mathcal{D} h\bigg(\frac{\bar{Z}_{1,2}^\top H_{[22]}(t) \bar{Z}_{1,2}}{\tau_Y^2}-1\bigg)\bigg],
	\end{align*}
	where 
	\begin{align*}
	\mathcal{D} g(Z_1,Z_2) \equiv g(Z_1,Z_2)-\E_{Z_1} g(Z_1,Z_2)-\E_{Z_2}g(Z_1,Z_2)+\E_{Z_1,Z_2}g(Z_1,Z_2).
	\end{align*}
	Expanding the product results in 16 terms of form
	\begin{align*}
	B_{(k,\ell),(k',\ell')}(t\Sigma_{XY})\equiv \E \bigg[h\bigg(\frac{\bar{Z}_{k,\ell}^\top H_{[11]}(t) \bar{Z}_{k,\ell}}{\tau_X^2}-1\bigg) h\bigg(\frac{\bar{Z}_{k',\ell'}^\top H_{[22]}(t) \bar{Z}_{k',\ell'}}{\tau_Y^2}-1\bigg)\bigg\lvert Z_1\bigg],
	\end{align*}
	where $k\neq \ell, k'\neq \ell'$. Using Lemma \ref{lem:cross_XY_var} and the moment estimates in Lemma \ref{lem:moment_R}, we have
	\begin{align*}
	\E \big(\psi_{X,Y}(\Sigma_{X,Y})-\psi_{X,Y}(0)\big)^2& \lesssim \sup_{k,k',\ell,\ell'} \E \big(B_{(k,\ell),(k',\ell')}(\Sigma_{XY})-B_{(k,\ell),(k',\ell')}(0)\big)\\
	&\lesssim_M \tau_X^{-4}\tau_Y^{-4} \pnorm{\Sigma_{XY}}{F}^2(1\vee \pnorm{\Sigma_{XY}}{F}^2) (\tau_X\wedge \tau_Y)^{-2}.
	\end{align*}
	The claim follows as $\psi_{X,Y}(0)=0$.
\end{proof}

\section{Proofs for Section \ref{section:hoeffding_decomp}}\label{appendix:proof_hoeff}

\subsection{Proof of Lemma \ref{lem:h1}}

As $\E_{Z_2,Z_3,Z_4}U(X_{i_1},X_{i_2})V(Y_{i_3},Y_{i_4})=0$, Proposition \ref{prop:hoef_decomp} yields that
\begin{align*}
&k_1(z_1) =\frac{1}{4!}\sum_{(i_1,\ldots,i_4)\in \sigma(1,2,3,4)} \bigg[ \E_{Z_2,Z_3,Z_4}U(X_{i_1},X_{i_2})V(Y_{i_1},Y_{i_2})\\
&\qquad\qquad -2 \E_{Z_2,Z_3,Z_4} U(X_{i_1},X_{i_2})V(Y_{i_1},Y_{i_3})\bigg].
\end{align*}
The claim now follows by calculating
\begin{align*}
&\sum_{(i_1,\ldots,i_4)\in \sigma(1,2,3,4)} \E_{Z_2,Z_3,Z_4}\big(U(X_{i_1},X_{i_2})V(Y_{i_1},Y_{i_2})\big) \\
&\qquad  = 12 \big(\E U(x_1, X) V(y_1,Y)+\E U(X,X') V(Y,Y')\big),
\end{align*}
and
\begin{align*}
\E_{Z_2,Z_3,Z_4} U(X_{i_1},X_{i_2})V(Y_{i_1},Y_{i_3})=0,
\end{align*}
using the double-centered property of $U,V$.\qed

\subsection{Proof of Lemma \ref{lem:g1}}

Recall the identities involving $U,V$ in (\ref{eqn:UV}). Then we have
\begin{align*}
&g_1(x_1,y_1) \equiv  \frac{1}{2}\bigg[\E U(x_1,X_2) V(y_1,Y_2)-\dcov^2(X,Y)\bigg]\\
& = \frac{1}{2\tau_X\tau_Y}\bigg[\big(x_1^\top \Sigma_{XY} y_1-\pnorm{\Sigma_{XY}}{F}^2\big)+\big(R_1(x_1,y_1)-\E R_1(X_1,Y_1)\big)\bigg],
\end{align*}
where, with $\psi_X,\psi_Y,\psi_{XY}$ defined in (\ref{def:psi}),
\begin{align*}
R_1(x_1,y_1) \equiv -\tau_X^2 \psi_X (x_1,y_1)-\tau_Y^2\psi_Y(x_1,y_1)+\tau_X^2\tau_Y^2 \psi_{X,Y}(x_1,y_1).
\end{align*}
By Lemma \ref{lem:psi_decomp}, we may write [recall the definitions of $\bar{\psi}_X$ and $\bar{\psi}_Y$ in (\ref{def:bar_psi})]
\begin{align*}
\psi_X(x_1,y_1)
&= \frac{1}{2\tau_X^4}\big(\pnorm{x_1}{}^2-\tr(\Sigma_X)\big)\pnorm{\Sigma_{XY}}{F}^2 + \bar{\psi}_X(x_1,y_1),\\
\psi_Y(x_1,y_1)&= \frac{1}{2\tau_Y^4}\big(\pnorm{y_1}{}^2-\tr(\Sigma_Y)\big)\pnorm{\Sigma_{XY}}{F}^2 + \bar{\psi}_Y(x_1,y_1).
\end{align*}
The claim follows.\qed

\subsection{Proof of Lemma \ref{lem:bar_g1_second_moment}}
Note that
\begin{align*}
&\E \bar{g}_1^2(X,Y) = \frac{1}{4\tau_X^2\tau_Y^2}\Big[\E(X^\top\Sigma_{XY}Y - \pnorm{\Sigma_{XY}}{F}^2)^2 + \E  \mathscr{A}_{1,X}(X,Y)^2 + \E  \mathscr{A}_{1,Y}(X,Y)^2\\
&\qquad +2\E(X^\top\Sigma_{XY}Y) \mathscr{A}_{1,X}(X,Y) + 2\E(X^\top\Sigma_{XY}Y) \mathscr{A}_{1,Y}(X,Y)+ 2\E  \mathscr{A}_{1,X}(X,Y) \mathscr{A}_{1,Y}(X,Y)\Big].
\end{align*}
The terms can be calculated as follows:
\begin{itemize}
	\item By Lemma \ref{lem:gaussian_4moment}-(1), we have
	\begin{align*}
	\E(X^\top\Sigma_{XY}Y - \pnorm{\Sigma_{XY}}{F}^2)^2 = \pnorm{\Sigma_{XY}\Sigma_{YX}}{F}^2 + \tr(\Sigma_{XY}\Sigma_{Y}\Sigma_{YX}\Sigma_X) + \kappa\tr(G_{[12]} \circ G_{[12]}).
	\end{align*}
	\item By Lemma \ref{lem:gaussian_4moment}-(1), we have
	\begin{align*}
	\E  \mathscr{A}_{1,X}(X,Y)^2 &= \frac{\pnorm{\Sigma_{XY}}{F}^4}{4\tau_X^4}\cdot \big[2\pnorm{\Sigma_{X}}{F}^2 + \kappa\tr(H_{[11]}\circ H_{[11]})\big]\\
	&= \frac{\pnorm{\Sigma_{XY}}{F}^4\pnorm{\Sigma_X}{F}^2}{2\tau_X^4} + \frac{\kappa}{4\tau_X^4}\pnorm{\Sigma_{XY}}{F}^4\tr(H_{[11]}\circ H_{[11]}),
	\end{align*}
	and similarly $\E  \mathscr{A}_{1,Y}(X,Y)^2 = \pnorm{\Sigma_{XY}}{F}^4\pnorm{\Sigma_Y}{F}^2/(2\tau_Y^4) + (4\tau_Y^4)^{-1}\kappa\pnorm{\Sigma_{XY}}{F}^4\tr(H_{[22]}\circ H_{[22]})$.
	\item We have
	\begin{align*}
	\E (X^\top\Sigma_{XY}Y) \mathscr{A}_{1,X}(X,Y) &= -\frac{\pnorm{\Sigma_{XY}}{F}^2}{2\tau_X^2}\E\Big[(X^\top\Sigma_{XY} Y)(\pnorm{X}{}^2-\tr(\Sigma_X))\Big] \\
	&= -\frac{\pnorm{\Sigma_{XY}}{F}^2}{\tau_X^2}\tr(\Sigma_{XY}\Sigma_{YX}\Sigma_X) - \frac{\kappa\pnorm{\Sigma_{XY}}{F}^2}{2\tau_X^2}\tr(H_{[11]}\circ G_{[12]}),
	\end{align*}
	and similarly 
	\begin{align*}
	\E (X^\top\Sigma_{XY}Y) \mathscr{A}_{1,Y}(X,Y) = -\frac{\pnorm{\Sigma_{XY}}{F}^2}{\tau_Y^2}\tr(\Sigma_{YX}\Sigma_{XY}\Sigma_Y)- \frac{\kappa\pnorm{\Sigma_{XY}}{F}^2}{2\tau_Y^2}\tr(H_{[22]}\circ G_{[12]}).
	\end{align*}
	\item We have
	\begin{align*}
	\E  \mathscr{A}_{1,X}(X,Y) \mathscr{A}_{1,Y}(X,Y) &= \frac{\pnorm{\Sigma_{XY}}{F}^4}{4\tau_X^2\tau_Y^2}\E\Big[(\pnorm{X}{}^2-\tr(\Sigma_X))(\pnorm{Y}{}^2-\tr(\Sigma_Y))\Big]\\
	&= \frac{\pnorm{\Sigma_{XY}}{F}^4}{4\tau_X^2\tau_Y^2}\cdot \big[2\pnorm{\Sigma_{XY}}{F}^2 + \kappa\tr(H_{[11]}\circ H_{[22]})\big] \\
	&= \frac{\pnorm{\Sigma_{XY}}{F}^6}{2\tau_X^2\tau_Y^2} + \frac{\kappa\pnorm{\Sigma_{XY}}{F}^4}{4\tau_X^2\tau_Y^2}\tr(H_{[11]}\circ H_{[22]}).
	\end{align*}
\end{itemize}
The proof is complete.\qed

\subsection{Proof of Lemma \ref{lem:first_variance_lower}}
\noindent (1). With $(X^\top, Y^\top)^\top\stackrel{d}{=} \Sigma^{1/2}Z$, we may write
\begin{align*}
\bar{g}_1(X,Y)\equald \bar{g}_1(Z)\equiv \frac{1}{4\tau_X\tau_Y}\cdot \Big[Z^\top AZ - \E(Z^\top AZ)\Big],
\end{align*}
where the symmetric matrix $A$ is defined by
\begin{align*}
A\equiv G_{[12]} + G_{[21]} - \frac{\pnorm{\Sigma_{XY}}{F}^2}{\tau_X^2}H_{[11]} - \frac{\pnorm{\Sigma_{XY}}{F}^2}{\tau_Y^2}H_{[22]}.
\end{align*}
So by Lemma \ref{lem:gaussian_4moment}-(1), we have
\begin{align*}
&\E \bar{g}_1^2(X,Y)= \frac{1}{16\tau_X^2\tau_Y^2}\cdot \big(2\pnorm{A}{F}^2 + \kappa \tr(A\circ A)\big).
\end{align*}
Since $Z$ has a Lebesgue density, we have $\kappa = \E Z_1^4 - 3 = \var(Z_1^2) - 2 > c_0 - 2$ for some $c_0 > 0$ that only depends on the distribution of $Z_1$. Hence using $\tr(A\circ A)\leq \pnorm{A}{F}^2$, we have $\E \bar{g}_1^2(X,Y) \geq c_0 \pnorm{A}{F}^2/(16\tau_X^2\tau_Y^2)$, which can be bounded by
\begin{align*}
\pnorm{A}{F}^2& = \biggpnorm{\Sigma^{1/2}
	\begin{pmatrix}
	-\big(\pnorm{\Sigma_{XY}}{F}^2/\tau_X^2\big) I_p & \Sigma_{XY}\\
	\Sigma_{YX} & -\big(\pnorm{\Sigma_{XY}}{F}^2/\tau_Y^2\big) I_q
	\end{pmatrix}
	\Sigma^{1/2}}{F}^2\\
&\gtrsim_M \biggpnorm{
	\begin{pmatrix}
	-\big(\pnorm{\Sigma_{XY}}{F}^2/\tau_X^2\big) I_p & \Sigma_{XY}\\
	\Sigma_{YX} & -\big(\pnorm{\Sigma_{XY}}{F}^2/\tau_Y^2\big) I_q
	\end{pmatrix}
}{F}^2\gtrsim \pnorm{\Sigma_{XY}}{F}^2.
\end{align*}
This concludes the lower bound claim. If the spectrum of $\Sigma$ is bounded above as well, then using Lemma \ref{lem:SigmaXY_F_bound}, we have
\begin{itemize}
	\item $\pnorm{\Sigma_{XY}\Sigma_{YX}}{F}^2\lesssim_M \pnorm{\Sigma_{XY}}{F}^2$;
	\item $\tr(\Sigma_{XY}\Sigma_Y\Sigma_{YX}\Sigma_X)\asymp_M \pnorm{\Sigma_{XY}}{F}^2$;
	\item $\frac{\pnorm{\Sigma_{XY}}{F}^4\pnorm{\Sigma_X}{F}^2}{2\tau_X^4}\vee \frac{\pnorm{\Sigma_{XY}}{F}^4\pnorm{\Sigma_Y}{F}^2}{2\tau_Y^4}\lesssim_M (\tau_X^{-2}\vee\tau_Y^{-2}) \pnorm{\Sigma_{XY}}{F}^2 (\tau_X^2\wedge \tau_Y^2)\lesssim \pnorm{\Sigma_{XY}}{F}^2$;
	\item $\frac{\pnorm{\Sigma_{XY}}{F}^2 \tr(\Sigma_{XY}\Sigma_{YX}\Sigma_X)}{\tau_X^2}\vee \frac{\pnorm{\Sigma_{XY}}{F}^2 \tr(\Sigma_{YX}\Sigma_{XY}\Sigma_Y)}{\tau_Y^2}\lesssim_M \frac{\pnorm{\Sigma_{XY}}{F}^4}{(\tau_X\wedge \tau_Y)^2}\lesssim_M \pnorm{\Sigma_{XY}}{F}^2$;
	\item $\frac{\pnorm{\Sigma_{XY}}{F}^6}{\tau_X^2\tau_Y^2}\lesssim_M \pnorm{\Sigma_{XY}}{F}^2 \frac{\tau_X^4\wedge \tau_Y^4}{\tau_X^2\tau_Y^2}\lesssim \pnorm{\Sigma_{XY}}{F}^2$;
	\item $\tr(G_{[12]}\circ G_{[12]}) \leq \pnorm{G_{[12]}}{F}^2 \lesssim_M \pnorm{\Sigma_{XY}}{F}^2$;
	\item $\frac{\pnorm{\Sigma_{XY}}{F}^4}{4\tau_X^4}\tr(H_{[11]}\circ H_{[11]}) \vee \frac{\pnorm{\Sigma_{XY}}{F}^4}{4\tau_Y^4}\tr(H_{[22]}\circ H_{[22]}) \lesssim_M \frac{\pnorm{\Sigma_{XY}}{F}^4}{\tau_X^2\wedge \tau_Y^2}\lesssim_M \pnorm{\Sigma_{XY}}{F}^2$;
	\item $\frac{\pnorm{\Sigma_{XY}}{F}^2}{2\tau_X^2}\tr(H_{[11]}\circ G_{[12]})\vee \frac{\pnorm{\Sigma_{XY}}{F}^2}{2\tau_Y^2}\tr(H_{[22]}\circ G_{[12]}) \lesssim_M \frac{\pnorm{\Sigma_{XY}}{F}^3}{\tau_X\wedge \tau_Y}\lesssim_M \pnorm{\Sigma_{XY}}{F}^2$;
	\item $\frac{\pnorm{\Sigma_{XY}}{F}^4}{\tau_X^2\tau_Y^2}\tr(H_{[11]}\circ H_{[22]}) \lesssim \frac{\pnorm{\Sigma_{XY}}{F}^4}{\tau_X\tau_Y}\lesssim_M \pnorm{\Sigma_{XY}}{F}^2$.
\end{itemize}
Collecting the bounds we conclude that $\E \bar{g}_1^2(X,Y)\lesssim_M \tau_X^{-2}\tau_Y^{-2}\pnorm{\Sigma_{XY}}{F}^2$. 

\noindent (2). When $X=Y$, we may write with $Z \sim \mathcal{N}(0,I_p)$ that
\begin{align*}
\bar{g}_1(Z)=\bar{g}_1(X,Y) \stackrel{d}{=} \frac{1}{2\tau_X^2} \cdot \Big[Z^\top\bar{A}Z - \E(Z^\top \bar{A}Z)\Big],
\end{align*}
with the symmetric matrix $\bar{A}$ defined by $\bar{A}\equiv  \Sigma_X^{1/2}\Big[\Sigma_X-\pnorm{\Sigma_X}{F}^2/\tau_X^2\Big]\Sigma_X^{1/2}$. Using a similar argument as in the previous case, 
\begin{align*}
\E \bar{g}_1^2(X,Y)& \asymp \tau_X^{-4} \biggpnorm{\Sigma_X^{1/2}\Big[\Sigma_X-\pnorm{\Sigma_X}{F}^2/\tau_X^2\Big]\Sigma_X^{1/2}}{F}^2\\
&\gtrsim_M \tau_X^{-4} \bigpnorm{\Sigma_X-\pnorm{\Sigma_X}{F}^2/\tau_X^2}{F}^2 = \tau_X^{-4}\cdot \pnorm{\Sigma_X}{F}^4 p /\tau_X^4.
\end{align*}
A matching upper bound can be easily proved under the condition $\pnorm{\Sigma_X}{\op}\leq M$. \qed

\subsection{Proof of Lemma \ref{lem:var_bar_psi}}

We only prove the case for $\bar{\psi}_X$. In the definition (\ref{def:bar_psi}) of $\bar{\psi}_X(X,Y)$, we write $\bar{\psi}_X(x_1,y_1)\equiv \bar{A}_{1,X}(x_1,y_1) +A_{2,X}(x_1,y_1)$. Then we have
\begin{align*}
\var\big(\bar{\psi}_X(X,Y)\big)\lesssim \var\big(\bar{A}_{1,X}(X,Y)) + \var\big(A_{2,X}(X,Y)\big).
\end{align*}
Let $(X^\top, Y^\top)^\top \stackrel{d}{=} \Sigma^{1/2}Z$. We may write
\begin{align*}
\bar{A}_{1,X}(X,Y) &= \frac{1}{2\tau_X^4}\bigg[\big(\pnorm{X_1}{}^2-\tr(\Sigma_X)\big)(X^\top\Sigma_{XY}Y - \pnorm{\Sigma_{XY}}{F}^2)+ 2X^\top \Sigma_X \Sigma_{XY}Y\bigg]\\
& \stackrel{d}{=} \frac{1}{2\tau_X^4}\bigg[\big(Z^\top H_{[11]} Z-\tr(H_{[11]})\big)\big(Z^\top \bar{G}_{[1,2]} Z-\tr(\bar{G}_{[1,2]})\big)+ 2Z^\top \bar{G}_X Z\bigg],
\end{align*}
with $\bar{G}_X$ is defined in (\ref{def:bar_G_X}). By the Poincar\'e inequality, we have 
\begin{align*}
&\var \big(\tau_X^4\bar{A}_{1,X}(X,Y)\big)=\var\big(\tau_X^4 \bar{A}_{1,X}(Z)\big)\\
&\lesssim \E \big(Z^\top H_{[11]} Z-\tr(H_{[11]})\big)^2 Z^\top \bar{G}_{[1,2]}^2 Z+\E \big(Z^\top \bar{G}_{[1,2]} Z-\tr(\bar{G}_{[1,2]})\big)^2 Z^\top G_{[11]} Z+ \E Z^\top \bar{G}_{X}^2 Z\\
&\lesssim \pnorm{G_{[12]}}{F}^2 \pnorm{H_{[11]}}{F}^2 + \pnorm{\bar{G}_X}{F}^2 \lesssim_M \tau_X^2 \pnorm{\Sigma_{XY}}{F}^2,
\end{align*}
On the other hand, by the estimate in (\ref{ineq:res_psi_bound_6}), 
\begin{align*}
\var\big(A_{2,X}(X,Y)\big)\lesssim \E A_{2,X}^2(X,Y) \lesssim_M \tau_X^{-6} \pnorm{\Sigma_{XY}}{F}^2. 
\end{align*}
Collecting the bounds to conclude.\qed

\subsection{Proof of Lemma \ref{lem:h2}}

As $\E_{Z_3,Z_4}U(X_{i_1},X_{i_2})V(Y_{i_3},Y_{i_4})=0$, Proposition \ref{prop:hoef_decomp} yields that
\begin{align*}
k_2(z_1,z_2)&= \frac{1}{4!} \sum_{(i_1,\ldots,i_4)\in \sigma(1,2,3,4)} \bigg[ \E_{Z_3,Z_4} U(X_{i_1},X_{i_2})V(Y_{i_1},Y_{i_2}) \\
&\qquad\qquad - 2\E_{Z_3,Z_4} U(X_{i_1},X_{i_2})V(Y_{i_1},Y_{i_3})\bigg].
\end{align*}
The claim now follows by calculating
\begin{align*}
&\sum_{(i_1,\ldots,i_4)\in \sigma(1,2,3,4)} \E_{Z_3,Z_4} U(X_{i_1},X_{i_2})V(Y_{i_1},Y_{i_2})\\
& = 4 U(x_1,x_2) V(y_1,y_2)+8\E U(x_1,X)V(y_1,Y)+8\E U(x_2,X)V(y_2,Y)+ 4\dcov^2(X,Y),
\end{align*}
and
\begin{align*}
&\sum_{(i_1,\ldots,i_4)\in \sigma(1,2,3,4)} \E_{Z_3,Z_4} U(X_{i_1},X_{i_2})V(Y_{i_1},Y_{i_3})\\
& = \bigg[\sum_{\substack{(i_1,\ldots,i_4)\in \sigma(1,2,3,4),\\i_2=1,i_3=2}}+ \sum_{\substack{(i_1,\ldots,i_4)\in \sigma(1,2,3,4),\\i_2=2,i_3=1}}\bigg]\E_{Z_3,Z_4} U(X_{i_1},X_{i_2})V(Y_{i_1},Y_{i_3})\\
& = 2\E U(x_1,X) V(y_2,Y)+2\E U(x_2,X)V(y_1,Y),
\end{align*}
as desired.\qed

\subsection{Proof of Lemma \ref{lem:g2}}

By definition of $g_2$, we have
\begin{align*}
&g_2\big((x_1,y_1),(x_2,y_2) \big) \nonumber \\
&\equiv k_2\big((x_1,y_1),(x_2,y_2)\big) -k_1(x_1,y_1)-k_1(x_2,y_2)+\dcov^2(X,Y)\nonumber\\
&  =\frac{1}{6}\bigg[U(x_1,x_2)V(y_1,y_2)- \E U(x_1,X)V(y_1,Y)-\E U(x_2,X)V(y_2,Y)+\dcov^2(X,Y)\nonumber\\
&\qquad\qquad - \E U(x_1,X)V(y_2,Y) - \E U(x_2,X)V(y_1,Y)\bigg]\nonumber\\
& = \frac{1}{6\tau_X\tau_Y}\bigg[\Big(x_1^\top x_2 y_1^\top y_2 - x_1^\top \Sigma_{XY} y_1-x_2^\top \Sigma_{XY} y_2 + \pnorm{\Sigma_{XY}}{F}^2\Big)-\Big(x_1^\top \Sigma_{XY} y_2+x_2^\top \Sigma_{XY} y_1\Big)\nonumber\\
&\qquad\qquad + \big[\bar{R}_2\big((x_1,y_1),(x_2,y_2)\big) -\E \bar{R}_2\big((X_1,Y_1),(X_2,Y_2)\big)\big] \bigg],
\end{align*}
completing the proof.\qed

\subsection{Proof of Lemma \ref{lem:h3}}
As $\E_{Z_4}U(X_{i_1},X_{i_2})V(Y_{i_3},Y_{i_4})=0$, Proposition \ref{prop:hoef_decomp} yields that
\begin{align*}
k_3(z_1,z_2,z_3) &= \frac{1}{4!} \sum_{(i_1,\ldots,i_4)\in \sigma(1,2,3,4)} \bigg[ \E_{Z_4} U(X_{i_1},X_{i_2})V(Y_{i_1},Y_{i_2}) - 2\E_{Z_4} U(X_{i_1},X_{i_2})V(Y_{i_1},Y_{i_3})\bigg].
\end{align*}
The claim follows by calculating
\begin{align*}
&\sum_{(i_1,\ldots,i_4)\in \sigma(1,2,3,4)} \E_{Z_4} U(X_{i_1},X_{i_2})V(Y_{i_1},Y_{i_2})\\
& = \bigg[\sum_{\substack{(i_1,\ldots,i_4)\in \sigma(1,2,3,4)\\ i_1,i_2\neq 4}} + \sum_{\substack{(i_1,\ldots,i_4)\in \sigma(1,2,3,4)\\ i_1=4}}+\sum_{\substack{(i_1,\ldots,i_4)\in \sigma(1,2,3,4)\\ i_2=4}}\bigg] \E_{Z_4} U(X_{i_1},X_{i_2})V(Y_{i_1},Y_{i_2})\\
& = 2\sum_{1\leq i_1\neq i_2\leq 3}U(x_{i_1},x_{i_2})V(y_{i_1},y_{i_2}) + 4\sum_{1\leq i\leq 3}\E U(X,x_{i}) V(Y,y_i),
\end{align*}
and
\begin{align*}
&\sum_{(i_1,\ldots,i_4)\in \sigma(1,2,3,4)} \E_{Z_4} U(X_{i_1},X_{i_2})V(Y_{i_1},Y_{i_3})\\
& = \bigg[\sum_{\substack{(i_1,\ldots,i_4)\in \sigma(1,2,3,4),\\i_1=4}}+\sum_{\substack{(i_1,\ldots,i_4)\in \sigma(1,2,3,4),\\i_4=4}}\bigg] \E_{Z_4} U(X_{i_1},X_{i_2})V(Y_{i_1},Y_{i_3})\\
& = \sum_{1\leq i_1\neq i_2\leq 3} \E U(X,x_{i_1})V(Y,y_{i_2})+\sum_{(i_1,i_2,i_3) \in \sigma (1,2,3)} U(x_{i_1},x_{i_2}) V(y_{i_1},y_{i_3}),
\end{align*}
as desired.\qed

\section{Proofs for Section \ref{section:clt_truncated_dcov}}\label{appendix:proof_clt_Tn_bar}

\subsection{Proof of Lemma \ref{lem:rep_Tbar}}\label{subsec:truncate_decomposition}

This follows calculations with the definition (\ref{def:bar_T}) of $\bar{T}_n$:
	\begin{align*}
	&\bar{T}_n\big(\bm{X},\bm{Y}\big) = \dcov^2(X,Y)\\
	&+ \frac{2}{\tau_X\tau_Yn}\sum_i \Big(X_i^\top \Sigma_{XY} Y_i-\pnorm{\Sigma_{XY}}{F}^2-\frac{\pnorm{\Sigma_{XY}}{F}^2}{2\tau_X^2}(\pnorm{X_i}{}^2-\tr(\Sigma_X))-\frac{\pnorm{\Sigma_{XY}}{F}^2}{2\tau_Y^2}(\pnorm{Y_i}{}^2-\tr(\Sigma_Y))\Big)\\
	&\qquad + \frac{1}{\tau_X\tau_Y\cdot 2 \binom{n}{2}}\sum_{I_n^2}\bigg[\Big(X_{i_1}^\top X_{i_2} Y_{i_1}^\top Y_{i_2} - X_{i_1}^\top \Sigma_{XY} Y_{i_1}-X_{i_2}^\top \Sigma_{XY} Y_{i_2} + \pnorm{\Sigma_{XY}}{F}^2\Big)\nonumber\\
	&\qquad\qquad\qquad\qquad -\Big(X_{i_1}^\top \Sigma_{XY} Y_{i_2}+X_{i_2}^\top \Sigma_{XY} Y_{i_1}\Big) \bigg]\\
	& =  \dcov^2(X,Y) - \frac{2}{\tau_X\tau_Yn}\sum_{i=1}^n\bigg[\frac{\pnorm{\Sigma_{XY}}{F}^2}{2\tau_X^2}\big(\pnorm{X_i}{}^2-\tr(\Sigma_X)\big)+\frac{\pnorm{\Sigma_{XY}}{F}^2}{2\tau_Y^2}\big(\pnorm{Y_i}{}^2-\tr(\Sigma_Y)\big)\bigg]\\
	&\qquad\qquad+ \frac{1}{\tau_X\tau_Y\cdot 2 \binom{n}{2}}\sum_{I_n^2}\bigg[\Big(X_{i_1}^\top X_{i_2} Y_{i_1}^\top Y_{i_2} - \pnorm{\Sigma_{XY}}{F}^2\Big) -\Big(X_{i_1}^\top \Sigma_{XY} Y_{i_2}+X_{i_2}^\top \Sigma_{XY} Y_{i_1}\Big) \bigg].
	\end{align*}
The claim now follows by the definitions of $\psi_1$-$\psi_3$.\qed

\subsection{Proof of Proposition \ref{prop:var_psi_1}}
\subsubsection{Variance bound}
\begin{proof}[Proof of Proposition \ref{prop:var_psi_1}-(1)]
	With $W_i\equiv X_iY_i^\top$, we have
		\begin{align}\label{ineq:clt_2}
		&\Delta_i \psi_1\big(\bm{X},\bm{Y}\big) = \psi_1\big(\bm{X},\bm{Y}\big)-\psi_1\big(\bm{X}^{\{i\}},\bm{Y}^{\{i\}}\big)\nonumber\\
		&=\sum_{ \substack{i_1\neq i_2,\\ i_1,i_2\neq i}}  \tr\big(W_{i_1}W_{i_2}^\top\big)+ 2\sum_{j\neq i} \tr(W_{i} W_{j}^\top) -\bigg[\sum_{ \substack{i_1\neq i_2,\\ i_1,i_2\neq i}}  \tr\big(W_{i_1}W_{i_2}^\top\big)+ 2\sum_{j\neq i} \tr(W_{i}' W_{j}^\top)\bigg] \nonumber\\
		&= 2\tr\bigg[\big(W_i-W_{i}'\big)\sum_{j\neq i} W_j^\top\bigg]= 2 \bigg[Y_i^\top \Big(\sum_{j\neq i} Y_jX_j^\top\Big) X_i-Y_i'^\top \Big(\sum_{j\neq i} Y_jX_j^\top\Big) X_i'\bigg].
		\end{align}
	So for any $A \subset \{1,\ldots,n\}$ and $i \notin A$,
		\begin{align*}
		&\Delta_i \psi_1\big(\bm{X}^A,\bm{Y}^A\big)=2 \tr\bigg[\big(W_i-W_{i}'\big)\bigg(\sum_{j\notin A \cup \{i\}} W_j+\sum_{j \in A} W_j'\bigg)^\top\bigg]\\
		&=2 \bigg[Y_i^\top \bigg(\sum_{j\notin A\cup\{i\}} Y_jX_j^\top+\sum_{j \in A} Y_j'X_j'^\top\bigg) X_i-Y_i'^\top \bigg(\sum_{j\notin A\cup\{i\}} Y_jX_j^\top+\sum_{j \in A} Y_j'X_j'^\top\bigg)  X_i'\bigg].
		\end{align*}
	Then with $\E'\equiv \E_{\bm{X}',\bm{Y}'}$, we have
		\begin{align*}
		&4^{-1}\E'\big[\Delta_i \psi_1\big(\bm{X},\bm{Y}\big)\Delta_i \psi_1\big(\bm{X}^A,\bm{Y}^A\big)\big]\\
		& = \E'\bigg[Y_i^\top\Big(\sum_{j\neq i} Y_jX_j^\top\Big) X_i \cdot Y_i^\top \bigg(\sum_{j\notin A\cup\{i\}} Y_jX_j^\top+\sum_{j \in A} Y_j'X_j'^\top\bigg) X_i\bigg]\\
		&\qquad - \E'\bigg[Y_i^\top \Big(\sum_{j\neq i} Y_jX_j^\top\Big) X_i \cdot Y_i'^\top \bigg(\sum_{j\notin A\cup\{i\}} Y_jX_j^\top+\sum_{j \in A} Y_j'X_j'^\top\bigg)  X_i'\bigg]\\
		&\qquad -\E'\bigg[Y_i'^\top \Big(\sum_{j\neq i} Y_jX_j^\top\Big) X_i' \cdot Y_i^\top \bigg(\sum_{j\notin A\cup\{i\}} Y_jX_j^\top+\sum_{j \in A} Y_j'X_j'^\top\bigg) X_i\bigg]\\
		&\qquad + \E'\bigg[Y_i'^\top \Big(\sum_{j\neq i} Y_jX_j^\top\Big) X_i'\cdot Y_i'^\top \bigg(\sum_{j\notin A\cup\{i\}} Y_jX_j^\top+\sum_{j \in A} Y_j'X_j'^\top\bigg)  X_i'\bigg]\\
		& \equiv B_1(i,A)-B_2(i,A)-B_3(i,A)+B_4(i,A).
		\end{align*}
	The four terms can be calculated as follows:
	\begin{itemize}
		\item By direct calculation, 
		\begin{align*}
		B_1(i,A) &= Y_i^\top \Big(\sum_{j\neq i} Y_jX_j^\top\Big) X_i \cdot Y_i^\top \Big(\sum_{j\notin A\cup\{i\}} Y_jX_j^\top+|A|\cdot\Sigma_{YX}\Big) X_i\\
		&= \sum_{j\neq i, k\notin A\cup\{i\}}(X_i^\top X_j)(Y_i^\top Y_j)(X_i^\top X_k)(Y_i^\top Y_k) + |A|(X_i^\top\Sigma_{XY}Y_i)\cdot\sum_{j\neq i}(X_i^\top X_j)(Y_i^\top Y_j).
		\end{align*}
		\item Using independence between $(X_i',Y_i')$ and $\{(X_j',Y_j')\}_{j\in A}$ as $i\notin A$, 
			\begin{align*}
			B_2(i,A) &= Y_i^\top \Big(\sum_{j\neq i} Y_jX_j^\top\Big) X_i \cdot \tr\Big[\Big(\sum_{j\notin A\cup\{i\}} Y_jX_j^\top+|A|\cdot\Sigma_{YX}\Big)\Sigma_{XY}\Big]\\
			&= \sum_{j\neq i,k\notin A\cup\{i\}}(X_i^\top X_j)(Y_i^\top Y_j)(X_k^\top \Sigma_{XY}Y_k) + |A|\pnorm{\Sigma_{XY}}{F}^2\cdot\sum_{j\neq i}(X_i^\top X_j)(Y_i^\top Y_j).
			\end{align*}
		\item Again by independence between $(X_i',Y_i')$ and $\{(X_j',Y_j')\}_{j\in A}$ as $i \notin A$,
			\begin{align*}
			B_3(i,A) &= \tr\Big(\Big[\sum_{j\neq i}Y_jX_j^\top\Big]\Sigma_{XY}\Big)\cdot Y_i^\top\Big(\sum_{j\notin A\cup\{i\}}Y_jX_j^\top + |A|\cdot\Sigma_{YX}\Big)X_i\\
			&= \sum_{j\neq i, k\notin A\cup\{i\}}(X_j^\top\Sigma_{XY}Y_j)(X_i^\top X_k)(Y_i^\top Y_k) + |A|(X_i^\top\Sigma_{XY}Y_i)\cdot \sum_{j\neq i} X_j^\top \Sigma_{XY}Y_j.
			\end{align*}
		\item By Lemma \ref{lem:moment_X_Y}-(4), we have
		\begin{align*}
		B_4(i,A) &= \E'\Big[X_i'^\top\Big(\sum_{j\neq i}X_jY_j^\top\Big)Y_i'\cdot X_i'^\top\Big(\sum_{j\notin A\cup\{i\}}X_jY_j^\top + |A|\cdot \Sigma_{XY}\Big)Y_i'\Big]\\
		&= \tr\Big[\Sigma_X\Big(\sum_{j\neq i}X_jY_j^\top\Big)\Sigma_Y\Big(\sum_{j\notin A\cup\{i\}}Y_jX_j^\top + |A|\cdot \Sigma_{YX}\Big)\Big]\\
		&\quad\quad\quad + \tr\Big[\Big(\sum_{j\neq i}X_jY_j^\top\Big)\Sigma_{YX}\Big(\sum_{j\notin A\cup\{i\}}X_jY_j^\top + |A|\cdot \Sigma_{XY}\Big)\Sigma_{YX}\Big]\\
		&\quad\quad\quad + \tr\Big[\Big(\sum_{j\neq i}X_jY_j^\top\Big)\Sigma_{YX}\Big]\cdot\tr\Big[\Big(\sum_{j\notin A\cup\{i\}}X_jY_j^\top + |A|\cdot \Sigma_{XY}\Big)\Sigma_{YX}\Big]\\
		&\quad\quad\quad +\kappa \tr\big(M_1(i,A) \circ M_2(i,A)\big)\\
		&= \Big[\sum_{j\neq i,k\notin A\cup\{i\}}(X_j^\top\Sigma_X X_k)(Y_j^\top\Sigma_Y Y_k) + |A|\cdot\sum_{j\neq i}X_j^\top\Sigma_X\Sigma_{XY}\Sigma_YY_j\Big]\\
		&\quad\quad\quad + \Big[\sum_{j\neq i,k\notin A\cup\{i\}}(X_j^\top\Sigma_{XY}Y_k)(X_k^\top\Sigma_{XY}Y_j) + |A|\cdot\sum_{j\neq i}X_j^\top \Sigma_{XY}\Sigma_{YX}\Sigma_{XY}Y_j\Big]\\
		&\quad\quad\quad + \Big[\sum_{j\neq i,k\notin A\cup\{i\}}(X_j^\top\Sigma_{XY}Y_j)(X_k^\top \Sigma_{XY}Y_k) + |A|\pnorm{\Sigma_{XY}}{F}^2\cdot \sum_{j\neq i}X_j^\top\Sigma_{XY} Y_j \Big]\\
		&\quad\quad\quad +\kappa \tr\big(M_1(i,A) \circ M_2(i,A)\big).
		\end{align*}
		Here $M_1(i,A) \equiv (M_1(i,A))(\bm{X},\bm{Y}) \equiv \Sigma^{1/2}[0,\bar{M}_1(i,A);0,0]\Sigma^{1/2}$ and $M_2(i,A) \equiv (M_2(i,A))(\bm{X},\bm{Y}) \equiv \Sigma^{1/2}[0,\bar{M}_2(i,A);0,0]\Sigma^{1/2}$, with $\bar{M}_1(i,A) \equiv \sum_{j\neq i}X_jY_j^\top$ and $\bar{M}_2(i,A) \equiv \sum_{j\notin A\cup\{i\}}X_jY_j^\top + |A|\cdot \Sigma_{XY}$.
	\end{itemize}
	Hence we have
	\begin{align*}
	B_1(i,A) - B_2(i,A) &= \sum_{j\neq i, k\notin A\cup\{i\}}(X_i^\top X_j)(Y_i^\top Y_j)\big[X_k^\top(X_iY_i^\top -\Sigma_{XY})Y_k\big]\\
	&\quad\quad\quad+ |A|\cdot \big(X_i^\top\Sigma_{XY}Y_i - \pnorm{\Sigma_{XY}}{F}^2\big)\cdot \sum_{j\neq i}(X_i^\top X_j)(Y_i^\top Y_j),
	\end{align*}
	and, by combining $-B_3(i,A)$ with the last term of $B_4(i,A)$,
	\begin{align*}
	&B_4(i,A) - B_3(i,A)\\
	&= \sum_{j\neq i,k\notin A\cup\{i\}}(X_j^\top\Sigma_X X_k)(Y_j^\top\Sigma_Y Y_k) + |A|\cdot\sum_{j\neq i}X_j^\top\Sigma_X\Sigma_{XY}\Sigma_YY_j\\
	&\quad\quad\quad + \sum_{j\neq i,k\notin A\cup\{i\}}(X_j^\top\Sigma_{XY}Y_k)(X_k^\top\Sigma_{XY}Y_j) + |A|\cdot\sum_{j\neq i}X_j^\top \Sigma_{XY}\Sigma_{YX}\Sigma_{XY}Y_j\\
	&\quad\quad\quad- \sum_{j\neq i,k\notin A\cup\{i\}} (X_j^\top\Sigma_{XY}Y_j)\big[X_k^\top(X_iY_i^\top - \Sigma_{XY})Y_k\big]\\
	&\quad\quad\quad - |A|\cdot (X_i^\top\Sigma_{XY}Y_i - \pnorm{\Sigma_{XY}}{F}^2)\cdot\sum_{j\neq i} X_j^\top \Sigma_{XY} Y_j +\kappa \tr\big(M_1(i,A) \circ M_2(i,A)\big).
	\end{align*} 
	This implies that
	\begin{align*}
	&4^{-1}\E'\big[\Delta_i \psi_1\big(\bm{X},\bm{Y}\big)\Delta_i \psi_1\big(\bm{X}^A,\bm{Y}^A\big)\big]\\
	&= \sum_{j\neq i,k\notin A\cup\{i\}}(X_j^\top\Sigma_X X_k)(Y_j^\top\Sigma_Y Y_k) + \sum_{j\neq i,k\notin A\cup\{i\}}(X_j^\top\Sigma_{XY}Y_k)(X_k^\top\Sigma_{XY}Y_j)\\
	&\quad\quad\quad+ \sum_{j\neq i,k\notin A\cup\{i\}} \big[X_j^\top(X_iY_i^\top - \Sigma_{XY})Y_j\big]\big[X_k^\top(X_iY_i^\top - \Sigma_{XY})Y_k\big]\\
	&\quad\quad\quad +|A|\cdot\sum_{j\neq i}X_j^\top\Sigma_X\Sigma_{XY}\Sigma_YY_j + |A|\cdot\sum_{j\neq i}X_j^\top \Sigma_{XY}\Sigma_{YX}\Sigma_{XY}Y_j\\
	&\quad\quad\quad + |A|\cdot (X_i^\top\Sigma_{XY}Y_i - \pnorm{\Sigma_{XY}}{F}^2)\cdot\sum_{j\neq i} X_j^\top(X_iY_i^\top - \Sigma_{XY}) Y_j +\kappa \tr\big(M_1(i,A) \circ M_2(i,A)\big)\\
	& \equiv 4^{-1}\cdot\sum_{\ell=1}^7T_\ell(i,A) . 
	\end{align*}
	By definition, we have
	\begin{align*}
	T_1&\equiv\frac{1}{2}\sum_{A \subsetneq \{1,\ldots,n\}}\sum_{i\notin A}\frac{T_1(i,A)}{\binom{n}{\abs{A}}(n-\abs{A})}\\
	& = 2 \sum_{A \subsetneq \{1,\ldots,n\}}\sum_{i \notin A}\sum_{j \neq i, k \notin A\cup \{i\}} \frac{(X_j^\top\Sigma_X X_k)(Y_j^\top\Sigma_Y Y_k)}{\binom{n}{\abs{A}}(n-\abs{A})}\\
	& = 2\sum_{\substack{1\leq i\leq n,\\ j\neq i, k\neq i}} (X_j^\top\Sigma_X X_k)(Y_j^\top\Sigma_Y Y_k)\cdot \bigg[\sum_{A\cap \{i,k\}=\emptyset}\frac{1}{\binom{n}{\abs{A}}(n-\abs{A})}\bigg]\\
	& \stackrel{(\ast)}{=} \sum_{\substack{1\leq i\leq n,\\ j\neq i, k\neq i}} (X_j^\top\Sigma_X X_k)(Y_j^\top\Sigma_Y Y_k) = (n-2) \sum_{j,k} (X_j^\top\Sigma_X X_k)(Y_j^\top\Sigma_Y Y_k)  + \sum_{j=1}^n (X_j^\top\Sigma_XX_j)(Y_j^\top \Sigma_YY_j).
	\end{align*}
	Here in ($\ast$) we used
	\begin{align}\label{ineq:sum_comb_A_1}
	\sum_{A\cap \{i,k\}=\emptyset}\frac{1}{\binom{n}{\abs{A}}(n-\abs{A})} &= \sum_{a=0}^{n-2} \sum_{A\cap \{i,k\}=\emptyset, \abs{A}=a} \frac{1}{\binom{n}{a}(n-a)} = \sum_{a=0}^{n-2} \frac{\binom{n-2}{a}}{\binom{n}{a}(n-a)}=\sum_{a=0}^{n-2} \frac{n-a-1}{n(n-1)}=\frac{1}{2}.
	\end{align}
	With completely analogous definitions and calculations, we have
	\begin{align*}
	T_2 &= (n-2) \sum_{j,k}(X_j^\top\Sigma_{XY}Y_k)(X_k^\top\Sigma_{XY}Y_j)  + \sum_{j=1}^n (X_j^\top\Sigma_{XY}Y_j)^2,\\
	T_3 &= \sum_{1\leq i\leq n}\bigg[\sum_{j\neq i} X_j^\top(X_iY_i^\top - \Sigma_{XY})Y_j\bigg]^2.
	\end{align*}
	Next, we have
	\begin{align*}
	T_4 &\equiv\frac{1}{2}\sum_{A \subsetneq \{1,\ldots,n\}}\sum_{i\notin A}\frac{T_4(i,A)}{\binom{n}{\abs{A}}(n-\abs{A})}\\
	& = 2 \sum_{A \subsetneq \{1,\ldots,n\}}\sum_{i \notin A}\sum_{j \neq i} \frac{|A|\cdot (X_j^\top\Sigma_X\Sigma_{XY}\Sigma_Y Y_j)}{\binom{n}{\abs{A}}(n-\abs{A})}\\
	& = 2\sum_{1\leq i\leq n, j\neq i} (X_j^\top\Sigma_X\Sigma_{XY}\Sigma_Y Y_j)\cdot \bigg[\sum_{A: i\notin A}\frac{|A|}{\binom{n}{\abs{A}}(n-\abs{A})}\bigg]\\
	& \stackrel{(\ast\ast)}{=} (n-1)\cdot\sum_{1\leq i\leq n, j\neq i} X_j^\top\Sigma_X\Sigma_{XY}\Sigma_Y Y_j = (n-1)^2\cdot\sum_{j=1}^n X_j^\top\Sigma_X\Sigma_{XY}\Sigma_Y Y_j.
	\end{align*}
	Here in $(**)$ we used
	\begin{align*}
	\sum_{A:i\notin A}\frac{|A|}{\binom{n}{\abs{A}}(n-\abs{A})} &= \sum_{a=0}^{n-1} \sum_{A:i\notin A, \abs{A}=a} \frac{a}{\binom{n}{a}(n-a)} = \sum_{a=0}^{n-1} \frac{a\binom{n-1}{a}}{\binom{n}{a}(n-a)}=\sum_{a=0}^{n-1} \frac{a}{n}=\frac{n-1}{2}.
	\end{align*}
	With completely analogous definitions and calculations, we have
	\begin{align*}
	T_5 &= (n-1)^2\cdot\sum_{j=1}^n X_j^\top\Sigma_{XY}\Sigma_{YX}\Sigma_{XY}Y_j,\\
	T_6 &= (n-1)\cdot\sum_{1\leq i\leq n, j\neq i}(X_i^\top \Sigma_{XY}Y_i - \pnorm{\Sigma_{XY}}{F}^2)\cdot\big[X_j^\top(X_iY_i^\top-\Sigma_{XY})Y_j\big],\\
	T_7 &= \kappa\cdot\Big[(n-1)^2\tr\Big(\Sigma^{1/2}
	\begin{pmatrix}
	0 & \sum_{j=1}^n X_jY_j^\top\\
	0 & 0
	\end{pmatrix}
	\Sigma^{1/2} \circ G_{[12]}
	\Big)\\
	&\quad\quad + \sum_{j,k} (n-2+\bm{1}_{j=k})\tr\Big(\Sigma^{1/2}
	\begin{pmatrix}
	0 & X_jY_j^\top\\
	0 & 0
	\end{pmatrix}
	\Sigma^{1/2}\circ \Sigma^{1/2}
	\begin{pmatrix}
	0 & X_kY_k^\top\\
	0 & 0
	\end{pmatrix}\Sigma^{1/2}
	\Big)\Big].
	\end{align*}
	Putting together the pieces, we have $
	\E(T_{\psi_1}|\bm{X},\bm{Y}) = \sum_{i=1}^6 T_i$.
	The proof is now complete by invoking Lemma \ref{lem:psi_1_variances} that gives bounds for the variances of $T_i$'s.
\end{proof}

\begin{lemma}\label{lem:psi_1_variances}
	Recall the terms $T_1$-$T_6$ defined in the proof of Proposition \ref{prop:var_psi_1}-(1) above. The following hold.
	\begin{enumerate}
		\item $\var(T_1)\lesssim_Mn^3\cdot(n\vee \tau_X^2\vee \tau_Y^2)\cdot(\pnorm{\Sigma_X}{F}^2\pnorm{\Sigma_Y}{F}^2 + n\pnorm{\Sigma_{XY}}{F}^2)$.
		\item $\var(T_2)\lesssim_M n^3\cdot\pnorm{\Sigma_{XY}}{F}^6 + n^5\cdot \pnorm{\Sigma_{XY}}{F}^2$.
		\item $\var(T_3)\lesssim_M n^3\cdot \pnorm{\Sigma_X}{F}^4\pnorm{\Sigma_Y}{F}^4 + n^4\cdot(1\vee\pnorm{\Sigma_{XY}}{F}^2)\pnorm{\Sigma_X}{F}^2\pnorm{\Sigma_Y}{F}^2 + n^5\cdot \pnorm{\Sigma_{XY}}{F}^2$.
		\item $\var(T_4)\vee \var(T_5)\lesssim_M n^5\cdot \pnorm{\Sigma_{XY}}{F}^2$.
		\item $\var(T_6)\lesssim_M n^4\cdot  \pnorm{\Sigma_X}{F}^2\pnorm{\Sigma_Y}{F}^2\pnorm{\Sigma_{XY}}{F}^2$.
		\item $\var(T_7) \lesssim n^3\cdot \pnorm{\Sigma_X}{F}^4\pnorm{\Sigma_Y}{F}^4 + n^4\pnorm{\Sigma_{XY}}{F}^2\pnorm{\Sigma_X}{F}^2\pnorm{\Sigma_Y}{F}^2 + n^5\cdot \pnorm{\Sigma_{XY}}{F}^4$.
	\end{enumerate}
	Consequently,
	\begin{align*}
	\sum_{i=1}^6 \var(T_i) &\lesssim_M n^3\cdot \pnorm{\Sigma_X}{F}^4\pnorm{\Sigma_Y}{F}^4 + n^4\cdot(1\vee\pnorm{\Sigma_{XY}}{F}^2)\pnorm{\Sigma_X}{F}^2\pnorm{\Sigma_Y}{F}^2 + n^5\cdot(1\vee\pnorm{\Sigma_{XY}}{F}^2) \pnorm{\Sigma_{XY}}{F}^2.
	\end{align*}
\end{lemma}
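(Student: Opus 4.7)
The plan is to establish each of the six variance bounds by direct moment computation, relying on (i) the exact Gaussian fourth-moment formulas for bilinear forms from Lemma \ref{lem:moment_X_Y}, (ii) Isserlis' theorem to reduce sextic Gaussian moments to products of covariances, and (iii) the spectrum assumption $\pnorm{\Sigma}{\op}\vee \pnorm{\Sigma^{-1}}{\op}\leq M$ to translate traces of polynomial matrix expressions in $\Sigma_X,\Sigma_Y,\Sigma_{XY}$ into the Frobenius norms appearing in the stated bounds.

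I would dispose of the easy terms first. For $T_4$ and $T_5$, which are sums of $n$ i.i.d.\ bilinear forms $X_j^\top AY_j$ with $A=\Sigma_X\Sigma_{XY}\Sigma_Y$ or $A=\Sigma_{XY}\Sigma_{YX}\Sigma_{XY}$, the variance equals $(n-1)^4\cdot n\cdot \var(X^\top AY)$, and the standard Gaussian identity $\E(X^\top AY)^2 = \tr(A^\top \Sigma_X A\Sigma_Y) + (\tr(A^\top\Sigma_{XY}))^2 + \tr(A^\top\Sigma_{XY}A^\top\Sigma_{XY})$ together with the spectrum bound yields $\var(X^\top AY)\lesssim_M \pnorm{\Sigma_{XY}}{F}^2$, giving the claimed $n^5\pnorm{\Sigma_{XY}}{F}^2$ bound. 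For $T_1$ and $T_2$, I split the double sum $\sum_{j,k}$ into its diagonal ($j=k$) and off-diagonal ($j\neq k$) parts. The diagonal part is a sum of $n$ i.i.d.\ quartic Gaussian forms $(X_j^\top\Sigma_XX_j)(Y_j^\top\Sigma_YY_j)$, whose variance is controlled by products of traces of $\Sigma_X^k,\Sigma_Y^k$, while the off-diagonal part is a degree-two $U$-statistic whose variance is bounded by $n^2\E[\text{kernel}^2]$; Gaussian moment expansions of the kernel produce exactly the claimed combination $n^3(n\vee p\vee q)(\pnorm{\Sigma_X}{F}^2\pnorm{\Sigma_Y}{F}^2 + n\pnorm{\Sigma_{XY}}{F}^2)$ for $T_1$ and $n^3\pnorm{\Sigma_{XY}}{F}^6 + n^5\pnorm{\Sigma_{XY}}{F}^2$ for $T_2$, with the different powers of $\pnorm{\Sigma_{XY}}{F}$ reflecting the different dependence structure. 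For $T_6$, I write $T_6^2$ as a quadruple sum over $(i,j,i',j')$ and separate the cases according to whether indices coincide; Cauchy--Schwarz on the two factors $(X_i^\top\Sigma_{XY}Y_i-\pnorm{\Sigma_{XY}}{F}^2)$ and $X_j^\top(X_iY_i^\top-\Sigma_{XY})Y_j$, combined with the fact that each factor is mean-zero given the other $(X_k,Y_k)$'s, yields the $n^4\pnorm{\Sigma_X}{F}^2\pnorm{\Sigma_Y}{F}^2\pnorm{\Sigma_{XY}}{F}^2$ bound.

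The main obstacle will be $T_3=\sum_{i=1}^n V_i^2$ with $V_i\equiv \sum_{j\neq i}X_j^\top A_iY_j$ and $A_i\equiv X_iY_i^\top-\Sigma_{XY}$. Here I would proceed by conditioning: given $(X_i,Y_i)$, the random variable $V_i$ is a sum of $n-1$ conditionally independent bilinear forms in $(X_j,Y_j)$, so $\E[V_i^2\mid X_i,Y_i] = (n-1)\,\E[(X^\top A_iY)^2\mid A_i]$ which expands into traces in $A_i$, $\Sigma_X,\Sigma_Y,\Sigma_{XY}$. Squaring and taking expectation, $\E T_3^2$ decomposes into contributions from $i=i'$ and $i\neq i'$; the $i=i'$ contribution carries an extra factor of $n$ but only $n$ summands, while the $i\neq i'$ contribution is a sum over $n(n-1)$ pairs. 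The sixth-order Gaussian moments that arise are handled by Isserlis--Wick pairings, and I would track which pairings generate $\pnorm{\Sigma_X}{F}^2\pnorm{\Sigma_Y}{F}^2$-type versus $\pnorm{\Sigma_{XY}}{F}^{2}$-type contributions. The $n^3\pnorm{\Sigma_X}{F}^4\pnorm{\Sigma_Y}{F}^4$ term arises from the fully disjoint pairing (null-like), the middle term $n^4(1\vee\pnorm{\Sigma_{XY}}{F}^2)\pnorm{\Sigma_X}{F}^2\pnorm{\Sigma_Y}{F}^2$ from pairings that share exactly one index with the external $A_i,A_{i'}$ factors, and $n^5\pnorm{\Sigma_{XY}}{F}^4$ from pairings that fully couple $A_i$ and $A_{i'}$ through the $\Sigma_{XY}$ component of $X_iY_i^\top$.

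The final consequence, summing the six pieces, follows by using $\pnorm{\Sigma_{XY}}{F}^2\leq \pnorm{\Sigma}{\op}(p\wedge q)\lesssim_M p\wedge q$ from Lemma \ref{lem:SigmaXY_F_bound} to absorb sub-leading terms such as $n^3\pnorm{\Sigma_{XY}}{F}^6$ into $n^4\pnorm{\Sigma_{XY}}{F}^2\pnorm{\Sigma_X}{F}^2\pnorm{\Sigma_Y}{F}^2$, and the $n\vee \tau_X^2\vee \tau_Y^2$ factor appearing in (1) into $\pnorm{\Sigma_X}{F}^2\pnorm{\Sigma_Y}{F}^2$ at one further power of $n$. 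The bookkeeping for $T_3$ is where the proof will be most delicate, because the three distinct index-pairing regimes must each be identified and shown to produce the correct combination of traces before the spectrum bound is applied.
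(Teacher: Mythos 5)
Your overall strategy — direct second-moment computation via Wick pairings and a case analysis over index coincidences — is a legitimate alternative to the paper's route, which instead applies the Gaussian--Poincar\'e inequality to each $T_i$ viewed as a polynomial in the underlying Gaussian array $Z$ and bounds $\E\pnorm{\nabla T_i}{F}^2$ (this lowers the polynomial degree by one, so the paper never needs eighth-order moments, whereas your treatment of $T_3$ and $T_6$ does). Your dispatch of $T_4$, $T_5$ via the exact formula of Lemma \ref{lem:moment_X_Y}-(4) is correct, and your plan for the final ``Consequently'' step is fine.

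There is, however, one step that fails as written. For $T_1$ and $T_2$ you bound the variance of the off-diagonal part by ``$n^2\E[\text{kernel}^2]$''. This inequality is false for a \emph{non-degenerate} degree-two kernel: writing $S=\sum_{j<k}h(Z_j,Z_k)$, the pairs of index sets sharing exactly one element contribute $\asymp n^3\var(h_1)$ with $h_1(z)=\E h(z,Z_2)-\E h$, and this can dominate $n^2\E h^2$. The kernels here are non-degenerate precisely when $\Sigma_{XY}\neq 0$: for $T_1$, $h_1(Z_1)=X_1^\top\Sigma_X\Sigma_{XY}\Sigma_YY_1-\text{const}$ with $\var(h_1)\asymp_M\pnorm{\Sigma_{XY}}{F}^2$, and the resulting $(n-2)^2\cdot n^3\pnorm{\Sigma_{XY}}{F}^2=n^5\pnorm{\Sigma_{XY}}{F}^2$ is exactly the $n\pnorm{\Sigma_{XY}}{F}^2$ term inside the parentheses of claim (1); likewise for the $n^5\pnorm{\Sigma_{XY}}{F}^2$ term in (2). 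So the terms you say your expansion ``produces'' cannot come out of the bound you state; you must carry out the full covariance expansion over the three coincidence patterns (two shared indices, one shared index, none), not just the diagonal one. A second, subtler soft spot is $T_6$: after discarding the $O(n^4)$ pairs with disjoint index sets (zero covariance), termwise Cauchy--Schwarz on the remaining $O(n^3)$ pairs costs you $(n-1)^2\cdot n^3\cdot\E W_{12}^2\asymp n^5\,\pnorm{\Sigma_X}{F}^2\pnorm{\Sigma_Y}{F}^2\pnorm{\Sigma_{XY}}{F}^2$, one factor of $n$ worse than claim (5). To recover $n^4$ you must compute the one-shared-index covariances exactly by conditioning (e.g., for $j=j'$, $i\neq i'$, $\E[W_{ij}\mid Z_j]=X_j^\top BY_j$ with $B=\Sigma_X\Sigma_{XY}\Sigma_Y+\Sigma_{XY}\Sigma_{YX}\Sigma_{XY}$, so the covariance is $\var(X_j^\top BY_j)\lesssim_M\pnorm{\Sigma_{XY}}{F}^2$ with no $\pnorm{\Sigma_X}{F}^2\pnorm{\Sigma_Y}{F}^2$ factor); Cauchy--Schwarz alone destroys this cancellation. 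With these two repairs your plan goes through, but as stated it does not establish claims (1), (2), or (5).
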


\begin{proof}[Proof of Lemma \ref{lem:psi_1_variances}]
	\noindent (\textbf{Variance of $T_1$})  
	By the Poincar\'e inequality, it is easy to show that $\var\big((X^\top \Sigma_X X)(Y^\top\Sigma_Y Y)\big)\lesssim_M (\tau_X^2\vee \tau_Y^2)\pnorm{\Sigma_X}{F}^2\pnorm{\Sigma_Y}{F}^2$, so it remains to prove the bound 
	\begin{align*}
	\var(T_1) \equiv \var\Big((n-2)\sum_{j,k} (X_j^\top\Sigma_XX_k)(Y_j^\top\Sigma_Y Y_k)\Big) \lesssim_M n^3(n\vee\tau_X^2\vee \tau_Y^2)(\pnorm{\Sigma_X}{F}^2\pnorm{\Sigma_Y}{F}^2 + n\pnorm{\Sigma_{XY}}{F}^2),
	\end{align*}
	where we slightly abuse the notation $T_1$.
	Note that
	\begin{align*}
	T_1 &= (n-2)\cdot\sum_{k,\ell} (X_k^\top\Sigma_X X_\ell)(Y_k^\top\Sigma_YY_\ell)= (n-2)\cdot\sum_{k,\ell} (Z_k^\top G_{[11]}Z_\ell)(Z_k^\top G_{[22]}Z_\ell).
	\end{align*}	
	Hence for any $i\in[n]$ and $j\in[p+q]$, we have
	\begin{align*}
	\frac{\partial T_1}{\partial Z_{ij}} &= 2(n-2)\cdot \sum_{k=1}^n\Big[(e_j^\top G_{[11]}Z_k)(Z_i^\top G_{[22]}Z_k) + (Z_i^\top G_{[11]}Z_k)(Z_k^\top G_{[22]}e_j)\Big]\\
	&\equiv 2(n-2)\cdot \big[(T_{11})_{(ij)} + (T_{12})_{(ij)}\big].
	\end{align*}
	By direct calculation, we have
	\begin{align*}
	\E\pnorm{T_{11}}{F}^2 &= \sum_{i,j} \E \bigg[e_j^\top G_{[11]}\Big(\sum_{k=1}^n Z_kZ_k^\top\Big)G_{[22]}Z_iZ_i^\top G_{[22]}\Big(\sum_{k=1}^n Z_kZ_k^\top\Big)G_{[11]}e_j\bigg]\\
	&= n\cdot\sum_{k_1,k_2=1}^n\E\Big[(Z_1^\top G_{[22]}Z_{k_1})(Z_1^\top G_{[22]}Z_{k_2})(Z_{k_1}^\top G_{[11]}^2Z_{k_2})\Big]\\
	&= n\cdot \E \bigg[1\cdot (Z_1^\top G_{[22]}Z_1)(Z_1^\top G_{[22]}Z_1)(Z_1^\top G_{[11]}^2Z_1)\\
	&\qquad\qquad + 2(n-1)\cdot (Z_1^\top G_{[22]}Z_1)(Z_1^\top G_{[22]}Z_2)(Z_1^\top G_{[11]}^2Z_2)\\
	&\qquad\qquad + (n-1)\cdot (Z_1^\top G_{[22]}Z_2)(Z_1^\top G_{[22]}Z_2)(Z_2^\top G_{[11]}^2Z_2)\\
	&\qquad\qquad + (n-1)(n-2)\cdot (Z_1^\top G_{[22]}Z_2)(Z_1^\top G_{[22]}Z_3)(Z_2^\top G_{[11]}^2Z_3)\bigg]\\
	&\lesssim n\cdot\bigg[1\cdot\pnorm{\Sigma_Y}{F}^4\pnorm{\Sigma_X^2}{F}^2 \\
	&\qquad\qquad+ n\cdot \big[\tr(\Sigma_{XY}\Sigma_Y^3\Sigma_{YX}\Sigma_X^3) + \pnorm{\Sigma_Y}{F}^2\cdot\tr(\Sigma_{XY}\Sigma_Y\Sigma_{YX}\Sigma_X^3)\big]\\
	&\qquad\qquad + n\cdot\pnorm{\Sigma_X^2}{F}^2\pnorm{\Sigma_Y}{F}^2 + n^2\cdot \tr(\Sigma_{XY}\Sigma_Y^3\Sigma_{YX}\Sigma_X^3)\bigg]\\
	&\asymp_M n\cdot\Big[\tau_Y^4\tau_X^2 + n\cdot\tau_Y^2\pnorm{\Sigma_{XY}}{F}^2 + n\tau_X^2\tau_Y^2 + n^2\pnorm{\Sigma_{XY}}{F}^2\Big]\\
	&\asymp n\cdot(n\vee \tau_Y^2)\cdot(\tau_X^2\tau_Y^2 + n\pnorm{\Sigma_{XY}}{F}^2).
	\end{align*}
	A similar calculation for $T_{12}$ yields that
	\begin{align*}
	\E\pnorm{T_{11}}{F}^2 \lesssim_M n\cdot(n\vee \tau_X^2)\cdot(\tau_X^2\tau_Y^2 + n\pnorm{\Sigma_{XY}}{F}^2),
	\end{align*}
	so we conclude by the Gaussian-Poincar\'e inequality that
	\begin{align*}
	\var(T_1)\lesssim_Mn^3\cdot(n\vee \tau_X^2\vee \tau_Y^2)\cdot(\tau_X^2\tau_Y^2 + n\pnorm{\Sigma_{XY}}{F}^2).
	\end{align*}
	\medskip
	
	\noindent (\textbf{Variance of $T_2$}) 
	By the Poincar\'e inequality, it is easy to show that $\var\big(\sum_{j=1}^n (X_j^\top \Sigma_{XY}Y_j)^2\big)$ is bounded by the desired order, so we redefine
	\begin{align*}
	T_2 \equiv (n-2)\sum_{j,k} (X_j^\top\Sigma_{XY}Y_k)(X_k^\top\Sigma_{XY}Y_j)
	\end{align*}
	with a slight abuse of notation.
	Note that 
	\begin{align*}
	T_2 = (n-2)\cdot \sum_{k,\ell}  (Z_k^\top G_{[12]}Z_\ell)(Z_k^\top G_{[21]}Z_\ell).
	\end{align*}
	Hence for any $i\in[n]$ and $j\in[p+q]$, 
	\begin{align*}
	\frac{\partial T_2}{\partial Z_{ij}} &= 2(n-2)\cdot \sum_{k} \Big[(e_j^\top G_{[12]}Z_k)(Z_i^\top G_{[21]}Z_k) + (Z_k^\top G_{[12]}e_j)(Z_k^\top G_{[21]}Z_i)\Big]\\
	&\equiv 2(n-2)\cdot \big[(T_{21})_{(ij)} + (T_{22})_{(ij)}\big].
	\end{align*}
	By direct calculation, we have
	\begin{align*}
	\E\pnorm{T_{21}}{F}^2 &= \sum_{i,j} \E\Big[Z_i^\top G_{[21]} \Big(\sum_{k} Z_kZ_k^\top\Big)G_{[21]}e_je_j^\top G_{[12]}\Big(\sum_{k} Z_kZ_k^\top\Big)G_{[12]}Z_i\Big]\\
	&= n \cdot \sum_{k_1,k_2} \E\Big[(Z_1^\top G_{[21]}Z_{k_1})(Z_1^\top G_{[21]}Z_{k_2})(Z_{k_1}^\top G_{[21]}G_{[12]}Z_{k_2})\Big]\\
	&= n\cdot \E \bigg[1\cdot (Z_1^\top G_{[21]}Z_1)(Z_1^\top G_{[21]}Z_1)(Z_1^\top G_{[21]}G_{[12]}Z_1)\\
	&\quad\quad + 2(n-1)\cdot (Z_1^\top G_{[21]}Z_1)(Z_1^\top G_{[21]}Z_2)(Z_1^\top G_{[21]}G_{[12]}Z_2)\\
	&\quad\quad + (n-1)\cdot (Z_1^\top G_{[21]}Z_2)(Z_1^\top G_{[21]}Z_2)(Z_2^\top G_{[21]}G_{[12]}Z_2)\\
	&\quad\quad + (n-1)(n-2)\cdot (Z_1^\top G_{[21]}Z_2)(Z_1^\top G_{[21]}Z_3)(Z_2^\top G_{[21]}G_{[12]}Z_3)\bigg]\\
	&\lesssim_M n\cdot\Big[\pnorm{\Sigma_{XY}}{F}^6 + n\cdot \pnorm{\Sigma_{XY}}{F}^4 + n^2\cdot\pnorm{\Sigma_{XY}}{F}^2\Big]\\
	&\asymp n\cdot\big(\pnorm{\Sigma_{XY}}{F}^6 + n^2\pnorm{\Sigma_{XY}}{F}^2\big).
	\end{align*}
	A similar bound holds for $T_{22}$.
	\medskip
	
	\noindent (\textbf{Variance of $T_3$}) Note that
	\begin{align*}
	T_3 &= \sum_{k=1}^n\Big[\sum_{\ell \neq k}X_\ell^\top(X_kY_k^\top-\Sigma_{XY})Y_\ell\Big]^2= \sum_{k=1}^n\Big[\sum_{\ell\neq k} Z_\ell^\top\Big(H_{[11]}Z_kZ_k^\top H_{[22]} - G_{[12]}\Big)Z_\ell\Big]^2.
	\end{align*}
	Hence for any $i\in[n]$ and $j\in[p+q]$, we have
	\begin{align*}
	2^{-1}\frac{\partial T_3}{\partial Z_{ij}} &= \sum_{k=1}^n \Big[\sum_{\ell\neq k} Z_\ell^\top\Big(H_{[11]}Z_kZ_k^\top H_{[22]} - G_{[12]}\Big)Z_\ell\Big]\\
	& \qquad\qquad \times\sum_{\ell\neq k}\Big[\delta_{i\ell}e_j^\top(H_{[11]}Z_kZ_k^\top H_{[22]} - G_{[12]})Z_\ell \\
	&\qquad\qquad\qquad\qquad + \delta_{i\ell}Z_\ell^\top(H_{[11]}Z_kZ_k^\top H_{[22]} - G_{[12]})e_j\\
	&\qquad\qquad\qquad\qquad +\delta_{ik}\cdot Z_\ell^\top(H_{[11]}e_jZ_k^\top H_{[22]}+H_{[11]}Z_ke_j^\top H_{[22]})Z_\ell\Big]\\
	&= \Big[\sum_{\ell\neq i}Z_\ell^\top\Big(H_{[11]}Z_iZ_i^\top H_{[22]} - G_{[12]}\Big)Z_\ell\Big]\cdot \Big[\sum_{\ell\neq i}(Z_\ell^\top H_{[11]}e_j)(Z_i^\top H_{[22]}Z_\ell)\Big]\\
	&+\Big[\sum_{\ell\neq i}Z_\ell^\top\Big(H_{[11]}Z_iZ_i^\top H_{[22]} - G_{[12]}\Big)Z_\ell\Big]\cdot \Big[\sum_{\ell\neq i}(Z_\ell^\top H_{[11]}Z_i)(e_j^\top H_{[22]}Z_\ell)\Big]\\
	&+\sum_{k\neq i}\Big[\sum_{\ell\neq k}Z_\ell^\top\Big(H_{[11]}Z_kZ_k^\top H_{[22]} - G_{[12]}\Big)Z_\ell\Big]\cdot \Big[e_j^\top\Big(H_{[11]}Z_kZ_k^\top H_{[22]} - G_{[12]}\Big)Z_i\Big]\\
	&+\sum_{k\neq i}\Big[\sum_{\ell\neq k}Z_\ell^\top\Big(H_{[11]}Z_kZ_k^\top H_{[22]} - G_{[12]}\Big)Z_\ell\Big]\cdot \Big[Z_i^\top\Big(H_{[11]}Z_kZ_k^\top H_{[22]} - G_{[12]}\Big)e_j\Big]\\
	&\equiv (T_{31})_{(ij)} + (T_{32})_{(ij)} + (T_{33})_{(ij)} + (T_{34})_{(ij)}.
	\end{align*}
	For $T_{31}$, we have
	\begin{align*}
	&\E\pnorm{T_{31}}{F}^2 = n\cdot \E\Big[\sum_{\ell\neq 1}Z_\ell^\top\Big(H_{[11]}Z_1Z_1^\top H_{[22]} - G_{[12]}\Big)Z_\ell\Big]^2\cdot \biggpnorm{\sum_{\ell\neq 1}(Z_1^\top H_{[22]}Z_\ell)H_{[11]}Z_\ell}{}^2\\
	&\leq n\cdot\E^{1/2}\Big[\sum_{\ell\neq 1}Z_\ell^\top\Big(H_{[11]}Z_1Z_1^\top H_{[22]} - G_{[12]}\Big)Z_\ell\Big]^4 \cdot \E^{1/2}\biggpnorm{\sum_{\ell\neq 1}(Z_1^\top H_{[22]}Z_\ell)H_{[11]}Z_\ell}{}^4.
	\end{align*}
	The two terms can be bounded as follows:
	\begin{itemize}
		\item Let $U_1\equiv H_{[11]}Z_1Z_1^\top H_{[22]} - G_{[12]}$. By first conditioning on $Z_1$, we have
		\begin{align}\label{ineq:estimate_T31_1}
		\E\Big[\sum_{\ell\neq 1}Z_\ell^\top U_1Z_\ell\Big]^4 &\lesssim \E \big[n\cdot\tr(U_1) + n^{1/2}\cdot \pnorm{U_1}{F}\big]^4\nonumber\\
		&\lesssim_M n^4\cdot\pnorm{\Sigma_{XY}}{F}^4 + n^2\cdot \pnorm{\Sigma_X}{F}^4\pnorm{\Sigma_Y}{F}^4,
		\end{align}
		where the last inequality follows from Lemma \ref{lem:U_property}-(4).
		\item Let $M_{-1}\equiv \sum_{k\neq 1}Z_kZ_k^\top$. Then by first taking expectation with respect to $Z_1$, we have
		\begin{align*}
		&\E\biggpnorm{\sum_{\ell\neq 1}(Z_1^\top H_{[22]}Z_\ell)H_{[11]}Z_\ell}{}^4 \asymp \E\tr^2\Big(H_{[22]}M_{-1}H_{[11]}^2M_{-1}H_{[22]}\Big)\\
		&= \E\bigg[\sum_{k_1,k_2\neq 1} (Z_{k_1}^\top H_{[11]}^2Z_{k_2}) (Z_{k_1}^\top H_{[22]}^2Z_{k_2})\bigg]^2\\
		&= \E\sum_{k_1,k_2,k_3,k_4\neq 1}(Z_{k_1}^\top H_{[11]}^2Z_{k_2}) (Z_{k_1}^\top H_{[22]}^2Z_{k_2})(Z_{k_3}^\top H_{[11]}^2Z_{k_4}) (Z_{k_3}^\top H_{[22]}^2Z_{k_4}).
		\end{align*}
		\begin{enumerate}
			\item[(i)] When $k_1$-$k_4$ take one distinct value, there are up to $n$ such terms, and each term has contribution
			\begin{align*}
			&\E(Z_1^\top H_{[11]}^2Z_1)^2(Z_1^\top H_{[22]}^2Z_1)^2 \lesssim \E^{1/2}(Z_1^\top H_{[11]}^2Z_1)^4\cdot  \E^{1/2}(Z_1^\top H_{[22]}^2Z_1)^4\\
			&\lesssim \pnorm{H_{[11]}}{F}^4\pnorm{H_{[22]}}{F}^4 = \pnorm{\Sigma_X}{F}^4\pnorm{\Sigma_Y}{F}^4.
			\end{align*}
			\item[(ii)] When $k_1$-$k_4$ take two distinct values, there are up to $n^2$ such terms, and each term has contribution
			\begin{align*}
			&\E\Big[(Z_1^\top H_{[11]}^2Z_1)(Z_1^\top H_{[22]}^2Z_1)(Z_1^\top H_{[11]}^2Z_2)(Z_1^\top H_{[22]}^2Z_2)\Big]\\
			&\quad = \E\Big[(Z_1^\top H_{[11]}^2Z_1)(Z_1^\top H_{[22]}^2Z_1)(Z_1^\top H_{[11]}^2H_{[22]}^2Z_1)\Big]\\
			&\leq \E^{1/4}(Z_1^\top H_{[11]}^2Z_1)^4 \cdot \E^{1/4}(Z_1^\top H_{[22]}^2Z_1)^4\cdot \E^{1/2}(Z_1^\top H_{[11]}^2H_{[22]}^2Z_1)^2\\
			&\lesssim \pnorm{H_{[11]}}{F}^2\cdot\pnorm{H_{[22]}}{F}^2\cdot\big[\tr(H_{[11]}^2H_{[22]}^2) + \pnorm{H_{[11]}^2H_{[22]}^2}{F}\big]\\
			&\lesssim_M \pnorm{\Sigma_X}{F}^2\cdot \pnorm{\Sigma_Y}{F}^2 \cdot (\pnorm{\Sigma_{XY}}{F}^2 + \pnorm{\Sigma_{XY}}{F}).
			\end{align*}
			\item[(iii)] When $k_1$-$k_4$ take three distinct values in the form of $(k_1 = k_2)\neq k_3 \neq k_4$, there are up $n^3$ such terms, and each term has contribution
			\begin{align*}
			&\E\Big[(Z_1^\top H_{[11]}^2Z_1)(Z_1^\top H_{[22]}^2Z_1)(Z_2^\top H_{[11]}^2Z_3)(Z_3^\top H_{[22]}^2Z_2)\Big]\\
			&= \E\Big[(Z_1^\top H_{[11]}^2Z_1)(Z_1^\top H_{[22]}^2Z_1)\Big] \cdot \tr\big(H_{[11]}^2H_{[22]}^2\big)\\
			&\asymp \pnorm{H_{[11]}}{F}^2\pnorm{H_{[22]}}{F}^2\tr\big(H_{[11]}^2H_{[22]}^2\big)\lesssim_M \pnorm{\Sigma_X}{F}^2\pnorm{\Sigma_Y}{F}^2\pnorm{\Sigma_{XY}}{F}^2.
			\end{align*}
			\item[(iv)] When $k_1$-$k_4$ take three distinct values in the form of $(k_1 = k_3)\neq k_2\neq k_4$, there are up to $n^3$ such terms, and each term has contribution
			\begin{align*}
			&\E\Big[(Z_1^\top H_{[11]}^2Z_2)(Z_1^\top H_{[22]}^2Z_2)(Z_1^\top H_{[11]}^2Z_3)(Z_1^\top H_{[22]}^2Z_3)\Big]\\
			&= \E\big(Z_1^\top H_{[11]}^2H_{[22]}^2 Z_1\big)^2 \asymp_M \pnorm{\Sigma_{XY}}{F}^2 + \pnorm{\Sigma_{XY}}{F}^4.
			\end{align*}
			\item[(v)] When $k_1$-$k_4$ take four distinct values, there are up to $n^4$ such terms, and each term has contribution
			\begin{align*}
			&\E\Big[(Z_1^\top H_{[11]}^2Z_2)(Z_1^\top H_{[22]}^2Z_2)(Z_3^\top H_{[11]}^2Z_4)(Z_3^\top H_{[22]}^2Z_4)\Big]\\
			&= \Big[\E Z_1^\top H_{[11]}^2H_{[22]}^2 Z_1\Big]^2 = \tr^2\big(H_{[11]}^2H_{[22]}^2\big) \lesssim_M \pnorm{\Sigma_{XY}}{F}^4.
			\end{align*}
		\end{enumerate}
		Putting together estimates (i)-(v) yields that
		\begin{align}\label{ineq:estimate_T31_2}
		\notag&\E\biggpnorm{\sum_{\ell\neq 1}(Z_1^\top H_{[22]}Z_\ell)H_{[11]}Z_\ell}{}^4\\
		\notag&\lesssim_M n\cdot \tau_X^4\tau_Y^4 + n^2 \cdot  \tau_X^2\tau_Y^2\pnorm{\Sigma_{XY}}{F} + n^3\cdot \tau_X^2\tau_Y^2\pnorm{\Sigma_{XY}}{F}^2 + n^4\cdot \pnorm{\Sigma_{XY}}{F}^4\\
		&\lesssim n^2\cdot\tau_X^4\tau_Y^4 +  n^4\cdot \pnorm{\Sigma_{XY}}{F}^4.
		\end{align}
		The last inequality follows since (1) when the second term dominates the third term $n\pnorm{\Sigma_{XY}}{F}\lesssim 1$, the overall bound is dominated by the first term; (2) when the third term dominates the second term, the overall bound reduces to the sum of the first, third and fourth terms that can be written as a complete square.
	\end{itemize}
	Combining (\ref{ineq:estimate_T31_1}) and (\ref{ineq:estimate_T31_2}) yields that
	\begin{align}\label{ineq:T31}
	T_{31}&\lesssim_M n^3\cdot (n\pnorm{\Sigma_{XY}}{F}^2 + \pnorm{\Sigma_X}{F}^2\pnorm{\Sigma_Y}{F}^2)^2.
	\end{align}
	A similar bound holds for $T_{32}$.
	
	Next we bound $T_{33}$. For any $k\in[n]$, let $U_k\equiv H_{[11]}Z_kZ_k^\top H_{[22]} - G_{[12]}$ be a centered random matrix. Then by definition, we have
	\begin{align*}
	\E\pnorm{T_{33}}{F}^2 &= n\cdot \sum_{k_1,k_2\neq 1}\E\Big[ \Big(\sum_{\ell\neq k_1}Z_\ell^\top U_{k_1}Z_\ell\Big)\cdot\Big(\sum_{\ell\neq k_2}Z_\ell^\top U_{k_2}Z_\ell\Big)\cdot (Z_1^\top U_{k_1}^\top U_{k_2} Z_1)\Big]\\
	&=n\cdot \sum_{\ell_1,\ell_2=1}\sum_{k_1\notin\{\ell_1,1\},k_2\notin\{\ell_2,1\}}\E \big(Z_{\ell_1}^\top U_{k_1}Z_{\ell_1}\big)(Z_{\ell_2}^\top U_{k_2}Z_{\ell_2})(Z_1^\top U_{k_1}^\top U_{k_2} Z_1).
	\end{align*}
	We now divide the analysis of $T_{33}$ into the following four cases: $\ell_1 = \ell_2 = 1$ (Case 1), $\ell_1 = 1$, $\ell_2 \neq 1$ (Case 2), $(\ell_1=\ell_2)\neq 1$ (Case 3), $\ell_1\neq \ell_2\neq 1$ (Case 4).

	\begin{itemize}
		\item When $\ell_1 = \ell_2 = 1$, 
		\begin{align*}
		S_1&\equiv \sum_{k_1\neq 1,k_2\neq 1}\E \big(Z_1^\top U_{k_1}Z_1\big)(Z_1^\top U_{k_2}Z_1)(Z_1^\top U_{k_1}^\top U_{k_2} Z_1)\\
		&\asymp n\cdot \E(Z_1^\top U_2Z_1)^2\cdot(Z_1^\top U_2^\top U_2 Z_1) + n^2\cdot \E \big(Z_1^\top U_2Z_1\big)(Z_1^\top U_3Z_1)(Z_1^\top U_2^\top U_3 Z_1).
		\end{align*}
		\begin{enumerate}
			\item[(i)] By Lemma \ref{lem:U_property},
			\begin{align*}
			&\E(Z_1^\top U_2Z_1)^2\cdot(Z_1^\top U_2^\top U_2 Z_1) \leq \E^{1/2} (Z_1^\top U_2Z_1)^4 \cdot \E^{1/2}(Z_1^\top U_2^\top U_2 Z_1)^2\\
			&\lesssim \E\big(\tr(U_2) + \pnorm{U_2}{F}\big)^2 \cdot \pnorm{U_2}{F}^2 \lesssim \E\big[ \tr(U_2)^4 + \pnorm{U_2}{F}^4\big]\lesssim_M \pnorm{\Sigma_X}{F}^4\pnorm{\Sigma_Y}{F}^4.
			\end{align*}
			\item[(ii)] We have
			\begin{align}\label{ineq:var_T33_0}
			&\E \big(Z_1^\top U_2Z_1\big)(Z_1^\top U_3Z_1)(Z_1^\top U_2^\top U_3 Z_1)\nonumber\\
			& \leq \E^{1/4}\big(Z_1^\top U_2Z_1\big)^4\cdot \E^{1/4}(Z_1^\top U_3Z_1)\cdot \E^{1/2}(Z_1^\top U_2^\top U_3 Z_1)^2\nonumber\\
			&\lesssim \E \big(|\tr(U_2)| + \pnorm{U_2}{F}\big)\cdot \E\big(|\tr(U_3)| + \pnorm{U_2}{F}\big)\cdot \E\big(|\tr(U_2U_3)| + \pnorm{U_2U_3^\top}{F}\big)\nonumber\\
			&\lesssim (\pnorm{\Sigma_X}{F}\pnorm{\Sigma_Y}{F})^2\cdot \E[\pnorm{U_2}{F}\pnorm{U_3}{F}]\lesssim_M \pnorm{\Sigma_X}{F}^4\pnorm{\Sigma_Y}{F}^4.
			\end{align}
		\end{enumerate}
		Combining (i) and (ii) yields that in this first case,
		\begin{align}\label{ineq:var_T33_1}
		S_1	\lesssim_M n^2\pnorm{\Sigma_X}{F}^4\pnorm{\Sigma_Y}{F}^4.
		\end{align}
		\item When $\ell_1 = 1$ and $\ell_2\neq 1$, we have (by letting $\ell_2 = 2$)
		\begin{align*}
		S_2&\equiv \sum_{k_1\neq 1, k_2\notin \{1,2\}}\E (Z_1^\top U_{k_1}Z_1)(Z_2^\top U_{k_2}Z_2)(Z_1^\top U_{k_1}^\top U_{k_2} Z_1)\\
		&= \sum_{k_2\notin\{1,2\}} \E (Z_1^\top U_2Z_1)(Z_2^\top U_{k_2}Z_2)(Z_1^\top U_2^\top U_{k_2} Z_1)\\
		&\quad\quad +\sum_{k_1,k_2\notin\{1,2\}} \E \big(Z_1^\top U_{k_1}Z_1\big)(Z_2^\top U_{k_2}Z_2)(Z_1^\top U_{k_1}^\top U_{k_2} Z_1)\\
		&\asymp n\cdot \E (Z_1^\top U_2Z_1)(Z_2^\top U_3Z_2)(Z_1^\top U_2^\top U_3 Z_1)\\
		&\quad\quad\quad +n\cdot  \E \big(Z_1^\top U_3Z_1\big)(Z_2^\top U_3Z_2)(Z_1^\top U_3^\top U_3 Z_1)\\
		&\quad\quad\quad +n^2 \cdot \E \big(Z_1^\top U_3Z_1\big)(Z_2^\top U_4Z_2)(Z_1^\top U_3^\top U_4 Z_1).
		\end{align*}
		\begin{enumerate}
			\item[(i)] Using similar argument as in the previous case, we have
			\begin{align*}
			\E (Z_1^\top U_2Z_1)(Z_2^\top U_3Z_2)(Z_1^\top U_2^\top U_3 Z_1) &\lesssim_M \pnorm{\Sigma_X}{F}^4\pnorm{\Sigma_Y}{F}^4,\\
			\E\big(Z_1^\top U_3Z_1\big)(Z_2^\top U_3Z_2)(Z_1^\top U_3^\top U_3 Z_1) &\lesssim_M \pnorm{\Sigma_X}{F}^4\pnorm{\Sigma_Y}{F}^4.
			\end{align*}
			\item[(ii)] For the third summand, by taking expectation with respect to $Z_2$ and then $Z_1$, we have
			\begin{align*}
			&\E \big(Z_1^\top U_3Z_1\big)(Z_2^\top U_4Z_2)(Z_1^\top U_3^\top U_4 Z_1) = \E(Z_1^\top U_3Z_1)(Z_1^\top U_3^\top \tr(U_4)U_4 Z_1)\\
			&= \E\tr(U_3^2 \cdot \tr(U_4)U_4^\top) + \E\tr(U_3 U_3^\top  \tr(U_4)U_4) + \E \tr(U_3)\tr(U_4)\tr(U_3^\top U_4)\\
			&\lesssim_M \tr\Big[\E (U_3^2) \cdot \E\big(\tr(U_4)U_4^\top\big)\Big] + \tr\Big[\E (U_3 U_3^\top) \cdot \E\big(\tr(U_4)U_4\big)\Big] + \pnorm{\Sigma_{XY}}{F}^2.
			\end{align*}
			The last line follows from Lemma \ref{lem:U_property}-(6). By Lemma \ref{lem:U_property}-(1)(2)(3), we have
			\begin{align*}
			&\tr\Big[\E (U_3^2) \cdot \E\big(\tr(U_4)U_4^\top\big)\Big]\\
			&= \tr\Big[\Big(G_{[11]}G_{[22]} + \tr(G_{[12]})G_{[12]} + \kappa H_{[11]}(I\circ G_{[21]})H_{[22]}\Big)\\
			&\quad\quad\quad \cdot \Big(G_{[22]}G_{[11]} + G_{[21]}^2 + \kappa H_{[22]}(I\circ G_{[21]})H_{[11]}\Big)\Big]\\
			&= \tr(G_{[11]}G_{[22]}G_{[22]}G_{[11]}) + \tr(G_{[11]}G_{[22]}G_{[21]}^2)\\
			&\quad\quad\quad+ \tr(G_{[12]})\tr(G_{[12]}G_{[22]}G_{[11]}) + \tr(G_{[12]})\tr(G_{[12]}G_{[21]}^2)\\
			&\quad\quad\quad + \kappa\Big[\tr\big((G_{[11]}G_{[22]} + \tr(G_{[12]})G_{[12]})\cdot H_{[22]}(I\circ G_{[21]})H_{[11]}\big)\\
			&\quad\quad\quad + \tr\big((G_{[22]}G_{[11]} + G_{[21]}^2)\cdot H_{[11]}(I\circ G_{[21]})H_{[22]}\big)\Big]\\
			&\quad\quad\quad +\kappa^2\tr\big(H_{[11]}(I\circ G_{[21]})H_{[22]}\cdot H_{[22]}(I\circ G_{[21]})H_{[11]}\big)\\
			&\lesssim_M \pnorm{\Sigma_{XY}}{F}^2 + \pnorm{\Sigma_{XY}\Sigma_{YX}}{F}^2 + \pnorm{\Sigma_{XY}}{F}^4 + \pnorm{\Sigma_{XY}}{F}^2\pnorm{\Sigma_{XY}\Sigma_{YX}}{F}^2 + \pnorm{\Sigma_{XY}}{F}^2 + \pnorm{\Sigma_{XY}}{F}^4\\
			&\lesssim \pnorm{\Sigma_{XY}}{F}^2(1\vee \pnorm{\Sigma_{XY}}{F}^2),
			\end{align*} 
			and
			\begin{align*}
			&\tr\Big[\E (U_3 U_3^\top) \cdot \E\big(\tr(U_4)U_4\big)\Big]\\
			&= \tr\Big[\Big(G_{[12]}G_{[21]} + G_{[11]}\tr(G_{[22]}) + \kappa H_{[11]}(I\circ G_{[22]})H_{[11]}\Big)\\
			&\quad\quad\quad \cdot \Big(G_{[11]}G_{[22]} + G_{[12]}^2 + \kappa H_{[11]}(I\circ G_{[12]})H_{[22]}\Big)\Big]\\
			&= \tr(G_{[12]}G_{[21]}G_{[11]}G_{[22]}) + \tr(G_{[21]}G_{[12]}^3)\\
			&\quad\quad\quad + \tr(G_{[22]})\tr(G_{[11]}^2G_{[22]}) + \tr(G_{[22]})\tr(G_{[11]}G_{[12]}^2)\\
			&\quad\quad\quad + \kappa\Big[\tr\big((G_{[12]}G_{[21]} + G_{[11]}\tr(G_{[22]}))\cdot H_{[11]}(I\circ G_{[12]})H_{[22]}\big)\\
			&\quad\quad\quad + \tr\big((G_{[11]}G_{[22]} + G_{[12]}^2)\cdot H_{[11]}(I\circ G_{[22]})H_{[11]}\big)\Big]\\
			&\quad\quad\quad + \kappa^2 \cdot \tr(H_{[11]}(I\circ G_{[12]})H_{[22]}\cdot H_{[11]}(I\circ G_{[22]})H_{[11]})\\
			&\asymp_M \pnorm{\Sigma_{XY}\Sigma_{YX}}{F}^2 + \tr\big[(\Sigma_{XY}\Sigma_{YX})^3\big] + \pnorm{\Sigma_Y}{F}^2\pnorm{\Sigma_{XY}}{F}^2 +  \pnorm{\Sigma_Y}{F}^2\pnorm{\Sigma_{XY}\Sigma_{YX}}{F}^2 + \pnorm{\Sigma_Y}{F}^2\pnorm{\Sigma_{XY}}{F}^2\\
			&\asymp_M \pnorm{\Sigma_Y}{F}^2\pnorm{\Sigma_{XY}}{F}^2.
			\end{align*}
			Using Lemma \ref{lem:SigmaXY_F_bound}, we have
			\begin{align*}
			&\E \big(Z_1^\top U_3Z_1\big)(Z_2^\top U_4Z_2)(Z_1^\top U_3U_4^\top Z_1) \\
			&= \E(Z_1^\top U_3Z_1)(Z_1^\top U_3\tr(U_4)U_4^\top Z_1) \lesssim_M (\pnorm{\Sigma_X}{F}^2+\pnorm{\Sigma_Y}{F}^2)\pnorm{\Sigma_{XY}}{F}^2.
			\end{align*}
		\end{enumerate}
		Combining (i) and (ii) yields that in this second case,
		\begin{align}\label{ineq:var_T33_2}
		S_2
		&\lesssim_M n\cdot\pnorm{\Sigma_X}{F}^4\pnorm{\Sigma_Y}{F}^4 + n^2(\pnorm{\Sigma_X}{F}^2+\pnorm{\Sigma_Y}{F}^2)\pnorm{\Sigma_{XY}}{F}^2.
		\end{align}
		A similar estimate holds for the case $\ell_1\neq 1,\ell_2=1$. 
		\item When $(\ell_1=\ell_2) \neq 1$, we have (letting $\ell_1 = \ell_2 = 2$)
		\begin{align*}
		S_3&\equiv \sum_{k_1,k_2\notin \{1,2\}}\E \big(Z_2^\top U_{k_1}Z_2\big)(Z_2^\top U_{k_2}Z_2)(Z_1^\top U_{k_1}^\top U_{k_2} Z_1)\\
		&\asymp n\cdot \E \big(Z_2^\top U_3Z_2\big)^2(Z_1^\top U_3^\top U_3 Z_1) + n^2\cdot \E \big(Z_2^\top U_3Z_2\big)(Z_2^\top U_4Z_2)(Z_1^\top U_3^\top U_4 Z_1)\\
		&= n\cdot \E(Z_2^\top U_3 Z_2)^2\pnorm{U_3}{F}^2 + n^2\E(Z_2^\top U_3Z_2)(Z_2^\top U_4Z_2)\tr(U_3^\top U_4)\\
		&\asymp n\cdot\big[\E\pnorm{U_3}{F}^4 + \E\tr^2(U_3)\pnorm{U_3}{F}^2\big]\\
		&\quad\quad\quad+ n^2\cdot \Big[\E\tr^2(U_3U_4^\top) + \E\tr(U_3U_4)\tr(U_3U_4^\top)  + \E\tr(U_3)\tr(U_4)\tr(U_3^\top U_4)\Big].
		\end{align*}
		By Lemma \ref{lem:U_property} and Lemma \ref{lem:SigmaXY_F_bound},
		\begin{align*}
		\E\pnorm{U_3}{F}^4 + \E\tr^2(U_3)\pnorm{U_3}{F}^2&\lesssim_M \pnorm{\Sigma_X}{F}^4\pnorm{\Sigma_Y}{F}^4+\pnorm{\Sigma_{XY}}{F}^2 \pnorm{\Sigma_X}{F}^2\pnorm{\Sigma_Y}{F}^2\\
		&\asymp_M \pnorm{\Sigma_X}{F}^4\pnorm{\Sigma_Y}{F}^4,
		\end{align*}
		and 
		\begin{align*}
		&\E\tr^2(U_3U_4^\top) + \E\tr(U_3U_4)\tr(U_3U_4^\top)  + \E\tr(U_3)\tr(U_4)\tr(U_3^\top U_4)\\
		&\lesssim_M \pnorm{\Sigma_X}{F}^2\pnorm{\Sigma_Y}{F}^2+ \pnorm{\Sigma_X}{F}\pnorm{\Sigma_Y}{F}\pnorm{\Sigma_{XY}}{F}^2+\pnorm{\Sigma_{XY}}{F}^2\asymp \pnorm{\Sigma_X}{F}^2\pnorm{\Sigma_Y}{F}^2.
		\end{align*}
		Consequently, 
		\begin{align}\label{ineq:var_T33_3}
		S_3
		&\asymp_M n\pnorm{\Sigma_X}{F}^4\pnorm{\Sigma_Y}{F}^4 + n^2\pnorm{\Sigma_X}{F}^2\pnorm{\Sigma_Y}{F}^2.
		\end{align}
		\item When $\ell_1,\ell_2\neq 1$ and $\ell_1\neq \ell_2$, we have (letting $\ell_1 = 2$, $\ell_2 =3$)
		\begin{align*}
		S_4&\equiv \sum_{k_1\notin\{1,2\},k_2\notin\{1,3\}}\E \big(Z_2^\top U_{k_1}Z_2\big)(Z_3^\top U_{k_2}Z_3)(Z_1^\top U_{k_1}^\top U_{k_2} Z_1)\\
		&= \Big[\sum_{k_1=3,k_2=2} +\sum_{k_1=3,k_2\notin\{1,2,3\}}+\sum_{k_1\notin\{1,2,3\},k_2=2} +\sum_{k_1,k_2\notin\{1,2,3\}}\Big](\cdots)\\
		&\asymp \E \big(Z_2^\top U_3Z_2\big)(Z_3^\top U_2Z_3)(Z_1^\top U_3^\top U_2 Z_1) + n\cdot \E \big(Z_2^\top U_3Z_2\big)(Z_3^\top U_4Z_3)(Z_1^\top U_3^\top U_4 Z_1)\\
		&\qquad 
		+ n\cdot \E \big(Z_2^\top U_4Z_2\big)(Z_3^\top U_2Z_3)(Z_1^\top U_4^\top U_2 Z_1) \\
		&\qquad +n^2\cdot \E \big(Z_2^\top U_4Z_2\big)(Z_3^\top U_5Z_3)(Z_1^\top U_4^\top U_5 Z_1)\equiv \sum_{\ell} S_{4,\ell}.
		\end{align*}
		\begin{enumerate}
			\item[(i)] By (\ref{ineq:var_T33_0}), $S_{4,1}\lesssim_M \pnorm{\Sigma_X}{F}^4\pnorm{\Sigma_Y}{F}^4$. 
			\item[(ii)] By first taking expectation with respect to $Z_1$ and $Z_2$, we have by Lemma \ref{lem:U_property}-(4)(5)
			\begin{align*}
			S_{4,2}/n& = \E \tr(U_3)(Z_3^\top U_4Z_3)\tr(U_3^\top U_4)\\
			&\leq \E^{1/4}\tr^4(U_3)\cdot \E^{1/4}(Z_3^\top U_4Z_3)^4 \cdot \E^{1/2}\tr^2(U_3^\top U_4)\\
			&\lesssim_M  \pnorm{\Sigma_{XY}}{F} \pnorm{\Sigma_X}{F}^2\pnorm{\Sigma_Y}{F}^2 \lesssim (1\vee\pnorm{\Sigma_{XY}}{F}^2) \pnorm{\Sigma_X}{F}^2\pnorm{\Sigma_Y}{F}^2.
			\end{align*}
			\item[(iii)] By taking expectation with respect to $Z_1,Z_3$, a similar argument as in (ii) yields that
			\begin{align*}
			S_{4,3}/n &= \E(Z_2^\top U_4Z_2)\tr(U_2)\tr(U_4^\top U_2)\lesssim_M (1\vee\pnorm{\Sigma_{XY}}{F}^2) \pnorm{\Sigma_X}{F}^2\pnorm{\Sigma_Y}{F}^2
			\end{align*}
			\item[(iv)] By taking expectation with respect to $Z_1,Z_2,Z_3$, we have by Lemma \ref{lem:U_property}-(6)
			\begin{align*}
			S_{4,4}/n^2 = \E\tr(U_4)\tr(U_5)\tr(U_4^\top U_5) \lesssim_M \pnorm{\Sigma_{XY}}{F}^2.
			\end{align*}
		\end{enumerate}
		Putting together the estimates in (i)-(iv), we have in this case
		\begin{align}\label{ineq:var_T33_4}
		S_4
		&\lesssim_M \pnorm{\Sigma_X}{F}^4\pnorm{\Sigma_Y}{F}^4 + n\cdot (1\vee\pnorm{\Sigma_{XY}}{F}^2)\pnorm{\Sigma_X}{F}^2\pnorm{\Sigma_Y}{F}^2 + n^2\cdot\pnorm{\Sigma_{XY}}{F}^2.
		\end{align}
	\end{itemize}
	Finally combine (\ref{ineq:var_T33_1}) - (\ref{ineq:var_T33_4}) (and recall the notation $S_1$-$S_4$ defined at the end of each case) to conclude the bound for $\E\pnorm{T_{33}}{F}^2$:
	\begin{align*}
	\E\pnorm{T_{33}}{F}^2 &\lesssim n\cdot S_1 + n^2\cdot S_2 + n^2\cdot S_3 + n^3\cdot S_4\\
	&\lesssim_M n\cdot \big[ n^2\pnorm{\Sigma_X}{F}^4\pnorm{\Sigma_Y}{F}^4\big]\\
	&\quad\quad + n^2\cdot\big[n\cdot\pnorm{\Sigma_X}{F}^4\pnorm{\Sigma_Y}{F}^4 + n^2(\pnorm{\Sigma_X}{F}^2+\pnorm{\Sigma_Y}{F}^2)\pnorm{\Sigma_{XY}}{F}^2\big]\\
	&\quad\quad + n^2\cdot\big[ n\pnorm{\Sigma_X}{F}^4\pnorm{\Sigma_Y}{F}^4 + n^2\pnorm{\Sigma_X}{F}^2\pnorm{\Sigma_Y}{F}^2\big]\\
	&\quad\quad + n^3\cdot\big[\pnorm{\Sigma_X}{F}^4\pnorm{\Sigma_Y}{F}^4 + n\cdot (1\vee\pnorm{\Sigma_{XY}}{F}^2) \pnorm{\Sigma_X}{F}^2\pnorm{\Sigma_Y}{F}^2 + n^2\cdot \pnorm{\Sigma_{XY}}{F}^2\big]\\
	&\asymp n^3\cdot \pnorm{\Sigma_X}{F}^4\pnorm{\Sigma_Y}{F}^4 + n^4\cdot(1\vee\pnorm{\Sigma_{XY}}{F}^2)\pnorm{\Sigma_X}{F}^2\pnorm{\Sigma_Y}{F}^2 + n^5\cdot \pnorm{\Sigma_{XY}}{F}^2.
	\end{align*}
	A similar bound holds for $T_{34}$. The above bound dominates the bound (\ref{ineq:T31}) for $T_{31}$ and $T_{32}$ and hence is a valid bound for $T_3$.
	\medskip
	
	\noindent (\textbf{Variance of $T_4$ and $T_5$}) By direct calculation, we have
	\begin{align*}
	\var(T_4) \asymp n^5\var\big(X_1^\top \Sigma_X\Sigma_{XY}\Sigma_Y Y_1\big)\equiv n^5\var(U).
	\end{align*}
	Then
	\begin{align*}
	\E \pnorm{\nabla_{X_1} (X_1^\top \Sigma_{X}\Sigma_{XY}\Sigma_{Y} Y_1)}{}^2 = \E \pnorm{\nabla_{X_1} (\Sigma_{X}\Sigma_{XY}\Sigma_{Y} Y_1)}{}^2\lesssim_M \pnorm{\Sigma_{XY}}{F}^2.
	\end{align*}
	A similar bound holds for the gradient with respect to $Y_1$. By Gaussian-Poincar\'e inequality and a change of variable, we then have
	\begin{align*}
	\var(T_4)\lesssim_M \pnorm{\Sigma_{XY}}{F}^2.
	\end{align*}
	A similar bound holds for $\var(T_5)$.
	\medskip
	
	\noindent (\textbf{Variance of $T_6$}) Recall the notation $U_k\equiv H_{[11]}Z_kZ_k^\top H_{[22]}-G_{[12]}$. Then
	\begin{align*}
	T_6 
	&= (n-1) \sum_{k\neq \ell} (Z_k^\top G_{[12]}Z_k - \pnorm{\Sigma_{XY}}{F}^2)\cdot (Z_\ell^\top U_\ell Z_\ell).
	\end{align*}
	Then for any $i\in[n]$ and $j\in[p+q]$, we have
	\begin{align*}
	\frac{\partial T_6}{\partial Z_{ij}} &= (n-1)\cdot \sum_{k\neq \ell} \bigg[\delta_{ki}(Z_k^\top G_{[12]}e_j + e_j^\top G_{[12]}Z_k)\cdot (Z_\ell^\top U_k Z_\ell)\\
	&\quad + \delta_{\ell i} (Z_k^\top G_{[12]}Z_k - \pnorm{\Sigma_{XY}}{F}^2)(e_j^\top U_k Z_\ell + Z_\ell^\top U_k e_j)\\
	&\quad + \delta_{ki} (Z_k^\top G_{[12]}Z_k - \pnorm{\Sigma_{XY}}{F}^2)\cdot Z_\ell^\top\big[H_{[11]}(Z_ke_j^\top + e_jZ_k^\top)H_{[22]}\big]Z_\ell\bigg]\\
	&= (n-1)\cdot \sum_{k\neq i} \bigg[(e_j^\top G_{[12]}Z_i)(Z_k^\top U_iZ_k) + (Z_i^\top G_{[12]}e_j)(Z_k^\top U_i Z_k)\\
	&\quad\quad+ (Z_k^\top G_{[12]}Z_k - \pnorm{\Sigma_{XY}}{F}^2)(e_j^\top U_kZ_i) + (Z_k^\top G_{[12]}Z_k - \pnorm{\Sigma_{XY}}{F}^2)(Z_i^\top U_ke_j)\\
	&\quad\quad+ (Z_i^\top G_{[12]}Z_i - \pnorm{\Sigma_{XY}}{F}^2)(Z_k^\top H_{[11]}Z_ie_j^\top H_{[22]}Z_k)\\
	&\quad\quad + (Z_i^\top G_{[12]}Z_i - \pnorm{\Sigma_{XY}}{F}^2)(Z_k^\top H_{[11]}e_jZ_i^\top H_{[22]}Z_k)\bigg]\\
	&\equiv (n-1)\cdot(T_{61} + T_{62} + T_{63} + T_{64} + T_{65} + T_{66})_{ij}.
	\end{align*}
	For $T_{61}$, we have
	\begin{align*}
	\E\pnorm{T_{61}}{F}^2 
	&= n\cdot \sum_{k_1,k_2\neq 1}(Z_{k_1}^\top U_1Z_{k_1})(Z_{k_2}^\top U_1Z_{k_2})(Z_1^\top G_{[21]}G_{[12]}Z_1)\\
	&\asymp n\cdot \Big[n\cdot \E(Z_2^\top U_1Z_2)^2(Z_1^\top G_{[12]}G_{[21]}Z_1)\\
	&\qquad\qquad + n^2\cdot \E(Z_2^\top U_1Z_2)(Z_3^\top U_1Z_3)(Z_1^\top G_{[12]}G_{[21]}Z_1)\Big].
	\end{align*}
	The two terms can be bounded as follows:
	\begin{itemize}
		\item By Lemma \ref{lem:U_property}-(4), we have
		\begin{align*}
		&\E(Z_2^\top U_1Z_2)^2(Z_1^\top G_{[12]}G_{[21]}Z_1) \leq \E^{1/2}(Z_2^\top U_1Z_2)^4\cdot \E^{1/2}(Z_1^\top G_{[12]}G_{[21]}Z_1)^2\\
		&\lesssim  \E \big[\tr^2(U_1) + \pnorm{U_1}{F}^2\big]\cdot \pnorm{G_{[12]}}{F}^2 \asymp_M \pnorm{\Sigma_X}{F}^2\pnorm{\Sigma_Y}{F}^2\pnorm{\Sigma_{XY}}{F}^2+\pnorm{\Sigma_{XY}}{F}^4.
		\end{align*}
		\item Again by Lemma \ref{lem:U_property}-(4), we have
		\begin{align*}
		&\E(Z_2^\top U_1Z_2)(Z_3^\top U_1Z_3)(Z_1^\top G_{[12]}G_{[21]}Z_1) = \E\tr(U_1)^2(Z_1^\top G_{[12]}G_{[21]}Z_1)\\
		&\leq \E^{1/2}\tr(U_1)^4\cdot \E^{1/2}(Z_1^\top G_{[12]}G_{[21]}Z_1)^2 \lesssim \pnorm{\Sigma_{XY}}{F}^2\cdot \pnorm{\Sigma_{XY}}{F}^2 = \pnorm{\Sigma_{XY}}{F}^4.
		\end{align*}
	\end{itemize}
	In summary we have
	\begin{align}\label{ineq:var_T6_1}
	(n-1)\cdot\E\pnorm{T_{61}}{F}^2 \lesssim_M n^3\cdot\pnorm{\Sigma_X}{F}^2\pnorm{\Sigma_Y}{F}^2\pnorm{\Sigma_{XY}}{F}^2 + n^4\cdot  \pnorm{\Sigma_{XY}}{F}^4.
	\end{align}
	A similar bound holds for $T_{62}$.
	
	For $T_{63}$, we have
	\begin{align*}
	\E\pnorm{T_{63}}{F}^2 &= \E\sum_i\biggpnorm{\sum_{k\neq i}(Z_k^\top G_{[12]}Z_k - \tr(G_{[12]}))U_kZ_i}{}^2\\
	&= n\cdot \sum_{k_1,k_2\neq 1}\E (Z_{k_1}^\top G_{[12]}Z_{k_1} - \tr(G_{[12]}))(Z_{k_2}^\top G_{[12]}Z_{k_2} - \tr(G_{[12]}))(Z_1^\top U_{k_1}^\top U_{k_2} Z_1)\\
	&\asymp n\cdot \Big[n\cdot \E (Z_2^\top G_{[21]}Z_2 - \tr(G_{[12]}))^2\pnorm{U_2}{F}^2\\
	&\quad\quad + n^2\cdot \E(Z_2^\top G_{[12]}Z_2 - \tr(G_{[12]}))(Z_3^\top G_{[12]}Z_3 - \tr(G_{[12]}))\tr(U_2^\top U_3)\Big]. 
	\end{align*}
	The two terms can be bounded as follows:
	\begin{itemize}
		\item By Lemma \ref{lem:U_property}-(4),
		\begin{align*}
		&\E (Z_2^\top G_{[21]}Z_2 - \tr(G_{[12]}))^2\pnorm{U_2}{F}^2 \leq \E^{1/2}(Z_2^\top G_{[21]}Z_2 - \tr(G_{[12]}))^4\cdot \E^{1/2}\pnorm{U_2}{F}^4\\
		&\lesssim_M \pnorm{G_{[12]}}{F}^2\cdot \pnorm{\Sigma_{X}}{F}^2\pnorm{\Sigma_Y}{F}^2 = \pnorm{\Sigma_{X}}{F}^2\pnorm{\Sigma_Y}{F}^2\pnorm{\Sigma_{XY}}{F}^2.
		\end{align*}
		\item By Lemma \ref{lem:U_property}-(4)(5), we have
		\begin{align*}
		&\E(Z_2^\top G_{[12]}Z_2 - \tr(G_{[12]}))(Z_3^\top G_{[12]}Z_3 - \tr(G_{[12]}))\tr(U_2^\top U_3)\Big]\\
		& \leq \E^{1/4}(Z_2^\top G_{[12]}Z_2 - \tr(G_{[12]}))^4\cdot \E^{1/4}(Z_3^\top G_{[12]}Z_3 - \tr(G_{[12]}))^4\cdot \E^{1/2}\tr^2(U_2^\top U_3)\\
		&\lesssim_M  \pnorm{\Sigma_X}{F}\pnorm{\Sigma_Y}{F}\pnorm{\Sigma_{XY}}{F}^2.
		\end{align*}
	\end{itemize}
	In summary, we have
	\begin{align}\label{ineq:var_T6_2}
	(n-1)\cdot \E\pnorm{T_{63}}{F}^2 &\lesssim_M n^4\cdot\pnorm{\Sigma_X}{F}^2\pnorm{\Sigma_Y}{F}^2\pnorm{\Sigma_{XY}}{F}^2.
	\end{align}
	A similar bound holds for $T_{64}$.
	
	For $T_{65}$, we have
	\begin{align*}
	\E\pnorm{T_{65}}{F}^2 &= \sum_i \E\biggpnorm{\sum_{k\neq i}(Z_i^\top G_{[12]}Z_i - \tr(G_{[12]}))(Z_k^\top H_{[11]}Z_i)H_{[22]}Z_k}{}^2\\
	&= n\cdot \sum_{k_1,k_2\neq 1}\E(Z_1^\top G_{[12]}Z_1 - \tr(G_{[12]}))^2(Z_{k_1}^\top H_{[11]}Z_1)(Z_{k_2}^\top H_{[11]}Z_1)(Z_{k_1}^\top G_{[22]}Z_{k_2})\\
	&\asymp n\cdot\Big[n\cdot \E(Z_1^\top G_{[12]}Z_1 - \tr(G_{[12]}))^2(Z_2^\top H_{[11]}Z_1)^2(Z_2^\top G_{[22]}Z_2)\\
	&\quad\quad + n^2\cdot \E(Z_1^\top G_{[12]}Z_1 - \tr(G_{[12]}))^2(Z_2^\top H_{[11]}Z_1)(Z_3^\top H_{[11]}Z_1)(Z_2^\top G_{[22]}Z_3)\Big].
	\end{align*}
	The two terms can be bounded as follows
	\begin{itemize}
		\item By first taking expectation with respect to $Z_2$, we have
		\begin{align*}
		&\E(Z_1^\top G_{[12]}Z_1 - \tr(G_{[12]}))^2(Z_2^\top H_{[11]}Z_1)^2(Z_2^\top G_{[22]}Z_2)\\
		&\asymp \E(Z_1^\top G_{[12]}Z_1 - \tr(G_{[12]}))^2(Z_1^\top G_{[11]}Z_1)\tr(G_{[22]})\\
		&\leq \pnorm{\Sigma_Y}{F}^2\cdot \E^{1/2}(Z_1^\top G_{[12]}Z_1 - \tr(G_{[12]}))^4\cdot \E^{1/2}(Z_1^\top G_{[11]}Z_1)^2\\
		&\lesssim_M \pnorm{\Sigma_X}{F}^2\pnorm{\Sigma_Y}{F}^2\pnorm{\Sigma_{XY}}{F}^2.
		\end{align*}
		\item By taking expectation with respect to $Z_2,Z_3$, we have
		\begin{align*}
		&\E(Z_1^\top G_{[12]}Z_1 - \tr(G_{[12]}))^2(Z_2^\top H_{[11]}Z_1)(Z_3^\top H_{[11]}Z_1)(Z_2^\top G_{[22]}Z_3)\Big]\\
		&\leq \E^{1/2}(Z_1^\top G_{[12]}Z_1 - \tr(G_{[12]}))^4\cdot \E^{1/2}(Z_1^\top H_{[11]}G_{[22]}H_{[11]}Z_1)^2\\
		&\lesssim_M \pnorm{\Sigma_{XY}}{F}^2\cdot \pnorm{\Sigma_{XY}}{F}^2 = \pnorm{\Sigma_{XY}}{F}^4.
		\end{align*}
	\end{itemize}
	In summary we have
	\begin{align}\label{ineq:var_T6_3}
	(n-1)\cdot \E\pnorm{T_{65}}{F}^2 \lesssim_M n^3\cdot \pnorm{\Sigma_X}{F}^2\pnorm{\Sigma_Y}{F}^2\pnorm{\Sigma_{XY}}{F}^2 + n^4\cdot \pnorm{\Sigma_{XY}}{F}^4.
	\end{align}
	A similar bound holds for $T_{64}$. Combining (\ref{ineq:var_T6_1}) - (\ref{ineq:var_T6_3}) and Lemma \ref{lem:SigmaXY_F_bound} concludes the proof. 
	
	\medskip
	\noindent (\textbf{Variance of $T_7$}) Note that $T_7$ can be decomposed as $T_{7} = \kappa(T_{7,1} + T_{7,2} + T_{7,3})$, where
	\begin{align*}
	T_{7,1} &\equiv (n-1)^2\tr\Big(\Sigma^{1/2}
	\begin{pmatrix}
	0 & \sum_{j=1}^n X_jY_j^\top\\
	0 & 0
	\end{pmatrix}
	\Sigma^{1/2} \circ G_{[12]}
	\Big),\\
	T_{7,2} &\equiv (n-2)\sum_{j,k} \tr\Big(\Sigma^{1/2}
	\begin{pmatrix}
	0 & X_jY_j^\top\\
	0 & 0
	\end{pmatrix}
	\Sigma^{1/2}\circ \Sigma^{1/2}
	\begin{pmatrix}
	0 & X_kY_k^\top\\
	0 & 0
	\end{pmatrix}\Sigma^{1/2}
	\Big),\\
	T_{7,3} &\equiv \sum_{j=1}^n \tr\Big(\Sigma^{1/2}
	\begin{pmatrix}
	0 & X_jY_j^\top\\
	0 & 0
	\end{pmatrix}
	\Sigma^{1/2}\circ \Sigma^{1/2}
	\begin{pmatrix}
	0 & X_jY_j^\top\\
	0 & 0
	\end{pmatrix}\Sigma^{1/2}
	\Big).
	\end{align*}
	
	To control the variance of $T_{7,1}$, note that
	\begin{align*}
	\var(T_{7,1}) \asymp n^5 \var\Big(\tr\Big(\Sigma^{1/2}
	\begin{pmatrix}
	0 & XY^\top\\
	0 & 0
	\end{pmatrix}\Sigma^{1/2}\circ G_{[12]}\Big)\Big) = n^5 \cdot \var(f(Z)),
	\end{align*}
	where $f(Z) \equiv \tr\big(H_{[11]}ZZ^\top H_{[22]}\circ G_{[12]}\big) = \sum_i (e_i^\top H_{[11]}Z)(e_i^\top H_{[22]}Z)(G_{[12]})_{ii}$. Using $\nabla f(Z) = (A_1+A_2)Z$ with $A_1\equiv \sum_i H_{[11]}e_ie_i^\top H_{[22]} (G_{[12]})_{ii}$ and  $A_2\equiv \sum_i H_{[22]}e_ie_i^\top H_{[11]} (G_{[12]})_{ii}$, it suffices to bound $\pnorm{A_1}{F}$ and $\pnorm{A_2}{F}$ so that by the Poincar\'e inequality, $\var(T_{7,1}) \lesssim n^5(\pnorm{A_1}{F}^2\vee \pnorm{A_2}{F}^2)$. To bound $\pnorm{A_1}{F}^2$, note that $(A_1)_{k\ell} = \sum_i (H_{11})_{ki}(H_{22})_{\ell i}(G_{[12]})_{ii}$, so that
	\begin{align*}
	\pnorm{A_1}{F}^2 &= \sum_{k,\ell} \sum_{i_1,i_2} (H_{[11]})_{k,i_1}(H_{[11]})_{k,i_2}(H_{[22]})_{\ell,i_1}(H_{[22]})_{\ell,i_2} (G_{[12]})_{i_1,i_1}(G_{[12]})_{i_2,i_2}\\
	&= \sum_{i,j} (G_{[11]})_{ij}(G_{[22]})_{ij}(G_{[12]})_{ii}(G_{[12]})_{jj} \leq \sqrt{\sum_{i,j}(G_{[11]})^2_{ij}(G_{[12]})^2_{ii}}\cdot \sqrt{\sum_{i,j}(G_{[22]})^2_{ij}(G_{[12]})^2_{jj}}\\
	&= \sqrt{\sum_i(G_{[11]}^2)_{ii}(G_{[12]})^2_{ii}}\cdot \sqrt{\sum_i(G^2_{[22]})_{ii}(G_{[12]})^2_{ii}} \lesssim_M \pnorm{G_{[12]}}{F}^2 \lesssim_M \pnorm{\Sigma_{XY}}{F}^2.
	\end{align*}
	A similar bound holds for $\pnorm{A_2}{F}^2$ so we have $\var(T_{7,1}) \lesssim n^5\pnorm{\Sigma_{XY}}{F}^2$.
	
	Next we bound $\var(T_{7,2})$. Using again $\Sigma^{1/2}[0, XY^\top; 0, 0]\Sigma^{1/2} = H_{[11]}ZZ^\top H_{[22]}$, we have
	\begin{align*}
	\var(T_{7,2}) \asymp n^2\cdot \var\Big(\sum_i \Big(\sum_j (e_i^\top H_{[11]}Z_j)(e_i^\top H_{[22]}Z_j)\Big)^2\Big) \equiv n^2 \var(f(Z))
	\end{align*}
	for $Z = [Z_1^\top,\ldots, Z_{n}^\top]^\top\in\R^{n\times(p+q)}$. Using
	\begin{align*}
	\frac{\partial f(Z)}{\partial Z_k} = \sum_i \Big(\sum_j (e_i^\top H_{[11]}Z_j)(e_i^\top H_{[22]}Z_j)\Big) (H_{[11]}e_ie_i^\top H_{[22]} + H_{[22]}e_ie_i^\top H_{[11]})Z_k,
	\end{align*}
	we have by symmetry (and with $M_i \equiv H_{[11]}e_ie_i^\top H_{[22]}$)
	\begin{align*}
	\E \pnorm{\nabla f(Z)}{}^2 &\lesssim \E \sum_k \pnorm{\sum_i \Big(\sum_j (e_i^\top H_{[11]}Z_j)(e_i^\top H_{[22]}Z_j)\Big) H_{[11]}e_ie_i^\top H_{[22]} Z_k}{}^2\\
	&= n\cdot  \E \pnorm{\sum_i \Big(\sum_j (e_i^\top H_{[11]}Z_j)(e_i^\top H_{[22]}Z_j)\Big) H_{[11]}e_ie_i^\top H_{[22]} Z_1}{}^2\\
	&\equiv n\cdot \E \pnorm{\sum_i \Big(\sum_j Z_j^\top M_i\Big) M_iZ_1}{}^2 = n\cdot \E \sum_{i_1,i_2} \Big(\sum_j Z_j^\top M_{i_1} Z_j\Big)\Big(\sum_j Z_j^\top M_{i_2}Z_j\Big)Z_1^\top M_{i_1}^\top M_{i_2}Z_1\\
	&=n\cdot \Big[\E\sum_{i_1,i_2} (Z_1^\top M_{i_1}Z_1)(Z_1^\top M_{i_2}Z_1)(Z_1^\top M_{i_1}^\top M_{i_2}Z_1)\\
	&\quad\quad\quad + (2n-1)\E\sum_{i_1,i_2} (Z_1^\top M_{i_1}Z_1)(Z_2^\top M_{i_2}Z_2)(Z_1^\top M_{i_1}^\top M_{i_2} Z_1)\\
	&\quad\quad\quad + (n^2-2n) \E \sum_{i_1,i_2} (Z_2^\top M_{i_1}Z_2)(Z_3^\top M_{i_2}Z_3)(Z_1^\top M_{i_1}^\top M_{i_2}Z_1) \Big]\\
	&\lesssim_M n\pnorm{\Sigma_X}{F}^4\pnorm{\Sigma_Y}{F}^4 + n^2\pnorm{\Sigma_{XY}}{F}^2\pnorm{\Sigma_X}{F}^2\pnorm{\Sigma_Y}{F}^2 + n^3\pnorm{\Sigma_{XY}}{F}^2.
	\end{align*}
	Here the last inequality follows from Cauchy-Schwarz and some simple estimates based on Lemma \ref{lem:gaussian_4moment}. Along with the Poincar\'e inequality, this concludes the desired variance bound for $T_{7,2}$. The variance bound for $T_{7,3}$ is straightforward so we omit it. 
\end{proof}

\begin{lemma}\label{lem:U_property} 
	Let $U\equiv H_{[11]}ZZ^\top H_{[22]} - G_{[12]}$ with $Z\sim\mathcal{N}(0,I_{p+q})$, and $U_1,U_2,\ldots$ be independent copies of $U$. Then the following hold.
	\begin{enumerate}
		\item $\E U^2 = G_{[11]}G_{[22]} + \tr(G_{[12]})G_{[12]}  +\kappa H_{[11]} (I\circ G_{[21]}) H_{[22]}$.
		\item $\E U^\top U =G_{[21]}G_{[12]} + G_{[22]}\tr(G_{[11]}) +\kappa H_{[11]} (I\circ G_{[21]}) H_{[22]}$, $\E UU^\top = G_{[12]}G_{[21]}+G_{[11]}\tr(G_{[22]})+ \kappa H_{[11]}(I\circ G_{[22]}) H_{[11]}$.
		\item $\E \tr(U)U = G_{[11]}G_{[22]} + G_{[12]}^2+ \kappa H_{[11]}(I\circ G_{[12]})H_{[22]}$.
		\item $\E \tr^4(U)\lesssim_M \pnorm{\Sigma_{XY}}{F}^4$ and $\E\pnorm{U}{F}^4\lesssim_M \pnorm{\Sigma_X}{F}^4\pnorm{\Sigma_Y}{F}^4$.
		\item $\E \tr^2(U_1^\top U_2) \lesssim \pnorm{\Sigma_X^2}{F}^2\pnorm{\Sigma_Y^2}{F}^2$, $\E \tr^2(U_1U_2)\lesssim \pnorm{\Sigma_{XY}\Sigma_{YX}}{F}^4$.
		\item $\E\tr(U_1)\tr(U_2)\tr(U_1^\top U_2)\lesssim_M  \pnorm{\Sigma_{XY}}{F}^2$.
	\end{enumerate}
\end{lemma}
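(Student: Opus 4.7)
The six parts split naturally into (i) the exact identities (1)--(3), which are standard Gaussian moment calculations, and (ii) the moment bounds (4)--(6), which require combining those identities with Gaussian hypercontractivity and the spectral control.

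\textbf{Parts (1)--(3).} I would expand $U^2$, $U^\top U$, $UU^\top$, and $\tr(U)\,U$ via the definition $U = H_{[11]}ZZ^\top H_{[22]} - G_{[12]}$, and then average using the two elementary identities
\[
\E\big[ZZ^\top A ZZ^\top\big] = A + A^\top + \tr(A)\,I, \qquad \E\big[(Z^\top A Z)\,ZZ^\top\big] = A + A^\top + \tr(A)\,I,
\]
which follow from Isserlis. For instance, for (1) the middle cross term contributes $H_{[11]}\E[ZZ^\top G_{[21]} ZZ^\top]H_{[22]} = H_{[11]}(G_{[21]} + G_{[12]} + \tr(G_{[21]}) I)H_{[22]}$, which upon using the multiplication rules of Lemma \ref{lem:property_GH}, namely $H_{[11]}H_{[22]} = G_{[12]}$, $H_{[11]}G_{[21]}H_{[22]} = G_{[12]}^2$, and $H_{[11]}G_{[12]}H_{[22]} = G_{[11]}G_{[22]}$, collapses to $G_{[12]}^2 + G_{[11]}G_{[22]} + \tr(G_{[12]})G_{[12]}$; subtracting the cross terms from $G_{[12]}$ gives exactly $G_{[11]}G_{[22]} + \tr(G_{[12]})G_{[12]}$. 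Parts (2) and (3) are entirely analogous with $A = G_{[11]}, G_{[22]}$.

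\textbf{Part (4).} For the first bound, $\tr(U) = Z^\top G_{[21]} Z - \tr(G_{[21]})$ is a centered Gaussian quadratic form, so by Hanson--Wright $\E \tr^4(U) \lesssim \pnorm{G_{[21]}}{F}^4$; Lemma \ref{lem:property_GH} and Lemma \ref{lem:SigmaXY_F_bound} combined with the bounded spectrum then yield $\pnorm{G_{[21]}}{F}^4 \lesssim_M \pnorm{\Sigma_{XY}}{F}^4$. For the second bound, I first expand
\[
\pnorm{U}{F}^2 = (Z^\top G_{[11]} Z)(Z^\top G_{[22]} Z) - 2 Z^\top G_{[11]} G_{[22]} Z + \pnorm{G_{[12]}}{F}^2,
\]
which is a degree-$4$ polynomial in $Z$ with $\E \pnorm{U}{F}^2 = \tr(G_{[11]})\tr(G_{[22]}) + \pnorm{G_{[12]}}{F}^2 \asymp_M \pnorm{\Sigma_X}{F}^2\pnorm{\Sigma_Y}{F}^2$; Gaussian hypercontractivity (Nelson) then gives $\E\pnorm{U}{F}^4 \leq C(\E\pnorm{U}{F}^2)^2 \lesssim_M \pnorm{\Sigma_X}{F}^4\pnorm{\Sigma_Y}{F}^4$.

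\textbf{Parts (5)--(6).} Using $U_1 \perp U_2$, I expand explicitly
\[
\tr(U_1^\top U_2) = (Z_1^\top G_{[11]}Z_2)(Z_1^\top G_{[22]}Z_2) - Z_1^\top G_{[11]}G_{[22]}Z_1 - Z_2^\top G_{[11]}G_{[22]}Z_2 + \tr(G_{[11]}G_{[22]}),
\]
square, and take expectations. Conditional on $Z_2$, the key quartic-in-$Z_1$ term is evaluated via the Gaussian identity $\E[(Z_1^\top a)^2(Z_1^\top b)^2] = \pnorm{a}{}^2\pnorm{b}{}^2 + 2(a^\top b)^2$ applied with $a = G_{[11]}Z_2$, $b = G_{[22]}Z_2$; a further expectation over $Z_2$ produces the dominant contribution $\tr(G_{[11]}^2)\tr(G_{[22]}^2) = \pnorm{\Sigma_X^2}{F}^2\pnorm{\Sigma_Y^2}{F}^2$, all other terms being of strictly lower order under the spectrum assumption. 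For the companion $\tr(U_1 U_2)$, a parallel expansion using $H_{[22]}G_{[12]}H_{[11]} = G_{[21]}^2$ yields a leading $(Z_1^\top G_{[12]}Z_2)(Z_1^\top G_{[21]}Z_2)$, whose second moment evaluates to $\pnorm{G_{[12]}}{F}^2\pnorm{G_{[21]}}{F}^2 \asymp \pnorm{\Sigma_{XY}\Sigma_{YX}}{F}^4$. Part (6) is obtained by conditioning on $U_1$: both $\tr(U_2)$ and $\tr(U_1^\top U_2)$ are quadratic forms in $Z_2$, so their inner expectation is the covariance $2\tr\bigl((G_{[21]})_{\mathrm{sym}}(H_{[22]}U_1^\top H_{[11]})_{\mathrm{sym}}\bigr)$; multiplying by $\tr(U_1)$ and integrating over $Z_1$ via the identity in (3) yields the claim.

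\textbf{Main obstacle.} The most delicate step is the matching \emph{lower} bound in (5): the Isserlis expansion of $\E\tr^2(U_1^\top U_2)$ produces several terms with negative signs that are \emph{a priori} of the same order as the leading positive contribution. A clean way around this, which I would pursue, is to use independence to rewrite
\[
\E\tr^2(U_1^\top U_2) = \E\iprod{U_1}{U_2}_F^2 = \sum_{i,j,k,\ell} M_{(ij),(k\ell)}^2 \geq 0, \qquad M_{(ij),(k\ell)} := \E[(U)_{ij}(U)_{k\ell}],
\]
so that the lower bound follows automatically from computing the dominant entries of $M$ via (1)--(3); the same device handles the lower bound for $\E\tr^2(U_1 U_2)$ and (6).
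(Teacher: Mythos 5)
Your proposal is correct and, for parts (1)--(4), essentially coincides with the paper's proof: the identities (1)--(3) are obtained exactly as in the paper from $\E[ZZ^\top A ZZ^\top]=A+A^\top+\tr(A)I$ together with the multiplication rules $H_{[11]}H_{[22]}=G_{[12]}$, $H_{[11]}G_{[12]}H_{[22]}=G_{[11]}G_{[22]}$, $H_{[11]}G_{[21]}H_{[22]}=G_{[12]}^2$, and the first half of (4) is the same Hanson--Wright/Gaussian-chaos bound on $\tr(U)=Z^\top G_{[21]}Z-\tr(G_{[21]})$. For the second half of (4) you use hypercontractivity of the degree-$4$ polynomial $\pnorm{U}{F}^2$ where the paper uses Cauchy--Schwarz on $\E\pnorm{H_{[11]}Z}{}^4\pnorm{H_{[22]}Z}{}^4$; both are valid and of comparable length.

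Where you genuinely diverge is in (5)--(6). The paper conditions on $Z_2$, writes $\tr(U_1^\top U_2)=Z_1^\top A(Z_2)Z_1-\E_1(\cdots)$ with $A(Z_2)=G_{[11]}Z_2Z_2^\top G_{[22]}-G_{[11]}G_{[22]}$, and evaluates $\E_2\var_1$ via the variance formula for Gaussian quadratic forms; the two-sidedness then rests on checking that $\E_2\tr(A(Z_2)^2)$ does not cancel $\E_2\pnorm{A(Z_2)}{F}^2$. Your factorization $\E\tr^2(U_1^\top U_2)=\sum_{i,j,k,\ell}\big(\E[U_{ij}U_{k\ell}]\big)^2$ is a clean alternative: with $\E[U_{ij}U_{k\ell}]=(G_{[11]})_{ik}(G_{[22]})_{j\ell}+(G_{[12]})_{i\ell}(G_{[12]})_{kj}$ it yields the exact value $\pnorm{G_{[11]}}{F}^2\pnorm{G_{[22]}}{F}^2+2\tr(G_{[11]}^2G_{[22]}^2)+\pnorm{G_{[12]}}{F}^4$, all three terms manifestly nonnegative and the latter two dominated by the first via Cauchy--Schwarz, so the lower bound is immediate. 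Likewise your (6) collapses to $\pnorm{\E[\tr(U)U]}{F}^2=\pnorm{G_{[11]}G_{[22]}+G_{[12]}^2}{F}^2$ via part (3), which is arguably slicker than the paper's iterated-covariance computation. Two small caveats you should fix in a write-up: first, your claim in the direct expansion of (5) that the non-leading terms are of ``strictly lower order'' is not right --- they can be of the same order, which is exactly why the sum-of-squares device is needed, as you yourself note; second, for $\E\tr^2(U_1U_2)$ the independence factorization gives $\sum_{i,j,k,\ell}\E[U_{ij}U_{k\ell}]\,\E[U_{ji}U_{\ell k}]$, which is \emph{not} literally a sum of squares, so the nonnegativity of each of the three resulting traces (e.g.\ $\tr(G_{[21]}G_{[22]}G_{[12]}G_{[11]})=\tr\big((H_{[11]}H_{[22]}^2H_{[11]})(H_{[22]}H_{[11]}^2H_{[22]})\big)\geq 0$) must be verified by hand rather than invoked ``automatically.''
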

\begin{proof}
	\noindent (1). By Lemma \ref{lem:gaussian_4moment},
	\begin{align*}
	\E U^2 &= \E\big(H_{[11]}ZZ^\top H_{[22]}H_{[11]}ZZ^\top H_{[22]}\big) - G_{[12]}^2\\
	&= H_{[11]}\cdot \E(ZZ^\top)(Z^\top H_{[22]}H_{[11]}Z)\cdot H_{[22]} - G^2_{[12]}\\
	&= H_{[11]}\big(H_{[11]}H_{[22]} + H_{[22]}H_{[11]} + \tr(G_{[12]})\cdot I + \kappa I\circ G_{[21]}\big)H_{[22]} - G^2_{[12]}\\
	&= G_{[11]}G_{[22]} + \tr(G_{[12]})G_{[12]} +\kappa H_{[11]} (I\circ G_{[21]}) H_{[22]}.
	\end{align*}
	\noindent (2). By Lemma \ref{lem:gaussian_4moment},
	\begin{align*}
	\E U^\top U &= \E\big(H_{[22]}ZZ^\top H_{[11]}^2 ZZ^\top H_{[22]}\big) - G_{[21]}G_{[12]}\\
	&= H_{[22]}\cdot \E(ZZ^\top)(Z^\top H_{[11]}^2 Z)\cdot H_{[22]} - G_{[21]}G_{[12]}\\
	&= H_{[22]}\big(2H_{[11]}^2 + \tr(G_{[11]})\cdot I +\kappa I\circ G_{[11]}\big)H_{[22]} - G_{[21]}G_{[12]}\\
	&= G_{[21]}G_{[12]} + G_{[22]}\tr(G_{[11]})  +\kappa H_{[11]} (I\circ G_{[21]}) H_{[22]},
	\end{align*}
	and similarly
	\begin{align*}
	\E UU^\top & = G_{[12]}G_{[21]}+G_{[11]} \tr(G_{[22]})+ \kappa H_{[11]}(I\circ G_{[22]}) H_{[11]}.
	\end{align*}
	\noindent (3). We have
	\begin{align*}
	\E \tr(U)U &= \E\Big(Z^\top G_{[12]}Z - \tr(G_{[12]})\Big)\Big(H_{[11]}ZZ^\top H_{[22]} - G_{[12]}\Big)\\
	&= H_{[11]} \cdot \E(ZZ^\top)(Z^\top G_{[12]}Z)\cdot H_{[22]} - \tr(G_{[12]})G_{[12]}\\
	&= H_{[11]}\big(H_{[11]}H_{[22]} + H_{[22]}H_{[11]} + \tr(G_{[12]})\cdot I + \kappa I \circ G_{[12]}\big)H_{[22]} - \tr(G_{[12]})G_{[12]}\\
	&= G_{[11]}G_{[22]} + G_{[12]}^2 + \kappa H_{[11]}(I\circ G_{[12]})H_{[22]}.
	\end{align*}
	\noindent (4). We have
	\begin{align*}
	\notag\E\tr(U)^4 &= \E \big[Z^\top G_{[12]}Z - \E(Z^\top G_{[12]}Z)\big]^4\lesssim \pnorm{G_{[12]}}{F}^4\lesssim_M \pnorm{\Sigma_{XY}}{F}^4,\\
	\notag\E\pnorm{U}{F}^4 &\leq \E\pnorm{H_{[11]}ZZ^\top H_{[22]}}{F}^4 + \pnorm{G_{[12]}}{F}^4\\
	& \lesssim_M \E\pnorm{H_{[11]}Z}{}^4\pnorm{H_{[22]}Z}{}^4 +\pnorm{\Sigma_{XY}}{F}^4\lesssim  \pnorm{\Sigma_X}{F}^4\pnorm{\Sigma_Y}{F}^4 + \pnorm{\Sigma_{XY}}{F}^4.
	\end{align*}
	The claim follows by Lemma \ref{lem:SigmaXY_F_bound}.
	
	\noindent (5).  By definition, we have
	\begin{align*}
	\tr(U_1^\top U_2) &= \tr\bigg(\big[H_{[22]}Z_1 Z_1^\top H_{[11]} - G_{[21]}\big]\big[H_{[11]}Z_2 Z_2^\top H_{[22]} - G_{[12]}\big]\bigg)\\
	&= (Z_1^\top G_{[11]}Z_2)(Z_1^\top G_{[22]}Z_2)  - Z_1^\top G_{[11]}G_{[22]}Z_1 -  Z_2^\top G_{[11]}G_{[22]}Z_2 + \tr(G_{[11]}G_{[22]})\\
	&= Z_1^\top\Big(G_{[11]}Z_2Z_2^\top G_{[22]} - G_{[11]}G_{[22]}\Big)Z_1 - \tr(G_{[11]}Z_2Z_2^\top G_{[22]} - G_{[11]}G_{[22]})\\
	&\equiv Z_1^\top A(Z_2) Z_1 - \E_1(Z_1^\top A(Z_2)Z_1),
	\end{align*}
	where $\E_i$ denotes expectation only with respect to $Z_i$. Hence,
	\begin{align*}
	\E\tr^2(U_1^\top U_2) &= \E_2\var_1(Z_1^\top A(Z_2)Z_1) \asymp \E\pnorm{A(Z)}{F}^2\\
	&= \E\pnorm{G_{[11]}ZZ^\top G_{[22]} - G_{[11]}G_{[22]}}{F}^2\\
	&= \E\Big[(Z^\top G_{[11]}^2 Z)(Z^\top G_{[22]}^2 Z) - 2Z^\top G_{[11]}^2G_{[22]}^2 Z + \tr(G_{[11]}^2G_{[22]}^2)\Big]\\
	&= \tr(G_{[11]}^2G_{[22]}^2) + \pnorm{G_{[11]}}{F}^2 \pnorm{G_{[22]}}{F}^2 + \kappa \tr(G_{[11]}^2\circ G_{[22]}^2)\\
	&\asymp_M \pnorm{G_{[11]}}{F}^2 \pnorm{G_{[22]}}{F}^2 = \pnorm{\Sigma_X^2}{F}^2\pnorm{\Sigma_Y^2}{F}^2 \lesssim_M \pnorm{\Sigma_X}{F}^2\pnorm{\Sigma_Y}{F}^2.
	\end{align*}
	On the other hand, as
	\begin{align*}
	\tr(U_1 U_2) &= \tr\bigg(\big[H_{[11]}Z_1 Z_1^\top H_{[22]} - G_{[12]}\big]\big[H_{[11]}Z_2 Z_2^\top H_{[22]} - G_{[12]}\big]\bigg)\\
	&= (Z_1^\top G_{[12]}Z_2)(Z_1^\top G_{[21]} Z_2)  - Z_1^\top G_{[12]}^2Z_1 -  Z_2^\top G_{[12]}^2Z_2 + \tr(G_{[12]}^2)\\
	&= Z_1^\top\Big(G_{[12]}Z_2Z_2^\top G_{[12]} - G_{[12]}^2\Big)Z_1 - \tr(G_{[12]}Z_2Z_2^\top G_{[12]} - G_{[12]}^2)\\
	&\equiv Z_1^\top B(Z_2) Z_1 - \E_1(Z_1^\top B(Z_2)Z_1),
	\end{align*}
	so using a similar argument as in the case for $\E \tr^2(U_1^\top U_2)$, we have
	\begin{align*}
	\E\tr^2(U_1 U_2) 
	&= \E\pnorm{G_{[12]}ZZ^\top G_{[12]} - G_{[12]}^2}{F}^2\\
	&= \E\Big[(Z^\top G_{[12]}G_{[21]} Z)(Z^\top G_{[21]}G_{[12]} Z) - 2Z^\top G_{[12]}G_{[21]}^2G_{[12]} Z + \tr(G_{[12]}^2G_{[21]}^2)\Big]\\
	&= \tr(G_{[12]}^2G_{[21]}^2) + \pnorm{G_{[12]}}{F}^4 +\kappa \tr(G_{[12]}G_{[21]}\circ G_{[21]}G_{[12]})\lesssim \pnorm{G_{[12]}}{F}^4 =\pnorm{\Sigma_{XY}\Sigma_{YX}}{F}^4.
	\end{align*}
	
	\noindent (6). Using the formula for $\tr(U_1^\top U_2)$ in the proof of (5) above, 
	\begin{align*}
	&\E\tr(U_1)\tr(U_2)\tr(U_1^\top U_2)\\
	&= \E(Z_1^\top G_{[12]}Z_1 - \tr(G_{[12]}))(Z_2^\top G_{[12]}Z_2 - \tr(G_{[12]}))(Z_1^\top A(Z_2) Z_1 - \tr(A(Z_2)))\\
	&= \E(Z_2^\top G_{[12]}Z_2 - \tr(G_{[12]}))\cdot \big[\tr(G_{[12]}A(Z_2) + G_{[12]}A(Z_2)^\top) +\kappa \tr(G_{[12]}\circ A(Z_2))\big]\quad \hbox{(by Lemma \ref{lem:gaussian_4moment}-(1))}\\
	&= \E(Z_2^\top G_{[12]}Z_2 - \tr(G_{[12]}))\\
	&\quad\quad\quad\cdot\big[Z_2^\top\big(G_{[11]}(G_{[12]} + G_{[21]})G_{[22]}\big)Z_2 - \tr\big(G_{[11]}(G_{[12]} + G_{[21]})G_{[22]}\big)\big]\\
	&\quad\quad + \kappa\cdot \E(Z_2^\top G_{[12]}Z_2 - \tr(G_{[12]})) \big(Z_2^\top G_{[11]} (I\circ G_{[12]}) G_{[22]}Z_2 - \tr(G_{[11]} (I\circ G_{[12]})  G_{[22]})\big)\\
	&\asymp \tr\big[G_{[12]}\cdot \big(G_{[11]}(G_{[12]} + G_{[21]})G_{[22]}\big) + \kappa G_{[12]}\circ  \big(G_{[11]}(G_{[12]} + G_{[21]})G_{[22]}\big)\big]\\
	&\quad\quad\quad + \kappa \cdot \tr\big[(G_{[12]}\cdot G_{[11]} (I\circ G_{[12]}) G_{[22]} + \kappa\cdot G_{[12]}\circ (G_{[11]} (I\circ G_{[12]}) G_{[22]})\big]\\
	& \lesssim_M \pnorm{\Sigma_{XY}\Sigma_{YX}}{F}^2 + \tr\big[(\Sigma_{XY}\Sigma_{YX})^3\big] + \pnorm{\Sigma_{XY}}{F}^2\\
	&\asymp_M \pnorm{\Sigma_{XY}}{F}^2.
	\end{align*}
	
	The proof is complete.
\end{proof}

\subsubsection{Third moment bound}

\begin{proof}[Proof of Proposition \ref{prop:var_psi_1}-(2)]
	For any $i\in[n]$, let $M_{-i}\equiv \sum_{k\neq i}X_kY_k^\top$. By definition of $\Delta_i \psi_1(\bm{X},\bm{Y})$, we have
	\begin{align*}
	&\E|\Delta_i\psi_1(\bm{X},\bm{Y})|^3 \lesssim \E^{3/4}(X_i^\top M_{-i}Y_i - X_i'^\top M_{-i}Y_i')^4\\
	&\lesssim \E^{3/4}\big[X_i^\top M_{-i}Y_i - \E_{(X_i,Y_i)}(X_i^\top M_{-i}Y_i)\big]^4 + \E^{3/4}\big[X_i'^\top M_{-i}Y_i' - \E_{(X_i',Y_i')}(X_i'^\top M_{-i}Y_i')\big]^4\\
	&= \E^{3/4}\big[Z_i^\top \tilde{M}_{-i}Z_i - \E_{Z_i}(Z_i^\top \tilde{M}_{-i}Z_i)\big]^4 + \E^{3/4}\big[Z_i'^\top \tilde{M}_{-i}Z_i' - \E_{Z_i'}(Z_i'^\top M_{-i}Z_i')\big]^4.
	\end{align*}
	Here $\tilde{M}_{-i}$ is the random matrix defined by
	\begin{align*}
	\tilde{M}_{-i}\equiv \Sigma^{1/2}
	\begin{pmatrix}
	0 & M_{-i}\\
	0 & 0
	\end{pmatrix}\Sigma^{1/2}.
	\end{align*}
	Hence we have
	\begin{align*}
	\E|\Delta_i\psi_1(\bm{X},\bm{Y})|^3 &\lesssim \E^{3/4}\pnorm{\tilde{M}_{-i}}{F}^4\lesssim_M \E^{3/4}\pnorm{M_{-i}}{F}^4\\
	&\lesssim_M n^{3/2} \tr^{3/2}(\Sigma_X)\tr^{3/2}(\Sigma_Y) + n^3\pnorm{\Sigma_{XY}}{F}^3,
	\end{align*}
	using Lemma \ref{lem:estimate_M} in the last step.
\end{proof}

\begin{lemma}\label{lem:estimate_M}
	For any $i\in[n]$, let $M_{-i}\equiv \sum_{k\neq i}X_kY_k^\top$. Then the mean and variance of $\pnorm{M_{-i}}{F}^2$ satisfy:
	\begin{align*}
	\E \pnorm{M_{-i}}{F}^2 &=n(n-1)\pnorm{\Sigma_{XY}}{F}^2 + (n-1)\tr(\Sigma_X)\tr(\Sigma_Y) + \kappa (n-1)\tr(H_{[11]}\circ H_{[22]}),\\
	\var \big(\pnorm{M_{-i}}{F}^2\big) &\lesssim_M n\cdot\big[\tau_X^2\tau_Y^2(n \vee \tau_X^2 \vee \tau_Y^2) + n^2\pnorm{\Sigma_{XY}}{F}^2\big].
	\end{align*}
	Consequently, $\E \pnorm{M_{-i}}{F}^4\lesssim n^2 \tr^2(\Sigma_X)\tr^2(\Sigma_Y) + n^4\pnorm{\Sigma_{XY}}{F}^4$.
\end{lemma}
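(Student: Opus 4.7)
The plan is to compute the mean by a direct Isserlis expansion, bound the variance via Gaussian--Poincar\'e, and then obtain the fourth moment from $\E V^2 = \var(V) + (\E V)^2$, where $V \equiv \pnorm{M_{-i}}{F}^2 = \sum_{k,\ell\neq i}(X_k^\top X_\ell)(Y_k^\top Y_\ell)$. Splitting on whether $k = \ell$, the diagonal contributes $(n-1)\E(\pnorm{X}{}^2\pnorm{Y}{}^2) = (n-1)[\tr(\Sigma_X)\tr(\Sigma_Y) + 2\pnorm{\Sigma_{XY}}{F}^2]$ by Isserlis, while the off-diagonal contributes $(n-1)(n-2)\pnorm{\Sigma_{XY}}{F}^2$ using the independence of the samples $(X_k,Y_k)$ across $k$. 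Summing yields the claimed mean formula.

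For the variance, I would further decompose $V = V_{\mathrm{diag}} + V_{\mathrm{off}}$ where $V_{\mathrm{diag}}\equiv\sum_{k\neq i}\pnorm{X_k}{}^2\pnorm{Y_k}{}^2$. Since $V_{\mathrm{diag}}$ is a sum of $n-1$ i.i.d.\ summands, $\var(V_{\mathrm{diag}}) = (n-1)\var(\pnorm{X}{}^2\pnorm{Y}{}^2)$, and applying the Gaussian--Poincar\'e inequality in the standardized coordinates $Z = \Sigma^{-1/2}(X^\top,Y^\top)^\top$ and bounding the resulting fourth-order Gaussian chaos by hypercontractivity gives $\var(\pnorm{X}{}^2\pnorm{Y}{}^2)\lesssim_M \tau_X^2\tau_Y^2(\tau_X^2\vee\tau_Y^2)$. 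For $V_{\mathrm{off}}$, I would apply the Gaussian--Poincar\'e inequality in the i.i.d.\ block $(X_k,Y_k)_{k\neq i}$; direct differentiation yields
\begin{align*}
\pnorm{\nabla_{X_m} V_{\mathrm{off}}}{}^2 = 4\biggpnorm{\sum_{\substack{\ell\neq m\\ \ell\neq i}}(Y_m^\top Y_\ell)X_\ell}{}^2,
\end{align*}
and expanding the expected squared norm according to whether $\ell_1 = \ell_2$ or not, then evaluating each piece by Isserlis (using $\E[(Y^\top \Sigma_Y Y)\pnorm{X}{}^2] = \tr(\Sigma_X)\pnorm{\Sigma_Y}{F}^2 + 2\tr(\Sigma_Y\Sigma_{YX}\Sigma_{XY})$ for the diagonal case and $\tr(\Sigma_Y\Sigma_{YX}\Sigma_{XY})$ for the off-diagonal case), yields per-index contributions of order $(n-2)[\tau_X^2\tau_Y^2 + \pnorm{\Sigma_{XY}}{F}^2] + (n-2)(n-3)\pnorm{\Sigma_{XY}}{F}^2$. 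Summing over $m$ and the symmetric estimate for $\nabla_{Y_m}$ produces $\var(V_{\mathrm{off}}) \lesssim_M n^2\tau_X^2\tau_Y^2 + n^3\pnorm{\Sigma_{XY}}{F}^2$, which combined with the $V_{\mathrm{diag}}$ bound gives the claimed variance estimate.

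The fourth moment is then immediate: $(\E V)^2 \lesssim n^4\pnorm{\Sigma_{XY}}{F}^4 + n^2\tr^2(\Sigma_X)\tr^2(\Sigma_Y)$ from the mean formula, and under the compact spectrum assumption each variance term is absorbed by the elementary inequality $n^3\pnorm{\Sigma_{XY}}{F}^2 \leq (n^2 + n^4\pnorm{\Sigma_{XY}}{F}^4)/2$ together with $\tr(\Sigma_X)\tr(\Sigma_Y)\gtrsim_M pq\geq 1$. The main bookkeeping difficulty is the Gaussian--Poincar\'e step for $V_{\mathrm{off}}$, where one must correctly identify which index patterns inside the expected squared gradient produce the dimension-heavy $\tau_X^2\tau_Y^2$ contribution (only the $\ell_1=\ell_2$ diagonal pairs, gaining a factor of $n$) versus the covariance-heavy $\pnorm{\Sigma_{XY}}{F}^2$ contribution (the full $(n-2)(n-3)$ off-diagonal pairs, gaining $n^2$); this is routine in spirit but delicate in execution, and closely parallels the calculations already carried out for Lemma~\ref{lem:psi_1_variances}.
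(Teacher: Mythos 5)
Your proposal is correct and follows essentially the same route as the paper: the mean comes from the same diagonal/off-diagonal Isserlis split, and the variance from the Gaussian--Poincar\'e inequality followed by an enumeration of index patterns, with the fourth moment obtained as $\var+(\E)^2$ plus the same absorption of the variance terms. The only difference is organizational --- you peel off the diagonal sum $\sum_{k}\pnorm{X_k}{}^2\pnorm{Y_k}{}^2$ and treat it as an i.i.d.\ sum before applying Poincar\'e to the off-diagonal part, whereas the paper applies Poincar\'e to the full double sum in the standardized $Z$-coordinates and handles the $k_1=k_2$ patterns inside the gradient computation; both yield the identical bound.
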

\begin{proof}
	Without loss of generality we let $i=1$. Note that
	\begin{align*}
	\pnorm{M_{-1}}{F}^2 & = \sum_{k_1,k_2\neq 1} (X_{k_1}^\top X_{k_2})(Y_{k_1}^\top Y_{k_2})= \sum_{k_1,k_2\neq 1} (Z_{k_1}^\top H_{[11]}Z_{k_2})(Z_{k_1}^\top H_{[22]}Z_{k_2}).
	\end{align*}
	Hence the mean formula is
	\begin{align*}
	\E\pnorm{M_{-1}}{F}^2 &= (n-1)\E(Z_1^\top H_{[11]}Z_1)(Z_1^\top H_{[22]}Z_1) + (n-1)(n-2)\E(Z_1^\top H_{[11]}Z_2)(Z_1^\top H_{[22]}Z_2)\\
	&= (n-1)\cdot \big[2\tr(G_{[12]}) + \tr(H_{[11]})\tr(H_{[22]}) + \kappa \tr(H_{[11]}\circ H_{[22]})\big] + (n-1)(n-2)\tr(G_{[12]})\\
	&= n(n-1)\pnorm{\Sigma_{XY}}{F}^2 + (n-1)\tr(\Sigma_X)\tr(\Sigma_Y) + \kappa (n-1)\tr(H_{[11]}\circ H_{[22]}).
	\end{align*}
	To bound the variance of $\pnorm{M_{-1}}{F}^2$, write
	\begin{align*}
	T(Z)\equiv \pnorm{M_{-1}}{F}^2 = \sum_{k_1,k_2\neq 1} (Z_{k_1}^\top H_{[11]}Z_{k_2})(Z_{k_1}^\top H_{[22]}Z_{k_2}).
	\end{align*}
	So for any $i \neq 1$,
	\begin{align*}
	\nabla_{Z_i} T(Z)&= \sum_{k_1,k_2\neq 1}\Big[(\delta_{k_1, i}\cdot H_{[11]}Z_{k_2}+\delta_{k_2, i}\cdot H_{[11]}Z_{k_1})(Z_{k_1}^\top H_{[22]}Z_{k_2})\\
	&\quad\quad +  (\delta_{k_1,i}\cdot H_{[22]}Z_{k_2} + \delta_{k_2,i}H_{[22]}Z_{k_1})(Z_{k_1}^\top H_{[11]}Z_{k_2})\Big]\\
	&= 2\cdot\sum_{k\neq 1} (Z_k^\top H_{[22]}Z_i)H_{[11]}Z_k + 2\cdot\sum_{k\neq 1} (Z_k^\top H_{[11]}Z_i)H_{[22]}Z_k \equiv [T_{11} + T_{12}]_i.
	\end{align*}
	Hence
	\begin{align*}
	\E\pnorm{T_{11}}{F}^2 &\asymp n\cdot \E\biggpnorm{\sum_{k\neq 1} (Z_k^\top H_{[22]}Z_2)H_{[11]}Z_k }{}^2\\
	&= n\cdot \sum_{k_1,k_2\neq 1}\E(Z_{k_1}^\top H_{[22]}Z_2)(Z_{k_2}^\top H_{[22]}Z_2)(Z_{k_1}^\top G_{[11]}Z_{k_2})\\
	&\asymp n\cdot\Big[\E(Z_2^\top H_{[22]}Z_2)^2(Z_2^\top G_{[11]}Z_2) + n\cdot \E(Z_2^\top H_{[22]}Z_2)(Z_3^\top H_{[22]}Z_2)(Z_2^\top G_{[11]}Z_3)\\
	&\quad + n\cdot \E(Z_3^\top H_{[22]}Z_2)^2(Z_3^\top G_{[11]}Z_3) + n^2\E(Z_3^\top H_{[22]}Z_2)(Z_4^\top H_{[22]}Z_2)(Z_3^\top G_{[11]}Z_4)\Big]\\
	&\lesssim_M n(\tau_X^2\tau_Y^4 + n\tau_X^2\tau_Y^2 + n^2\pnorm{\Sigma_{XY}}{F}^2).
	\end{align*}
	Flipping $X$, $Y$ for $T_{12}$ to conclude that
	\begin{align*}
	\pnorm{\nabla_Z T(Z)}{F}^2\lesssim_M n\cdot\big[\tau_X^2\tau_Y^2(n \vee \tau_X^2 \vee \tau_Y^2) + n^2\pnorm{\Sigma_{XY}}{F}^2\big],
	\end{align*}
	completing the proof.
\end{proof}

\subsection{Proof of Proposition \ref{prop:var_psi_2}}

We only prove the claim for the term $X_i^\top\Sigma_{XY}Y_j$, the bound for the other term is completely analogous. With some abuse of notation, we write in the proof $\psi_2(\bm{X},\bm{Y}) \equiv \sum_{i\neq j} X_i^\top\Sigma_{XY}Y_j$.

\noindent (1). For any $i\in[n]$, as
\begin{align*}
\psi_2(\bm{X},\bm{Y}) = \sum_{\substack{i_1\neq i_2\\ i_1,i_2\neq i}} X_{i_1}^\top\Sigma_{XY}Y_{i_2} + \sum_{i_1\neq i} X_{i_1}^\top\Sigma_{XY}Y_i + \sum_{i_2\neq i}X_i^\top \Sigma_{XY}Y_{i_2}, 
\end{align*}
we have
\begin{align*}
\Delta_i\psi_2(\bm{X},\bm{Y})&\equiv \psi_2(\bm{X},\bm{Y}) - \psi_2(\bm{X}^{\{i\}},\bm{Y}^{\{i\}})\\
& = \Big(\sum_{j\neq i} X_j\Big)^\top \Sigma_{XY}(Y_i-Y_i') + (X_i - X_i')^\top\Sigma_{XY}\Big(\sum_{j\neq i}Y_j\Big).
\end{align*}
Hence for any $A\subset[n]$ such that $i\notin A$,
\begin{align*}
\Delta_i\psi_2(\bm{X}^A,\bm{Y}^A) &= \Big(\sum_{j\in A} X_j' + \sum_{j\notin A\cup\{i\}}X_j\Big)^\top \Sigma_{XY}(Y_i-Y_i')\\
&\quad\quad\quad + (X_i - X_i')^\top\Sigma_{XY}\Big(\sum_{j\in A}Y_j' + \sum_{j\notin A\cup\{i\}}Y_j\Big).
\end{align*}
This implies that
\begin{align*}
&\E'\big[\Delta_i\psi_2(\bm{X},\bm{Y})\Delta_i\psi_2(\bm{X}^A,\bm{Y}^A)\big]\\
& = \E'\Big[\Big(\sum_{j\neq i} X_j\Big)^\top \Sigma_{XY}(Y_i-Y_i')\cdot \Big(\sum_{j\in A} X_j' + \sum_{j\notin A\cup\{i\}}X_j\Big)^\top \Sigma_{XY}(Y_i-Y_i')\Big] \\
&\quad+ \E'\Big[\Big(\sum_{j\neq i} X_j\Big)^\top \Sigma_{XY}(Y_i-Y_i')\cdot (X_i - X_i')^\top\Sigma_{XY}\Big(\sum_{j\in A}Y_j' + \sum_{j\notin A\cup\{i\}}Y_j\Big)\Big] \\
&\quad + \E'\Big[(X_i - X_i')^\top\Sigma_{XY}\Big(\sum_{j\neq i}Y_j\Big)\cdot \Big(\sum_{j\in A} X_j' + \sum_{j\notin A\cup\{i\}}X_j\Big)^\top \Sigma_{XY}(Y_i-Y_i')\Big]\\
&\quad + \E'\Big[(X_i - X_i')^\top\Sigma_{XY}\Big(\sum_{j\neq i}Y_j\Big)\cdot (X_i - X_i')^\top\Sigma_{XY}\Big(\sum_{j\in A}Y_j' + \sum_{j\notin A\cup\{i\}}Y_j\Big)\Big]\\
&= \Big(\sum_{j\neq i} X_j\Big)^\top \Sigma_{XY}(Y_iY_i^\top +\Sigma_Y)\Sigma_{YX}\Big(\sum_{j\notin A\cup\{i\}}X_j\Big)\\
&\quad\quad + \Big(\sum_{j\neq i} X_j\Big)^\top \Sigma_{XY}(Y_iX_i^\top + \Sigma_{YX})\Sigma_{XY}\Big(\sum_{j\notin A\cup\{i\}}Y_j\Big)\\
&\quad\quad + \Big(\sum_{j\neq i} Y_j\Big)^\top\Sigma_{YX}(X_iY_i^\top +\Sigma_{XY})\Sigma_{YX}\Big(\sum_{j\notin A\cup\{i\}}X_j\Big)\\
&\quad\quad + \Big(\sum_{j\neq i} Y_j\Big)^\top\Sigma_{YX}(X_iX_i^\top +\Sigma_X)\Sigma_{XY}\Big(\sum_{j\notin A\cup\{i\}}Y_j\Big)\\
&\equiv S_1(i,A) + S_2(i,A) + S_3(i,A) + S_4(i,A).
\end{align*}
Hence by using (\ref{ineq:sum_comb_A_1}) we have
\begin{align*}
S_1&\equiv \frac{1}{2}\sum_{A\subsetneq[n]}\sum_{i\notin A}\frac{S_1(i,A)}{{n\choose |A|}(n-|A|)}\\
&= \frac{1}{2}\sum_{\substack{1\leq i\leq n\\j\neq i,k\neq i}} \big[X_j^\top\Sigma_{XY}(Y_iY_i^\top + \Sigma_Y)\Sigma_{YX}X_k\big]\cdot \Big[\sum_{A\cap\{i,k\}=\emptyset}\frac{1}{{n\choose |A|}(n-|A|)}\Big]\\
&=\frac{1}{4}\sum_{\substack{1\leq i\leq n\\j\neq i,k\neq i}} \big[X_j^\top\Sigma_{XY}(Y_iY_i^\top + \Sigma_Y)\Sigma_{YX}X_k\big].
\end{align*}
Using similar calculations, we have
\begin{align*}
S_2&\equiv \frac{1}{2}\sum_{A\subsetneq[n]}\sum_{i\notin A}\frac{S_2(i,A)}{{n\choose |A|}(n-|A|)} = \frac{1}{4}\sum_{\substack{1\leq i\leq n\\j\neq i,k\neq i}} \big[X_j^\top\Sigma_{XY}(Y_iX_i^\top + \Sigma_{YX})\Sigma_{XY}Y_k\big],\\
S_3&\equiv\frac{1}{2}\sum_{A\subsetneq[n]}\sum_{i\notin A}\frac{S_3(i,A)}{{n\choose |A|}(n-|A|)} = \frac{1}{4}\sum_{\substack{1\leq i\leq n\\j\neq i,k\neq i}} \big[Y_j^\top\Sigma_{YX}(X_iY_i^\top + \Sigma_{XY})\Sigma_{YX}X_k\big],\\
S_4&\equiv\frac{1}{2}\sum_{A\subsetneq[n]}\sum_{i\notin A}\frac{S_4(i,A)}{{n\choose |A|}(n-|A|)} = \frac{1}{4}\sum_{\substack{1\leq i\leq n\\j\neq i,k\neq i}} \big[Y_j^\top\Sigma_{YX}(X_iX_i^\top + \Sigma_{X})\Sigma_{XY}Y_k\big].
\end{align*}
As $\E(\psi_2|\bm{X},\bm{Y}) = \sum_{i=1}^4 S_i$, the proof is complete by invoking Lemma \ref{lem:psi_2_variances} below.
\medskip

\noindent (2). By definition, we have for each $i\in[n]$,
\begin{align*}
&\E|\Delta_i \psi_2(\bm{X},\bm{Y})|^3 \lesssim \E\Big|\Big(\sum_{j\neq i}X_j)^\top \Sigma_{XY}(Y_i - Y_i')\Big|^3 + \E\Big|\Big(\sum_{j\neq i}Y_j)^\top \Sigma_{YX}(X_i - X_i')\Big|^3\\
&\lesssim \Big(n^2 \E (X_1^\top \Sigma_{XY} Y_2)^4\Big)^{3/4}\leq  n^{3/2}\E^{1/2}(Z_1^\top G_{[12]}Z_2)^6 \asymp n^{3/2}\pnorm{G_{[12]}}{F}^3 \asymp n^{3/2}\pnorm{\Sigma_{XY}}{F}^3.
\end{align*}
The proof is complete.\qed

\begin{lemma}\label{lem:psi_2_variances}
	Recall the terms $S_1$-$S_4$ defined in the proof of Proposition \ref{prop:var_psi_2}. The following variance bound holds.
	\begin{align*}
	\max_{1\leq i\leq 4}\var(S_i) \lesssim_M n^4\pnorm{\Sigma_{XY}}{F}^4.
	\end{align*}
\end{lemma}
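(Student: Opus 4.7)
The plan is to exploit a structural symmetry of $S_1,\ldots,S_4$: the four quantities are mapped to one another under the exchange $(X,Y)\leftrightarrow (Y,X)$ combined with $\Sigma_{XY}\leftrightarrow\Sigma_{YX}$. So it suffices to prove the variance bound for $S_1$, after which the other three follow from identical calculations. Each $S_\ell$ is a polynomial of total degree four in the underlying Gaussian vector; its variance is therefore computable in closed form, and the difficulty is to reorganize the computation so that the resulting traces in $\Sigma_X,\Sigma_Y,\Sigma_{XY},\Sigma_{YX}$ collapse into the target bound $n^4\pnorm{\Sigma_{XY}}{F}^4$.

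For $S_1$, I would expand $Y_iY_i^\top+\Sigma_Y$ and carry out the summation over $i$ with $j,k\neq i$ to obtain the decomposition
\begin{align*}
4S_1 = \sum_{i=1}^n\Big(\sum_{j\neq i}X_j^\top\Sigma_{XY}Y_i\Big)^2 + (n-2)\,U^\top B\,U + \sum_{i=1}^n X_i^\top B X_i,
\end{align*}
where $U\equiv\sum_{j=1}^n X_j\sim\mathcal{N}(0,n\Sigma_X)$ and $B\equiv\Sigma_{XY}\Sigma_Y\Sigma_{YX}$. The last two terms are pure Gaussian quadratic forms, so their variances reduce to traces of the form $\tr((B\Sigma_X)^2)$ via the standard identity $\var(W^\top AW)=2\tr((A\Sigma_W)^2)$ for symmetric $A$ and $W\sim\mathcal{N}(0,\Sigma_W)$; the compact-spectrum condition on $\Sigma$, together with the inequality $\pnorm{\Sigma_{XY}\Sigma_{YX}}{F}^2\lesssim_M\pnorm{\Sigma_{XY}}{F}^2$ (from $\pnorm{\Sigma_{XY}\Sigma_{YX}}{\op}\leq M^2$ and $\tr(\Sigma_{XY}\Sigma_{YX})=\pnorm{\Sigma_{XY}}{F}^2$), then controls these traces.

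For the first (quartic) summand, I plan to apply the Gaussian--Poincar\'e inequality $\var(f(Z))\leq\E\pnorm{\nabla f(Z)}{}^2$ to the underlying standard Gaussian $Z\in\R^{n(p+q)}$ with $(X_i^\top,Y_i^\top)^\top=\Sigma^{1/2}Z_i$. Differentiating with respect to $Y_i$ produces a term proportional to $(\sum_{j\neq i}X_j^\top\Sigma_{XY}Y_i)\cdot\Sigma_{YX}\sum_{j\neq i}X_j$, while differentiating with respect to $X_j$ yields an analogous cross-term; squaring and taking expectations reduces the computation to sixth-order Gaussian moments that I would enumerate by the coincidence pattern of the summation indices, in the same book-keeping spirit as Lemma \ref{lem:psi_1_variances} but considerably lighter since the kernel here is quartic rather than of degree six. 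The main obstacle will be the combinatorial accounting: each gradient term produces $O(n^2)$ contributions classified by how many of the indices $i,j,k$ coincide, and for each class one must verify that the combinatorial factor times the induced trace expression in $\Sigma_X,\Sigma_Y,\Sigma_{XY}$ is dominated by $n^4\pnorm{\Sigma_{XY}}{F}^4$, using repeatedly the spectral bound $\pnorm{\Sigma}{\op}\leq M$. The bounds for $S_2,S_3,S_4$ follow from the same decomposition and Poincar\'e argument with $B$ replaced by $\Sigma_{XY}\Sigma_{YX}\Sigma_{XY}$ or by the $(X,Y)$-swapped analogue.
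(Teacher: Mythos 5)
Your proposal follows essentially the same route as the paper: the paper also reduces to $S_1$, splits it as $S_1=4^{-1}(S_{11}+S_{12})$ with $S_{11}=\sum_i\big(\sum_{j\neq i}X_j^\top\Sigma_{XY}Y_i\big)^2$ the quartic piece and $S_{12}$ the Gaussian quadratic form in $U=\sum_jX_j$, handles $S_{12}$ by the exact variance formula for quadratic forms, and handles $S_{11}$ by the Gaussian--Poincar\'e inequality with the same gradient computation and index bookkeeping you describe. Your decomposition of $4S_1$ is correct (and slightly more careful than the paper's about the diagonal term $\sum_iX_i^\top BX_i$, which is harmless since its variance is only $O_M(n\pnorm{\Sigma_{XY}}{F}^4)$).

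Two small corrections. First, for the quadratic piece the inequality you invoke, $\pnorm{\Sigma_{XY}\Sigma_{YX}}{F}^2\lesssim_M\pnorm{\Sigma_{XY}}{F}^2$, only yields $\var((n-2)U^\top BU)\lesssim_M n^4\pnorm{\Sigma_{XY}}{F}^2$, which is weaker than the claimed $n^4\pnorm{\Sigma_{XY}}{F}^4$ when $\pnorm{\Sigma_{XY}}{F}<1$; you should instead use submultiplicativity, $\pnorm{\Sigma_{XY}\Sigma_Y\Sigma_{YX}}{F}\leq\pnorm{\Sigma_Y}{\op}\pnorm{\Sigma_{XY}}{F}\pnorm{\Sigma_{YX}}{F}\lesssim_M\pnorm{\Sigma_{XY}}{F}^2$, so that $\tr((B\Sigma_X)^2)\lesssim_M\pnorm{\Sigma_{XY}}{F}^4$. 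Second, the exchange $(X,Y)\leftrightarrow(Y,X)$ maps $S_1\leftrightarrow S_4$ and $S_2\leftrightarrow S_3$ but does not map $S_1$ to $S_2$, so the ``mixed'' terms $S_2,S_3$ (quartic part $\sum_i(\sum_{j\neq i}X_j^\top\Sigma_{XY}Y_i)(X_i^\top\Sigma_{XY}\sum_{k\neq i}Y_k)$ plus a bilinear form with kernel $\Sigma_{XY}\Sigma_{YX}\Sigma_{XY}$) need their own pass through the same argument rather than being free by symmetry; you implicitly concede this in your last sentence, and the paper likewise dismisses them as ``completely analogous,'' so this is a matter of phrasing rather than substance.
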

\begin{proof}
	We only give a bound for $S_1$; bounds for $S_2$-$S_4$ follow from completely analogous arguments. By definition we have $S_1 = 4^{-1}(S_{11} + S_{12})$, where
	\begin{align*}
	S_{11} &\equiv \sum_{\substack{1\leq i\leq n\\j\neq i,k\neq i}} (X_j^\top\Sigma_{XY}Y_i)(Y_i^\top\Sigma_{YX}X_k)= \sum_{k=1}^n Z_k^\top\Big(\sum_{\ell\neq k}G_{[21]}Z_\ell\Big)\Big(\sum_{\ell\neq k}Z_\ell^\top G_{[12]}\Big)Z_k,\\
	S_{12} &\equiv  \sum_{\substack{1\leq i\leq n\\j\neq i,k\neq i}} X_j^\top\Sigma_{XY}\Sigma_Y\Sigma_{YX}X_k = \sum_{j,k=1}^n\big[(n-2) + \delta_{jk}\big]X_j^\top\Sigma_{XY}\Sigma_Y\Sigma_{YX}X_k.
	\end{align*}
	To control the variance of $S_{11}$, note that for any $\ell \in[n]$, 
	\begin{align*}
	\nabla_{X_\ell} S_{11}
	& = \sum_{\substack{1\leq i\leq n\\j\neq i,k\neq i}} \delta_{j\ell} \Sigma_{XY}Y_i(Y_i^\top \Sigma_{YX} X_k) + \delta_{k\ell} \Sigma_{XY} Y_i (Y_i^\top \Sigma_{YX} X_j)\\
	& =  2\sum_{i\neq \ell, j\neq i} \Sigma_{XY} Y_i(Y_i^\top \Sigma_{YX} X_j) = 2 \sum_j \Sigma_{XY} M_{Y,-\{j,\ell\}} \Sigma_{YX} X_j.
	\end{align*}
	Here $M_{Y,-A}\equiv \sum_{k\notin A} Y_kY_k^\top$ for $A\subset [n]$. Then
	\begin{align*}
	\E \pnorm{\nabla_{X_\ell} S_{11}}{}^2
	& = 4 \sum_j \E\tr\big(\Sigma_{XY} M_{Y,-\{j,\ell\}} \Sigma_{YX} \Sigma_{XY} M_{Y,-\{j,\ell\}} \Sigma_{YX}\big)\\
	& = 4 \sum_j \sum_{k_1,k_2\neq j,\ell} \E (Y_{k_1}^\top \Sigma_{YX}\Sigma_{XY}Y_{k_2})^2 \lesssim_M n^3 \pnorm{\Sigma_{XY}\Sigma_{YX}}{F}^2\leq n^3 \pnorm{\Sigma_{XY}}{F}^4.
	\end{align*}
	On the other hand, as
	\begin{align*}
	\nabla_{Y_\ell} S_{11}&= \sum_{\substack{1\leq i\leq n\\j\neq i,k\neq i}} \delta_{i\ell} \big[\Sigma_{YX} X_j (X_k^\top \Sigma_{XY} Y_i)+\Sigma_{YX} X_k (X_j^\top \Sigma_{XY}Y_i)\big]=2\sum_{j,k\neq \ell} \Sigma_{YX}X_j (X_k^\top \Sigma_{XY}Y_\ell),
	\end{align*}
	we have
	\begin{align*}
	\E \pnorm{\nabla_{Y_\ell} S_{11}}{}^2 &= 4 \sum_{ \substack{j_1,j_2,k_1,k_2 \neq \ell,\\ \abs{\{j_1,j_2,k_1,k_2\}}=1,2}} \E \big(X_{j_1}^\top \Sigma_{XY} \Sigma_Y \Sigma_{YX} X_{j_2}\big)\big(X_{k_1}^\top \Sigma_{XY}  \Sigma_{YX} X_{k_2}\big)\\
	& \lesssim_M n \pnorm{\Sigma_{XY}}{F}^4+n^2 \big(\pnorm{\Sigma_{XY}}{F}^4+\pnorm{\Sigma_{XY}\Sigma_{YX}}{F}^2\big)\asymp n^2 \pnorm{\Sigma_{XY}}{F}^4.
	\end{align*}
	Putting together the two estimates and using the Poincar\'e inequality and a change of variable yield that
	\begin{align*}
	\var(S_{11}) \lesssim_M \sum_\ell \big(\E \pnorm{\nabla_{X_\ell} S_{11}}{}^2+\E \pnorm{\nabla_{Y_\ell} S_{11}}{}^2\big)\lesssim_M n^4\pnorm{\Sigma_{XY}}{F}^4.
	\end{align*}
	The variance bound for $S_{12}$ follows from a similar and simpler argument so we omit the proof. Putting together the estimates for $S_{11}$ and $S_{12}$ concludes the proof for $S_1$.
\end{proof}

\subsection{Proof of Proposition \ref{prop:var_psi_3}}

We will bound the $X$ component, and the bound for the $Y$ component is completely analogous. 

\noindent (1). By direct calculation, we have for each $i\in[n]$,
\begin{align*}
\Delta_i \psi_3(\bm{X},\bm{Y}) =\tau_X^{-2}\pnorm{\Sigma_{XY}}{F}^2 (\pnorm{X_i}{}^2 - \pnorm{X_i'}{}^2).
\end{align*}                                                                   
Hence for any $A\subset [n]$ such that $i\notin A$, we have also $\Delta_i \psi_3(\bm{X}^A,\bm{Y}^A) = \tau_X^{-2}\pnorm{\Sigma_{XY}}{F}^2(\pnorm{X_i}{}^2 - \pnorm{X_i'}{}^2)$, thus by Lemma \ref{ineq:sum_comb_A_1}
\begin{align*}
T_{\psi_3} &= \sum_{A\subsetneq [n]}\sum_{i\notin A}\frac{\tau_X^{-4}\pnorm{\Sigma_{XY}}{F}^4 (\pnorm{X_i}{}^2 - \pnorm{X_i'}{}^2)^2}{{n\choose |A|}(n-|A|)}\\
&= \sum_{i=1}^n \tau_X^{-4}\pnorm{\Sigma_{XY}}{F}^4 (\pnorm{X_i}{}^2 - \pnorm{X_i'}{}^2)^2\cdot \Big[\sum_{A\subsetneq[n]:i\notin A}\frac{1}{{n\choose |A|}(n-|A|)}\Big]\\
&\asymp \sum_{i=1}^n \tau_X^{-4}\pnorm{\Sigma_{XY}}{F}^4(\pnorm{X_i}{}^2 - \pnorm{X_i'}{}^2)^2.
\end{align*}
This means
\begin{align*}
\var\big[\E(T_{\psi_3}|\bm{X},\bm{Y})\big] &\leq \var(T_{\psi_3}) = n\tau_X^{-8}\pnorm{\Sigma_{XY}}{F}^8 \cdot \var\Big[(\pnorm{X}{}^2 - \pnorm{X'}{}^2)^2\Big]\\
&\lesssim n\tau_X^{-8}\cdot\pnorm{\Sigma_{XY}}{F}^8\cdot \E\big(\pnorm{X}{}^2 - \tr(\Sigma_X)\big)^4\lesssim_M n\tau_X^{-4}\cdot \pnorm{\Sigma_{XY}}{F}^8.
\end{align*}

\noindent (2). We have
\begin{align*}
\E|\Delta_j \psi_3(\bm{X},\bm{Y})|^3 &=\tau_X^{-6}\pnorm{\Sigma_{XY}}{F}^6 \cdot \E\abs{\pnorm{X}{}^2 - \pnorm{X'}{}^2}^3 \lesssim \tau_X^{-6}\pnorm{\Sigma_{XY}}{F}^6 \E\abs{\pnorm{X}{}^2 - \tr(\Sigma_X)}^3\\
&\lesssim \tau_X^{-6}\pnorm{\Sigma_{XY}}{F}^6 \cdot \pnorm{\Sigma_X}{F}^3\lesssim_M \tau_X^{-3}\pnorm{\Sigma_{XY}}{F}^6.
\end{align*}
The proof is complete.\qed

\section{Proof of Theorem \ref{thm:non_null_clt_kernel}}\label{sec:proof_kernel}

For notational simplicity, we only prove the case $f_X = f_Y = f$; the general case only requires formal modifications. 

\subsection{Notation and preliminaries}
Let
\begin{align*}
U_{f,\gamma_X}(x_1,x_2)&\equiv f\big(\pnorm{x_1-x_2}{}/\gamma_X\big)-\E f\big(\pnorm{x_1-X}{}/\gamma_X\big)\\
&\qquad\qquad-\E f\big(\pnorm{X-x_2}{}/\gamma_X\big)+ \E f\big(\pnorm{X-X'}{}/\gamma_X\big),\\
V_{f,\gamma_Y}(y_1,y_2)&\equiv f\big(\pnorm{y_1-y_2}{}/\gamma_Y\big)-\E f\big(\pnorm{y_1-Y}{}/\gamma_Y\big)\\
&\qquad\qquad-\E f\big(\pnorm{Y-y_2}{}/\gamma_Y\big)+ \E f\big(\pnorm{Y-Y'}{}/\gamma_Y\big).
\end{align*}
The sample kernel distance covariance $\dcov_\ast^2(\bm{X},\bm{Y};f,\gamma)$ in (\ref{def:dcov_empirical_kernel}) can be represented as a 4-th order $U$-statistics using $U_{f,\gamma_X},V_{f,\gamma_Y}$ similar to Proposition \ref{prop:hoef_decomp}.
\begin{proposition}\label{prop:hoef_decomp_kernel}
	The following holds:
	\begin{align*}
	\dcov_\ast^2(\bm{X},\bm{Y};f,\gamma) & = \frac{1}{\binom{n}{4}}\sum_{i_1<\cdots<i_4} h_{f,\gamma}\big(Z_{i_1},Z_{i_2},Z_{i_3},Z_{i_4}\big),
	\end{align*}
	where
	\begin{align*}
	&h_{f,\gamma}(Z_1,Z_2,Z_3,Z_4) = \frac{1}{4!}\sum_{(i_1,\ldots,i_4)\in \sigma(1,2,3,4)} \bigg[ U_{f,\gamma_X}(X_{i_1},X_{i_2})V_{f,\gamma_Y}(Y_{i_1},Y_{i_2})\\
	&\qquad\qquad+U_{f,\gamma_X}(X_{i_1},X_{i_2})V_{f,\gamma_Y}(Y_{i_3},Y_{i_4})- 2 U_{f,\gamma_X}(X_{i_1},X_{i_2})V_{f,\gamma_Y}(Y_{i_1},Y_{i_3})\bigg].
	\end{align*}
\end{proposition}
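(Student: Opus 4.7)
The plan is to mimic the proof in Appendix \ref{appendix:kernel_rep} for the canonical case, leveraging the fact that by construction both matrices $A=(A_{k\ell}(f_X,\gamma_X))$ and $B=(B_{k\ell}(f_Y,\gamma_Y))$ have zero diagonal (via the $\bm{1}_{k\neq\ell}$ convention), which is the only algebraic feature of the canonical $A,B$ that was actually used in Appendix \ref{appendix:kernel_rep}.

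First, I would establish the matrix/trace identity
\begin{align*}
\dcov_\ast^2(\bm{X},\bm{Y};f,\gamma) = \frac{1}{n(n-3)}\bigg(\tr(AB)+ \frac{\bm{1}^\top A \bm{1}\,\bm{1}^\top B \bm{1}}{(n-1)(n-2)}-\frac{2\,\bm{1}^\top AB \bm{1}}{n-2}\bigg),
\end{align*}
which follows verbatim from the calculation in the first lemma of Appendix \ref{appendix:kernel_rep}: the only properties of $A,B$ invoked there are symmetry and zero diagonal, both of which our $A(f_X,\gamma_X),B(f_Y,\gamma_Y)$ satisfy by definition. Expanding $\tr(AB)$, $\bm{1}^\top A\bm{1}\,\bm{1}^\top B\bm{1}$, and $\bm{1}^\top AB\bm{1}$ as sums of entries and using the identities
\begin{align*}
\sum_{i\neq j,k\neq \ell}A_{ij}B_{k\ell} &= \sum_{i\neq j\neq k\neq \ell}A_{ij}B_{k\ell} + 4\sum_{i\neq j\neq k}A_{ij}B_{ik} + 2\sum_{i\neq j}A_{ij}B_{ij},\\
\sum_{i\neq j,i\neq k}A_{ij}B_{ik} &= \sum_{i\neq j\neq k}A_{ij}B_{ik} + \sum_{i\neq j}A_{ij}B_{ij},
\end{align*}
I would then obtain, exactly as in Appendix \ref{appendix:kernel_rep},
\begin{align*}
\dcov_\ast^2(\bm{X},\bm{Y};f,\gamma) = \frac{1}{4!\binom{n}{4}}\sum_{i\neq j\neq k\neq \ell}\bigl(A_{ij}B_{ij}+A_{ij}B_{k\ell}-2A_{ij}B_{ik}\bigr),
\end{align*}
which is a $4$-th order $U$-statistic with unsymmetrized kernel $A_{12}B_{12}+A_{12}B_{34}-2A_{12}B_{13}$ (noting that on the summation index set all four indices are distinct, so the diagonal convention is irrelevant).

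The remaining task is to identify the symmetrized version of this kernel with $h_{f,\gamma}$. Following the canonical argument, the symmetrization can be rewritten as
\begin{align*}
\frac{1}{4\cdot 4!}\!\!\sum_{\sigma(1,2,3,4)}\!\!\Bigl(A_{i_1 i_2}+A_{i_3 i_4}-A_{i_1 i_3}-A_{i_2 i_4}\Bigr)\Bigl(B_{i_1 i_2}+B_{i_3 i_4}-B_{i_1 i_3}-B_{i_2 i_4}\Bigr),
\end{align*}
by averaging the four identical representations obtained under the swaps $(1\leftrightarrow 2,3\leftrightarrow 4)$ etc. At this point the key observation is that in each bracket every index appears exactly twice, once with a $+$ and once with a $-$ sign; therefore the three single- and double-expectation correction terms defining the double-centering ($U_{f,\gamma_X}$ vs.\ $f(\pnorm{\cdot}{}/\gamma_X)$, and similarly for $Y$) cancel identically. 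Hence the brackets can be replaced by their $U_{f,\gamma_X},V_{f,\gamma_Y}$ analogues, and expanding the resulting product and re-symmetrizing reproduces precisely $h_{f,\gamma}$.

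No serious technical obstacle is expected: the argument is purely algebraic and the kernel $f$ enters only through the entries of $A,B$, so nothing beyond the zero-diagonal convention on $A_{kk}(f_X,\gamma_X),B_{kk}(f_Y,\gamma_Y)$ is used. The most delicate bookkeeping step is the double-centering invariance in the last paragraph, where one must verify that in the antisymmetric pattern $+(1,2)+(3,4)-(1,3)-(2,4)$ every index indeed appears with net multiplicity zero; this is the reason the diagonal entries of $A,B$ were set to zero in the definition of the generalized kernel distance covariance in the first place.
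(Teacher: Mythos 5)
Your proposal is correct and follows exactly the route the paper intends: the paper's own proof of Proposition \ref{prop:hoef_decomp_kernel} simply states that it ``follows the same lines as that of Proposition \ref{prop:hoef_decomp},'' and you have carried out precisely that program, correctly isolating the two properties of $A,B$ (symmetry and zero diagonal) that the Appendix \ref{appendix:kernel_rep} argument actually uses, and correctly identifying the cancellation of the centering corrections under the antisymmetric pattern $+(i_1,i_2)+(i_3,i_4)-(i_1,i_3)-(i_2,i_4)$ as the step for which the zero-diagonal convention was imposed. No gap.
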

\begin{proof}
	The proof follows the same lines as that of Proposition \ref{prop:hoef_decomp}; we omit repetitive details.
\end{proof}

We need the following basic properties of $f_{\sqrt{\cdot}}(\cdot)$ inherited from Assumption \ref{assump:kernel}.
\begin{lemma}\label{lem:f_sqrt_basic}
	Under Assumption \ref{assump:kernel} for $f(\cdot)$, conclusions (1)-(4) therein hold for $f_{\sqrt{\cdot}}(\cdot)$ with different constants $c'_\epsilon, C'_\epsilon, \mathfrak{q}'$. Furthermore, $c'_\epsilon, C'_\epsilon$ only depend on $c_\epsilon,C_\epsilon$ and $\epsilon$, and $\mathfrak{q}'$ only depends on $\mathfrak{q}$.
\end{lemma}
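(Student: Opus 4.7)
The plan is to reduce everything to a single chain-rule identity and then verify the four conditions one by one. First I would establish by induction on $\ell \in \{1,2,3,4\}$ the representation
\begin{align*}
(f_{\sqrt{\cdot}})^{(\ell)}(x) = \sum_{j=1}^{\ell} c_{\ell,j}\, f^{(j)}(\sqrt{x})\, x^{-(2\ell-j)/2}, \qquad x>0,
\end{align*}
for absolute constants $c_{\ell,j}$; this is a routine induction starting from $(f_{\sqrt{\cdot}})'(x) = f'(\sqrt{x})/(2\sqrt{x})$ and using $\frac{d}{dx}=\frac{1}{2\sqrt{x}}\frac{d}{du}\big|_{u=\sqrt{x}}$, and the key observation is simply that each application of $d/dx$ either lowers the index of the derivative of $f$ by zero (producing an extra $x^{-1/2}$) or does not change it (reusing an existing power of $x^{-1/2}$), so that the exponent of $x$ ranges over half-integers bounded below by $-(2\ell-1)/2$.

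With this formula in hand, conditions (1)--(3) are immediate. For (1), $f_{\sqrt{\cdot}}(x) = f(\sqrt{x})$ is bounded on $[0,M)$ since $f$ is bounded on $[0,\sqrt{M})$. For (2), on $\{x\geq \epsilon\}$ we have $\sqrt{x}\geq\sqrt{\epsilon}$, and condition (2) for $f$ gives $|f^{(j)}(\sqrt{x})|\leq C_{\sqrt{\epsilon}}$ for $1\leq j\leq 4$, so plugging into the chain-rule identity produces
\begin{align*}
\sup_{x\geq\epsilon}|(f_{\sqrt{\cdot}})^{(\ell)}(x)| \leq C_{\sqrt{\epsilon}}\cdot \sum_{j=1}^{\ell}|c_{\ell,j}|\,\epsilon^{-(2\ell-j)/2}\equiv C_{\epsilon}',
\end{align*}
a constant depending only on $C_{\sqrt{\epsilon}}$ and $\epsilon$. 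For (3), $(f_{\sqrt{\cdot}})'(x) = f'(\sqrt{x})/(2\sqrt{x})$, and for $x\in(\epsilon,\epsilon^{-1})$ we have $\sqrt{x}\in(\sqrt{\epsilon},(\sqrt{\epsilon})^{-1})$, so condition (3) for $f$ applied at parameter $\sqrt{\epsilon}$ yields $|(f_{\sqrt{\cdot}})'(x)|\geq c_{\sqrt{\epsilon}}\sqrt{\epsilon}/2 \equiv c_{\epsilon}'$.

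Condition (4) is the only step requiring a touch of care, and will be where I expect the book-keeping to be the most tedious, though still elementary. Using the same chain-rule formula, condition (4) for $f$ gives $|f^{(j)}(\sqrt{x})|\lesssim x^{-\mathfrak{q}/2}$ as $x\downarrow 0$, uniformly in $1\leq j\leq 4$, and then
\begin{align*}
\max_{1\leq\ell\leq 4}|(f_{\sqrt{\cdot}})^{(\ell)}(x)| \lesssim \max_{1\leq\ell\leq 4}\max_{1\leq j\leq \ell} x^{-\mathfrak{q}/2}\cdot x^{-(2\ell-j)/2} \lesssim x^{-(\mathfrak{q}+7)/2}
\end{align*}
as $x\downarrow 0$, so the choice $\mathfrak{q}' = (\mathfrak{q}+7)/2$ works and depends only on $\mathfrak{q}$. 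Assembling the four verifications finishes the proof.
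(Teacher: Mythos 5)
Your proof is correct and is exactly the argument the paper has in mind: the paper's own ``proof'' is a one-line assertion that the claims follow from the definitions of $f_{\sqrt{\cdot}}^{(\ell)}$, and your chain-rule identity $(f_{\sqrt{\cdot}})^{(\ell)}(x)=\sum_{j=1}^{\ell}c_{\ell,j}f^{(j)}(\sqrt{x})x^{-(2\ell-j)/2}$ together with the four verifications supplies the omitted details. The only cosmetic point is that your new constants at level $\epsilon$ are expressed through the old constants at level $\sqrt{\epsilon}$ (e.g.\ $C'_\epsilon$ depends on $C_{\sqrt{\epsilon}}$), which is consistent with the lemma's intended meaning that the new family of constants is determined by the old family and $\epsilon$.
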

\begin{proof}
	The claims hold by definitions of $f^{(\ell)}_{\sqrt{\cdot}}$, $\ell=0,1,2,3,4$. 
\end{proof}

A useful consequence of Assumption \ref{assump:kernel} is the following.
\begin{lemma}\label{lem:kernel_der_int}
	Suppose that Assumption \ref{assump:kernel} holds, and the spectrum of $\Sigma_X,\Sigma_Y$ is contained in $[1/M,M]$ for some $M>1$. Then for any integer $s \in \N$, there exists $K\equiv K(s,\mathfrak{q})>0$ such that if $p\wedge q\geq K$, 
	\begin{align*}
	\max_{1\leq \ell\leq 4}\sup_{\rho^2 \in [1/M,M]}\sup_{t \in[0,1]} \E\bigabs{f_{\sqrt{\cdot}}^{(\ell)}\big(\rho^2(1+tW)\big)}^{s} \lesssim_M 1,
	\end{align*}
	where $W$ is either $L_X(X_1,X_2)$ or $L_Y(Y_1,Y_2)$ defined in (\ref{def:L_X}).
\end{lemma}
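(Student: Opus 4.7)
The plan is to reduce the desired bound to controlling a negative moment of $\rho^2(1+tW)$, and then to split the range of $t$ to handle the two ways in which $\rho^2(1+tW)$ can be small.

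First, by Lemma \ref{lem:f_sqrt_basic}, $f_{\sqrt{\cdot}}$ satisfies conclusions (1)-(4) of Assumption \ref{assump:kernel} with some constant $\mathfrak{q}' = \mathfrak{q}'(\mathfrak{q})$. Combining (2) and (4) applied to $f_{\sqrt{\cdot}}$, I would produce a uniform pointwise bound of the form
\begin{align*}
\max_{1\leq \ell\leq 4}\bigabs{f_{\sqrt{\cdot}}^{(\ell)}(x)} \leq C_f\cdot \big(1+x^{-\mathfrak{q}'}\big),\quad x>0,
\end{align*}
by using (4) to control the blow-up near $0$ and (2) on any compactum bounded away from $0$. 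Hence the desired moment bound reduces to showing
\begin{align*}
\sup_{\rho^2\in[1/M,M]}\sup_{t\in[0,1]}\E\big[\big(\rho^2(1+tW)\big)^{-s\mathfrak{q}'}\big]\lesssim_M 1,
\end{align*}
provided $p\wedge q$ is sufficiently large.

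For the moment bound, I would split the range of $t\in[0,1]$ into $[0,1/2]$ and $(1/2,1]$. On $[0,1/2]$, since $W \geq -1$ by definition of $L_X, L_Y$, one has $1 + tW \geq 1-t \geq 1/2$, so $\rho^2(1+tW)\geq 1/(2M)$ deterministically and the claim is immediate. On $(1/2,1]$, using the representation $1+tL_X(X_1,X_2) = (1-t) + t\pnorm{X_1-X_2}{}^2/\tau_X^2$, one has
\begin{align*}
\rho^2(1+tW)\geq \frac{\rho^2}{2}\cdot\frac{\pnorm{X_1-X_2}{}^2}{\tau_X^2}\geq \frac{1}{2M}\cdot\frac{\pnorm{X_1-X_2}{}^2}{\tau_X^2},
\end{align*}
so $\big(\rho^2(1+tW)\big)^{-s\mathfrak{q}'}\lesssim_{M,s,\mathfrak{q}'} \tau_X^{2s\mathfrak{q}'}\pnorm{X_1-X_2}{}^{-2s\mathfrak{q}'}$. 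Taking expectation and applying the negative-moment estimate (\ref{ineq:moment_R_1}) (with index $s\mathfrak{q}'$), which is valid for $p\geq 2s\mathfrak{q}'+1$, yields a bound of order $1$.

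The case $W = L_Y(Y_1,Y_2)$ is entirely symmetric with the role of $p$ played by $q$. Taking $K(s,\mathfrak{q}) \equiv 2s\mathfrak{q}'(\mathfrak{q})+1$ then ensures the conclusion under $p\wedge q\geq K$. There is no substantive obstacle here: the argument is routine once one observes that Assumption \ref{assump:kernel}-(4) reduces the control of $f_{\sqrt{\cdot}}^{(\ell)}$ to a polynomial singularity, and that the two failure modes of $1+tW$ (namely $t\downarrow 0$ contributing a $(1-t)$-type floor, versus $t$ close to $1$ leaving only $\pnorm{X_1-X_2}{}^2/\tau_X^2$) are controlled by disjoint mechanisms.
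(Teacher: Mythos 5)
Your proposal is correct and follows essentially the same route as the paper's own proof: both reduce via Lemma \ref{lem:f_sqrt_basic} to a polynomial singularity $x^{-\mathfrak{q}'}$, then exploit the decomposition $1+tL_X = (1-t)+t\pnorm{X_1-X_2}{}^2/\tau_X^2$ together with the negative-moment bound (\ref{ineq:moment_R_1}); the paper phrases the case split as a minimum of $(1-t)^{-s\mathfrak{q}'}$ and $t^{-s\mathfrak{q}'}\E(\tau_X^{-2}\pnorm{X_1-X_2}{}^2)^{-s\mathfrak{q}'}$ rather than splitting $t$ at $1/2$, but this is the same mechanism. No gaps.
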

\begin{proof}
	We only prove the case for $W=L_X(X_1,X_2)$, $\ell=1$. By Lemma \ref{lem:f_sqrt_basic}, for all $x\geq 0$, $\abs{f_{\sqrt{\cdot}}^{(1)}(x)}\leq C_f (1\vee x^{-\mathfrak{q'}})$ for some $C_f>0$ and $\mathfrak{q}'\in\mathbb{N}$. This means that 
	\begin{align*}
	&\E\bigabs{f_{\sqrt{\cdot}}^{(1)}\big(\rho^2(1+tL_X(X_1,X_2))\big)}^{s}\lesssim 1+ \rho^{-2s\mathfrak{q'}} \E \big(1+tL_X(X_1,X_2)\big)^{-s\mathfrak{q'}}\\
	&\lesssim 1+ \rho^{-2s\mathfrak{q'}} \min\big\{
	(1-t)^{-s\mathfrak{q'}},t^{-s\mathfrak{q'}} \E \big(\tau_X^{-2}\pnorm{X_1-X_2}{}^2\big)^{-s\mathfrak{q'}}\}.
	\end{align*}
	Now use (\ref{ineq:chi_inv_moment}) to conclude.
\end{proof}

We define the analogue of the $h$ function in (\ref{def:h}): For $u\geq -1$, let
\begin{align*}
h_{f,\rho}(u)&\equiv\frac{f_{\sqrt{\cdot}}(\rho^2(1+u))-f_{\sqrt{\cdot}}(\rho^2)-f_{\sqrt{\cdot}}'(\rho^2)\rho^2 u}{2 f_{\sqrt{\cdot}}'(\rho^2)\rho^2}\\ 
& =\frac{\rho^2}{2f'_{\sqrt{\cdot}}(\rho^2)}\cdot u^2 \int_0^1 f''_{\sqrt{\cdot}}\big(\rho^2(1+su)\big)(1-s)\,\d{s}.
\end{align*}
When $f(x) = x$ so that $f_{\sqrt{\cdot}}(x)=\sqrt{x}$ and $\rho=1$, we recover the $h$ function in (\ref{def:h}).

\begin{lemma}\label{lem:property_h_fcn_kernel}
	Suppose that Assumption \ref{assump:kernel} holds and $1/M\leq \rho^2 \leq M$ for some $M>1$. Then 
	\begin{align*}
	\abs{h_{f,\rho}(u)}\lesssim_M u^2,\quad \abs{h_{f,\rho}'(u)}\lesssim_M \big|u\cdot F_{f,\rho;2}(u)\big|,
	\end{align*}
	where
	\begin{align*}
	F_{f,\rho;2}(u)\equiv u^{-1} \int_0^{u} f_{\sqrt{\cdot}}''(\rho^2(1+t))\,\d{t} = \int_0^1 f_{\sqrt{\cdot}}''(\rho^2(1+us))\,\d{s}.
	\end{align*}
	Furthermore
	\begin{align*}
	&h_{f,\rho}(u)= \frac{f_{\sqrt{\cdot}}''(\rho^2)\rho^2}{4 f_{\sqrt{\cdot}}'(\rho^2)}\cdot u^2 + \frac{\rho^4}{2 f_{\sqrt{\cdot}}'(\rho^2)}\cdot u^3 \int_0^1 f^{(3)}_{\sqrt{\cdot}}\big(\rho^2(1+su)\big)\frac{(1-s)^2}{2}\,\d{s}\\
	& = \frac{f_{\sqrt{\cdot}}''(\rho^2)\rho^2}{4 f_{\sqrt{\cdot}}'(\rho^2)}\cdot u^2 +\frac{f_{\sqrt{\cdot}}^{(3)}(\rho^2)\rho^4}{12 f_{\sqrt{\cdot}}'(\rho^2)}\cdot u^3+ \frac{\rho^6}{2 f_{\sqrt{\cdot}}'(\rho^2)}\cdot u^4\int_0^1 f^{(4)}_{\sqrt{\cdot}}\big(\rho^2(1+su)\big)\frac{(1-s)^3}{6}\,\d{s}\\
	&\equiv \frac{f_{\sqrt{\cdot}}''(\rho^2)\rho^2}{4 f_{\sqrt{\cdot}}'(\rho^2)}\cdot u^2 + h_{f,\rho;3}(u).
	\end{align*}
\end{lemma}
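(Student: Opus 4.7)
The plan is to reduce everything to clean Taylor expansions of the scalar function $F(x) \equiv f_{\sqrt{\cdot}}(x)$ at the base point $x_0 = \rho^2 \in [1/M, M]$, and to control the denominator $2f'_{\sqrt{\cdot}}(\rho^2)\rho^2$ uniformly in $\rho$. The latter is the easiest step and sets the stage: Assumption \ref{assump:kernel}-(3) via Lemma \ref{lem:f_sqrt_basic} gives $|f'_{\sqrt{\cdot}}(\rho^2)| \gtrsim_M 1$ uniformly over $\rho^2 \in [1/M, M]$, so every appearance of $1/f'_{\sqrt{\cdot}}(\rho^2)$ below is bounded by an $M$-dependent constant. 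Similarly $\rho^2 \asymp_M 1$.

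For the bound $|h_{f,\rho}(u)| \lesssim_M u^2$, I would split on the size of $u$. For $u > -1/2$, the argument $\rho^2(1+su) \geq \rho^2/2 \gtrsim_M 1$ for all $s \in [0,1]$, so Assumption \ref{assump:kernel}-(2) applied to $f_{\sqrt{\cdot}}$ (Lemma \ref{lem:f_sqrt_basic}) yields $\sup_{s\in[0,1]}|f''_{\sqrt{\cdot}}(\rho^2(1+su))| \lesssim_M 1$. Plugging into the integral representation already stated in the lemma gives $|h_{f,\rho}(u)| \lesssim_M u^2$ directly. For $u \in [-1,-1/2]$, I would go back to the original difference-quotient definition: $f_{\sqrt{\cdot}}$ is bounded on $[0, 2M]$ (Assumption \ref{assump:kernel}-(1) via Lemma \ref{lem:f_sqrt_basic}) and $f'_{\sqrt{\cdot}}(\rho^2)\rho^2 u$ is also bounded there, so $|h_{f,\rho}(u)| \lesssim_M 1 \lesssim u^2$ since $u^2 \geq 1/4$ on this range. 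Combining the two regimes closes claim (1).

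For the bound on $h'_{f,\rho}$, a direct differentiation of the defining formula gives
\[
h'_{f,\rho}(u) = \frac{f'_{\sqrt{\cdot}}(\rho^2(1+u)) - f'_{\sqrt{\cdot}}(\rho^2)}{2 f'_{\sqrt{\cdot}}(\rho^2)},
\]
and the fundamental theorem of calculus converts the numerator into $\rho^2 u \int_0^1 f''_{\sqrt{\cdot}}(\rho^2(1+su))\,\d s = \rho^2 u\, F_{f,\rho;2}(u)$. Dividing by $2f'_{\sqrt{\cdot}}(\rho^2)$ and using the uniform lower bound on the denominator yields $|h'_{f,\rho}(u)| \lesssim_M |u\, F_{f,\rho;2}(u)|$, which is claim (2).

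For the Taylor-type identities, I would apply Taylor's theorem with integral remainder to $F(x) = f_{\sqrt{\cdot}}(x)$ around $x_0 = \rho^2$ and then divide by $2f'_{\sqrt{\cdot}}(\rho^2)\rho^2$. The order-three expansion is
\[
F(\rho^2(1+u)) - F(\rho^2) - F'(\rho^2)\rho^2 u = \tfrac{1}{2}F''(\rho^2)\rho^4 u^2 + \rho^6 u^3\!\int_0^1 F^{(3)}(\rho^2(1+su))\tfrac{(1-s)^2}{2}\,\d s,
\]
and dividing by $2f'_{\sqrt{\cdot}}(\rho^2)\rho^2$ recovers the first stated expansion. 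Pushing Taylor one order further and doing the same manipulation gives the second expansion and the definition of $h_{f,\rho;3}(u)$. Since the calculation is purely algebraic, no convergence or bounding is required at this step.

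The only genuine subtlety is handling the regime $u \in [-1, -1/2]$ in claim (1), where $\rho^2(1+u)$ can approach $0$ and the integral-remainder representation loses control because $f^{(\ell)}_{\sqrt{\cdot}}$ is allowed to blow up polynomially at $0$ under Assumption \ref{assump:kernel}-(4). The remedy, as outlined above, is to avoid the integral representation entirely on this range and instead use that both $f_{\sqrt{\cdot}}$ and the linear term are bounded on compacta by Assumption \ref{assump:kernel}-(1)(2), while $|u| \geq 1/2$ makes $u^2 \gtrsim 1$ absorb the $\mathcal{O}(1)$ bound. Everything else reduces to direct differentiation and the scalar Taylor expansion of $f_{\sqrt{\cdot}}$.
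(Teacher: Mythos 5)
Your proposal is correct and follows essentially the same route as the paper: the same split into $u\in[-1,-1/2]$ (where boundedness of $f_{\sqrt{\cdot}}$ and $u^2\gtrsim 1$ absorb everything) versus $u>-1/2$ (where the integrand stays on a compactum bounded away from $0$), the same FTC identity $h_{f,\rho}'(u)=\big(f_{\sqrt{\cdot}}'(\rho^2(1+u))-f_{\sqrt{\cdot}}'(\rho^2)\big)/\big(2f_{\sqrt{\cdot}}'(\rho^2)\big)=\rho^2 u\,F_{f,\rho;2}(u)/\big(2f_{\sqrt{\cdot}}'(\rho^2)\big)$, and the same Taylor expansions with integral remainder. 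No gaps.
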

\begin{proof}
	The bound for $h_{f,\rho}$ follows by the following consideration: if $u \in [-1,-1/2]$, it holds by Lemma \ref{lem:f_sqrt_basic}-Conclusion (2) and the fact $\rho \asymp_M 1$ that $|h_{f,\rho}(u)|\lesssim_M 1\lesssim_M u^2$;  if $u>-1/2$, if follows from the lower bound on $|f'_{\sqrt{\cdot}}|$ and upper bound on $|f^{''}_{\sqrt{\cdot}}|$ that
	\begin{align*}
	|h_{f,\rho}(u)|&=\Big|\frac{\rho^2}{2f'_{\sqrt{\cdot}}(\rho^2)}\cdot u^2 \int_0^1 f''_{\sqrt{\cdot}}\big(\rho^2(1+su)\big)(1-s)\,\d{s}\Big|\\
	&\lesssim_M u^2\cdot \sup_{u>-1/2}\sup_{s\in[0,1]} \big|f''_{\sqrt{\cdot}}\big(\rho^2(1+su)\big)\big| \lesssim_M u^2.
	\end{align*}
	On the other hand, by Taylor expansion
	\begin{align*}
	&f_{\sqrt{\cdot}}(\rho^2(1+u))-f_{\sqrt{\cdot}}(\rho^2)-f_{\sqrt{\cdot}}'(\rho^2)\rho^2 u = \int_{0}^{\rho^2 u}f_{\sqrt{\cdot}}''(\rho^2+t)(\rho^2u-t)\,\d{t}\\
	& = \frac{f_{\sqrt{\cdot}}''(\rho^2)}{2}(\rho^2 u)^2+ (\rho^2 u)^3 \int_0^1 f^{(3)}_{\sqrt{\cdot}}\big(\rho^2(1+su)\big)\frac{(1-s)^2}{2}\,\d{s}\\
	& = \frac{f_{\sqrt{\cdot}}''(\rho^2)}{2}(\rho^2 u)^2+ \frac{f_{\sqrt{\cdot}}^{(3)}(\rho^2)}{6}(\rho^2 u)^3+(\rho^2 u)^4 \int_0^1 f^{(4)}_{\sqrt{\cdot}}\big(\rho^2(1+su)\big)\frac{(1-s)^3}{6}\,\d{s}. 
	\end{align*}
	The first equality shows that
	\begin{align*}
	\abs{h_{f,\rho}'(u)}=\biggabs{\frac{1}{2f'_{\sqrt{\cdot}}(\rho^2)}\int_0^{\rho^2 u} f_{\sqrt{\cdot}}''(\rho^2+t)\,\d{t}}\lesssim_M \biggabs{\int_0^{u} f_{\sqrt{\cdot}}''(\rho^2(1+t))\,\d{t}} = \big|u\cdot F_{f,\rho;2}(u)\big|,
	\end{align*}
	as desired.
\end{proof}

\subsection{Residual estimates}
Let
\begin{align*}
R_{X;f}(x_1,x_2)\equiv h_{f,\rho_X}(L_X(x_1,x_2)),\, R_{Y;f}(y_1,y_2)\equiv h_{f,\rho_Y}(L_Y(y_1,y_2)).
\end{align*}
These quantities are analogues of $R_X,R_Y$ defined in (\ref{def:L_X}). Now we may develop an expansion of $U_{f,\gamma_X},V_{f,\gamma_Y}$ similar to Lemma \ref{lem:sqrt_norm_LX}.

\begin{lemma}\label{lem:UV_expansion_kernel}
	With $\bar{R}_{X;f},\bar{R}_{Y;f}$ denoting the double centered versions of ${R}_{X;f},{R}_{Y;f}$, we have
	\begin{align*}
	U_{f,\gamma_X}(x_1,x_2)&=- \frac{2f_{\sqrt{\cdot}}'(\rho_X^2)}{\gamma_X^2}\Big[x_1^\top x_2-\tau_X^2 \bar{R}_{X;f}(L_X(x_1,x_2))\Big],\\
	V_{f,\gamma_Y}(y_1,y_2)&=- \frac{2f_{\sqrt{\cdot}}'(\rho_Y^2)}{\gamma_Y^2}\Big[y_1^\top y_2-\tau_Y^2\bar{R}_{Y;f}(L_Y(y_1,y_2))\Big].
	\end{align*}
\end{lemma}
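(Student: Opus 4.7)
The two assertions are symmetric in $X$ and $Y$, so the plan is to derive the identity for $U_{f,\gamma_X}$; the identity for $V_{f,\gamma_Y}$ will follow by the same argument with the roles of $X$ and $Y$ interchanged. The strategy is a direct algebraic expansion combined with the vanishing effect of double centering on constants and on functions depending on a single argument.

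First I would rewrite the base quantity in terms of $f_{\sqrt{\cdot}}$ and $L_X$. Since $\pnorm{x_1-x_2}{}^2 = \tau_X^2(1+L_X(x_1,x_2))$ and $\rho_X^2 = \tau_X^2/\gamma_X^2$, one has
\begin{align*}
f\big(\pnorm{x_1-x_2}{}/\gamma_X\big) = f_{\sqrt{\cdot}}\big(\rho_X^2(1+L_X(x_1,x_2))\big).
\end{align*}
The definition of $h_{f,\rho_X}$ just before Lemma \ref{lem:property_h_fcn_kernel} rearranges into the exact identity
\begin{align*}
f_{\sqrt{\cdot}}\big(\rho_X^2(1+u)\big) = f_{\sqrt{\cdot}}(\rho_X^2) + f_{\sqrt{\cdot}}'(\rho_X^2)\rho_X^2\, u + 2 f_{\sqrt{\cdot}}'(\rho_X^2)\rho_X^2\, h_{f,\rho_X}(u),
\end{align*}
which, upon substitution of $u = L_X(x_1,x_2)$ and use of $R_{X;f}(x_1,x_2) = h_{f,\rho_X}(L_X(x_1,x_2))$, gives
\begin{align*}
f\big(\pnorm{x_1-x_2}{}/\gamma_X\big) = f_{\sqrt{\cdot}}(\rho_X^2) + \frac{f_{\sqrt{\cdot}}'(\rho_X^2)}{\gamma_X^2}\big(\pnorm{x_1-x_2}{}^2 - \tau_X^2\big) + 2 f_{\sqrt{\cdot}}'(\rho_X^2)\rho_X^2\, R_{X;f}(x_1,x_2),
\end{align*}
where I used $f_{\sqrt{\cdot}}'(\rho_X^2)\rho_X^2 L_X(x_1,x_2) = f_{\sqrt{\cdot}}'(\rho_X^2)(\pnorm{x_1-x_2}{}^2-\tau_X^2)/\gamma_X^2$.

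Next I would apply the double-centering operation defining $U_{f,\gamma_X}$ term by term. The constant $f_{\sqrt{\cdot}}(\rho_X^2)$ drops out. For the middle term, expand $\pnorm{x_1-x_2}{}^2 = \pnorm{x_1}{}^2 + \pnorm{x_2}{}^2 - 2 x_1^\top x_2$; the pieces $\pnorm{x_1}{}^2, \pnorm{x_2}{}^2$, and $\tau_X^2$ depend on at most one argument (or none) and so are annihilated by double centering, while $x_1^\top x_2$ is already double-centered under Assumption \ref{ass:gaussian} since $\E X = 0$. Hence the middle term contributes only $-2 f_{\sqrt{\cdot}}'(\rho_X^2)\gamma_X^{-2} x_1^\top x_2$ after centering, and the last term contributes $2 f_{\sqrt{\cdot}}'(\rho_X^2)\rho_X^2\, \bar R_{X;f}(x_1,x_2)$. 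Using $\rho_X^2\tau_X^2\cdot\gamma_X^{-2}\cdot\tau_X^{-2}\cdot\gamma_X^2 = 1$ (i.e., $2f_{\sqrt{\cdot}}'(\rho_X^2)\rho_X^2 = 2 f_{\sqrt{\cdot}}'(\rho_X^2)\tau_X^2/\gamma_X^2$) and collecting the two surviving contributions yields
\begin{align*}
U_{f,\gamma_X}(x_1,x_2) = -\frac{2 f_{\sqrt{\cdot}}'(\rho_X^2)}{\gamma_X^2}\Big[x_1^\top x_2 - \tau_X^2\, \bar R_{X;f}(x_1,x_2)\Big],
\end{align*}
which is the claimed identity.

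There is no substantial obstacle in this lemma: the whole argument is an exact algebraic identity that isolates the linear-in-$L_X$ piece of $f_{\sqrt{\cdot}}(\rho_X^2(1+L_X))$ (which collapses to $x_1^\top x_2$ under double centering) from the residual $h_{f,\rho_X}(L_X)$. The real content is in the error estimates for $\bar R_{X;f}$ that follow, which rely on Lemmas \ref{lem:property_h_fcn_kernel} and \ref{lem:kernel_der_int}; the present lemma is purely a bookkeeping identity.
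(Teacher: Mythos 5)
Your proof is correct and follows essentially the same route as the paper's: expand $f(\pnorm{x_1-x_2}{}/\gamma_X)=f_{\sqrt{\cdot}}(\rho_X^2(1+L_X))$ via the defining identity of $h_{f,\rho_X}$ into constant, linear, and residual pieces, then observe that double centering kills the constant and single-argument terms and sends $L_X$ to $-2x_1^\top x_2/\tau_X^2$. The paper's proof is just a terser version of the same bookkeeping.
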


\begin{proof}
	Using the definition of $R_{X,f}$, we have
	\begin{align*}
	f\bigg(\frac{\pnorm{x_1-x_2}{}}{\gamma_X}\bigg)
	& =f_{\sqrt{\cdot}}\big(\rho_X^2+ \rho_X^2 L_X(x_1,x_2)\big)\\ 
	& =f_{\sqrt{\cdot}}(\rho_X^2)+f_{\sqrt{\cdot}}'(\rho_X^2) \rho_X^2 \Big[L_X(x_1,x_2)+2\cdot R_{X;f}(L_X(x_1,x_2))\Big],
	\end{align*}
	The claim follows by using the fact that double centered version of $L_X(x_1,x_2)$ equals $-2x_1^\top x_2/\tau_X^2$.
\end{proof}

The following moment estimates give an analogy of Lemma \ref{lem:moment_R}.

\begin{lemma}\label{lem:moment_R_kernel}
	Suppose Assumption \ref{assump:kernel} holds, and that (i) the spectrum of $\Sigma_X,\Sigma_Y$ and (ii) $\rho_X,\rho_Y$ are contained in $[1/M,M]$ for some $M>1$. Then:
	\begin{enumerate}
		\item  For any positive integer $s\in \N$,
		\begin{align*}
		\tau_X^{2s} \E R_{X;f}^s(X_1,X_2)\bigvee \tau_Y^{2s} \E R_{Y;f}^s(Y_1,Y_2)\lesssim_{M,s} 1.
		\end{align*}
		\item For any positive integer $s \in \N$,
		\begin{align*}
		\tau_X^s \E h_{f,\rho_X}'(L_X(X_1,X_2))^s \bigvee \tau_Y^s \E h_{f,\rho_Y}'(L_Y(Y_1,Y_2))^s \lesssim_M 1
		\end{align*}
		holds for $p,q$ large (possibly depending on $s,\mathfrak{q}$).
	\end{enumerate}
\end{lemma}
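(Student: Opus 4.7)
The strategy is to reduce both claims to moment estimates for $L_X, L_Y$ (Lemma \ref{lem:moment_R}) together with the pointwise control on $h_{f,\rho}$ and $h_{f,\rho}'$ from Lemma \ref{lem:property_h_fcn_kernel}, using the uniform moment bound on derivatives of $f_{\sqrt{\cdot}}$ from Lemma \ref{lem:kernel_der_int} to handle the kernel-dependent factors. Throughout I will use the spectrum condition to note that $\pnorm{\Sigma_X}{F}^2 = \tr(\Sigma_X^2) \leq M\tr(\Sigma_X) \asymp_M \tau_X^2$, and analogously for $\Sigma_Y$.

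For part (1), Lemma \ref{lem:property_h_fcn_kernel} gives $|h_{f,\rho_X}(u)| \lesssim_M u^2$, so $|R_{X;f}(X_1,X_2)|^s \lesssim_{M,s} L_X(X_1,X_2)^{2s}$. Combining with Lemma \ref{lem:moment_R}-(1) yields
\begin{align*}
\tau_X^{2s}\,\E |R_{X;f}(X_1,X_2)|^s \lesssim_{M,s} \tau_X^{2s} \cdot \tau_X^{-4s}\pnorm{\Sigma_X}{F}^{2s} \lesssim_M 1,
\end{align*}
using $\pnorm{\Sigma_X}{F}^{2s}\lesssim_M \tau_X^{2s}$ at the end. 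The same argument applies to $R_{Y;f}$.

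For part (2), I would use the derivative bound $|h_{f,\rho_X}'(u)| \lesssim_M |u\cdot F_{f,\rho_X;2}(u)|$ from Lemma \ref{lem:property_h_fcn_kernel}, and then split by Cauchy–Schwarz:
\begin{align*}
\E |h_{f,\rho_X}'(L_X(X_1,X_2))|^s \lesssim_M \Big(\E L_X(X_1,X_2)^{2s}\Big)^{1/2}\Big(\E |F_{f,\rho_X;2}(L_X(X_1,X_2))|^{2s}\Big)^{1/2}.
\end{align*}
The first factor is $\lesssim_M \tau_X^{-s}$ by Lemma \ref{lem:moment_R}-(1) combined with the spectrum condition. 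For the second factor, Jensen's inequality applied to $F_{f,\rho_X;2}(u) = \int_0^1 f_{\sqrt{\cdot}}''(\rho_X^2(1+us))\,\d s$ gives
\begin{align*}
\E |F_{f,\rho_X;2}(L_X(X_1,X_2))|^{2s} \leq \int_0^1 \E \big|f_{\sqrt{\cdot}}''\big(\rho_X^2(1+sL_X(X_1,X_2))\big)\big|^{2s}\,\d s \lesssim_M 1,
\end{align*}
by Lemma \ref{lem:kernel_der_int} (with $\ell=2$), provided $p\wedge q$ is sufficiently large depending on $s$ and $\mathfrak{q}$. Multiplying by $\tau_X^s$ yields the claim, and the $Y$-case is handled identically.

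There is no genuine obstacle here; the only subtlety is the explicit integral form of $F_{f,\rho;2}$ provided by Lemma \ref{lem:property_h_fcn_kernel}, which allows Jensen's inequality to transfer the expectation inside and thereby reduce to the already-established uniform moment bound of Lemma \ref{lem:kernel_der_int}. The dependence of the required largeness of $p\wedge q$ on $(s,\mathfrak{q})$ is inherited directly from that lemma.
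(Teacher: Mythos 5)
Your proof is correct and follows essentially the same route as the paper's (very terse) argument: part (1) via $|h_{f,\rho}(u)|\lesssim_M u^2$ together with Lemma \ref{lem:moment_R}-(1) and $\pnorm{\Sigma_X}{F}^2\lesssim_M\tau_X^2$, and part (2) via $|h_{f,\rho}'(u)|\lesssim_M |u\cdot F_{f,\rho;2}(u)|$, Cauchy--Schwarz, and Jensen's inequality on the integral form of $F_{f,\rho;2}$ to invoke Lemma \ref{lem:kernel_der_int}. The only difference is that you spell out the Cauchy--Schwarz and Jensen steps that the paper leaves implicit.
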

\begin{proof}
	Combining with estimates in Lemma \ref{lem:moment_R}, (1) follows by $\abs{h_{f,\rho}(u)}\lesssim_M u^2$, and (2) follows by $\abs{h'_{f,\rho}(u)}\lesssim_M |u\cdot F_{f,\rho;2}(u)|$ and noting that for any $s \in \N$,
	\begin{align*}
	\E \abs{F_{f,\rho;2}(L_X(X_1,X_2))}^s \lesssim \sup_{t \in [0,1]} \E\bigabs{f_{\sqrt{\cdot}}^{(2)}\big(\rho^2(1+tL_X(X_1,X_2))\big)}^{s}\lesssim_M 1
	\end{align*}
	by Lemma \ref{lem:kernel_der_int}.
\end{proof}

Let
\begin{align*}
\psi_{X;f}(x_1,y_1)&\equiv \E_{X_2,Y_2}[\bar{R}_{X;f}(x_1,X_2)Y_2^\top y_1],\nonumber\\
\psi_{Y;f}(x_1,y_1)&\equiv \E_{X_2,Y_2}[\bar{R}_{Y;f}(y_1,Y_2)X_2^\top x_1],\nonumber\\
\psi_{X,Y;f}(x_1,y_1)& \equiv \E_{X_2,Y_2}[\bar{R}_{X;f}(x_1,X_2)\bar{R}_{Y;f}(y_1,Y_2)].
\end{align*}

\begin{proposition}\label{prop:res_psi_bound_kernel}
	Suppose that Assumption \ref{assump:kernel} holds, and that $\rho_X,\rho_Y$ are contained in $[1/M,M]$ for some $M>1$. Then the bounds in Proposition \ref{prop:res_psi_bound} hold when $\psi_X,\psi_Y,\psi_{X,Y}$ are replaced with $\psi_{X;f},\psi_{Y;f},\psi_{X,Y;f}$ under the same spectrum conditions, for $p,q$ large (possibly depending on $\mathfrak{q}$).
\end{proposition}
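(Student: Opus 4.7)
The plan is to repeat the proof of Proposition \ref{prop:res_psi_bound} with the function $h$ replaced by $h_{f,\rho}$ throughout, keeping careful track of the $(f,M)$-dependent constants that arise. Concretely, by Lemma \ref{lem:property_h_fcn_kernel} the kernel residual $h_{f,\rho}$ admits the expansion
\begin{align*}
h_{f,\rho}(u) = c_{f,\rho}\cdot u^2 + h_{f,\rho;3}(u),\quad c_{f,\rho}\equiv \frac{f_{\sqrt{\cdot}}''(\rho^2)\rho^2}{4 f_{\sqrt{\cdot}}'(\rho^2)},
\end{align*}
which plays the exact role of $h(u) = -u^2/8 + h_3(u)$ in the vanilla setup, with $|c_{f,\rho}| \lesssim_{M,f} 1$ uniformly in $\rho^2 \in [1/M,M]$ by Assumption \ref{assump:kernel}-(2)(3). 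Hence $\psi_{X;f}(x_1,y_1)$ admits the same kind of decomposition $A_{1,X;f}+A_{2,X;f}$ as in Lemma \ref{lem:psi_decomp}, where $A_{1,X;f}$ is simply $c_{f,\rho_X}\cdot \tau_X^{-4}$ times the explicit quadratic quantity computed there, and $A_{2,X;f}(x_1,y_1)\equiv \E[h_{f,\rho_X;3}(L_X(x_1,X))\,(Y^\top y_1)]$.

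For the first moment bounds I would reuse the closed-form expression from the proof of Proposition \ref{prop:res_psi_bound}, which immediately yields $|\E A_{1,X;f}(X_1,Y_1)| \lesssim_{M,f} \tau_X^{-4}\pnorm{\Sigma_{XY}}{F}^2$. The residual $\E A_{2,X;f}$ is handled by plugging the integral form of $h_{f,\rho;3}(u)$ from Lemma \ref{lem:property_h_fcn_kernel} and applying Lemma \ref{lem:cross_moment_LX_improved}-(2), with the functions $f_{\sqrt{\cdot}}^{(3)}(\rho^2(1+\cdot))$ and $f_{\sqrt{\cdot}}^{(4)}(\rho^2(1+\cdot))$ treated as bounded multipliers via Lemma \ref{lem:kernel_der_int} (applied with a Cauchy--Schwarz split if needed); this gives the desired $\tau_X^{-4}\pnorm{\Sigma_{XY}}{F}^2$ bound. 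For the second moment bounds, the $A_{1,X;f}$ part reduces to the same Gaussian--Poincar\'e estimate of $\var(A_1(Z))$ in the proof of Proposition \ref{prop:res_psi_bound}-Part 2, up to the factor $c_{f,\rho_X}^2$, and the $A_{2,X;f}$ part is controlled by Cauchy--Schwarz exactly as in (\ref{ineq:res_psi_bound_6}), with moments $\E L_X^{24}$ and $\E(Z_{2;X}^\top \Sigma_X^{-1/2}\Sigma_{XY}Y_1)^4$ reused verbatim, and the integrated derivative term bounded via Lemma \ref{lem:kernel_der_int}. Flipping the roles of $X$ and $Y$ handles $\psi_{Y;f}$.

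For the bound on $\E\psi_{X,Y;f}^2$ I would invoke Lemma \ref{lem:cross_XY_var} with $\mathfrak{h}_X = h_{f,\rho_X}$ and $\mathfrak{h}_Y = h_{f,\rho_Y}$, the same way it is used in Proposition \ref{prop:res_psi_bound}-Part 3; the required moments $\E(\mathfrak{h}_X'\circ L_X)^8$, $\E(\mathfrak{h}_X\circ L_X)^8$, and their $Y$-analogues, are now controlled by Lemma \ref{lem:moment_R_kernel}, using $|h'_{f,\rho}(u)|\lesssim_M |u F_{f,\rho;2}(u)|$ from Lemma \ref{lem:property_h_fcn_kernel} combined with the uniform moment bound of Lemma \ref{lem:kernel_der_int}. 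The combined bound produces exactly the desired scaling $\pnorm{\Sigma_{XY}}{F}^2(1\vee \pnorm{\Sigma_{XY}}{F}^2)(\tau_X\wedge\tau_Y)^{-2}\tau_X^{-4}\tau_Y^{-4}$.

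The main technical obstacle that I expect is the behavior of $f^{(\ell)}_{\sqrt{\cdot}}$ near the origin, where these derivatives may blow up polynomially by Assumption \ref{assump:kernel}-(4); this is the reason the conclusion requires $p,q$ large (depending on $\mathfrak{q}$). The key device to defuse it is Lemma \ref{lem:kernel_der_int}, whose proof in turn relies on the negative moment bound $\E(1+tL_X(X_1,X_2))^{-s}\lesssim_{M,s} 1$ of (\ref{ineq:moment_R_1}), valid once $p\wedge q \gtrsim s\mathfrak{q}$. All other ingredients of the proof of Proposition \ref{prop:res_psi_bound} are algebraic and transfer unchanged, since $L_X,L_Y$ and the Gaussian Hanson--Wright / matrix-derivative technology (Lemmas \ref{lem:cross_moment_LX} and \ref{lem:cross_XY_var}) are completely insensitive to the choice of kernel.
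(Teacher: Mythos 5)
Your proposal is correct and follows essentially the same route as the paper: decompose $\psi_{X;f}$ into a leading quadratic part (a bounded $f$-dependent multiple of the vanilla $A_{1,X}$) plus a residual driven by $h_{f,\rho;3}$, control the residual by combining Lemmas \ref{lem:cross_moment_LX}, \ref{lem:cross_moment_LX_improved} and \ref{lem:kernel_der_int} (the latter being exactly the device the paper uses to tame the possible polynomial blow-up of $f^{(\ell)}_{\sqrt{\cdot}}$ near zero, which is where the $p,q$ large depending on $\mathfrak{q}$ condition enters), and handle $\psi_{X,Y;f}$ via Lemma \ref{lem:cross_XY_var} with the moment inputs from Lemma \ref{lem:moment_R_kernel}. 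The only cosmetic discrepancy is your citation of Lemma \ref{lem:cross_moment_LX_improved}-(2) for the cubic term, where the sharp $\tau_X^{-4}\pnorm{\Sigma_{XY}}{F}^2$ rate actually comes from part (1) of that lemma (the constant coefficient $f^{(3)}_{\sqrt{\cdot}}(\rho^2)\rho^4/(12f'_{\sqrt{\cdot}}(\rho^2))$ factors out), but this does not affect the argument.
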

\begin{proof}
	The proof essentially proceeds the same way as in the proof of Proposition \ref{prop:res_psi_bound}. We sketch some differences. 
	
	\noindent (\textbf{First moment}) First, using a similar argument as in Lemma \ref{lem:psi_decomp}, we have the decomposition
	\begin{align*}
	\psi_{X;f}(x,y) = A_{1,X;f}(x,y) + A_{2,X;f}(x,y),
	\end{align*}
	where $A_{2,X;f}(x,y) = \E\big[h_{f,\rho_X;3}(L_X(x,X))Y^\top y\big]$ ($h_{f,\rho_X;3}$ defined in Lemma \ref{lem:property_h_fcn_kernel}), and
	\begin{align*}
	A_{1,X;f}(x,y) = -\frac{f''_{\sqrt{\cdot}}(\rho_X^2)\rho_X^2}{f'_{\sqrt{\cdot}}(\rho_X^2)\tau_X^4}\Big[\big(\pnorm{x}{}^2-\tr(\Sigma_X)\big)x^\top\Sigma_{XY}y + 2x^\top\Sigma_X\Sigma_{XY}y\Big].
	\end{align*}
	The bound $|\E A_{1,X;f}(X,Y)|\lesssim_M \tau_X^{-4}\pnorm{\Sigma_{XY}}{F}^2$ follows from Lemma \ref{lem:f_sqrt_basic}, and the bound for $|\E A_{2,X;f}(X_1,Y_1)|$ follows from the following modification: by Lemma \ref{lem:f_sqrt_basic},
	\begin{align*}
	&\abs{\E A_{2,X;f}(X_1,Y_1)} \lesssim_M \bigabs{ \E\big[L_X^3(X_1,X_2)Y_1^\top Y_2\big]}\\
	&\qquad +\biggabs{\E \bigg[L_X^{4}(X_1,X_2) Y_1^\top Y_2\cdot \int_0^{1} (1-s)^3f_{\sqrt{\cdot}}^{(4)}\big(\rho^2(1+sL_X(X_1,X_2))\big)\,\d{s}\bigg] }.
	\end{align*}
	The first term can be handled by Lemma \ref{lem:cross_moment_LX_improved} of order at most $\tau_X^{-4}\pnorm{\Sigma_{XY}}{F}^2$, while by Lemma \ref{lem:kernel_der_int} and a similar argument as in Lemma \ref{lem:cross_moment_LX_improved}, the second term can be bounded, up to a constant depending on $M$, by
	\begin{align*}
	&\E^{1/2} L_X^8(X_1,X_2) \bigg[\int_0^{1} (1-s)^3f_{\sqrt{\cdot}}^{(4)}\big(\rho^2(1+sL_X(X_1,X_2))\big)\,\d{s}\bigg]^2\cdot \pnorm{\Sigma_{XY}}{F}^2\\
	&\lesssim_M \tau_X^{-4}\pnorm{\Sigma_{XY}}{F}^2\cdot \sup_{s\in [0,1]}\E^{1/4} \big[f_{\sqrt{\cdot}}^{(4)}\big(\rho^2(1+sL_X(X_1,X_2))\big)\big]^4\lesssim_{M,\mathfrak{q}} \tau_X^{-4}\pnorm{\Sigma_{XY}}{F}^2.
	\end{align*}
	
	\noindent (\textbf{Second moment}) The proof of the second moment bounds for $\psi_{X;f},\psi_{Y;f}$ requires a modification for controlling $\E A_{2,X;f}^2(X_1,Y_1)$: 
	\begin{align*}
	&\E A_{2,X;f}^2(X_1,Y_1)\\
	&\lesssim_M \E_{X_1,Y_1} \bigg\{\E_{X_2,Y_2}\bigg[L_X^3(X_1,X_2)Y_1^\top Y_2\int_0^{1} (1-s)^2f_{\sqrt{\cdot}}^{(3)}\big(\rho^2(1+sL_X(X_1,X_2))\big)\,\d{s}\bigg]\bigg\}^2\\
	&\lesssim_M  \tau_X^{-6}\pnorm{\Sigma_{XY}}{F}^2\cdot \sup_{s \in[0,1]} \E^{1/4}\big[f_{\sqrt{\cdot}}^{(3)}\big(\rho^2(1+sL_X(X_1,X_2))\big)\big]^8\lesssim_M \tau_X^{-6}\pnorm{\Sigma_{XY}}{F}^2.
	\end{align*}
	using Lemma \ref{lem:kernel_der_int} in the last step. The proof of the second moment bound for $\psi_{X,Y;f}$ proceeds along the same lines upon using Lemma \ref{lem:moment_R_kernel}.
\end{proof}

\subsection{Mean and variance expansion}
Now we may prove the mean and variance expansion formulae:
\begin{proposition}
	Suppose that Assumption \ref{assump:kernel} holds, and that (i) the spectrum of $\Sigma_X,\Sigma_Y$ (ii) $\rho_X,\rho_Y$ are contained in $[1/M,M]$ for some $M>1$.
	\begin{enumerate}
		\item The following mean expansion holds:
		\begin{align*}
		m_{\Sigma;f,\gamma}&\equiv \E \dcov_\ast^2(\bm{X},\bm{Y};f,\gamma)=\dcov^2(X,Y;f,\gamma)\\
		&= \varrho(\gamma)m_\Sigma \Big[1+\mathcal{O}_M\big((\tau_X\wedge \tau_Y)^{-1}\big)\Big],
		\end{align*}
		where $m_\Sigma$ is the mean for distance covariance defined in Theorem \ref{thm:dcov_expansion}.
		\item The following variance expansion holds:
		\begin{align*}
		\var\big(\dcov_\ast^2(\bm{X},\bm{Y};f,\gamma)\big)&=\varrho^2(\gamma)\bar{\sigma}_n^2(X,Y)\Big[1+\mathcal{O}_M\big(n^{-1/2}+(p\wedge q)^{-1/4}\big)\Big].
		\end{align*}
		Here $\bar{\sigma}_n^2(X,Y)$ is defined in Theorem \ref{thm:non_null_clt}.
	\end{enumerate}
\end{proposition}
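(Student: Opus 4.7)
The plan is to reduce the mean and variance expansions for the generalized kernel distance covariance to the canonical ones in Theorems \ref{thm:dcov_expansion} and \ref{thm:variance_expansion} via the identity in Lemma \ref{lem:UV_expansion_kernel}. Observe first that since $f_{\sqrt{\cdot}}(u)=f(\sqrt{u})$ gives $f'_{\sqrt{\cdot}}(\rho^2)=f'(\rho)/(2\rho)$, the prefactor in Lemma \ref{lem:UV_expansion_kernel} simplifies as
\begin{align*}
\frac{2f'_{\sqrt{\cdot}}(\rho_X^2)}{\gamma_X^2}\cdot \frac{2f'_{\sqrt{\cdot}}(\rho_Y^2)}{\gamma_Y^2} = \frac{f'(\rho_X)f'(\rho_Y)}{\gamma_X\gamma_Y\,\tau_X\tau_Y} = \frac{\varrho(\gamma)}{\tau_X\tau_Y}.
\end{align*}
Hence $U_{f,\gamma_X}(x_1,x_2)V_{f,\gamma_Y}(y_1,y_2) = \frac{\varrho(\gamma)}{\tau_X\tau_Y}\big(x_1^\top x_2-\tau_X^2\bar R_{X;f}\big)\big(y_1^\top y_2-\tau_Y^2\bar R_{Y;f}\big)$, which is structurally identical to the canonical case except $\bar R_X,\bar R_Y$ are replaced by $\bar R_{X;f},\bar R_{Y;f}$. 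This scaling is the key relation I will exploit.

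For the mean expansion, I will plug the above identity into Proposition \ref{prop:hoef_decomp_kernel} (or equivalently use the analogue of \eqref{def:dcov_population}) to write
\begin{align*}
\dcov^2(X,Y;f,\gamma) = \frac{\varrho(\gamma)}{\tau_X\tau_Y}\bigg[\pnorm{\Sigma_{XY}}{F}^2 - \tau_X^2\E\psi_{X;f} - \tau_Y^2\E\psi_{Y;f} + \tau_X^2\tau_Y^2\E\psi_{X,Y;f}\bigg],
\end{align*}
and then invoke the first-moment part of Proposition \ref{prop:res_psi_bound_kernel} to bound each remainder by $\mathcal{O}_M(\pnorm{\Sigma_{XY}}{F}^2/(\tau_X\wedge\tau_Y))$, exactly mirroring the proof of Theorem \ref{thm:dcov_expansion}. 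This yields the claimed mean expansion.

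For the variance expansion, the plan is to repeat the Hoeffding decomposition program of Section \ref{section:hoeffding_decomp} with $U,V$ replaced by $U_{f,\gamma_X},V_{f,\gamma_Y}$. Because the leading multiplicative constant is exactly $\varrho(\gamma)/(\tau_X\tau_Y)$, the analogue of the main first- and second-order kernels $\bar g_{1},\bar g_{2}$ defined in \eqref{def:g1_bar} and \eqref{def:g2_bar} coincide verbatim with the canonical ones up to an overall factor of $\varrho(\gamma)$ (the correction terms $\mathscr{A}_{1,X},\mathscr{A}_{1,Y}$ are produced by the same expansion of $f_{\sqrt{\cdot}}$ at the leading quadratic order; cf.\ the $u^2/8$-term versus the $\tfrac{f''_{\sqrt{\cdot}}(\rho_X^2)\rho_X^2}{4f'_{\sqrt{\cdot}}(\rho_X^2)}u^2$-term in Lemma \ref{lem:property_h_fcn_kernel}, which after double-centering is absorbed into $\mathscr{A}_{1,X}$ with a harmless $M$-dependent multiplicative constant). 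The residual contributions $\bar R_{1;f},\bar R_{2;f}$ are controlled by Proposition \ref{prop:res_psi_bound_kernel} and Lemma \ref{lem:moment_R_kernel} exactly as in Lemmas \ref{lem:var_bar_psi}, and Propositions \ref{prop:var_first_order}--\ref{prop:var_higher_order}. Combining via Hoeffding gives
\begin{align*}
\var\big(\dcov_\ast^2(\bm{X},\bm{Y};f,\gamma)\big) = \varrho^2(\gamma)\,\bar\sigma_n^2(X,Y)\big[1+\mathcal{O}_M\big(n^{-1/2}+(p\wedge q)^{-1/4}\big)\big].
\end{align*}

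The main obstacle will be verifying that the variance lower bound of Lemma \ref{lem:first_variance_lower}, which is crucial for turning the $\mathfrak{o}(1)$-type residual bounds into relative errors, transfers to the kernel setting. This requires checking that the analogous quantity $\bar g_{1;f}$ still admits the lower bound $\pnorm{\Sigma_{XY}}{F}^2/(\tau_X^2\tau_Y^2)$ up to constants depending on $M$ and $f$; this should follow from the same Gaussian representation argument, using that $f'_{\sqrt{\cdot}}(\rho_X^2),f'_{\sqrt{\cdot}}(\rho_Y^2)\asymp_{M,f} 1$ under Assumption \ref{assump:kernel}-(3) and the spectrum condition on $\rho_X,\rho_Y$. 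Once this lower bound is in place, all the variance ratios computed in Propositions \ref{prop:var_first_order}--\ref{prop:var_higher_order} and Theorem \ref{thm:variance_expansion} transfer verbatim (with $M$-dependent constants now depending also on $f$), which completes the proof.
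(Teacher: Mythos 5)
Your proposal is correct and follows essentially the same route as the paper: the paper's own proof of this proposition consists precisely of the remark that the arguments of Theorems \ref{thm:dcov_expansion} and \ref{thm:variance_expansion} go through once the residual bounds are replaced by Proposition \ref{prop:res_psi_bound_kernel} (together with Lemma \ref{lem:moment_R_kernel}), plus the same scaling verification $\frac{4f'_{\sqrt{\cdot}}(\rho_X^2)f'_{\sqrt{\cdot}}(\rho_Y^2)\tau_X\tau_Y}{\gamma_X^2\gamma_Y^2}=\varrho(\gamma)$ obtained from Lemma \ref{lem:UV_expansion_kernel}. The one delicate point you explicitly flag and wave through --- that the quadratic coefficient $\tfrac{f''_{\sqrt{\cdot}}(\rho_X^2)\rho_X^2}{4f'_{\sqrt{\cdot}}(\rho_X^2)}$ of $h_{f,\rho_X}$ replaces $-1/8$, so the correction term $\mathscr{A}_{1,X}$ acquires a kernel-dependent constant --- is handled with exactly the same level of detail (i.e., none) in the paper, so your treatment is faithful to, and no less rigorous than, the original.
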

\begin{proof}
	The mean and variance expansions can be proved using exactly the same lines as in the proof of Theorems \ref{thm:dcov_expansion} and \ref{thm:variance_expansion}, but using Proposition \ref{prop:res_psi_bound_kernel}. The scaling can be checked, e.g., as the leading term in $m_{\Sigma;f,\gamma}$ equals, by Lemma \ref{lem:UV_expansion_kernel},
	\begin{align*}
	\frac{4f_{\sqrt{\cdot}}'(\rho_X^2)f_{\sqrt{\cdot}}'(\rho_Y^2)}{\gamma_X^2\tau_Y^2}\pnorm{\Sigma_{XY}}{F}^2 = 4f_{\sqrt{\cdot}}'(\rho_X^2)f_{\sqrt{\cdot}}'(\rho_Y^2) \frac{\tau_X\tau_Y}{\gamma_X^2\gamma_Y^2}\cdot \frac{\pnorm{\Sigma_{XY}}{F}^2}{\tau_X\tau_Y}=\varrho(\gamma)\cdot \frac{\pnorm{\Sigma_{XY}}{F}^2}{\tau_X\tau_Y},
	\end{align*}
	whereas $\pnorm{\Sigma_{XY}}{F}^2/(\tau_X\tau_Y)$ is the leading term in $m_\Sigma$ as established in Theorem \ref{thm:dcov_expansion}.
\end{proof}

The rest of the proof of Theorem \ref{thm:non_null_clt_kernel} follows exactly the same lines as that of the proof of Theorem \ref{thm:non_null_clt}. Details are omitted.

\section{Auxiliary lemmas}

\begin{lemma}\label{lem:kolmogorov_res}
	For real-valued random variables $W,R$, $N\sim \mathcal{N}(0,1)$ and a deterministic real number $D$, we have
	\begin{align*}
	d_{\mathrm{Kol}}(W+R+D,N)\leq d_{\mathrm{Kol}}(W,N)+2 (\E R^2)^{1/3}+\abs{D}.
	\end{align*}
\end{lemma}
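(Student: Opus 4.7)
The approach is a standard truncation-and-smoothing argument. For any $\epsilon > 0$ and any $t \in \R$, I would first bound the upper tail
\begin{align*}
\Prob(W+R+D \leq t) \leq \Prob(W \leq t-D+\epsilon) + \Prob(R < -\epsilon),
\end{align*}
and analogously the lower tail
\begin{align*}
\Prob(W+R+D \leq t) \geq \Prob(W \leq t-D-\epsilon) - \Prob(R > \epsilon).
\end{align*}
Controlling $\Prob(|R| > \epsilon) \leq \E R^2/\epsilon^2$ by Markov's inequality reduces both sides to a Kolmogorov-distance comparison between $W$ and $N$ at shifted arguments plus a quadratic remainder.

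Next, I would invoke the definition of $d_{\mathrm{Kol}}(W,N)$ to replace $\Prob(W\leq t-D\pm\epsilon)$ by $\Phi(t-D\pm\epsilon) \pm d_{\mathrm{Kol}}(W,N)$, and then use the Lipschitz bound $|\Phi(s)-\Phi(t)| \leq |s-t|/\sqrt{2\pi} \leq |s-t|$ on the standard normal c.d.f.\ to get
\begin{align*}
|\Phi(t-D\pm\epsilon) - \Phi(t)| \leq |D| + \epsilon.
\end{align*}
Combining these observations produces the uniform bound
\begin{align*}
\bigabs{\Prob(W+R+D\leq t) - \Phi(t)} \leq d_{\mathrm{Kol}}(W,N) + |D| + \epsilon + \frac{\E R^2}{\epsilon^2}.
\end{align*}

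The final step is to optimize over $\epsilon>0$. Choosing $\epsilon = (2\E R^2)^{1/3}$ balances the two $\epsilon$-dependent terms and yields a remainder of $(3/2)\cdot 2^{1/3}(\E R^2)^{1/3}$, which is less than $2(\E R^2)^{1/3}$. Taking the supremum over $t$ then gives the stated inequality. There is no genuine obstacle here; the only mild subtlety is to ensure the proof handles both tails symmetrically and to check that the numerical constant $(3/2)\cdot 2^{1/3}$ indeed fits under the stated coefficient $2$.
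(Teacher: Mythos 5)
Your proof is correct and follows essentially the same route as the paper's: a Chebyshev truncation of $R$ at level $\epsilon$ (the paper's $u\,\E^{1/2}R^2$), the definition of $d_{\mathrm{Kol}}(W,N)$, the Lipschitz bound on $\Phi$ to absorb the shifts $|D|+\epsilon$, and optimization of $\epsilon+\E R^2/\epsilon^2$, with the constant $(3/2)\cdot 2^{1/3}<2$ checking out exactly as in the paper.
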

\begin{proof}
	Fix $u>0$, on an event $E_u$ with probability at least $1-u^{-2}$, $\abs{R}\leq u\cdot \E^{1/2} R^2$. Then for any $t \in \R$
	\begin{align*}
	\Prob(W+R+D\leq t)& \leq \Prob(W+D\leq t+ u \E^{1/2} R^2)+ u^{-2}\\
	&\leq \Prob(N\leq  t+ \abs{D}+u \E^{1/2} R^2)+ u^{-2}+d_{\mathrm{Kol}}(W,N)\\
	&\leq \Prob(N\leq t)+ \abs{D}+ u \E^{1/2} R^2+u^{-2}+ d_{\mathrm{Kol}}(W,N).
	\end{align*}
	Optimizing the above display over $u>0$ to conclude one side of the inequality. The reverse direction is similar.
\end{proof}

\begin{lemma}\label{lem:dkov_dW}
	For two random variables $X,Y$, with $\varphi_Y$ denoting the Lebesgue density of $Y$, we have $d_{\mathrm{Kol}}^2(X,Y)\leq 4 \pnorm{\varphi_Y}{\infty} d_{\mathrm{W}}(X,Y)$, where $d_{\mathrm{W}}$ is the Wasserstein distance.
\end{lemma}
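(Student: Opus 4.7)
The plan is to bound $\Prob(X\leq t)-\Prob(Y\leq t)$ uniformly in $t$ by combining a coupling argument for the Wasserstein side with the bounded-density hypothesis on $Y$, and then to optimize over a free spatial scale $\epsilon>0$.

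Fix $t\in \R$ and $\epsilon>0$, and let $(X,Y)$ be any coupling of the two marginals. Decomposing the event $\{X\leq t\}\setminus \{Y\leq t\}$ according to whether $|X-Y|\leq \epsilon$ or not, I would write
\begin{align*}
\Prob(X\leq t)-\Prob(Y\leq t) &= \Prob(X\leq t, Y>t) - \Prob(X>t, Y\leq t) \\
&\leq \Prob(X\leq t, Y>t, |X-Y|\leq \epsilon) + \Prob(|X-Y|>\epsilon).
\end{align*}
On the first event one has $t<Y\leq t+\epsilon$, so it is bounded by $\Prob(t<Y\leq t+\epsilon)\leq \pnorm{\varphi_Y}{\infty}\cdot \epsilon$ using the bounded-density assumption; the second is bounded by $\epsilon^{-1}\E|X-Y|$ via Markov. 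Taking the infimum over couplings yields $\Prob(X\leq t)-\Prob(Y\leq t)\leq \pnorm{\varphi_Y}{\infty}\epsilon + \epsilon^{-1} d_{\mathrm{W}}(X,Y)$. Applying the same argument with the roles of $X$ and $Y$ reversed gives the matching lower bound, so
\begin{align*}
d_{\mathrm{Kol}}(X,Y) \leq \pnorm{\varphi_Y}{\infty}\epsilon + \epsilon^{-1}d_{\mathrm{W}}(X,Y).
\end{align*}
Finally, optimizing the right-hand side by choosing $\epsilon = (d_{\mathrm{W}}(X,Y)/\pnorm{\varphi_Y}{\infty})^{1/2}$ gives $d_{\mathrm{Kol}}(X,Y)\leq 2\sqrt{\pnorm{\varphi_Y}{\infty}\cdot d_{\mathrm{W}}(X,Y)}$, and squaring yields the claim.

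There is no real obstacle here; the only point requiring minor care is verifying that the coupling-and-then-infimum step is legitimate, which follows from the Kantorovich--Rubinstein definition of $d_{\mathrm{W}}$ and the fact that the bound $\pnorm{\varphi_Y}{\infty}\epsilon + \epsilon^{-1}\E|X-Y|$ depends on the coupling only through $\E|X-Y|$.
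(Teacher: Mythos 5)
Your proof is correct, but it takes the primal (coupling) route where the paper takes the dual (test-function) route. The paper's proof invokes the Kantorovich--Rubinstein representation of $d_{\mathrm{W}}$ as a supremum over $1$-Lipschitz functions and sandwiches the indicator $\bm{1}_{(-\infty,t]}$ between piecewise-linear ramps $g_{t,\epsilon}$ of slope $\epsilon^{-1}$; the smoothing error is again $\Prob(t\leq Y\leq t+\epsilon)\leq \epsilon\pnorm{\varphi_Y}{\infty}$, so both arguments land on the identical bound $\epsilon^{-1}d_{\mathrm{W}}(X,Y)+\epsilon\pnorm{\varphi_Y}{\infty}$ and the same optimization in $\epsilon$. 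Your version replaces the Lipschitz test function with a split on $\{|X-Y|\leq\epsilon\}$ plus Markov's inequality; it is equally elementary and arguably more self-contained if one takes the coupling infimum as the definition of $d_{\mathrm{W}}$ (no duality needed), whereas the paper's version is shorter if one starts from the Lipschitz-dual definition. One phrasing to tighten: in the reverse direction you should not literally ``reverse the roles of $X$ and $Y$,'' since that would require a bounded density for $X$, which is not assumed. Instead, decompose $\Prob(Y\leq t)-\Prob(X\leq t)\leq \Prob(Y\leq t,\,X>t,\,|X-Y|\leq\epsilon)+\Prob(|X-Y|>\epsilon)$ and observe that on the first event $t-\epsilon<Y\leq t$, so the same bound $\epsilon\pnorm{\varphi_Y}{\infty}$ applies using the density of $Y$ only. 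With that adjustment the argument is complete.
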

\begin{proof}
	Fix $t \in \R$ and $\epsilon>0$. Let $g_{t,\epsilon}(\cdot)\equiv1$ on $(-\infty, t]$, $g_{t,\epsilon}(\cdot)\equiv 0$ on $[t+\epsilon,\infty)$ and be linearly interpolated on $(t,t+\epsilon)$. As $g_{t,\epsilon}$ is $\epsilon^{-1}$-Lipschitz,
	\begin{align*}
	&\Prob(X\leq t)-\Prob(Y\leq t)\leq \E g_{t,\epsilon}(X)-\E g_{t,\epsilon}(Y)+\E g_{t,\epsilon}(Y)-\E\bm{1}_{(-\infty,t]}(Y)\\
	&\leq \epsilon^{-1}d_{\mathrm{W}}(X,Y)+ \Prob\big(t\leq Y\leq t+\epsilon\big)\leq \epsilon^{-1}d_{\mathrm{W}}(X,Y)+\epsilon \pnorm{\varphi_Y}{\infty}.
	\end{align*}
	Optimizing over $\epsilon>0$ obtains the one-sided inequality. The other direction is similar.
\end{proof}

\begin{lemma}\label{lem:gaussian_4moment}
	Suppose $Z,Z_1,Z_2,\ldots$ are independent copies of a random vector in $\R^d$ having i.i.d. symmetric components with mean $0$, variance $1$, and fourth moment $m_4$. Let $\kappa \equiv m_4 - 3$ be the excess kurtosis. The following hold.
	\begin{enumerate}
		\item For any matrices $A,B \in \R^{d\times d}$, 
		\begin{align*}
		&\E (Z^\top A Z) (Z^\top BZ)= \tr(A)\tr(B)+\tr(AB)+\tr(AB^\top) + \kappa\tr(A\circ B),\\
		&\E\big(Z^\top A Z-\tr(A)\big)\big(Z^\top B Z-\tr(B)\big)=\tr(AB)+\tr(AB^\top)+ \kappa\tr(A\circ B). 
		\end{align*}
		\item For any matrices $A,B \in \R^{d\times d}$,
		\begin{align*}
		&\E (Z_1^\top A Z_2 Z_2^\top B Z_1)^2\\
		&=2\big(\tr^2(AB)+\tr(ABAB)+\tr(ABB^\top A^\top) + \tr(A^\top B^\top BA)\big)+ \pnorm{A}{F}^2\pnorm{B}{F}^2\\
		&+ 2\kappa\cdot\big[\tr(BA\circ BA) + \tr(AB \circ AB) + \tr(AA^\top \circ BB^\top\big] + \kappa^2\cdot\tr\big((A\circ A)(B\circ B)\big).
		\end{align*}
		\item For any matrix $A \in \R^{d\times d}$,
		\begin{align*}
		\E (ZZ^\top) (Z^\top A Z) = A+A^\top +\tr(A) I_d +\kappa \cdot I_d \circ A.
		\end{align*}
	\end{enumerate}
\end{lemma}
\begin{proof}
	\noindent (1). Note that
	\begin{align*}
	&\E (Z^\top A Z) (Z^\top BZ) = \E \tr\big(A ZZ^\top B ZZ^\top\big)\\
	& = \E\sum_{i,j,k,\ell} A_{ij}(ZZ^\top)_{jk}B_{k\ell} (ZZ^\top)_{\ell i} = \sum_{i,j,k,\ell} A_{ij}B_{k\ell}\cdot  \E Z_i Z_j Z_k Z_\ell. 
	\end{align*}
		To make the expectation on the right hand side non-vanishing:
	\begin{itemize}
		\item $(i=j)\neq (k=\ell)$. This contributes to $
		\sum_{i \neq k} A_{ii}B_{kk}$. 
		\item $(i=k)\neq (j=\ell)$. This contributes to $
		\sum_{i,j} A_{ij}B_{ij}$.
		\item $(i=\ell)\neq (j=k)$. This contributes to $
		\sum_{i,j} A_{ij}B_{ji}$.
		\item $i=j=k=\ell$. This contributes to $
		m_4\sum_{i} A_{ii}B_{ii} = m_4\tr(A\circ B)$.
	\end{itemize}
    Collecting the terms to conclude the first equality. The second equality follows a direct expansion.

	\noindent (2). Using part (1), we have
	\begin{align*}
	&\E (Z_1^\top A Z_2 Z_2^\top B Z_1)^2= \E \Big[\E \big[Z_1^\top \big(A Z_2 Z_2^\top B\big) Z_1\cdot Z_1^\top \big(A Z_2 Z_2^\top B\big) Z_1\lvert Z_2\big]\Big]\\
	& = \E \tr^2(A Z_2Z_2^\top B)+ \E \tr(A Z_2Z_2^\top B A Z_2Z_2^\top B)+\E\tr(A Z_2Z_2^\top B B^\top Z_2Z_2^\top A^\top)\\
	&\qquad+ \kappa\E \tr(AZ_2Z_2^\top B \circ AZ_2Z_2^\top B).
	\end{align*}
	Using again part (1), the above terms can be calculated as follows:
	\begin{itemize}
	\item The firs term is
	\begin{align*}
	\E \tr^2(A Z_2Z_2^\top B) = \E (Z_2^\top BAZ_2)^2 = \tr^2(BA) + \tr(BABA) + \tr(BAA^\top B^\top) + \kappa\tr(BA\circ BA).
	\end{align*}
	\item Since $\E \tr(A Z_2Z_2^\top B A Z_2Z_2^\top B) = \E (Z_2^\top BA Z_2)$, the second term equals the first term.
	\item The third term is
	\begin{align*}
	\E\tr(A Z_2Z_2^\top B B^\top Z_2Z_2^\top A^\top) &= \E (Z_2^\top BB^\top Z_2)(Z_2^\top A^\top A Z_2)\\
	&=  \pnorm{A}{F}^2\pnorm{B}{F}^2 + 2\tr(BB^\top A^\top A) + \kappa\cdot\tr(BB^\top\circ AA^\top).
	\end{align*}
	\item The fourth term satisfies
	\begin{align*}
	&\E \tr(AZ_2Z_2^\top B \circ AZ_2Z_2^\top B) = \sum_{i=1}^d \E (AZ_2Z_2^\top B)_{ii}^2 = \sum_{i=1}^d \E (Z_2^\top Be_ie_i^\top AZ_2)^2\\
	&= \sum_{i=1}^d \Big[\tr^2(Be_ie_i^\top A) + \tr(Be_ie_i^\top ABe_ie_i^\top A) + \tr(Be_ie_i^\top AA^\top e_ie_i^\top B^\top) + \kappa\cdot\tr(Be_ie_i^\top A\circ Be_ie_i^\top A)\Big]\\
	&= \sum_{i=1}^d \Big[(AB)_{ii}^2 + (AB)_{ii}^2 + (AA^\top)_{ii}(B^\top B)_{ii} + \kappa\cdot\sum_{j=1}^d A_{ij}^2B_{ji}^2\Big]\\
	&= 2\tr(AB\circ AB) + \tr(AA^\top \circ BB^\top) + \kappa\cdot\tr\big((A\circ A)(B\circ B)\big).
	\end{align*}
	\end{itemize}
	\noindent (3). For $i\neq j$,
	\begin{align*}
	\big(\E (ZZ^\top) (Z^\top A Z) \big)_{ij}&= \E \bigg[Z_iZ_j \sum_{k,\ell} Z_k A_{k\ell} Z_\ell\bigg] = A_{ij}+A_{ji}.
	\end{align*}
	For $i=j$, 
	\begin{align*}
	\big(\E (ZZ^\top) (Z^\top A Z) \big)_{ii}&= \E \bigg[Z_i^2 \sum_{k,\ell} Z_k A_{k\ell} Z_\ell\bigg] = \E Z_i^4\cdot A_{ii}+\sum_{k\neq i} (\E Z_i^2) (\E Z_k^2) A_{kk}\\
	& = m_4A_{ii}+\sum_{k\neq i} A_{kk} = (m_4-1)A_{ii} + \tr(A).
	\end{align*}
	The proof is complete.
\end{proof}

\begin{lemma}\label{lem:moment_X_Y}
	The following hold:
	\begin{enumerate}
		\item $\E \pnorm{X_1}{}^2 \pnorm{Y_1}{}^2 = \tr(\Sigma_X)\tr(\Sigma_Y)+ 2\pnorm{\Sigma_{XY}}{F}^2+\kappa\tr(H_{[11]}\circ H_{[22]})$.
		\item $\E(X_1^\top \Sigma_{XY} Y_2)^2 = \tr(\Sigma_X\Sigma_{XY}\Sigma_Y \Sigma_{YX})$. 
		\item For any matrix $A,B$,
		\begin{align*}
		\E(X_1^\top A Y_1)(X_1^\top B Y_1)=\tr(A\Sigma_{YX})\tr(B\Sigma_{YX})+\tr(A\Sigma_{YX}B\Sigma_{YX})+\tr(\Sigma_X A \Sigma_Y B^\top) + \kappa\cdot\tr(A_\Sigma\circ B_\Sigma).
		\end{align*}
		Here $A_\Sigma, B_\Sigma$ are given in (\ref{def:AB_Sigma}) below. In particular, for $A=B=\Sigma_{XY}$, we have $\E(X_1^\top \Sigma_{XY} Y_1)^2 = \pnorm{\Sigma_{XY}}{F}^4+\pnorm{\Sigma_{XY}\Sigma_{YX}}{F}^2+\tr(\Sigma_{XY}\Sigma_Y \Sigma_{YX}\Sigma_{X})  + \kappa\cdot\tr(G_{[12]} \circ G_{[12]})$.
		\item For any matrix $A,B$,
		\begin{align*}
		\E (X_1^\top A X_1) (Y_1^\top B Y_1)=\tr(A\Sigma_{X})\tr(B\Sigma_{Y})+\tr(A\Sigma_{XY}B\Sigma_{YX})+\tr(A\Sigma_{XY}B^\top\Sigma_{YX}) + \kappa\cdot\tr(\bar{A}_\Sigma\circ \bar{B}_\Sigma).
		\end{align*}
		Here $\bar{A}_\Sigma, \bar{B}_\Sigma$ are given in (\ref{def:AB_Sigma_bar}) below.
	\end{enumerate}
\end{lemma}
\begin{proof}
	\noindent (1) follows from
	\begin{align*}
		\E \pnorm{X_1}{}^2\pnorm{Y_1}{}^2& = \E Z_1^\top H_{[11]} Z_1\cdot Z_1^\top H_{[22]} Z_1\\
		& = \tr(H_{[11]})\tr(H_{[22]})+2\tr(H_{[11]}H_{[22]}) + \kappa\tr(H_{[11]}\circ H_{[22]})\\
		&= \tr(\Sigma_X)\tr(\Sigma_Y)+ 2\pnorm{\Sigma_{XY}}{F}^2 +\kappa\tr(H_{[11]}\circ H_{[22]}).
	\end{align*}

	\noindent (2) follows from
	\begin{align*}
	\E(X_1^\top \Sigma_{XY} Y_2)^2 
	& = \E \tr \big(\Sigma_{XY} Y_2Y_2^\top \Sigma_{XY} X_1 X_1^\top\big)=\tr(\Sigma_{XY}\Sigma_Y \Sigma_{YX}\Sigma_X).
	\end{align*}

	
	
	\noindent (4). Let
	\begin{align}\label{def:AB_Sigma}
	A_\Sigma\equiv \Sigma^{1/2}
	\begin{pmatrix}
	0 & A\\
	0 & 0
	\end{pmatrix}
	\Sigma^{1/2},\quad 
	B_\Sigma\equiv \Sigma^{1/2}
	\begin{pmatrix}
	0 & B\\
	0 & 0
	\end{pmatrix}
	\Sigma^{1/2}.
	\end{align}
	Then
	\begin{align*}
	&\E(X_1^\top A Y_1)(X_1^\top B Y_1)= \E (Z_1^\top A_\Sigma Z_1) (Z_1^\top B_\Sigma Z_1)\\
	&=\tr(A_\Sigma)\tr(B_\Sigma)+\tr(A_\Sigma B_\Sigma)+\tr(A_\Sigma B_\Sigma^\top) + \kappa\cdot\tr(A_\Sigma\circ B_\Sigma)\\
	& = \tr(A\Sigma_{YX})\tr(B\Sigma_{YX})+\tr(A\Sigma_{YX}B\Sigma_{YX})+\tr(\Sigma_X A \Sigma_Y B^\top) + \kappa\cdot\tr(A_\Sigma\circ B_\Sigma).
	\end{align*}
	\noindent (5). Let 
	\begin{align}\label{def:AB_Sigma_bar}
	\bar{A}_\Sigma\equiv \Sigma^{1/2}
	\begin{pmatrix}
	A & 0\\
	0 & 0
	\end{pmatrix}
	\Sigma^{1/2},\quad 
	\bar{B}_\Sigma\equiv \Sigma^{1/2}
	\begin{pmatrix}
	0 & 0\\
	0 & B
	\end{pmatrix}
	\Sigma^{1/2}.
	\end{align}
		Then
	\begin{align*}
	&\E(X_1^\top A X_1)(Y_1^\top B Y_1)= \E (Z_1^\top \bar{A}_\Sigma Z_1) (Z_1^\top \bar{B}_\Sigma Z_1)\\
	&=\tr(\bar{A}_\Sigma)\tr(\bar{B}_\Sigma)+\tr(\bar{A}_\Sigma \bar{B}_\Sigma)+\tr(\bar{A}_\Sigma \bar{B}_\Sigma^\top) + \kappa\tr(\bar{A}_\Sigma\circ \bar{B}_\Sigma)\\
	& = \tr(A\Sigma_{X})\tr(B\Sigma_{Y})+\tr(A\Sigma_{XY}B\Sigma_{YX})+\tr(A\Sigma_{XY}B^\top\Sigma_{YX})+ \kappa\tr(\bar{A}_\Sigma\circ \bar{B}_\Sigma).
	\end{align*}
	The proof is complete.
\end{proof}

\begin{lemma}\label{lem:SigmaXY_F_bound}
	It holds that $\pnorm{\Sigma_{XY}}{F}^2\leq \pnorm{\Sigma_X}{F}\pnorm{\Sigma_Y}{F}$. If in addition $\pnorm{\Sigma_X^{-1}}{\op}\vee \pnorm{\Sigma_Y^{-1}}{\op}\leq M$ for some $M>1$, then $\pnorm{\Sigma_{XY}}{F}^2\lesssim_M \tau_X^2\wedge \tau_Y^2$.
\end{lemma}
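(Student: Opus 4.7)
The plan is to base everything on the standard structural consequence of positive semidefiniteness for block covariance matrices: since $\Sigma\succeq 0$, there exists a contraction $K$ with $\pnorm{K}{\op}\leq 1$ such that
\[
\Sigma_{XY} = \Sigma_X^{1/2}\,K\,\Sigma_Y^{1/2}.
\]
To justify this, I would observe that the Schur complement of $\Sigma_Y$ in $\Sigma$ satisfies $\Sigma_X - \Sigma_{XY}\Sigma_Y^{+}\Sigma_{YX} \succeq 0$ (first on $\Sigma+\epsilon I_{p+q}$, then passing $\epsilon\downarrow 0$ to handle potential rank deficiency), which after pre- and post-multiplying by $\Sigma_X^{-1/2}$ gives $KK^\top \preceq I_p$, i.e.\ $\pnorm{K}{\op}\leq 1$. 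With this factorization in hand, both inequalities follow by short trace manipulations.

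For the first inequality, I would rewrite
\[
\pnorm{\Sigma_{XY}}{F}^2 = \tr\big(\Sigma_{YX}\Sigma_{XY}\big) = \tr\big(K^\top \Sigma_X K\,\Sigma_Y\big),
\]
and then apply the trace Cauchy--Schwarz $\tr(AB)\leq \pnorm{A}{F}\pnorm{B}{F}$ together with the submultiplicative bound $\pnorm{MN}{F}\leq \pnorm{M}{\op}\pnorm{N}{F}$ twice:
\[
\tr\big(K^\top \Sigma_X K\,\Sigma_Y\big) \leq \pnorm{K^\top \Sigma_X K}{F}\pnorm{\Sigma_Y}{F} \leq \pnorm{K}{\op}^2\pnorm{\Sigma_X}{F}\pnorm{\Sigma_Y}{F}\leq \pnorm{\Sigma_X}{F}\pnorm{\Sigma_Y}{F}.
\]

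For the second inequality, the lemma is invoked in the body of the paper only under bounded-spectrum hypotheses, which in particular yield $\pnorm{\Sigma_X}{\op}\vee\pnorm{\Sigma_Y}{\op}\lesssim_M 1$ alongside the stated lower bound. Using cyclicity of trace and bounding a p.s.d.\ factor by its operator norm,
\[
\pnorm{\Sigma_{XY}}{F}^2 = \tr\big(K\Sigma_Y K^\top\,\Sigma_X\big) \leq \pnorm{\Sigma_Y}{\op}\,\tr(KK^\top\Sigma_X) \leq \pnorm{\Sigma_Y}{\op}\pnorm{K}{\op}^2\,\tr(\Sigma_X) \lesssim_M \tau_X^2.
\]
Swapping the roles of $X$ and $Y$ produces the analogous bound by $\tau_Y^2$, and taking the minimum yields the claim.

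The only genuinely delicate point is the factorization step in the opening paragraph, specifically when $\Sigma_X$ or $\Sigma_Y$ is singular; everything else is routine Cauchy--Schwarz on traces. I would handle this by the short perturbation argument sketched above, observing that the bounds on $\pnorm{\Sigma_{XY}}{F}^2$ and $\tau_X^2, \tau_Y^2, \pnorm{\Sigma_X}{F}, \pnorm{\Sigma_Y}{F}$ are all continuous in $\Sigma$ so that letting $\epsilon\downarrow 0$ transfers the estimates from $\Sigma + \epsilon I_{p+q}$ back to $\Sigma$.
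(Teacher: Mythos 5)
Your proof is correct, but the route differs from the paper's in the first claim. The paper proves $\pnorm{\Sigma_{XY}}{F}^2\leq \pnorm{\Sigma_X}{F}\pnorm{\Sigma_Y}{F}$ probabilistically: writing $\pnorm{\Sigma_{XY}}{F}^2=\tr\big(\E[X_1Y_1^\top]\,\E[Y_2X_2^\top]\big)=\E\big[(X_1^\top X_2)(Y_1^\top Y_2)\big]$ for independent copies and applying Cauchy--Schwarz together with $\E(X_1^\top X_2)^2=\pnorm{\Sigma_X}{F}^2$; this is a three-line argument needing no factorization and no perturbation step for singular blocks. Your contraction factorization $\Sigma_{XY}=\Sigma_X^{1/2}K\Sigma_Y^{1/2}$ with $\pnorm{K}{\op}\leq 1$ plus trace Cauchy--Schwarz is a valid, purely linear-algebraic alternative that works for arbitrary p.s.d.\ $\Sigma$ once you carry out the $\epsilon$-perturbation; it is slightly longer but makes the structural input (the Schur complement) explicit. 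For the second claim your argument is essentially the paper's in different notation: the paper uses positive semidefiniteness of $\Sigma_X-\Sigma_{XY}\Sigma_Y^{-1}\Sigma_{YX}$ to get $\pnorm{\Sigma_{XY}}{F}^2\lesssim_M\tr(\Sigma_{XY}\Sigma_Y^{-1}\Sigma_{YX})\leq\tr(\Sigma_X)$, which is exactly your $KK^\top\preceq I$ bound. One point worth keeping from your write-up: both your proof and the paper's actually need an \emph{upper} bound $\pnorm{\Sigma_X}{\op}\vee\pnorm{\Sigma_Y}{\op}\lesssim_M 1$ for the second claim (the step $\tr(\Sigma_{XY}\Sigma_Y^{-1}\Sigma_{YX})\gtrsim_M\pnorm{\Sigma_{XY}}{F}^2$ uses $\Sigma_Y^{-1}\succeq\pnorm{\Sigma_Y}{\op}^{-1}I$, and the one-dimensional example $\Sigma_X=\Sigma_Y=\Sigma_{XY}=t$ with $t\to\infty$ shows the claim fails under the lower spectral bound alone), whereas the lemma as stated only assumes $\pnorm{\Sigma_X^{-1}}{\op}\vee\pnorm{\Sigma_Y^{-1}}{\op}\leq M$; you flag this explicitly and import the upper bound from the contexts in which the lemma is applied, which is the right reading of the hypothesis.
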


\begin{proof}
	For the first claim, note that
	\begin{align*}
	\pnorm{\Sigma_{XY}}{F}^2 & = \tr\big(\E X_1 Y_1^\top \E Y_2 X_2^\top\big) = \E (X_1^\top X_2) (Y_1^\top Y_2) \leq \E^{1/2}(X_1^\top X_2)^2\cdot \E^{1/2}(Y_1^\top Y_2)^2 = \pnorm{\Sigma_X}{F}\pnorm{\Sigma_Y}{F},
	\end{align*}
	as desired. For the second claim, as $\Sigma_{X\backslash Y} = \Sigma_X - \Sigma_{XY}\Sigma_{Y}^{-1}\Sigma_{YX}$ is p.s.d. (since it is the conditional variance of $X$ given $Y$ when $(X^\top,Y^\top)^\top \stackrel{d}{=} \mathcal{N}(0,\Sigma)$), we have
	\begin{align*}
	\pnorm{\Sigma_{XY}}{F}^2\lesssim_M \tr(\Sigma_{XY}\Sigma_{Y}^{-1}\Sigma_{YX}) \leq \tr(\Sigma_X) = \tau_X^2/2.
	\end{align*}
	Flipping the role of $X,Y$ to conclude.
\end{proof}

\bibliographystyle{amsalpha}
\bibliography{mybib}

\end{document}